\documentclass[reqno,12pt]{amsart}
\pdfoutput=1
\usepackage{hbpaper,verbatim,genyoungtabtikz}
\usepackage{enumitem,ytableau}
\usepackage{youngtab}
\usepackage{ytableau}

\allowdisplaybreaks

\newcommand{\Arm}{\operatorname{Arm}}
\newcommand{\Leg}{\operatorname{Leg}}
\newcommand{\Gar}{\operatorname{Gar}}
\def\bi{\text{\boldmath$i$}}

\newcommand\std{\operatorname{Std}} %standard
\newcommand\Std{\operatorname{Std}} %standard
\newcommand\tabupto[2]{#1_{\downarrow#2}}
\newcommand{\Ht}{{\rm ht}} %height
\newtheorem*{theorem*}{Theorem}

\title[Homomorphisms to hooks in type C]{Homomorphisms to Hook Specht Modules over Quiver Hecke algebras of type C}

\author[Mart\'{i}n Forsberg Conde]{Mart\'{i}n Forsberg Conde}
\address{Okinawa Institute of Science and Technology, Okinawa 904-0495, Japan}
\email{martin.forsberg@oist.jp}

\author[Berta Hudak]{Berta Hudak}
\address{National Center for Theoretical Sciences, Taipei, Taiwan}
\email{berta.hudak@ncts.ntu.edu.tw}

\date{}

\begin{document}

\renewcommand\auth{Mart\^{\i}n Forsberg Conde & Berta Hudak}

\let\thefootnote\relax\footnote{2020 Mathematics subject classification: 20C08, 05E10 (Primary), 16G10, 17B37, 20C30 (Secondary)}

\begin{abstract}
We investigate homomorphisms between Specht modules for level $1$ cyclotomic quiver Hecke algebras of affine type $C$.
For hook partitions with a `short leg' or a `short arm', we give the full quiver Hecke algebra action and we show that every non-zero homomorphism from an arbitrary Specht module is determined by sending the cyclic generator to a single standard basis element. We use this action to investigate homomorphisms to these Specht modules, giving a full classification in the short leg case.
This provides the first investigation of homomorphisms between Specht modules in affine type $C$ and extends methods previously used in affine type $A$.
\end{abstract}

\maketitle

\section{Introduction} \label{S:Intro}

The study of the structure of Specht modules is the central problem in the modular representation theory of symmetric groups and related algebras. In type A (and over a field $k$), the Brundan--Kleshchev isomorphism~\cite{bkisom} provides good guidance to a mathematician working on quiver Hecke algebras. The representation theory of cyclotomic Hecke algebras (not to mention symmetric groups) is a much older subject, and the isomorphism immediately suggests that any result about Specht modules of cyclotomic Hecke algebras should be provable in the quiver Hecke algebra formalism. Work in this direction helps build intuition about quiver Hecke algebras in general, and reveals the (previously hidden) graded structure of cyclotomic Hecke algebras. 

However, in type C, there is no analogous result to the Brundan--Kleshchev isomorphism. The general approach to studying quiver Hecke algebras of type C is to reason by analogy with type A, and find out the differences through computations. Ariki, Park, and Speyer~\cite{aps} recently  constructed Specht modules in type C, and Evseev and Mathas~\cite{em24} showed that the algebras are graded cellular. There have also been studies on their representation type~\cite{APC,CH23,ahsw23}. 

In this paper, we focus on homomorphisms between Specht modules of type C. This problem has not been studied before, and only special cases can be solved in type A. To have any hope of reaching a result, we should restrict the conditions in some way. A non-trivial yet manageable restriction, motivated by results in type A, is to require that $\ell = 1$ and that the label $\lambda$ of the target Specht module $S^\lambda$ is a hook-shaped partition. In type A, James~\cite{James} gave a complete description of homomorphisms to the trivial module. Much later, Loubert~\cite{loub17} gave a complete description of homomorphisms to Specht modules labelled by a hook-shaped $\lambda$ under the restriction $e > 2$. A partial description of the situation when $e = 2$ was given by Hudak~\cite{Hudak24}. The results by Loubert and Hudak were proven using the new quiver Hecke algebra formalism. We will give a full description of homomorphisms to Specht modules labelled by hook-shaped partitions $\lambda = (a+1, 1^b)$ which have a `short leg' (that is, $b < 2e$), and a partial description of the corresponding homomorphisms when $\lambda$ has a `short arm' (that is, $a < 2e$). Under the short leg/arm restriction, the quiver Hecke algebra action is tractable, which allows us to give a full description of the homomorphisms in most cases. 

Two key features of our description are:
\begin{enumerate}
    \item The homomorphisms we describe map the Specht module generator $v_{\tabt^\mu}$ to a single basis element of $S^\lambda$, which follows from \cref{Cor:YAction}.
    \item  All the homomorphisms that we find for short leg hook-shaped modules in type $C_e^{(1)}$ have a counterpart in type $A_{2e-1}^{(1)}$, according to the Main Theorem of \cite{loub17}, as we describe in \cref{SS:AC-Connection}.
\end{enumerate}

The structure of this text is as follows. 
In \cref{S:Background}, we give the necessary background on quiver Hecke algebras of type $C_e^{(1)}$, their Specht modules, and string diagrams. We review the relevant tableaux combinatorics under the different residue structure that applies in type $C_e^{(1)}$.

In \cref{S:Action}, we describe the action of the quiver Hecke algebra on hook Specht modules with a short arm or short leg. This serves as our main technical tool, and it provides the corollary that all homomorphisms to these Specht modules send the Specht module generator to a single basis element.

In \cref{S:ShortLeg}, we present a full explicit description of the homomorphisms from an arbitrary Specht module $S^\mu$ to a Specht module $S^\la$, where $\la$ has a short leg.

In \cref{S:ShortArm}, we discuss the analogous case where $\la$ has a short arm. The case where the charge of the hook is $0$ or $e$ is especially hard in type $C$ and we were not able to obtain a full characterization, so we limit the discussion to some examples.

\section{Background} \label{S:Background}

\subsection{Lie theory notation}\label{SubS:lie}

We follow \cite{kac} and use standard notation for the root datum.

Let $e\in \{2,3,\dots\}$ and $I= \{ 0,1,2, \ldots, e \} $ and suppose we are given a quiver of affine type $C_e^{(1)}$, as follows.
\[
\begin{tikzpicture}[scale=2.3]
  \dynkin[
    ordering    = Kac,
    labels      = {0,1,2,,e{-}1,e},
    mark        = o,
    edge/.style = {-stealth, shorten >=3pt},
  ] C[1]{}
\end{tikzpicture}
\]
We assign integer labels in $I$ from left to right to each vertex of the quiver. 

Then we have the \emph{simple roots} $\Pi=\{\alpha_i \mid i\in I\}$ and the \emph{fundamental weights} $\{\Lambda_i \mid i\in I \}$ in the \emph{weight lattice} $P$. 

There is a symmetric bilinear form $( - , - )$ on $P$ satisfying the following:
\begin{enumerate}
    \item $( \Lambda_i, \alpha_j ) = d_j \delta_{ij}$; and 
    \item $( \alpha_i, \alpha_j ) = d_i a_{ij}$,
\end{enumerate}
where $d = (2,1,\dots,1,2)$ if $e<\infty$.
We call $Q:=\bigoplus_{i\in I} \bbz \alpha_i$ the \emph{root lattice} and $Q^+ = \bigoplus_{i\in I} \bbz_{\ge 0} \alpha_i$ the \emph{positive cone} of the root lattice.
For $\beta=\sum_{i\in i}a_i\al_i\in Q^+$, let $\Ht(\beta)$ denote the \emph{height} of $\beta$, that is $\Ht(\beta)=\sum_{i\in I}a_i$.

Let $\fkS_n = \langle s_k \mid k=1, \ldots, n-1 \rangle$ denote the symmetric group on $n$ letters.
The group $\fkS_n$ acts on the set $I^n$ by place permutations.
If $\bi=(i_1,\dots,i_n)\in I^n$, then its \emph{weight} is $|\bi|:=\al_{i_1}+\dots+\al_{i_n}\in Q^+$.

For our purposes, the quiver $C_\infty$ can simply be taken to result from the limit $e \rightarrow \infty$ in the above construction.

\subsection{Combinatorics}

In this section, we recall some standard combinatorial notions.

A \emph{partition} of $n\ge0$ is a weakly decreasing sequence of non-negative integers $\mu = (\mu_1, \mu_2, \dots)$ such that $|\mu|=\mu_1+\mu_2+\dots=n$.
If $\mu$ is a partition of $n$, we write $\mu\vdash n$.
We denote the set of partitions of $n$ by $\calp_n$.
For any $\mu\in \calp_n$, its \emph{Young diagram} is defined as the set
\[
[\mu]=\{(r,c) \in \bbz_{\ge 1} \times \bbz_{\ge 1}  \mid c\le \mu_r\}.
\]

For $\mu\vdash n$, we say that $\mu$ is a \emph{partition of hook shape} (or simply a \emph{hook}) if $\mu_i\le 1$ for all $i\ge2$.
In this paper, we are concerned with a special shape of hooks:

\begin{defn}
    A short leg (resp.\ arm) hook is a partition $\mu=(a+1,1^b)$ such that $b<2e$ (resp.\ $a < 2e$).
\end{defn}

We define $\overline{\mkern2mu\cdot\mkern2mu} : \bbz \rightarrow I$ by $k \mapsto |k|$ if $e = \infty$ and, otherwise, by
\begin{equation}\label{res}
\begin{aligned}
&\overline{0 + 2e\bbz} = 0, \quad \overline{e+2e\bbz} = e, \\
 &\overline{k + 2e\bbz} = \overline{2e-k + 2e\bbz} = k \quad  \text{ for } 1 \le k \le e-1.
\end{aligned}
\end{equation}
Note that this function is periodic.

Given a \emph{charge} $\kappa\in \bbz$, we define $\Lambda_\kappa\in P^+$ by $\Lambda_\kappa:= \Lambda_{\overline{\kappa}}.$
If $\mu\vdash n$, we associate to any node $A = (r,c) \in [\mu]$ its \emph{content} defined by 
\[
\cont A = \kappa+c-r
\]
and its \emph{residue} defined by
\[
\res A = \overline{\kappa+c-r}.
\]
An $i$-node is a node with residue $i$.

\begin{eg}
If $\mu=(7,4,4,3,2,2)$, $\kappa=1$ and $e=3$, then $\mu$ has the following residue pattern.
\[
\ColorTableau[]{{1,2,3,2,1,0,1},{0,1,2,3},{1,0,1,2},{2,1,0},{3,2},{2,3}}
\]
\end{eg}

We call a node $A\in[\mu]$ \emph{removable} (resp.~\emph{addable}) if $[\mu]\setminus A$ (resp.~$[\mu]\cup A$) is a valid Young diagram for a partition of $n-1$ (resp.~$n+1$).
For an $i$-node $A\in[\mu]$, we set
\begin{equation*}
d_A(\mu) = d_i \cdot (\#\{\text{addable $i$-nodes of $[\mu]$ below } A\} - \#\{\text{removable $i$-nodes of $[\mu]$ below } A\}).
\end{equation*}
(Recall that $d = (2,1\dots,1,2)$.)

Let $\mu \in \calp_n$.
A \emph{$\mu$-tableau} is a bijection $\tabt :[\mu] \rightarrow \{1,\dots,n\}$.
We depict $\tabt$ by filling each node $(r,c)\in [\mu]$ with $\tabt(r,c)$.
We say that the tableau $\tabt$ is \emph{standard} if its entries increase along rows and down the columns and denote the set of standard tableaux by $\std(\mu)$.

The symmetric group $\fkS_n$ acts on the set of $\mu$-tableaux on the right by its natural action on the entries of the tableaux.
Let $\tabt^\mu$ be the unique $\mu$-tableau in which the numbers $1,2,\dots,n$ appear in order from left to right along the successive rows, working from top to bottom.
We call $\tabt^\mu$ the \emph{(row) initial tableau}. The initial tableau gives an ordering of the nodes of the Young diagram of a partition $\mu$, so that we write $N < M$ if $\tabt^{\mu}(N) < \tabt^\mu(M)$.
Set 
$
\bi^\mu:=\bi^{\tabt^\mu}.
$
For every $\mu$-tableau $\tabt$, we define the permutation $w^\tabt \in \fkS_n$ by
$
  \tabt^\mu w^\tabt=\tabt.
$
We also define, for each $\mu$-tableau $\tabt$, its \emph{content sequence}
\begin{align*}
\bm{c}(\tabt) &= (c_1,c_2,\dots,c_n)\\
&=( \cont{\tabt^{-1}(1)}  ,\   \cont \tabt^{-1}(2) , \dots,  \cont \tabt^{-1}(n) ).
\end{align*}
and its \emph{residue sequence}
\begin{align*}
\bi(\tabt) &= (i_1,i_2,\dots,i_n)\\
&=(  \res { \tabt^{-1}(1) } ,\   \res { \tabt^{-1}(2)} , \dots,  \res { \tabt^{-1}(n)} ).
\end{align*}
We write $\bi^\mu$ for the residue sequence of the initial tableau $\tabt^\mu$. We write $\ell(\resi)$ for the length of the residue sequence $\resi$.

Let $0\le m\le n$.
For a $\mu$-tableau $\tabt$, we denote by $\tabupto\tabt m$ the set of nodes of $[\mu]$ whose entries are less than or equal to $m$.
If $\tabt$ is standard, then $\tabupto\tabt m$ is also a standard tableau for some partition.

For any $\mu\in\calp_n$ and $\tabt\in\std(\mu)$, we define the \emph{degree} $\deg\tabt$ of $\tabt$ inductively.
If $n=0$ then $\tabt$ is the unique empty tableau and we set $\deg\ttt:=0$.
Otherwise, let $A=\tabt^{-1}(n)\in[\mu]$ and suppose $A$ is an $i$-node. We set
\begin{align*} %\label{deg}
\deg\tabt := \deg\tabupto\tabt{n-1} + d_A(\mu).
\end{align*}

\begin{eg}
If $\mu=(3,3,2,1)$, $\kappa=1$ and $e=2$, then $\mu$ has the following residue pattern
\[
\young(121,012,10,2)
\]
and if $\ttt$ is the tableau
\[
\young(134,258,69,7)
\]
then
\[
\deg\ttt=0+0+0+(1+1)+1+0+0-2+0=1.
\]
\end{eg}

\subsection{KLR algebras of type C}\label{subs:klr-alg}

Let us work over a field $k$ of characteristic $p$. For simplicity, let us assume $e < \infty$. We set polynomials $\calq_{i,j}(u,v)\in k[u,v]$ satisfying $\calq_{i,j}(u,v) = \calq_{j,i}(v,u)$, for $i,j\in I$, of the form
\begin{align}
\calq_{i,j}(u,v) = 
\begin{cases} 
    u + v^2 & \text{if } (i,j) = (0, 1), \\
    u + v   & \text{if } i \neq 0, j = i + 1, j \neq \ell, \\
    u^2 + v & \text{if } (i, j) = (\ell - 1, \ell), \\
    1       & \text{otherwise,}
\end{cases}
\end{align}
following \cite[eq (2.1)]{aps}. The algebra $R_n$ associated with the polynomials $\calq_{i,j}(u,v)$ is defined to be the unital associative diagrammatic $k$-algebra with generators 

\begin{equation} \label{E:generatorsC}
\begin{gathered} 
\def\h{3}
\psi_r = \begin{braid}
	\draw (0,0) node[anchor=north]{$1$}-- (0,\h);
	\draw (1,0) node[anchor=north]{$2$}-- (1,\h);
	\draw (1.5+\sep/2, \h/2) node{$\cdots$};
	\draw (2+\sep, 0) node[anchor=north]{$r$}-- (3 + \sep, \h);
	\draw (3 + \sep, 0) node[anchor=north]{$r+1$}-- (2 + \sep, \h);
	\draw (3.5+\sep + \sep/2, \h/2) node{$\cdots$};
	\draw (4 + 2*\sep, 0) node[anchor=north]{$n-1$} -- (4 + 2*\sep,\h);
	\draw (5+2*\sep, 0) node[anchor=north]{$n$}-- (5 + 2*\sep, \h);
\end{braid}, \qquad
y_r = \begin{braid}
	\draw (0,0) node[anchor=north]{$1$}-- (0,\h);
	\draw (1,0) node[anchor=north]{$2$}-- (1,\h);
	\draw (1.5+\sep/2, \h/2) node{$\cdots$};
	\draw (2+\sep, 0) node[anchor=north]{$r$}-- (2 + \sep, \h);
	\greendot(2+\sep, \h/2);
	\draw (2.5+\sep + \sep/2, \h/2) node{$\cdots$};
	\draw (3 + 2*\sep, 0) node[anchor=north]{$n-1$} -- (3 + 2*\sep,\h);
	\draw (4+2*\sep, 0) node[anchor=north]{$n$}-- (4 + 2*\sep, \h);
\end{braid}, \\
e(\bm{i}) = \begin{braid}
	\draw (0,0) node[anchor=north]{$1$}-- (0,\h) node[anchor= south]{$i_{1}$};
	\draw (1,0) node[anchor=north]{$2$}-- (1,\h)node[anchor= south]{$i_{2}$};
	\draw (1.5+\sep/2, \h/2) node{$\cdots$};
	\draw (2 +\sep, 0) node[anchor=north]{$n-1$} -- (2 + \sep,\h)node[anchor= south]{$i_{n-1}$};
	\draw (3+\sep, 0) node[anchor=north]{$n$}-- (3 + \sep, \h)node[anchor= south]{$i_{n}$};
\end{braid}
\end{gathered}
\end{equation}

under the following relations. First, the double crossing:
\begin{equation} \label{E:psisquaredC}
        \begin{braid}
            \draw (0,0) -- (1, \h/2) -- (0, \h) node[anchor=south]{$i$};
            \draw (1,0) -- (0, \h/2) -- (1, \h) node[anchor=south]{$j$};
        \end{braid}
      =  
      \def\h{5/3}
      \begin{cases}
          \eqmakebox[lhs][c]{$0$} &\text{if } i = j,\\[10pt]
          \eqmakebox[lhs][c]{$\begin{braid}
            	\draw (0,0) -- (0, \h) node[anchor=south]{$i$};
            	\draw (1,0) -- (1, \h) node[anchor=south]{$j$};
            	\greendot(1,\h/2);
                \end{braid} +
          \begin{braid}
            	\draw (0,0) -- (0, \h) node[anchor=south]{$i$};
            	\draw (1,0) -- (1, \h) node[anchor=south]{$j$};
            	\greendot(0,\h/2);
                \end{braid}$} &\text{if  }i\rightarrow j \text{ or }i\leftarrow j,\\[20pt]
          \eqmakebox[lhs][c]{$\begin{braid}
                \draw (0,0) -- (0, \h) node[anchor=south]{$i$};
            	\draw (1,0) -- (1, \h) node[anchor=south]{$j$};
            	\greendot(1,2*\h/3);
                \greendot(1,\h/3);
                \end{braid} + 
                \begin{braid}
            	\draw (0,0) -- (0, \h) node[anchor=south]{$i$};
            	\draw (1,0) -- (1, \h) node[anchor=south]{$j$};
            	\greendot(0,\h/2);
          \end{braid}$} &\text{if } \begin{tikzpicture}[inner sep=2pt,outer sep=0pt,baseline=-3pt]
            \node (i) at (1,0) {$i$};
            \node (C) at (1.3,0) {};
            \node (j) at (1.6,0) {$j$};
            \draw[thin,double] (C.west) -- (j);
            \draw[->,thin,double] (i) -- (C.east);
          \end{tikzpicture},\\[20pt]
          \eqmakebox[lhs][c]{$\begin{braid}
                \draw (0,0) -- (0, \h) node[anchor=south]{$i$};
            	\draw (1,0) -- (1, \h) node[anchor=south]{$j$};
            	\greendot(0,2*\h/3);
                \greendot(0,\h/3);
                \end{braid} + 
                \begin{braid}
            	\draw (0,0) -- (0, \h) node[anchor=south]{$i$};
            	\draw (1,0) -- (1, \h) node[anchor=south]{$j$};
            	\greendot(1,\h/2);
          \end{braid}$} &\text{if } \begin{tikzpicture}[inner sep=2pt,outer sep=0pt,baseline=-3pt]
            \node (i) at (1,0) {$i$};
            \node (C) at (1.3,0) {};
            \node (j) at (1.6,0) {$j$};
            \draw[thin,double] (C.west) -- (j);
            \draw[-<,thin,double] (i) -- (C.east);
          \end{tikzpicture},\\[20pt]
          \eqmakebox[lhs][c]{$\begin{braid}
            	\draw (0,0) -- (0, \h) node[anchor=south]{$i$};
            	\draw (1,0) -- (1, \h) node[anchor=south]{$j$};
                \end{braid}$} &\text{otherwise.}\\
        \end{cases}
\end{equation}
Next is the braid relation:
\begin{equation} \label{E:braidC}
    \begin{braid}
    	\draw (0,0) -- (2, \h) node[anchor=south]{$k$};
    	\draw (2,0) -- (0,\h) node[anchor=south]{$i$};
    	\draw (1, 0) -- (0, \h/2) -- (1, \h)node[anchor=south]{$j$};
\end{braid}
  = 
 \def\h{3}
\begin{cases}
        \eqmakebox[lhs][c]{$\begin{braid}
        	\draw (0,0) -- (2, \h) node[anchor=south]{$k$};
        	\draw (2,0) -- (0,\h) node[anchor=south]{$i$};
        	\draw (1, 0) -- (2, \h/2) -- (1, \h)node[anchor=south]{$j$};
        \end{braid} -
        \begin{braid}
        	\draw (0,0) -- (0, \h) node[anchor=south]{$i$};
        	\draw (2,0) -- (2,\h) node[anchor=south]{$k$};
        	\draw (1, 0) -- (1, \h)node[anchor=south]{$j$};
        \end{braid}$} 
        & \text{if }i = k \rightarrow j \text{ or } i = k \leftarrow j\\[20pt]
        \eqmakebox[lhs][c]{$\begin{braid}
        	\draw (0,0) -- (2, \h) node[anchor=south]{$k$};
        	\draw (2,0) -- (0,\h) node[anchor=south]{$i$};
        	\draw (1, 0) -- (2, \h/2) -- (1, \h)node[anchor=south]{$j$};
        \end{braid} - 
        \begin{braid}
        	\draw (0,0) -- (0, \h) node[anchor=south]{$i$};
        	\draw (2,0) -- (2,\h) node[anchor=south]{$k$};
        	\draw (1, 0) -- (1, \h)node[anchor=south]{$j$};
         \greendot(0,\h/2);
        \end{braid} - 
        \begin{braid}
        	\draw (0,0) -- (0, \h) node[anchor=south]{$i$};
        	\draw (2,0) -- (2,\h) node[anchor=south]{$k$};
        	\draw (1, 0) -- (1, \h)node[anchor=south]{$j$};
            \greendot(2,\h/2);
        \end{braid}$} &\text{if } i = \begin{tikzpicture}[inner sep=2pt,outer sep=0pt,baseline=-3pt]
            \node (k) at (1,0) {$k$};
            \node (C) at (1.3,0) {};
            \node (j) at (1.6,0) {$j$};
            \draw[thin,double] (C.west) -- (j);
            \draw[->,thin,double] (k) -- (C.east);
          \end{tikzpicture} \text{ or } i = \begin{tikzpicture}[inner sep=2pt,outer sep=0pt,baseline=-3pt]
            \node (k) at (1,0) {$k$};
            \node (C) at (1.3,0) {};
            \node (j) at (1.6,0) {$j$};
            \draw[thin,double] (C.west) -- (j);
            \draw[-<,thin,double] (k) -- (C.east);
          \end{tikzpicture}\\[20pt]
        \eqmakebox[lhs][c]{$\begin{braid}
        	\draw (0,0) -- (2, \h) node[anchor=south]{$k$};
        	\draw (2,0) -- (0,\h) node[anchor=south]{$i$};
        	\draw (1, 0) -- (2, \h/2) -- (1, \h)node[anchor=south]{$j$};
        \end{braid}$} & \text{otherwise.}
\end{cases}
\end{equation}

Next is the dot slide:
\begin{equation} \label{E:dotslideC}
    \def\h{3}
    \begin{braid}
    	\draw (0,0) -- (2, \h) coordinate[pos = 0.25] (B) node[anchor=south]{$j$};
    	\draw (2,0) -- (0,\h) node[anchor=south]{$i$};
    	\greendot(B);
    \end{braid}
     = 
     \def\h{3}
     \begin{braid}
    	\draw (0,0) -- (2, \h) coordinate[pos = 0.75] (B) node[anchor=south]{$j$};
    	\draw (2,0) -- (0,\h) node[anchor=south]{$i$};
    	\greendot(B);
    \end{braid}
    -\delta_{i,j}
    \begin{braid}
    	\draw (0,0) -- (0, \h) node[anchor=south]{$i$};
    	\draw (1,0) -- (1,\h) node[anchor=south]{$j$};
    \end{braid}
    \qquad \text{and} \qquad
    \def\h{3}
    \begin{braid}
    	\draw (0,0) -- (2, \h)  node[anchor=south]{$j$};
    	\draw (2,0) -- (0,\h) coordinate[pos = 0.25] (B) node[anchor=south]{$i$};
    	\greendot(B);
    \end{braid}
    = 
    \def\h{3}
    \begin{braid}
    	\draw (0,0) -- (2, \h)  node[anchor=south]{$j$};
    	\draw (2,0) -- (0,\h) coordinate[pos = 0.75] (B) node[anchor=south]{$i$};
    	\greendot(B);
    \end{braid}+\delta_{i,j}
    \begin{braid}
    	\draw (0,0) -- (0, \h) node[anchor=south]{$i$};
    	\draw (1,0) -- (1,\h) node[anchor=south]{$j$};
    \end{braid}.
\end{equation}
The summands conserving the crossings in the right-hand side of \cref{E:braidC} and \cref{E:dotslideC} are termed \emph{Slide} terms, while the rest of the terms are called \emph{Error} terms.

Finally, the $e(\resi)$ form a set of mutually orthogonal idempotents.
\begin{equation} \label{E:idempotentrelationsC}
    e(\resi)e(\resj)= \delta_{\resi, \resj} e(\resi), \qquad \sum_{\resi \in I^n} e(\resi) = 1.
\end{equation}

The algebra $R_n$ is $\bbz$-graded by setting
\begin{equation}
    \deg(e(\resi))=0, \qquad \deg(y_r e(\resi)) = (\alpha_{i_r},\alpha_{i_r}) = d_{i_r}, \qquad \deg(\psi_s e(\resi)) = -( \alpha_{i_s}, \alpha_{i_{s+1}})
\end{equation}
for all admissible $r,s$ and $\resi$.

For each element $w\in\fkS_n$, we fix a preferred reduced expression $w=s_{k_1}\dots s_{k_m}$ and define the corresponding element of $R_n$ by
\begin{equation}
    \psi_w=\psi_{s_{k_1}}\dots\psi_{s_{k_m}}.
\end{equation}

Note that unless $w\in\fkS_n$ is fully commutative, $\psi_w$ depends on this choice of reduced expression.

For $\La\in P^+$, we can define the algebra $R^\La_n$ as the quotient of $R_n$ by the additional cyclotomic relations $y_1^{\langle \alpha^\vee_{i_1},\Lambda \rangle} e(\resi)=0$ for $\resi \in I^n$ to obtain the cyclotomic quiver Hecke algebra.

\subsection{Specht modules}

The graded (row) Specht modules over quiver Hecke algebras of type $C$ were described in \cite{aps}. The simple Specht relations are the same as in type A.
\begin{equation}\label{E:specht}
    \def\h{3}
    \begin{braid}
    	\draw (0,0) -- (1, \h);
    	\draw (1,0) -- (0,\h);
        \draw[brown] (-1.7, \h) rectangle (2.7, \h + 1);
    \end{braid}
    = \quad 0 \qquad \text{and} \qquad
    \def\h{3}
    \begin{braid}
    	\draw (0,0) -- (0, \h) coordinate[pos=0.5] (C);
        \draw[brown] (-1.7, \h) rectangle (1.7, \h+1);
        \greendot(C);
    \end{braid}
    = \quad 0.
\end{equation}
The residue relation is also similar.
\begin{equation} \label{E:residuerelation}
v_{\tabt^\bla} e(\bm{i}) = \delta_{\bm{i}, \bm{i}^\bla} v_{\tabt^\bla}.
\end{equation}
The homogeneous Garnir relations in type $C_e^{(1)}$ were conjectured in \cite[Conjecture 5.3]{aps} to be the same as the Garnir relations in type $A_{2e-1}^{(1)}$. A forthcoming publication by Mathas~\cite{MathasGarnir} will confirm this, and we will use it freely.
\begin{equation} \label{E:GarnirC}
    v_{\tabt^\lambda} g^N = 0.
\end{equation}

\begin{eg} Let $e = 2, \kappa = 0, \lambda = (10, 5,3,2)$, and let $N = (1,5)$. The residue pattern is as follows.
\[
\ColorTableau[
  5/1/red!30, 6/1/red!30, 7/1/red!30, 8/1/red!30, 9/1/yellow!30, 10/1/yellow!30,
  1/2/green!30, 2/2/blue!30, 3/2/blue!30, 4/2/blue!30, 5/2/blue!30
]{{0,1,2,1,0,1,2,1,0,1},{1,0,1,2,1},{2,1,0},{1,2}}
\]
The nodes in $C^N,D^N$ and $\bm{B}^N$ are as follows.
\[
\ColorTableau[
  5/1/red!30, 6/1/red!30, 7/1/red!30, 8/1/red!30, 9/1/yellow!30, 10/1/yellow!30,
  1/2/green!30, 2/2/blue!30, 3/2/blue!30, 4/2/blue!30, 5/2/blue!30
]{{ , , , ,B_1,B_1,B_1,B_1,C,C},{D,B_2,B_2,B_2,B_2},{ , , },{ , }}
\]    
Next we give the tableau $\tabt^N$.
\[
\ColorTableau[
  5/1/red!30, 6/1/red!30, 7/1/red!30, 8/1/red!30, 9/1/yellow!30, 10/1/yellow!30,
  1/2/green!30, 2/2/blue!30, 3/2/blue!30, 4/2/blue!30, 5/2/blue!30
]{{1,2,3,4,6,7,8,9,14,15},{5,10,11,12,13},{16,17,18},{19,20}}
\]
If $\sigma_1^N$ is the (fully commutative) permutation exchanging the numbers $\{6,7,8,9\}$ with $\{10,11,12,13\}$ and $\sigma_{\tabt^N}$ is the (fully commutative) permutation such that $\tabt^\lambda \sigma_{\tabt^N} = \tabt^{N}$, then the Garnir relation is as follows.
\[
v_{\tabt^\lambda} \psi_{\tabt^N} (2 + \psi_{\sigma_1^N}) = 0.
\]
\end{eg}

\subsection{Stubborn Strings and Immobilization}
The concept of stubborn strings was introduced by Lyle and Mathas in \cite{lm14}, and they were developed further in \cite{forsberg}. They are a useful computational tool, especially for fully commutative string diagrams. We briefly recall the theory of stubborn strings as introduced in \cite{forsberg}. The setting of both papers is Specht modules of quiver Hecke algebras of type $A^{(1)}_{e-1}$, but the results hold for Specht modules of quiver Hecke algebras of type $C^{(1)}_e$ as well.

Let $v \in S^\la$ be a monomial. The set of \emph{accessible nodes} $A_v(s)$ for a string $s$ in $v$ is defined in \cite{forsberg}. Intuitively, these are the nodes at the top of the diagram that $s$ can reach by applying relations in $R_{n}^\Lambda$ that either preserve crossings or cut them, but never create new crossings. Recall that for two nodes in a Young diagram $[\mu]$, we may write $N < M$ if $\tabt^{\mu}(N) < \tabt^\mu(M)$. In the string diagram, the nodes appear in this order at the top of the diagram.

\begin{defn}
    Let $s$ be a string in a monomial $v \in S^\la$.
    \begin{enumerate}
        \item We say $s$ is \emph{stubborn} if it reaches the smallest node in $A_v(s)$.
        \item We say $s$ is \emph{co-stubborn} if it reaches the greatest node in $A_v(s)$. 
    \end{enumerate}
\end{defn}

We say that a string $s$ of residue $i$ is \emph{immobile} if any relation that resolves an $(i,i)$-crossing of $s$ yields zero. The simplest way for this to happen is if a string has a single accessible node, but it can happen in several ways. Immobile strings behave similarly to strings of a distinct residue: dots and strings may slide past their crossings without generating error terms. Specifically, the following relations hold for an immobile string $s$:
\begin{equation} \label{E:ImmobileRelations}
    \def\h{3}
    \begin{braid}
    	\draw (0,0)  -- (2, \h) node[anchor=south]{$k$};
    	\draw (2,0) node[anchor=north]{$s$} -- (0,\h) node[anchor=south]{$i$};
    	\draw (1, 0) -- (0, \h/2) -- (1, \h)node[anchor=south]{$j$};
    \end{braid}
    =
    \begin{braid}
    	\draw (0,0)  -- (2, \h) node[anchor=south]{$k$};
    	\draw (2,0) node[anchor=north]{$s$} -- (0,\h) node[anchor=south]{$i$};
    	\draw (1, 0) -- (2, \h/2) -- (1, \h)node[anchor=south]{$j$};
    \end{braid}
    \quad \text{and} \quad
    \begin{braid}
    	\draw (0,0)  -- (2, \h) coordinate[pos = 0.25] (B) node[anchor=south]{$j$};
    	\draw (2,0) node[anchor=north]{$s$} -- (0,\h) node[anchor=south]{$i$};
    	\greendot(B);
    \end{braid}
     = 
     \begin{braid}
    	\draw (0,0) -- (2, \h) coordinate[pos = 0.75] (B) node[anchor=south]{$j$};
    	\draw (2,0) node[anchor=north]{$s$} -- (0,\h) node[anchor=south]{$i$};
    	\greendot(B);
    \end{braid}.
\end{equation}

In the remainder of this paper, we will make use of these relations for immobile strings.

\section{The Quiver Hecke Action on Hooks with a Short Arm or Leg} \label{S:Action}

\subsection{$y$-action} \label{SS:YAction}
In this subsection we are concerned with describing the action of the nilpotent elements $y_k$ ($k \in \{1, \dots, n\}$) on a Specht module $S^\lambda$, where $\lambda \vdash n$ is a hook with a short arm or leg. For convenience we will assume that the leg is short, but the arguments work equally well for $\lambda$ with a short arm. It suffices to describe the action of $y_k$ on the basis of $S^\lambda$ given by $\{v_\tabt \mid \tabt \in \Std(\lambda)\}$. 

The $y$-action respects the residue sequence $\resi(\tabt)$, in the sense that $v_\tabt y_k = \sum a_{\tabt'} v_{\tabt'}$, where $a_{\tabt'} \in \bbz$ and the sum is taken over all $\tabt' \in \Std(\lambda)$ such that $\resi(\tabt') = \resi(\tabt)$. For this reason, we will begin the study of the $y$-action on $S^\lambda$ by examining the set $\Std(\lambda,\resi) = \{\tabt \in \Std(\lambda) \mid \resi(\tabt) = \resi \}$ for an arbitrary $\resi \in I^n$. In particular, $\Std(\lambda,\resi)$ is non-empty whenever $e(\resi)S^\lambda \neq 0$.

Let $\resi = (i_1, i_2, \dots, i_n)$ be a sequence of residues. A \emph{substring} of $\resi$ is a subsequence formed by a contiguous block of indices, taking the form $(i_j, i_{j+1}, \dots, i_m)$ for some $1 \leq j \leq m \leq n$. For instance, $(1,2,1,4)$ is a substring of $(1,3,1,2,1,4,5)$, while $(1,3,5)$ is not.

A weight $w \in Q^+$ is said to be a \emph{short weight} if $w = \wt(\resi)$, where
\[
\resi = (\overline{a\vphantom{b}}, \overline{a+1 \vphantom{b}}, \dots, \overline{a+k})
\]
where $a, k \in \mathbb{Z}$ and $k < 2e - 1$. For instance, $\alpha_0 + 2\alpha_1$ is a short weight, while $2\alpha_1$ and $\alpha_0 + 2\alpha_1 + \dots + 2\alpha_{e-1} + \alpha_e$ are not. Note also that a residue sequence $\resi$ like the above can alternatively be defined as a substring of the infinite residue sequence 
\begin{equation}
    \mathcal{I} \coloneq (\dots, 1,0,1, \dots, e-1, e, e-1, \dots, 1,0,1, \dots).
\end{equation}

A weight $w$ is said to be a \emph{double short weight} if $w = 2w'$, where $w'$ is a short weight. A residue sequence $\resi$ is said to be a \emph{(double) short residue sequence} if $\wt(\resi)$ is a (double) short weight. A double short residue sequence $\resi = (i_1, \dots, i_n)$ is said to be \emph{reduced} if there is no strict substring of the form $(i_1, \dots, i_k)$ with $k < n$ (that is, a strict initial segment starting at the exact same position) which is also a double short residue sequence. For instance, the sequence $\resi = (1,0,1,1,0,1)$ is a reduced double short residue sequence, while the sequence $\resi = (1,1,2,2)$ is not reduced because the subsequence $(1,1)$ is also double short.

Next we list some easy facts about short weights.

\begin{lem} \label{L:elementary_short_weights} \leavevmode
\begin{enumerate}
    \item If $w$ is a short weight, then the multiplicity of each $\alpha_j$ is either $0, 1$ or $2$. Furthermore, if $j = 0 \text{ or } e$, then the multiplicity of $\alpha_j$ is either $0$ or $1$.
    \item Let $w$ be a short weight and $j \in I$. If there exists a substring $\resi$ of $\mathcal{I}$ such that $i_1 = j$ and $\wt(\resi) = w$, then the sequence $\resi$ is unique. We write $\resi_j(w)$ for this residue sequence when it exists.
    \item The weight $\wt(\resi)$ of a substring $\resi$ of $\mathcal{I}$ cannot be a double short weight.
    \item There is a distance of at least $2e$ between the starts of any two identical substrings of $\mathcal{I}$ of length at least two.
\end{enumerate}
\end{lem}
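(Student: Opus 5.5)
The plan is to make everything explicit by lifting to contents. The map $\overline{\mkern2mu\cdot\mkern2mu}$ is $2e$-periodic and reflects about $0$ and about $e$. A substring of $\mathcal{I}$ is then $(\overline{a},\overline{a+1},\dots,\overline{a+k})$, the image of an interval $[a,a+k]$ of $k+1$ consecutive integers; for a short weight this interval has length $k+1\le 2e-1$. On one period $\{a,\dots,a+2e-1\}$ the map hits $0$ and $e$ once each and every $j\in\{1,\dots,e-1\}$ exactly twice, namely at the two representatives of $\pm j \bmod 2e$.

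For (1), an interval of at most $2e-1$ consecutive integers lies inside a single period. So each $\alpha_j$ occurs with multiplicity at most $2$, and $\alpha_0,\alpha_e$ occur at most once.

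For (2), the multiplicity of $\alpha_j$ in $w$ determines the interval of integers up to the symmetries of $\overline{\mkern2mu\cdot\mkern2mu}$. Write $m_j$ for the multiplicity of $\alpha_j$. Since the interval is contiguous, its image in the path $0-1-\cdots-e$ is a connected set $[p,q]$. The vertices with $m_j=2$ form a subinterval: the interval folds around $0$ or around $e$ (it cannot do both, being shorter than $2e$). So $w$ determines the shape: which end is folded, and how far. Once the start $i_1=j$ is fixed, the sequence is forced. A walk on $\mathcal{I}$ moves by $\pm1$ and reflects only at $0$ and $e$, so it is determined by its start and initial direction. The direction is in turn forced by the requirement that the multiset of visited residues equal $w$. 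In the ambiguous case where both directions from $j$ seem possible, $j$ is an endpoint of the folded region. I would check that only one direction gives the correct multiplicities, since going the other way visits a vertex outside the support or exceeds a multiplicity. This casework is the main obstacle, and I would organise it by the shape of the support $[p,q]$ and the fold position.

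For (3), suppose $\wt(\resi)=2w'$ with $w'$ short, and $\resi$ comes from an interval $J$. Then $w'$ comes from an interval of length at most $2e-1$, so $|J|=2|\text{that}|\le 4e-2$. Moreover $w'$ has some $\alpha_0$ or $\alpha_e$ with multiplicity $1$, or else it is supported in $\{1,\dots,e-1\}$. If $w'$ involves $\alpha_0$ (say), then $2w'$ has $\alpha_0$ with multiplicity $2$. Hence $J$ contains two integers $\equiv 0 \pmod{2e}$, so $|J|\ge 2e+1$. Counting multiplicities over such a $J$ shows that $\alpha_e$ also appears, which forces $w'$ to contain $\alpha_e$ and hence to have length at least $e+1$. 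A length count then gives that either $\alpha_e$ appears in $\resi$ an odd number of times or some $\alpha_j$ multiplicity is odd, a contradiction. If instead $w'$ avoids $0$ and $e$, then $J$ must avoid multiples of $e$ and lies in one open segment $(te,(t+1)e)$. There every residue occurs once, so the multiplicities are not all even.

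For (4), a substring of length at least two determines its starting content modulo $2e$. Equal residues at two adjacent positions $c,c+1$ pin down $c \bmod 2e$, because the pair $(\overline{c},\overline{c+1})$ is distinct for each $c\in\mathbb{Z}/2e$. Hence two identical substrings start at positions congruent mod $2e$, and distinct such positions differ by at least $2e$.
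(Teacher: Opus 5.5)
The paper states this lemma as a list of ``easy facts'' and gives no proof, so your argument has to stand on its own. Parts (1) and (4) are fine as written. There is a genuine gap in part (2). You correctly reduce it to one claim: for $j\notin\{0,e\}$ and $2\le L\le 2e-1$, the two candidates $(\overline{j},\overline{j+1},\dots,\overline{j+L-1})$ and $(\overline{j},\overline{j-1},\dots,\overline{j-L+1})$ have different weights. But you never prove this claim; you postpone it as ``casework''. Worse, the structure you propose to organise that casework around is false, because once $e\ge 4$ a short window can fold at both $0$ and $e$. For $e=4$, the contents $-1,0,\dots,5$ give the substring $(1,0,1,2,3,4,3)$ of length $7=2e-1$ and weight $\alpha_0+2\alpha_1+\alpha_2+2\alpha_3+\alpha_4$. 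So $\{j : m_j=2\}=\{1,3\}$ is not an interval, and there is no single fold position.

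One clean way to close the gap is as follows. Let $f_L(c)$ be the weight of the contents $c,c+1,\dots,c+L-1$.
\begin{enumerate}
\item If $L\le e$, the window contains at most one multiple of $e$. Then $f_L(c)$ determines either its image interval $[p,q]\subseteq\{1,\dots,e-1\}$, or the residue $\rho\in\{0,e\}$ of that multiple together with the unordered pair of lengths on either side of it. In both cases, the windows realising $f_L(c)$ are exactly those starting at $c$ or at $-c-L+1$ modulo $2e$.
\item If $e<L\le 2e-1$, the identity $f_L(c)+f_{2e-L}(c+L)=\alpha_0+2\alpha_1+\dots+2\alpha_{e-1}+\alpha_e$ reduces to length $2e-L<e$ and gives the same conclusion.
\end{enumerate}
The two candidates in (2) start at contents $j$ and $-j$. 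Equal weights therefore force $j\equiv -j$ or $L\equiv 1 \pmod{2e}$, that is, $j\in\{0,e\}$ or $L=1$. In both cases the two sequences coincide.

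Part (3) has a smaller hole: the closing ``length count'' is asserted rather than carried out. It is quickest to split on $|J|$, which is even and at most $4e-2$.
\begin{enumerate}
\item If $|J|<2e$, the multiplicities of $\alpha_0$ and $\alpha_e$ are at most one, hence zero by parity, and you finish as in your last case.
\item If $|J|\ge 2e$, delete one full period from $J$. The remaining window $J'$ has length at most $2e-2$, and its weight must be odd at $\alpha_0$ and $\alpha_e$ and even elsewhere. So $J'$ contains exactly one content of each of the residues $0$ and $e$, and hence the whole stretch between two consecutive multiples of $e$. It therefore contains every $j\in\{1,\dots,e-1\}$ at least once, hence at least twice by parity. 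This gives $|J'|\ge 2e$, a contradiction.
\end{enumerate}
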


We will use the above statements without explicit reference to them. The next lemma is crucial to the description of $\Std(\lambda,\resi)$.
\begin{lem} \label{L:doubleshortweight}
    Let $w$ be a short weight, and let $j \in I$ be any residue. Let $\resi, \resj$ be substrings of $\mathcal{I} = (\dots, 1, 0, 1, \dots, e-1, e, e-1, \dots)$ such that $\wt(\resi) + \wt(\resj) = 2w$ and $i_1 = j_1 = j$. Then $\resi_j(w)$ exists, and furthermore $\resi = \resj = \resi_j(w)$.
\end{lem}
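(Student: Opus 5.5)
The plan is to translate everything into integer intervals and use the folding map $\overline{\mkern2mu\cdot\mkern2mu}$. A substring of $\mathcal{I}$ is $(\overline{a},\overline{a+1},\dots,\overline{a+k})$ for some integers $a$ and $k\ge 0$. Its weight is therefore the pushforward to $I$ of the counting measure on the interval $[a,a+k]\subset\bbz$. Write $\resi$ for the interval $[a,a+p]$ and $\resj$ for $[b,b+q]$, with $\overline a=\overline b=j$. Write $w=\wt$ of some interval $[c,c+k]$ with $k<2e-1$, so that $2w$ has total height $2(k+1)$.

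\textbf{Step 1: the lengths.} Comparing heights gives $(p+1)+(q+1)=2(k+1)$. A short interval has length $k+1\le 2e-1$, so no residue class modulo $2e$ is covered twice by it. Hence by \cref{L:elementary_short_weights}(1) every coefficient of $w$ is at most $2$, every coefficient of $2w$ is at most $4$, and the coefficients of $\alpha_0,\alpha_e$ in $2w$ are at most $2$.

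I would first show $p=q$. Suppose $p>q$. Then $p+1>k+1$, so the longer interval $[a,a+p]$ has length at least $k+2$. The fibres of $\overline{\mkern2mu\cdot\mkern2mu}$ over $0$ and $e$ are single residues mod $2e$, and over any other $i$ they are two residues mod $2e$. A long interval covers some residue class more often than the short interval $[c,c+k]$ covers the corresponding fibre. I expect to derive a contradiction by comparing coefficients at the extreme residues reached by the longer interval, using that $\wt(\resj)\ge 0$ coefficientwise, so $\wt(\resi)\le 2w$.

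\textbf{Step 2: equality of $\resi$ and $\resj$.} Once $p=q=k$, note that $\overline a=\overline b$ gives $b\equiv \pm a \pmod{2e}$. In the case $b\equiv a$, the sequences coincide. In the case $b\equiv -a$, the sequence $\resj$ is the reflected walk: it starts at the same residue but moves in the opposite direction on the folded Dynkin diagram, unless $j\in\{0,e\}$, where the two coincide. So I must rule out the sum of the ``upward'' and ``downward'' walks from $j$ being $2w$. The upward walk covers residues on one side of $j$ that the downward walk does not, at least near its far end. Such a residue appears in $\wt(\resi)+\wt(\resj)$ with multiplicity $1$ or $3$, which is odd, contradicting that all coefficients of $2w$ are even. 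The parity argument is the cleanest tool. More generally, I would use it throughout: $\wt(\resi)+\wt(\resj)$ must have all coefficients even.

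\textbf{Step 3: existence of $\resi_j(w)$.} Finally, $\wt(\resi)=\wt(\resj)$ and $2\wt(\resi)=2w$ give $\wt(\resi)=w$. Since $\resi$ starts at $j$, it is $\resi_j(w)$, which therefore exists and is unique by \cref{L:elementary_short_weights}(2).

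\textbf{Main obstacle.} The hard part is Step 1, together with the parity bookkeeping near the turning points $0$ and $e$, where a walk reflects. I would handle this by working entirely in $\bbz$: lift to the count of integers in each interval congruent to $\pm r$ mod $2e$. A count of at most $2$ with even total per fibre then forces the two intervals to cover each residue class mod $2e$ identically. The endpoint analysis there, especially when $p$ or $q$ is at least $2e$, is where care is needed. I would first try the parity argument alone, applied to the residues at the ends of the intervals, since those are hit an odd number of times unless the ends match up.
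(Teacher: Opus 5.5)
Your outline (equal lengths, then equal direction, then \cref{L:elementary_short_weights}(2)) is sound, and Step 3 is fine. But Step 1, which is the heart of the lemma, is not actually proved, and the mechanism you propose for it cannot work.

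Knowing $\wt(\resi)\le 2w$ coefficientwise does not stop $\resi$ from being longer than $k+1$. For $e=3$ and $w=\alpha_1+\alpha_2+\alpha_3$ (from $(1,2,3)$), the substring $\resi=(2,3,2,1)$ has length $4$ and $\wt(\resi)=\alpha_1+2\alpha_2+\alpha_3\le 2w$. The contradiction only appears once you use that $2w-\wt(\resi)=\alpha_1+\alpha_3$ must be the weight of a length-$2$ substring starting at the same residue $2$, which it is not. So the second walk, with its forced starting residue $j$ and forced length $2k+1-p$, has to enter the argument essentially. Comparing ``extreme residues'' of the longer walk against $2w$ alone will not do it. Likewise, your proposed lift with counts ``at most $2$'' per class is not available a priori. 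Since $p+q=2k\le 4e-4$, the longer walk may have length up to $4e-3$, and a middle coefficient of $\wt(\resi)$ equal to $3$ or $4$ is exactly one of the things that must be excluded.

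The missing idea is the coefficientwise split analysis used in the paper: show that every coefficient of $2w$ is divided evenly between $\wt(\resi)$ and $\wt(\resj)$.
\begin{enumerate}
\item Exclude a $(2,0)$ split at $\alpha_0$ or $\alpha_e$. A walk from $j$ meeting $0$ twice is so long that, together with the other walk from $j$, it forces $\Ht(2w)\ge 4e$, contradicting $\Ht(w)\le 2e-1$.
\item Exclude a $(2,0)$ split at a middle residue.
\item Exclude $(4,0)$ and $(3,1)$ splits.
\end{enumerate}
Each step uses that both walks start at $j$. This gives $\wt(\resi)=\wt(\resj)=w$ directly, and \cref{L:elementary_short_weights}(2) finishes, so your Step 2 becomes unnecessary.

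As written, Step 2 is also unjustified. The claim that the upward walk covers residues the downward walk misses fails for long walks: for $e=4$ and $j=2$, both $(2,3,4,3,2,1,0)$ and $(2,1,0,1,2,3,4)$ cover all of $I$. The parity conclusion does survive there, since the sum is $2\alpha_0+3\alpha_1+4\alpha_2+3\alpha_3+2\alpha_4$. It needs a genuine argument, though, not a coverage heuristic.
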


\begin{proof}
    We will show that $\wt(\resi) = \wt(\resj) = w$. Then it will follow by the previous lemma that $\resi = \resj = \resi_j(w)$. There are several ways in which $\wt(\resi)$ might differ from $\wt(\resj)$, and we will deal with each case separately.
    \begin{enumerate}
        \item \label{subitem:case1}First, suppose towards a contradiction that the multiplicity of $\alpha_0$ in $\wt(\resi)$ is $2$ and the multiplicity of $\alpha_0$ in $\wt(\resj)$ is $0$. 
        \begin{enumerate}
            \item If $\resi = (j, j - 1, \dots, 0, 1, 2, \dots, 2, 1, 0, \dots)$, then $\resj = (j, j + 1, \dots, e - 1, e, \dots)$. We see that each residue between $1$ and $e - 1$ appears at least three times in $\wt(\resi) + \wt(\resj)$, while $0$ and $e$ both appear twice, so it follows that $|\wt(\resi) + \wt(\resj)| = 2w \geq 4e$, so that $w$ is not a short weight.
            \item If $\resi = (j, j + 1, \dots, e-1,e,e-1, \dots, 0, 1, 2, \dots, 2, 1, 0, \dots)$, then $\resj = (j, j - 1, \dots,2,1)$, and a similar argument shows that $|\wt(\resi) + \wt(\resj)| \geq 4e$, so that $w$ is not a short weight. 
        \end{enumerate}
        It follows that, whenever the multiplicity of $\alpha_0$ in $w$ is $1$, we must have that the multiplicity of $\alpha_0$ in both $\wt(\resi)$ and $\wt(\resj)$ is $1$. The analogous statement holds for the residue $e$.
        \item Next we let $k \in I \setminus\{0,e\}$ and suppose that the multiplicity of $\alpha_k$ in $\wt(\resi)$ is $2$ and the multiplicity of $\alpha_k$ in $\wt(\resj)$ is $0$. Without loss of generality, we may assume that $\resi = (j, \dots, k, k+1, \dots, e-1,e,e-1, \dots, k+1, k, \dots)$. From the above, we know that $e$ may not appear twice inside $\resi$, so that $e$ must appear once inside $\resj$. We must have $j \leq k$, since otherwise $\resi$ would include more than two $k$'s. Therefore, $\resj$ starts with $j\leq k$, contains no $k$'s, and one $e$. This is a contradiction, and therefore whenever the multiplicity of $\alpha_k$ in $w$ is $1$, we must have that the multiplicity of $\alpha_k$ in both $\wt(\resi)$ and $\wt(\resj)$ is $1$.
        \item Next we let $k \in I \setminus\{0,e\}$ and suppose that the multiplicity of $\alpha_k$ in $\wt(\resi)$ is $4$ and the multiplicity of $\alpha_k$ in $\wt(\resj)$ is $0$. It is clear that $\resi$ contains one of either $0$ or $e$ between each pair of $k$'s. Since $\resi$ contains four $k$'s, it contains at least two of either $0$ or $e$. This contradicts \ref{subitem:case1}. 
        \item Finally, we let $k \in I \setminus\{0,e\}$ and suppose that the multiplicity of $\alpha_k$ in $\wt(\resi)$ is $3$ and the multiplicity of $\alpha_k$ in $\wt(\resj)$ is $1$. Without loss of generality, we may assume that $\resi = (j, j+1, \dots )$. We know that $\resi$ has one of $0$ or $e$ between each pair of $k$'s. Due to \ref{subitem:case1}, we must have $$\resi = (j,j+1,\dots,k,\dots,e,\dots,k,\dots,0,\dots,k,\dots).$$
        Note that $\resi$ includes at least three $j$'s. Therefore, $\resj$ must contain exactly one of each $0$, $k$ and $e$. Moreover, $\resj$ starts with $j$, but it may contain at most one $j$. It is easy to see that this is absurd, and therefore whenever the multiplicity of $\alpha_k$ in $w$ is $2$, we must have that the multiplicity of $\alpha_k$ in both $\wt(\resi)$ and $\wt(\resj)$ is $2$, completing the proof. \qedhere
    \end{enumerate} 
    \end{proof}
Let $\resi \in I^n$ be a residue sequence such that $e(\resi) S^\lambda \neq 0$. For any tableau $\tabt \in \Std(\lambda,\resi)$, it is clear that $\tabt^{-1}(1) = (1, 1)$. Suppose we know $\tabt_{\downarrow m}$ for some $m \geq 1$ and we wish to determine $\tabt^{-1}(m+1)$. The unfilled nodes in $\Arm(\lambda)$ and $\Leg(\lambda)$ each form a contiguous strip starting 
from the current boundary of $\tabt_{\downarrow m}$. We denote the sequences of 
residues of these unfilled nodes, ordered by their distance from the corner 
node $(1,1)$, as:
\[
\resi^{\text{arm}}(\tabt_{\downarrow m}) = (i^{\text{arm}}_1(\tabt_{\downarrow m}), i^{\text{arm}}_2(\tabt_{\downarrow m}), \dots) \quad \text{and} 
\quad \resi^{\text{leg}}(\tabt_{\downarrow m}) = (i^{\text{leg}}_1(\tabt_{\downarrow m}), i^{\text{leg}}_2(\tabt_{\downarrow m}), \dots).
\]

We omit the argument $\tabt_{\downarrow m}$ when it is clear from context. Next we are interested in placing the number $m+1 \leq n$ in $\tabt$. The residue $i_{m+1}$ must equal at least one of $i^{\text{arm}}_1$ or $i^{\text{leg}}_1$, and if $i_{m+1} \neq i^{\text{arm}}_1$, then $\tabt^{-1}(m+1) \in \Leg(\lambda)$. Similarly, if $i_{m+1} \neq i^{\text{leg}}_1$, then $\tabt^{-1}(m+1) \in \Arm(\lambda)$. Suppose instead that $i_{m+1} = i^{\text{arm}}_1 = i^{\text{leg}}_1$. We split our investigation into two possibilities:
\begin{enumerate} 
    \item Suppose that there exists a substring $\resi '$ of $\resi$ beginning at $i'_1 = i_{m+1}$ such that $\wt(\resi ') = 2w$ is a double short weight, and let $l' \coloneq \ell(\resi')$. Then, according to \cref{L:doubleshortweight}, the numbers $m+1, \dots, m+l'$ can be divided into two halves $A$ and $B$ of equal size so that $\tabt^{-1}(a) \in \Arm(\lambda)$ if $a \in A$, and $\tabt^{-1}(b) \in \Leg(\lambda)$ if $b \in B$. Furthermore, $\sum_{a \in A} \alpha_{\res(\tabt^{-1}(a))} = \sum_{b \in B} \alpha_{\res(\tabt^{-1}(b))} = w$. If the numbers in $A$ and $B$ are exchanged in $\tabt \in \Std(\lambda)$, the resulting tableau is also standard.
    \item Suppose, to the contrary, that no substring $\resi'$ of $\resi$ beginning at $i'_1 = i_{m+1}$ exists such that $\wt(\resi ') = 2w$ is a double short weight, and in particular $i_{m+2} \neq i_{m+1}$. We claim that in this case, the node $\tabt^{-1}(m+1)$ is fully determined by the residue sequence $\resi$.
    \begin{enumerate}
        \item Suppose that $i_{m+2} \neq i^\text{arm}_2$. Then it is clear that $\tabt^{-1}(m+1) \in \Leg(\lambda)$. Similarly, if $i_{m+2} \neq i^\text{leg}_2$, then $\tabt^{-1}(m+1) \in \Arm(\lambda)$.
        \item Suppose, to the contrary, that $i_{m+2} = i^\text{arm}_2 = i^\text{leg}_2$.
        \begin{enumerate}
            \item Suppose that $\Arm(\lambda)$ and $\Leg(\lambda)$ share the same number of unfilled nodes. Then the sequence $\resi' = (i_{m+1}, i_{m+2}, \dots, i_n)$ has $\wt(\resi') = 2w$, where $w$ is the short weight corresponding to the unfilled nodes in both the arm and leg. (Recall that in this section $\Leg(\lambda)$ is assumed to be short.)
            \item Suppose that $\Arm(\lambda)$ has more unfilled nodes than $\Leg(\lambda)$ in $\tabt_{\downarrow m}$ and $i_{m+2} = i^{\text{arm}}_2 = i^{\text{leg}}_2$. Suppose, towards a contradiction, that $\tabt^{-1}(m+1) \in \Leg(\lambda)$. Let $m' > m$ be an integer such that $\tabt_{\downarrow m'} \setminus \tabt_{\downarrow m}$ has the same number of filled nodes in the arm as in the leg. Note that $m' > m$ must exist, since $\Arm(\lambda)$ has more unfilled nodes than $\Leg(\lambda)$. Then the sequence $\resi' = (i_{m+1}, \dots, i_{m'})$ has a double short weight $\wt(\resi') = 2w$ (recall that $\Leg(\lambda)$ is short), which contradicts our assumption at the start of (2). Therefore, $\tabt^{-1}(m+1) \in \Arm(\lambda)$. A similar conclusion holds if $\Leg(\lambda)$ has more unfilled nodes than $\Arm(\lambda)$.
        \end{enumerate}
    \end{enumerate}
\end{enumerate}
We conclude that the construction of any $\tabt \in \Std(\lambda,\resi)$ by placing the natural numbers into the Young diagram proceeds deterministically except at specific points where a binary choice between the arm and the leg arises.

\begin{defn} \label{D:DS}
    The set of \emph{designated sequences} $DS(\resi) = \{\bm{s}^{(1)}, \dots, \bm{s}^{(d)} \}$ is the set containing all contiguous integer intervals $\bm{s} = [u, v] \subseteq \{1, \dots, n\}$ satisfying the following two conditions:
\begin{enumerate}
    \item The subword of residues corresponding to the interval, $\resi_{\bm{s}} = (i_u, i_{u+1}, \dots, i_v)$, is a reduced double short residue sequence.
    \item For any $\tabt \in \Std(\lambda,\resi)$, consider the subtableau $\tabt_{\downarrow u-1}$. The next available addable nodes in both $\Arm(\lambda)$ and $\Leg(\lambda)$ exist, and their residues are both equal to $i_u$ (i.e., $i_1^{\text{arm}} = i_1^{\text{leg}} = i_u$).
\end{enumerate}
\end{defn} 

For each $r \in \{1, \dots, d\}$, we denote the indices of the $r$-th interval by $\bm{s}^{(r)} = (s^{(r)}_1, s^{(r)}_2, \dots,\allowbreak s^{(r)}_{|\bm{s}^{(r)}|})$ and its corresponding residue sequence by $\resi^{(r)} \coloneq \resi_{\bm{s}^{(r)}}$. Since a reduced double short residue sequence is uniquely determined by its starting position, the intervals in $DS(\resi)$ have distinct starting points, and we may unambiguously order them according to their starting indices so that ${s}^{(1)}_1 < {s}^{(2)}_1 < \dots < {s}^{(d)}_1$.

It is important to verify that the set $DS(\resi)$ is well-defined, that is, that Condition (2) does not depend on the choice of $\tabt \in \Std(\lambda,\resi)$. Since designated sequences can be nested (that is, an interval $\bm{s}^{(2)}$ may be strictly contained within another interval $\bm{s}^{(1)}$), the precise shape of the subtableau $\tabt_{\downarrow (u_2-1)}$ for $u_2 = {s}^{(2)}_1$ may depend on whether the initial elements of the larger sequence $\bm{s}^{(1)}$ were assigned to $\Arm(\lambda)$ or $\Leg(\lambda)$. In order to deal with this issue, first we describe the subset structure of $DS(\resi)$.

\begin{lem} \label{L:YActionSubsetStructure}
    Let $r_1, r_2 \in \{1, \dots, d\}$ with $r_1 < r_2$. Suppose that $\bm{s}^{(r_1)} \cap \bm{s}^{(r_2)} \neq \varnothing$. Then $\bm{s}^{(r_2)} \subset \bm{s}^{(r_1)}$, $|\bm{s}^{(r_2)}| = 2$ and the repeated residue in $\bm{s}^{(r_2)}$ is not $0$ or $e$.
\end{lem}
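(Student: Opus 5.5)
Write $\bm{s}^{(r_1)} = [u_1,v_1]$ and $\bm{s}^{(r_2)} = [u_2,v_2]$ with $u_1 < u_2$. Since the intervals intersect, $u_2 \le v_1$. The plan is to fix $\tabt \in \Std(\lambda,\resi)$ and follow how the entries $u_1,\dots,v_1$ are placed. By \cref{L:doubleshortweight} and the discussion before \cref{D:DS}, these entries split into two halves $A$ and $B$. The arm half and the leg half each fill a contiguous strip whose residue sequence is $\resi_{i_{u_1}}(w_1)$, where $\wt(\resi^{(r_1)}) = 2w_1$. Both strips therefore read the \emph{same} substring of $\mathcal{I}$, starting at $i_{u_1}$, and $\resi^{(r_1)}$ is a shuffle of two copies of this string.

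\textbf{Step 1: position of $u_2$.} Condition (2) for $\bm{s}^{(r_2)}$ requires $i_1^{\text{arm}} = i_1^{\text{leg}} = i_{u_2}$ just before $u_2$ is placed. Suppose at that moment the arm has consumed $p$ nodes of its copy and the leg $q$ nodes of its copy. Then $i_{u_2}$ is both the $(p+1)$-st and the $(q+1)$-st entry of the same substring of $\mathcal{I}$. Assume $p \neq q$. Consecutive entries of $\mathcal{I}$ always differ, and equal entries inside a string of length less than $2e$ are reflections about an occurrence of $0$ or $e$. So $i_{u_2}$ is a repeated residue of $w_1$, hence not $0$ or $e$ by \cref{L:elementary_short_weights}(1). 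Now take $p = q$. Then $\resi^{(r_1)}$ restricted to $[u_1,u_2-1]$ has weight $2\cdot(\text{weight of the first } p \text{ entries})$, which is double short. This contradicts reducedness of $\resi^{(r_1)}$ unless $p = q = 0$, i.e.\ $u_2 = u_1$, which is excluded. Hence $p \ne q$ and $i_{u_2} \notin\{0,e\}$.

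\textbf{Step 2: length and containment.} Here I use the reflection structure. The arm copy must next read $i_{u_2}$, and the leg copy must also have $i_{u_2}$ as its next entry further along the same string. Since $i_{u_2}\neq 0,e$, the residue $i_{u_2}$ occurs at most twice in $w_1$, so $\{p+1,q+1\}$ are exactly its two positions. The next residues $i_{u_2+1}$ available to the two strips are then a neighbour of $i_{u_2}$ on opposite sides, or the same neighbour. I claim that $(i_{u_2}, i_{u_2})$ is itself double short, and that it is a strict initial segment of $\resi^{(r_2)}$ if $i_{u_2+1} = i_{u_2}$. To get this, I will argue that any reduced double short sequence starting at $u_2$ must have its second entry equal to $i_{u_2}$. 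The alternative is that both halves proceed along $\mathcal{I}$ from $i_{u_2}$. Splitting $\resi^{(r_2)}$ into its arm and leg parts via \cref{L:doubleshortweight}, the two copies starting at $i_{u_2}$ would then have to match the continuations of both strips, which run in opposite directions from $i_{u_2}$. That contradicts the uniqueness in \cref{L:elementary_short_weights}(2). So $\resi^{(r_2)} = (i_{u_2}, i_{u_2})$ and $|\bm{s}^{(r_2)}| = 2$. For containment, $u_2+1 \le v_1$ holds because both strips still have unfilled entries of residue $i_{u_2}$ belonging to $\bm{s}^{(r_1)}$, which forces $v_1 \ge u_2+1$.

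\textbf{Main obstacle.} The delicate step is Step 2. There one must exclude a longer reduced double short sequence starting at $u_2$ that possibly runs past $v_1$. The remaining case analysis would follow the pattern of the proof of \cref{L:doubleshortweight}, counting the occurrences of $0$ and $e$.
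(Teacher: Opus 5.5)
Your argument is correct in substance, but it is organised differently from the paper's.

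\textbf{The paper's route.} The paper proves containment first, by contradiction. Assuming $v_2>v_1$, it rules out length two for either interval. It then uses that the arm and leg addable sequences coincide both at $u_1-1$ and at $u_2-1$ to conclude that $[u_1,u_2-1]$ fills equally many arm and leg nodes. That makes $[u_1,u_2-1]$ double short, contradicting reducedness. Only then, working inside the strips of $\bm{s}^{(r_1)}$, does it note $d_{\text{arm}}\neq d_{\text{leg}}$. It applies \cref{L:elementary_short_weights}(4) to the two occurrences of the arm half of $\resi^{(r_2)}$ inside a window of length less than $2e$, which forces length one and excludes $0,e$.

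\textbf{Your route.} You reverse the order. Reducedness gives $p\neq q$ (the paper's $d_{\text{arm}}\neq d_{\text{leg}}$) at once, and the multiplicity bound gives $i_{u_2}\notin\{0,e\}$. A reflection argument then bounds $|\bm{s}^{(r_2)}|$ without knowing containment, and containment drops out by counting. This treats sequences that might run past $v_1$ uniformly and sidesteps the paper's separate non-containment argument. The paper's route, once containment is in hand, needs only the spacing statement inside one short window.

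\textbf{Your ``main obstacle'' is already settled.} Your Step 2 handles it once the reflection is stated globally. Let $c,c+1,\dots$ be the contents of the free arm nodes and $c',c'-1,\dots$ those of the free leg nodes at time $u_1-1$. Since each copy of $\resi^{(r_1)}$ has length at least two, $c'\equiv -c \pmod{2e}$. So both strips read $k\mapsto\overline{c+k}$, also beyond $v_1$. Then $\overline{c+p}=\overline{c+q}$ with $0<|p-q|<2e$ forces $c+p\equiv-(c+q)$. Hence from $u_2$ on the arm reads $\overline{c+p+k}$ and the leg reads $\overline{c+p-k}$, and these differ at $k=1$ because $i_{u_2}\notin\{0,e\}$. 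With \cref{L:doubleshortweight} this excludes every longer reduced sequence starting at $u_2$, wherever it ends. No counting of $0$'s and $e$'s is needed.

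\textbf{What you still owe.} Your containment sentence, and the phrase ``repeated residue of $w_1$'', silently exclude the case that one copy is already exhausted at time $u_2-1$.
\begin{enumerate}
\item If $|\bm{s}^{(r_1)}|=2$, then $u_2=u_1+1$. The strip that received $u_1$ now offers a neighbour of $i_{u_1}$, violating condition (2), exactly as in the paper.
\item Suppose $|\bm{s}^{(r_1)}|=2l\ge 4$ and, say, $p=l$. The reflection argument still yields $|\bm{s}^{(r_2)}|=2$, with $i_{u_2}\notin\{0,e\}$ since $0<l-q<2e$. Meanwhile all of $[u_2,v_1]$ is forced into the leg. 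The two entries of $\bm{s}^{(r_2)}$ must go to different strips, so $u_2+1$ must land in the arm, which is only possible if $u_2=v_1$, i.e.\ $q=l-1$. Then condition (2) would make the consecutive residues $\overline{c+l-1}$ and $\overline{c+l}$ equal, which is impossible.
\end{enumerate}
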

\begin{proof} 
    Let $\bm{s}^{(r_1)} =[u_1, v_1]$ and $\bm{s}^{(r_2)} = [u_2, v_2]$. Since $r_1 < r_2$ and the designated sequences are ordered by their starting index, we have $u_1 < u_2$. The assumption $\bm{s}^{(r_1)} \cap \bm{s}^{(r_2)} \neq \varnothing$ implies $u_2 \leq v_1$. We first show that $\bm{s}^{(r_2)} \subset \bm{s}^{(r_1)}$, which is equivalent to showing $v_2 \leq v_1$. Suppose, towards a contradiction, that $\bm{s}^{(r_2)} \not\subset \bm{s}^{(r_1)}$. Let us first check that, under our supposition, both $\bm{s}^{(r_1)}$ and $\bm{s}^{(r_2)}$ have length larger than 2. Indeed, suppose that $|\bm{s}^{(1)}| = 2$. Then, since we have both $\bm{s}^{(r_2)} \not\subset \bm{s}^{(r_1)}$ and $\bm{s}^{(r_1)} \cap \bm{s}^{(r_2)} \neq \varnothing$, it follows that $u_2 = u_1 + 1 = v_1$, which indicates that $\tabt_{\downarrow u_1}$ has two addable nodes with residue $i_{v_1} = i_{u_1}$, after placing $u_1$ in a node with that residue. This is impossible, so it follows that $|\bm{s}^{(r_1)}| > 2$. A similar argument shows that $|\bm{s}^{(r_2)}| > 2$.

    Since $|\bm{s}^{(r_1)}| > 2$, the addable sequences $\resi^{\text{arm}}(\tabt_{\downarrow (u_1-1)})$ and $\resi^{\text{leg}}(\tabt_{\downarrow (u_1-1)})$ coincide until one of them terminates, since their first two residues coincide. The same can be said about $\resi^{\text{arm}}(\tabt_{\downarrow (u_2-1)})$ and $\resi^{\text{leg}}(\tabt_{\downarrow (u_2-1)})$. The amounts of nodes in the arm and leg covered by numbers in $\bm{s^{(r_1)}}$ are equal, and a similar result holds for $\bm{s^{(r_2)}}$. Consequently, the amounts of nodes covered by the numbers $[u_1, u_2 - 1]$ in $\Arm(\lambda)$ and $\Leg(\lambda)$ are equal. Therefore, the sequence $\bm{s}^* = [u_1, u_2 - 1]$ is itself a double short sequence. This contradicts the assumption that the designated sequence $\bm{s}^{(r_1)}$ must be reduced, according to the definition of $DS(\resi)$. This completes the proof that $\bm{s}^{(r_2)} \subset \bm{s}^{r_1}$.
    
    The sequence $\bm{s}^{(r_2)}$ must fill subsegments of equal length within the segments filled by $\bm{s}^{(r_1)}$. Let $d_{\text{arm}}$ and $d_{\text{leg}}$ be the respective number of nodes placed in the arm and in the leg by $\bm{s}^{(r_1)}$ prior to the start of $\bm{s}^{(r_2)}$. We must have $d_{\text{arm}} \neq d_{\text{leg}}$, as otherwise the subsequence $\bm{s}^*$ would be a double short sequence, contradicting the reducedness of $\bm{s}^{(r_1)}$.

    Define $\resi_{\text{arm}}^{(r)}$ to be the residue sequence in the arm of $\tabt$ filled by the numbers in $\bm{s}^{r}$, and define $\resi_{\text{leg}}^{(r)}$ analogously. The sequence $\resi_{\text{arm}}^{(r_2)}$ appears at two distinct starting positions ($d_{\text{arm}}+1$ and $d_{\text{leg}}+1$) inside $\resi_{\text{arm}}^{(r_1)}$. But the starts of two identical substrings of $\mathcal{I}$ of length at least two are separated by at least $2e$ residues, and of course the residues $0$ and $e$ are also placed at intervals of $2e$. Since $\resi_{\text{arm}}^{(r_1)}$ is a short residue sequence, its total length is strictly less than $2e$. It follows that $\resi^{(r_2)}_{\text{arm}}$ cannot have length 2 or greater, and thus must contain a single residue, which cannot be $0$ or $e$. This completes the proof.
\end{proof}

Let us return to our goal of checking that Condition 2 of the definition of $DS(\resi)$ is well-defined. Suppose we have $\bm{s}^{(2)} \subset \bm{s}^{(1)}$. Since $|\bm{s}^{(1)}| > 2$, then the addable residue sequences in the arm and leg will be symmetric at $\tabt_{\downarrow {(u_1-1)}}$, where $u_1 = {s}^{(1)}_1$. After placing $u_1$, the numbers $u_1+1, \dots, u_2 - 1$ are each forced into either $\Arm(\lambda)$ or $\Leg(\lambda)$, and the next available nodes in both $\Arm(\lambda)$ and $\Leg(\lambda)$ at step $u_2$ will share the same residue $i_{u_2}$. We then make an independent choice ($A$ or $L$) for the elements of the short sequence $\bm{s}^{(2)}$. Since $\bm{s}^{(2)}$ consumes exactly one node in the arm and one in the leg, the set of filled nodes does not depend on this choice. By the time the numbers in the larger sequence $\bm{s}^{(1)}$ have all been placed in $\tabt$, the resulting Young diagram shape is independent of these intermediate choices, as a sequence of nodes with weight $w(\resi)$ has been filled in both the arm and leg of $\lambda$. This guarantees that the process is deterministic outside of these specific binary decisions, and that an earlier $(A,L)$-choice does not affect the structure of $DS(\resi)$.

We partition the index set $\{1, \dots, d\}$ into two disjoint subsets $W_2$ and $W_4$, where $r \in W_4$ if $\resi^{(r)} \in \{(0, 0), (e, e)\}$, and $r \in W_2$ otherwise.

The next proposition synthesizes our findings and parameterizes the set $\Std(\lambda,\resi)$ via these independent binary choices.

\begin{prop} \label{P:ResidueHookTableaux}
    Suppose that $\Std(\lambda,\resi) \neq \varnothing$. Let $\{A, L\}^{d}$ be the set of words of length $d$ in the alphabet $\{A, L\}$. Then there is a bijection $\phi_{\resi}^\lambda: \{A, L\}^{d} \rightarrow \Std(\lambda,\resi)$ uniquely determined by the property that for any $r \in \{1, \dots, d\}$, the node $\tabt^{-1}(s^{(r)}_1)$ belongs to $\Arm(\lambda)$ if and only if the $r$-th letter of the word is $A$. Furthermore, if $\tabt = \phi_{\resi}^\lambda(l_1, \dots, l_{d})$, its degree is given by
    \[
        \deg(\tabt) = \deg(\phi_{\resi}^\lambda(A, \dots, A)) - 2\#\{r \in W_2 \mid l_r = L \} - 4\#\{r \in W_4 \mid l_r = L \}.
    \]
    We say that $(l_1, \dots, l_{d})$ is the \emph{binary word} of $\tabt$.
\end{prop}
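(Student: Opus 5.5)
The plan is to build $\phi_{\resi}^\lambda$ directly from the deterministic placement procedure described before \cref{D:DS}, and then to compute degrees by comparing each tableau with $\phi_{\resi}^\lambda(A,\dots,A)$ one letter at a time.

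\textbf{Step 1: well-definedness and injectivity.} Given a word $(l_1,\dots,l_d)$, place $1,\dots,n$ in turn. By the case analysis above, every number $m+1$ is forced into the arm or the leg, except when $m+1=s^{(r)}_1$ for some $r$. At such a point \cref{D:DS}(2) guarantees that both the arm and the leg have an addable node of residue $i_{s^{(r)}_1}$, and we put $s^{(r)}_1$ in the arm exactly when $l_r=A$. We then check that all the remaining entries of $\bm{s}^{(r)}$ are forced and that the procedure can always be completed. By \cref{L:doubleshortweight}, the entries of $\bm{s}^{(r)}$ split into an arm half and a leg half of equal weight $w$. Once the first letter is chosen, the residues $i_{s^{(r)}_2},\dots$ determine the rest, since the arm and leg strips carry the same short sequence $\resi_{i_u}(w)$. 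Exchanging the two halves therefore gives another standard tableau with the same residue sequence. By \cref{L:YActionSubsetStructure}, overlapping designated sequences are nested, and the inner one has length $2$ with a repeated residue not equal to $0$ or $e$. Its choice consumes one arm node and one leg node, so by the discussion following that lemma the later choices and the set $DS(\resi)$ do not depend on earlier ones. Hence every word yields a tableau in $\Std(\lambda,\resi)$. Injectivity holds because distinct words place some $s^{(r)}_1$ in different components.

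\textbf{Step 2: surjectivity.} Take any $\tabt\in\Std(\lambda,\resi)$ and read off its word from the positions of the $s^{(r)}_1$. By the determinism established above, $\tabt$ agrees with the image of that word.

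\textbf{Step 3: the degree formula.} Toggling a single letter $l_r$ from $A$ to $L$ swaps the arm half and the leg half of $\bm{s}^{(r)}$, and we compute the resulting change in $\deg$ from the recursive definition. Write $w$ for the short weight of each half. Entries outside $\bm{s}^{(r)}$ see the same shape at each step, so their contributions $d_A$ are unchanged. Inside $\bm{s}^{(r)}$, only the interaction between the two strips changes. Each node of residue $j$ in the leg half contributes $d_j$ times the number of addable $j$-nodes below it minus the removable ones, and similarly for the arm half. When the arm half comes first, the leg half's nodes see the arm's newly added node as an addable node, or see its removable corner; swapping the order exchanges these roles.

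The expected net change is $-2d_j$ for the unique residue $j$ at which both strips begin. Since that residue $j$ appears in both halves, this gives $-2$ when $j\neq 0,e$ (so $r\in W_2$) and $-4$ when $j\in\{0,e\}$ (so $r\in W_4$). All other residues of $w$ should cancel, because $w$ is short: each residue occurs at most twice, and at most once for $0$ and $e$.

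The main obstacle is this local degree computation. It has to be carried out carefully at the ends of the strips, where the two strips may end adjacent to different nodes. It also has to handle a nested length-$2$ sequence inside $\bm{s}^{(r)}$, which should be treated independently since it lies in $W_2$ and its toggle changes the degree by exactly $-2$. To control this, I would write the degree as $\sum_j d_j(\#\text{addable}-\#\text{removable below})$ node by node. I would first verify the formula for a single reduced double short sequence placed at a common corner, and then argue by additivity over the letters, since the choices are independent.
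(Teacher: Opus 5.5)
Your Steps 1 and 2 are fine; they amount to what the paper means by ``follows from the preceding discussion''. The gap is in Step 3, and it goes beyond needing care at the ends of the strips: the mechanism you propose is false.

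You predict a change of $-2d_j$ for the residue $j$ at which both strips start, and you identify $j\in\{0,e\}$ with $r\in W_4$. But $W_4$ contains only the sequences $(0,0)$ and $(e,e)$. A longer designated sequence starting at $0$ or $e$ lies in $W_2$, and the proposition asserts a change of $-2$ for it. Take $e=2$, $\kappa=2$, $\lambda=(4,1^3)$ and $\resi=(2,1,1,0,1,0,1)$. Then $DS(\resi)=\{[2,3],[4,7]\}$, with residues $(1,1)$ and $(0,1,0,1)$, both in $W_2$.
\begin{itemize}
\item $\phi_{\resi}^\lambda(A,A)$ has first row $1,2,4,5$ and leg $3,6,7$. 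By the recursive definition its degree is $2$.
\item $\phi_{\resi}^\lambda(A,L)$ has first row $1,2,6,7$ and leg $3,4,5$. Its degree is $0$.
\end{itemize}
The arm node $(1,3)$ of residue $0$ contributes $+d_0$ in the first tableau and $0$ in the second. The arm node $(1,4)$ contributes $-1$ in both. So the change is $-d_0=-2$, not $-2d_0=-4$. The doubling you see for a pair $(j,j)$, where the arm node sees an addable versus a removable $j$-node, does not persist for longer strips.

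Your description of the toggle also fails in the nested case. In \cref{eg:binaryword}, exchanging the two halves of $\bm{s}^{(1)}$ in $\phi_{\resi}^\lambda(A,A)$ gives $\phi_{\resi}^\lambda(L,L)$, not $\phi_{\resi}^\lambda(L,A)$. That exchange changes the degree by $-4$, from $2$ to $-2$. So a single-letter toggle is a half-exchange followed by re-toggling the nested pair, and no uniform ``$-2d_j$ per exchange'' rule can hold.

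A direct computation can work, but it has to see the whole weight $w$.
\begin{itemize}
\item Since $v_\tabt=v_{\tabt^\lambda}\psi_{w^\tabt}$, and in a hook two strings cross (once) exactly when an arm node $N$ and a leg node $M$ satisfy $\tabt(N)>\tabt(M)$, you get $\deg\tabt=\deg\tabt^\lambda-\sum(\alpha_{\operatorname{res}N},\alpha_{\operatorname{res}M})$ over such pairs.
\item Only pairs inside the two strips of $\bm{s}^{(r)}$ are affected. Exchanging the halves changes the degree by $-(w,w)+2\sum_{(a,b)\in C}(\alpha_{i_a},\alpha_{i_b})$, where $C$ is the set of crossing pairs inside the strips before the exchange.
\item You would then have to show that, after correcting for nested pairs, this equals $-2$, except for $(0,0)$ and $(e,e)$, where it must be $-4$.
\end{itemize}
The paper avoids this bookkeeping by homogeneity. \cref{L:YActionYkVL} gives $v_Ly_k=\pm v_A\neq 0$ for a $y_k$ of degree $2$ when $r\in W_2$. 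For $(0,0)$ or $(e,e)$, a single crossing is removed by a dot of degree $4$.
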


\begin{proof}
    The existence of the bijection $\phi_{\resi}^\lambda$ follows from the preceding discussion. We must prove the statement about the degrees. By an inductive argument, it is enough to determine $\deg(\tabt') - \deg(\tabt)$ in the case of two tableaux $\tabt, \tabt' \in \Std(\lambda,\resi)$ where 
    \begin{align*}
        \tabt &= \phi_{\resi}^\lambda(l_1, \dots, l_{r-1}, A, l_{r+1}, \dots, l_{d}),\\
        \tabt' &= \phi_{\resi}^\lambda(l_1, \dots, l_{r-1}, L, l_{r+1}, \dots, l_{d}).
    \end{align*}
    Suppose that $\resi^{(r)}$ has length $2q$. We restrict our attention to the row- and column-strips in $\lambda$ which house the numbers in $\bm{s}^{(r)} = \{s^{(r)}_1, \dots, s^{(r)}_{2q}\}$. If we focus our attention on the relevant strings only, then the computation in \cref{L:YActionYkVL} shows that $\deg(v_{\tabt_L}) - \deg(v_{\tabt_A}) = 2$ whenever $r \in W_2$, and the case $r \in W_4$ is trivial.
\end{proof}

\begin{cor} \label{C:minimal_tableau}
    There exists a unique tableau $\tabt_{\mathrm{max}}(\resi) \coloneq \phi_{\resi}^{\lambda}(A, \dots, A) \in \Std(\lambda,\resi)$ of maximal degree inside $\Std(\lambda,\resi)$.
\end{cor}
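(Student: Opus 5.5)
This is a direct consequence of \cref{P:ResidueHookTableaux}. Since $\Std(\lambda,\resi)$ is assumed non-empty, the bijection $\phi_{\resi}^\lambda\colon \{A,L\}^d \to \Std(\lambda,\resi)$ exists. It lets us parametrise every $\tabt\in\Std(\lambda,\resi)$ by its binary word $(l_1,\dots,l_d)$. We then compare degrees using the formula
\[
\deg(\tabt) = \deg(\phi_{\resi}^\lambda(A,\dots,A)) - 2\#\{r\in W_2 \mid l_r = L\} - 4\#\{r\in W_4 \mid l_r=L\}.
\]

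The steps are as follows. First, for $\tabt = \phi_{\resi}^\lambda(l_1,\dots,l_d)$, the correction term $2\#\{r\in W_2 \mid l_r=L\}+4\#\{r\in W_4 \mid l_r=L\}$ is a non-negative integer. Hence $\deg(\tabt)\le \deg(\phi_{\resi}^\lambda(A,\dots,A))$, so $\tabt_{\mathrm{max}}(\resi)=\phi_{\resi}^\lambda(A,\dots,A)$ attains the maximum. Second, for uniqueness, suppose $\tabt$ also attains the maximum. Then the correction term vanishes. Because $\{1,\dots,d\}=W_2\sqcup W_4$ and each index with $l_r=L$ contributes at least $2>0$, no letter of the word equals $L$. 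So the word is $(A,\dots,A)$, and injectivity of $\phi_{\resi}^\lambda$ gives $\tabt=\tabt_{\mathrm{max}}(\resi)$. The degenerate case $d=0$ is consistent: $\Std(\lambda,\resi)$ is then a singleton, namely the image of the empty word.

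The only real content sits in the degree formula of \cref{P:ResidueHookTableaux}, specifically in the strict positivity of the drop for $r\in W_4$, which the proposition calls trivial. If that needed verification, I would check the two-node case $\resi^{(r)}\in\{(0,0),(e,e)\}$ directly from the definition of $d_A(\mu)$. Placing the second of the two equal residues in the arm versus the leg changes one addable/removable count below the node by one on each of the two placements, each weighted by $d_0=d_e=2$. This gives a total difference of $4$, in the same way that the $W_2$ case gives $2$ with weight $d_i=1$. Taking the proposition as given, the corollary itself presents no obstacle.
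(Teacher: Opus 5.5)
Your argument is correct and matches the paper, which gives no separate proof and reads the corollary off directly from the degree formula in \cref{P:ResidueHookTableaux}: each letter $L$ lowers the degree by a strictly positive amount ($2$ or $4$), and injectivity of $\phi_{\resi}^\lambda$ then gives uniqueness. Your side check that a $W_4$ flip costs exactly $4$, computed from the addable and removable $0$- or $e$-node counts weighted by $d_0=d_e=2$, is also right.
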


\begin{eg} \label{eg:binaryword}
Suppose $e = 2$ and $\Lambda = \Lambda_0$. Let $\lambda = (4, 1^3) \vdash 7$ and fix the residue sequence $\resi = (0, 1, 2, 1, 1, 2, 1)$. 
    
    By mapping these residues onto the nodes of the hook $\lambda$, we find two points where an identical residue can be placed in either the arm or the leg, leading to two designated sequences in $DS(\resi)$:
    \begin{itemize}
        \item $\bm{s}^{(1)} = (2, 3, 4, 5, 6, 7)$ with $\resi^{(1)} = (1, 2, 1, 1, 2, 1)$.
        \item $\bm{s}^{(2)} = (4, 5)$ with $\resi^{(2)} = (1, 1)$.
    \end{itemize}
    Note that the double short residue sequence $\bm{s}^* = (3,4,5,6)$ is not a designated sequence in $DS(\resi)$ as it does not satisfy condition 2: at the time of placing the number $3$, the tableau $\tabt_{\downarrow 2}$ can only be one of
    \[\ColorTableau[]{{1,2}} \qquad \text{ or } \qquad \ColorTableau[]{{1},{2}} \,,\]
    and in either case there is only one available node with the correct residue.
    
    Thus, $d=2$. Since neither sequence is $(0,0)$ or $(e,e)$, both belong to $W_2$. Note that $\bm{s}^{(2)} \subset \bm{s}^{(1)}$. 
    The bijection $\phi_{\resi}^\lambda$ gives four standard tableaux in $\Std(\lambda,\resi)$ corresponding to the words in $\{A, L\}^2$:
    \[
    \phi_{\resi}^\lambda(L, L) = 
    \ColorTableau[]{{1,5,6,7},{2},{3},{4}}, \qquad \qquad \quad
    \phi_{\resi}^\lambda(A, L) = \ColorTableau[]{{1,2,3,5},{4},{6},{7}},
    \]
    \[
    \phi_{\resi}^\lambda(L, A) = 
    \ColorTableau[]{{1,4,6,7},{2},{3},{5}}, \, \qquad
    \tabt_{\mathrm{max}}(\resi) = \phi_{\resi}^\lambda(A, A) = \ColorTableau[]{{1,2,3,4},{5},{6},{7}}.
    \]
\end{eg}

Now that we understand the set $\Std(\lambda,\resi)$, we are ready to explore the effect of the $y$-action on a basis vector $v_\tabt \in S^\lambda$ with $\tabt \in \Std(\lambda,\resi)$. That is, we are interested in finding the coefficients $a_{\tabt'} \in \mathbb{Z}$ in $v_\tabt y_k = \sum_{\tabt' \in \Std(\lambda,\resi)}a_{\tabt'} v_{\tabt'}$.

Let us first compare the diagram $v_\tabt = v_{\phi_{\resi}^\lambda(l_1, \dots, l_{r-1}, A, l_{r+1}, \dots, l_d)}$ to the diagram $v_{\tabt'} \coloneq v_{\phi_{\resi}^\lambda(l_1', \dots, l_{r-1}', L, l_{r+1}', \dots, l_d')}$. Recall that for any fully commutative standard monomial, every string in the diagram is either stubborn or co-stubborn. In our specific configuration, a string routed to the arm is stubborn, while a string routed to the leg is co-stubborn. In the diagram for $v_{\tabt}$, the choice $A$ dictates that the string $s^{(r)}_1$ is routed to the arm. Because $s^{(r)}_1$ is stubborn, it cannot reach larger nodes, meaning it is geometrically impossible for it to reach any node located in the leg in any non-zero resolution of crossings. It follows directly that an $A$ choice cannot transform into an $L$ choice through the $y$-action. Consequently, if $l_r = A$ for $\tabt$, then $l_r = A$ for any $\tabt'$ such that $a_{\tabt'} \neq 0$ in the decomposition of $v_{\tabt} y_k$. Together with the degree considerations from \cref{P:ResidueHookTableaux}, we obtain the following lemma.

\begin{lem} \label{L:YActionVanishing}
    Let $k \in \{1, \dots, n\}$, $\tabt = \phi_{\resi}^\lambda(l_1, \dots, l_d) \in \Std(\lambda,\resi)$. 
    \begin{enumerate}
        \item If $\deg(y_k) = 2$, then $v_\tabt y_k = \sum_{\tabt' \in \Std(\lambda,\resi)}a_{\tabt'} v_{\tabt'}$, with $a_{\tabt'} \neq 0$ only if $\tabt' = \phi_{\resi}^\lambda(l_1', \dots, l_d')$, with $l_s = l_s'$ for $s \in \{1, \dots, r-1, r+1, \dots, d\}$, while $l_r = L$, $l_r' = A$, $k \in \bm{s}^{(r)}$ and $r \in W_2$.
        \item If $\deg(y_k) = 4$, then $v_\tabt y_k = a_{\tabt'} v_{\tabt'}$, with $\tabt' = \phi_{\resi}^\lambda(l_1', \dots, l_d')$, with $l_s = l_s'$ for all $s \in \{1, \dots, r-1, r+1, \dots, d\}$, while $l_r = L$, $l_r' = A$, $k \in \bm{s}^{(r)}$ and $r \in W_4$.
    \end{enumerate}
\end{lem}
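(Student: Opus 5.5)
The argument combines two constraints on the coefficients $a_{\tabt'}$: degree bookkeeping from \cref{P:ResidueHookTableaux}, and the stubbornness observation made just before the statement. First, $v_\tabt y_k$ is homogeneous of degree $\deg(\tabt)+\deg(y_k)$. It lies in $e(\resi)S^\lambda$, which has basis $\{v_{\tabt'} \mid \tabt'\in\Std(\lambda,\resi)\}$, and these basis vectors are homogeneous. So $a_{\tabt'}\neq 0$ forces $\deg(\tabt')=\deg(\tabt)+\deg(y_k)$.

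By \cref{P:ResidueHookTableaux}, with $\tabt'=\phi_\resi^\lambda(l'_1,\dots,l'_d)$,
\[
\deg(\tabt')-\deg(\tabt)=2\bigl(\#\{r\in W_2\mid l_r=L\}-\#\{r\in W_2\mid l'_r=L\}\bigr)+4\bigl(\#\{r\in W_4\mid l_r=L\}-\#\{r\in W_4\mid l'_r=L\}\bigr).
\]

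Second, by the stubbornness argument: if $l_r=A$ then $l'_r=A$ whenever $a_{\tabt'}\neq0$. Hence the set of positions where $\tabt'$ reads $L$ is contained in the set where $\tabt$ reads $L$. Each position changing from $L$ to $A$ raises the degree by $2$ (if it lies in $W_2$) or $4$ (if it lies in $W_4$), and no position can lower it.

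With total increase $2$, exactly one position $r\in W_2$ flips from $L$ to $A$ and all other letters agree; this gives (1). With total increase $4$, there are two candidates. Either a single $r\in W_4$ flips, which is what (2) asserts, or two positions $r,r'\in W_2$ flip. To rule out the second possibility, note that $\deg(y_k e(\resi))=d_{i_k}$, so $\deg(y_k)=4$ is not literally possible since $d_i\le 2$. I would therefore read (2) as the case $i_k\in\{0,e\}$, where $y_k$ has degree $2$ but only $W_4$-type flips should occur. This requires a locality argument.

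The locality step is the main obstacle. It also supplies the condition $k\in\bm{s}^{(r)}$ in (1). The plan is to show that the strings of $v_\tabt$ outside the designated sequences containing $k$ are immobile with respect to the relevant crossings. Multiplying by $y_k$ places a dot on string $k$, and dots slide freely past immobile strings by \eqref{E:ImmobileRelations}. Once the dot slides to the top of a string that is forced (not in any designated sequence with $k$), it is killed by \eqref{E:specht}. Thus only error terms from $(i_k,i_k)$-crossings inside a designated sequence containing $k$ can survive, and these only change the letter of that sequence.

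To justify this I would apply the stubborn/co-stubborn analysis to the fully commutative monomial $v_\tabt$. Outside $\bm{s}^{(r)}$ the accessible-node set of each string is a singleton, as in the deterministic part of the construction preceding \cref{D:DS}. The nested case of \cref{L:YActionSubsetStructure} has to be handled separately. Its inner sequence has length $2$ with residue not $0$ or $e$, so it lies in $W_2$, and flipping it still changes only one letter.

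For (2), when $i_k\in\{0,e\}$ the only sequence containing $k$ that can flip is the one with $r\in W_4$. A $W_2$ flip coming from a dot on a $0$-string would require the dot to migrate onto a string of a different residue, and dots cannot do that. So a single $W_4$ flip occurs. Its degree jump of $4$ exceeds the degree $2$ of $y_k$, and I expect this mismatch to be resolved by the doubled form $(\alpha_0,\alpha_0)$ in the grading convention. The result is a single surviving term $a_{\tabt'}v_{\tabt'}$, since the flipped word is uniquely determined.
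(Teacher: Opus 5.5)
\textbf{Part (1).} Your skeleton matches the paper's: homogeneity plus the degree formula of \cref{P:ResidueHookTableaux}, the stubbornness observation that only $L\to A$ changes occur, and a locality statement tying the changed letter to a designated sequence containing $k$. Part (1) goes through this way. You do not need your immobility plan for it. As in the paper, $(\tabt')^{-1}(k)\in A_{v_\tabt}(k)$ together with \cite[Lemma 3.6]{forsberg} forces some letter of a sequence containing $k$ to change. Degree then permits only one flip.

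\textbf{Part (2): the grading is misread.} Since $(\alpha_i,\alpha_i)=d_ia_{ii}=2d_i$, we have $\deg(y_ke(\resi))=2d_{i_k}$, which equals $4$ exactly when $i_k\in\{0,e\}$. The ``$=d_{i_r}$'' in \cref{subs:klr-alg} is a slip, as the jumps $2$ and $4$ in \cref{P:ResidueHookTableaux} confirm. Under your reading, homogeneity would forbid precisely the $W_4$ flip you conclude, so your argument for (2) contradicts itself. The ``mismatch'' you defer is the heart of the matter, not a cosmetic issue.

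\textbf{Part (2): the exclusion of $W_2$ flips is false.} A $W_2$ designated sequence can contain strings of residue $0$ or $e$. In \cref{eg:binaryword}, $k=3$ has residue $e=2$ and lies in $\bm{s}^{(1)}\in W_2$. Sliding $y_3$ up $v_{\phi(L,L)}$ produces an error term that cuts the crossing of string $3$ with string $6$, which also has residue $2$. No dot has to migrate to another residue.

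\textbf{What settles (2).} With degree $4$, the candidates are one $W_4$ flip or two $W_2$ flips; a single $W_2$ flip has the wrong degree. By \cref{L:YActionSubsetStructure}, a string of residue $0$ or $e$ lies in at most one designated sequence, because nested inner sequences are $(i,i)$ with $i\neq 0,e$. So if every flipped letter belongs to a sequence containing $k$, two $W_2$ flips are impossible and there is at most one summand. This is how the paper concludes.

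\textbf{The remaining gap in locality.} That argument needs the strong form of locality: no letter of a sequence not containing $k$ may change. Accessibility alone does not give it. In the example above, degree, stubbornness and accessibility all allow $v_{\phi(L,L)}y_3$ to have a component along $v_{\phi(A,A)}$, which also flips $\bm{s}^{(2)}=(4,5)\not\ni 3$. That component does vanish: $v_{\phi(L,L)}y_3=\pm v_{\tabs}\psi_3\psi_5\psi_4$ for the standard $\tabs$ with first row $1,3,4,7$, and $v_{\tabs}\psi_3=0$ by a braid relation together with the Specht and Garnir relations of $S^\lambda$. Your immobility plan targets the right statement, but its justification does not hold. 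You claim accessible-node sets are singletons away from the sequence containing $k$. This fails for strings in other designated sequences with letter $L$: strings $4$ and $5$ here can each reach both $(4,1)$ and $(1,2)$. That step therefore still needs an actual argument.
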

\begin{proof}
    Indeed, we know that $v_\tabt y_k = \sum_{\tabt'} a_{\tabt'} v_{\tabt'}$, where $a_{\tabt'} \neq 0$ only if 
    \begin{enumerate}
        \item $\tabt' \in \Std(\lambda,\resi)$,
        \item $\deg(\tabt') - \deg(\tabt) = \deg(y_k e(\resi(\tabt)))$ and
        \item The node $(\tabt')^{-1}(k)$ is accessible to string $k$ in the diagram $v_\tabt$. That is, $(\tabt')^{-1}(k) \in A_{v_\tabt}(k)$.
    \end{enumerate}  
    Finally, according to \cite[Lemma 3.6]{forsberg}, after cutting a crossing of string $k$ in $v_\tabt$, the original node $\tabt^{-1}(k)$ that string $k$ reached in $v_\tabt$ is no longer accessible to $k$, so that one of the binary choices involving string $k$ must change.
    The case where $\deg(y_ke(\resi(\tabt))) = 2$ now immediately follows.
    
    Moreover, a residue $0$ or $e$ may be at most inside one reduced residue sequence of double short weight, as shown in \cref{L:YActionSubsetStructure}. It follows that there is at most one summand in the basis decomposition of $v_\tabt y_k$ whenever $\deg(y_k e(\resi(\tabt))) = 4$. \end{proof}
Incidentally, the usual tableau dominance order relates nicely to the function $\phi_{\resi}^\lambda$. To be precise, we have 
$$\tabt = \phi_{\resi}^\lambda(l_1, \dots, l_d) \doms \phi_{\resi}^\lambda(l_1', \dots, l_d') = \tabt'$$
if and only if $(l_r, l_r') \neq (L, A)$ for all $r \in \{1, \dots, d\}$. In particular, $\tabt \doms \tabt'$ implies $\deg(\tabt) > \deg(\tabt')$.

The above lemma provides us with an accurate idea of the terms that vanish under the $y$-action. However, to find the value of $a_{\tabt'}$ in the remaining cases we will compute it via the diagram calculus of quiver Hecke algebras.

Let $s \in \{1, \dots, n\}$ be such that $s \notin \bm{s}^{(r)}$ for any $r$ such that $k \in \bm{s}^{(r)}$. Then $a_{\tabt'} = 0$ whenever $\tabt^{-1}(s) \neq (\tabt')^{-1}(s)$, in concordance with \cref{L:YActionVanishing}. Therefore all such strings are immobile with respect to the action of $y_k$. Since they are immobile strings in a diagram $v_\tabt$ with a fully commutative underlying permutation $\sigma_\tabt$, we may fairly stop drawing them when computing $v_\tabt y_k$. In our computations we will only draw the strings labelled by an integer in $\bm{s}^{(r)}$. Recall that the residues sequence attached to the numbers in $\bm{s}^{(r)} = \{s^{(r)}_1, \dots, s^{(r)}_{2m}\}$ is $i_1^{(r)}, \dots, i_{2m}^{(r)}$. We will use the simple notation $i_1, \dots, i_m$ for the (equal) short residue sequences filled in the arm and leg of $\lambda$ by $\bm{s}^{(r)}$. These appear in a row-strip in the arm and in a column-strip in the leg of $\lambda$.

Next we give a description of the diagram for the strings $\bm{s}^{(r)}$ in $v_\tabt$, assuming that $l_r = L$ for $\tabt$. This condition ensures that $s_1^{(r)} \in \Leg(\lambda)$ in $\tabt$. Moreover, we assume that $\resi^{(r)}$ is reduced, which requires that $s_2^{(r)} \in \Leg(\lambda)$. By the same reasoning, $s^{(r)}_3$ and $s^{(r)}_4$ may not both be in $\Arm(\lambda)$. In general, for any $k < \frac{m}{2}$, at least $k+1$ out of the first $2k$ strings must reach $\Leg(\lambda)$. 

Next we give a description of the diagram for $v_\tabt$. For graphical clarity, we have used the notation $i_m^{-j} \coloneq i_{m-j}$. The column-strip satisfies the same relations as the row-strip, the Garnir relations in the column-strip playing the role of the Specht relations in the row-strip. For this reason, we have chosen to draw the column-strip as a row-strip. 

\begin{lem} \label{L:YActionVTDescription}
    Let $\tabt = \phi_{\resi}^\lambda(l_1, \dots, l_{r-1}, L, l_{r+1}, \dots, l_d)$ be the tableau described above, and draw only the strings in $\bm{s}^{(r)}$. Then
    \begin{equation}
    v_\tabt = 
    \begin{braid}
	\def\h{5};
	\def\sep{2}
	\def\a{7+\sep};
	\def\dashpart{0.8}
	\def\sp{1.5}
	\def\hrect{1.15}
	\def\shiftdown{-3}
	
	\draw(\sp + 0, \shiftdown) -- (\sp + 0, 0) -- (\a, \h) node[anchor=south]{$i_1$} ;
	\draw(\sp + 1, \shiftdown) --(\sp +1, 0) -- (\a+1, \h) node[anchor=south]{$i_2$};
	\draw(\sp + 2, \shiftdown) --(\sp +2, 0) -- (0, \h) node[anchor=south]{$i_1$};
	\draw(\sp + 3, \shiftdown) --(\sp +3, 0) -- (\a+2, \h) node[anchor=south]{$i_3$};
	\draw(\sp + 4, \shiftdown) --(\sp +4, 0) -- (1, \h) node[anchor=south]{$i_2$};
	\draw(\sp + 5, \shiftdown) --(\sp +5, 0) -- (\a+3, \h) node[anchor=south]{$i_4$};
	
	\draw (\sp + 5.5 + \sep/2, -3) node {$\cdots$};
	\draw (1.5 + \sep/2, \h) node[anchor=south]{$\cdots$};
	\draw (\a + 3.5 + \sep/2, \h) node[anchor=south]{$\cdots$};
	
	\draw (\sp + 6+\sep, \shiftdown) --(\sp + 6 + \sep, 0) -- (2  + \sep, \h) node[anchor=south]{$i_m^{-3}$};
	\draw (\sp + 7+\sep, \shiftdown) -- (\sp + 7 + \sep, 0) -- (\a + 4  + \sep, \h) node[anchor=south]{$i_m^{-1}$};
	\draw  (\sp + 8+\sep, \shiftdown) --(\sp +8 + \sep, 0) -- (3  + \sep, \h) node[anchor=south]{$i_m^{-2}$};
	\draw  (\sp + 9+\sep, \shiftdown) --(\sp + 9 + \sep, 0) -- (\a + 5  + \sep, \h) node[anchor=south]{$i_m$};
	\draw  (\sp + 10+\sep, \shiftdown) --(\sp + 10 + \sep, 0) -- (4  + \sep, \h) node[anchor=south]{$i_m^{-1}$};
	\draw  (\sp + 11+\sep, \shiftdown) --(\sp + 11 + \sep, 0) -- (5  + \sep, \h) node[anchor=south]{$i_m$};
	
	\draw[black, sharp corners, fill=white] (1.5 + \sp, \shiftdown + 1) rectangle (9.5 + \sp + \sep , \shiftdown + 1 + \hrect);
	\draw[brown] (-0.5, \h) rectangle (\a - 1.5 , \h + \hrect);
	\draw (\sp + 5.5 + \sep/2, \shiftdown +1 + 0.5*\hrect) node[black, scale=1.3]{$\psi_\sigma$};
	\draw[brown] (\a - 0.5, \h) rectangle (2*\a + 0.5 , \h + \hrect);

\end{braid},
\end{equation} \label{E:GenericVtL}
where the permutation $\sigma$ is restricted in the following two ways:
\begin{enumerate}[(A)]
    \item $\sigma$ may not undo any crossings in the diagram above it. \label{C:perm1}
    \item $v_\tabt$ is the basis element for a standard tableau $\tabt$. \label{C:perm2}
\end{enumerate}
We denote the upper part of Diagram \ref{E:GenericVtL} by $v_L$, so that $v_\tabt = v_L \psi_\sigma$.
\end{lem}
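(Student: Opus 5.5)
We will obtain the displayed form of $v_\tabt$ by factoring the permutation $w^\tabt$ and checking that the top layer of the factorization can be chosen to be exactly the interleaving drawn in the diagram. Recall that $v_\tabt = v_{\tabt^\lambda}\psi_{w^\tabt}$, and since $\lambda$ is a hook, $w^\tabt$ is fully commutative. Hence $\psi_{w^\tabt}$ does not depend on the reduced expression chosen, and we may write $w^\tabt = \tau\sigma$ with $\ell(w^\tabt) = \ell(\tau) + \ell(\sigma)$, where $\tau$ is any left factor in the weak order. The plan is to take $\tau$ to be the permutation realised by the upper part $v_L$. Then $\sigma$ is the remaining factor, and conditions (A) and (B) come out of this factorization.

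\emph{Step 1: identify $\tau$.} After suppressing the strings not in $\bm{s}^{(r)}$ (these are immobile, as explained before the lemma), the strings labelled $s^{(r)}_1,\dots,s^{(r)}_{2m}$ end at the $m$ arm nodes of residues $i_1,\dots,i_m$ and at the $m$ leg nodes of residues $i_1,\dots,i_m$. We draw the column-strip as a row-strip, as the paper does. Going through the strings in increasing order, the $j$-th arm node and the $j$-th leg node always receive strings in increasing order, because $\tabt$ is standard. The reducedness of $\resi^{(r)}$, together with $l_r = L$, gives the counting constraint already noted: among the first $2k$ strings, at least $k+1$ go to the leg. Consequently the leg string reaching the node of residue $i_j$ always precedes the arm string reaching the node of residue $i_j$. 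We therefore choose $\tau$ to be the ``perfect interleaving'' drawn in $v_L$, in which the bottom positions alternate leg, arm, leg, arm, \dots\ apart from the first two, which both go to the leg. This $\tau$ is the shuffle of minimal length compatible with these ordering constraints.

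\emph{Step 2: verify $\tau \le_L w^\tabt$.} We must show that every inversion of $\tau$ is also an inversion of $w^\tabt$. Each inversion of $\tau$ is a pair consisting of a leg string to the $j$-th leg node and an arm string to an earlier arm node $j' < j$, where the interleaving forces the bottom order to be the reverse of the top order. Using the counting constraint from Step 1, we check that in $\tabt$ the entry of the $j'$-th arm node also exceeds the entry of the $j$-th leg node whenever $\tau$ inverts them. This is the main obstacle, since it requires translating the reducedness condition into an inequality between entries. We plan to argue by induction on $j$, using the fact that a violating pair would give an initial segment with equal arm and leg counts, that is, a strictly shorter double short initial segment. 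That would contradict reducedness of $\resi^{(r)}$ as in \cref{L:YActionSubsetStructure}.

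\emph{Step 3: conclude.} Once Step 2 is established, set $\sigma = \tau^{-1}w^\tabt$. Length additivity means precisely that $\sigma$ undoes no crossing of $\tau$, which is condition (A). Condition (B) holds by construction, since $v_L\psi_\sigma = v_{\tabt^\lambda}\psi_{w^\tabt} = v_\tabt$. Finally, the residues at the top of the diagram are read off from the arm and leg residues $i_1,\dots,i_m$.
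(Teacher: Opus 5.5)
Your route differs from the paper's, and it can be made to work. The paper inducts on $m$: it deletes the strings reaching $N_m,N'_m$, applies the inductive hypothesis, observes $\tabt(N_m)=s^{(r)}_{2m}$, and appends a permutation moving only the string of $N'_m$. You instead factor $w^\tabt=\tau\sigma$ in one step, by checking that every crossing of $v_L$ is a crossing of $v_\tabt$. With the correction below, this is the cleaner argument: it exhibits $\tau$ as the weak-order minimum of the admissible shuffles, and it avoids the paper's schematic embedding of $\psi_{\sigma'}$ (on $2m-2$ strings) into the $2m$-string picture. \textbf{The gap is in Step~2: you check the wrong set of pairs, and some of the inequalities you propose to verify are false.}

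\textbf{The inversions of $\tau$.} At the top, the arm block lies left of the leg block; the right-hand box is the leg, since $s^{(r)}_1$ ends there. So an arm string and a leg string cross exactly when the leg string is further left at the bottom. The bottom pattern of $v_L$ is $L,L,A,L,A,\dots,L,A,A$. It ends with two arm strings; alternating all the way after the first two, as you describe, would produce $m+1$ leg strings. The positions are:
\begin{itemize}
\item the $j$-th leg string sits at position $2j-2$ for $j\ge 2$;
\item the $j'$-th arm string sits at $2j'+1$ for $j'<m$, and the $m$-th at $2m$.
\end{itemize}
Hence the inversions of $\tau$ are the pairs (leg $j$, arm $j'$) with $j'\ge j-1$, not $j'<j$. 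For $j'\le j-2$ your inequality $\tabt(N_{j'})>\tabt(N'_j)$ fails in general. For instance, take $e=3$, $\kappa=0$, $\lambda=(4,1^3)$, $\resi=(0,1,2,1,3,2,3)$ and binary word $(L)$. Then $\tabt(N_1)=4<5=\tabt(N'_3)$, and indeed $\sigma=1$ there. Conversely, genuine crossings of $\tau$, such as (leg $1$, arm $1$), are missing from your list. Also, the Step~1 consequence ``leg $j$ precedes arm $j$'' is too weak. The crossings (leg $j$, arm $j-1$) need $\tabt(N'_j)<\tabt(N_{j-1})$; for example, strings $s^{(r)}_2,s^{(r)}_3$ cross in $v_L$.

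\textbf{The fix uses exactly your counting idea.} A proper initial segment of $\bm{s}^{(r)}$ filling $c$ arm and $c$ leg nodes has weight $2(\alpha_{i_1}+\dots+\alpha_{i_c})$, which is double short. Since $s^{(r)}_1$ goes to the leg and the leg-minus-arm count starts at $1$ and moves by $\pm1$, reducedness forces the leg count to strictly exceed the arm count on every proper nonempty initial segment. Hence $\tabt(N_m)=s^{(r)}_{2m}$. Moreover, for $j<m$, at least $j+1$ leg strings precede the $j$-th arm string, i.e.\ $\tabt(N'_{j+1})<\tabt(N_j)$. Since arm entries increase along the row, $\tabt(N'_j)<\tabt(N_{j'})$ for all $j'\ge j-1$ (and leg $1$ precedes everything). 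This says precisely that every crossing of $\tau$ is a crossing of $w^\tabt$, and your Step~3 then goes through unchanged.
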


 \begin{proof}
     We work by induction on $m$. The result is clear for $m = 2$. Write $N_1, \dots, N_m$ for the nodes in $\Arm(\lambda)$ reached by strings in $\bm{s}^{(r)}$, and write $N_1', \dots, N_m'$ for the corresponding nodes in $\Leg(\lambda)$. Next, ignore the strings reaching nodes $N_m, N_m'$. In this way we obtain a diagram with $2m-2$ strings having a reduced sequence with double short weight. By the inductive hypothesis, this diagram looks as follows, for some permutation $\sigma'$ satisfying the appropriate conditions.
     \begin{equation}
          \begin{braid}
	\def\h{5};
	\def\sep{2}
	\def\a{7+\sep};
	\def\dashpart{0.8}
	\def\sp{1.5}
	\def\hrect{1.15}
	\def\shiftdown{-3}
	
	\draw(\sp + 0, \shiftdown) -- (\sp + 0, 0) -- (\a, \h) node[anchor=south]{$i_1$} ;
	\draw(\sp + 1, \shiftdown) --(\sp +1, 0) -- (\a+1, \h) node[anchor=south]{$i_2$};
	\draw(\sp + 2, \shiftdown) --(\sp +2, 0) -- (0, \h) node[anchor=south]{$i_1$};
	\draw(\sp + 3, \shiftdown) --(\sp +3, 0) -- (\a+2, \h) node[anchor=south]{$i_3$};
	\draw(\sp + 4, \shiftdown) --(\sp +4, 0) -- (1, \h) node[anchor=south]{$i_2$};
	\draw(\sp + 5, \shiftdown) --(\sp +5, 0) -- (\a+3, \h) node[anchor=south]{$i_4$};
	
	\draw (\sp + 5.5 + \sep/2, -3) node {$\cdots$};
	\draw (1.5 + \sep/2, \h) node[anchor=south]{$\cdots$};
	\draw (\a + 3.5 + \sep/2, \h) node[anchor=south]{$\cdots$};
	
	\draw (\sp + 6+\sep, \shiftdown) --(\sp + 6 + \sep, 0) -- (2  + \sep, \h) node[anchor=south]{$i_m^{-3}$};
	\draw (\sp + 7+\sep, \shiftdown) -- (\sp + 7 + \sep, 0) -- (\a + 4  + \sep, \h) node[anchor=south]{$i_m^{-1}$};
	\draw  (\sp + 8+\sep, \shiftdown) --(\sp +8 + \sep, 0) -- (3  + \sep, \h) node[anchor=south]{$i_m^{-2}$};
	\draw  (\sp + 10+\sep, \shiftdown) --(\sp + 10 + \sep, 0) -- (4  + \sep, \h) node[anchor=south]{$i_m^{-1}$};
	
	\draw[black, sharp corners, fill=white] (1.5 + \sp, \shiftdown + 1) rectangle (7.5 + \sp + \sep , \shiftdown + 1 + \hrect);
	\draw[brown] (-0.5, \h) rectangle (\a - 1.5 , \h + \hrect);
	\draw (\sp + 4.5 + \sep/2, \shiftdown +1 + 0.5*\hrect) node[black, scale=1.3]{$\psi_{\sigma'}$};
	\draw[brown] (\a - 0.5, \h) rectangle (2*\a + 0.5 , \h + \hrect);

\end{braid}
     \end{equation}
     Next we consider the ways in which the strings reaching nodes $N_m, N_m'$ may appear in the diagram. Standardness requires that node $N_m$ is filled with a number higher than $\tabt(N_{m-1})$. Moreover, the nodes $\{N_m, N_m'\}$ may not be filled with numbers $\{s^{(r)}_{2m-1}, s^{(r)}_{2m}\}$, since $\bm{s}^{(r)}$ must be reduced. Together, these conditions imply that $\tabt(N_m) = s^{(r)}_{2m}$, while the string labeled by $\tabt(N_m')$ crosses at least the strings $s^{(r)}_{2m-1}, s^{(r)}_{2m}$. It may then go on to cross further strings. Therefore, the diagram for $v_\tabt$ looks as follows.
     \begin{equation}
          \begin{braid}
	\def\h{5};
	\def\sep{2}
	\def\a{7+\sep};
	\def\dashpart{0.8}
	\def\sp{1.5}
	\def\hrect{1.15}
	\def\shiftdown{-5}
	
	\draw(\sp + 0, \shiftdown) -- (\sp + 0, 0) -- (\a, \h) node[anchor=south]{$i_1$} ;
	\draw(\sp + 1, \shiftdown) --(\sp +1, 0) -- (\a+1, \h) node[anchor=south]{$i_2$};
	\draw(\sp + 2, \shiftdown) --(\sp +2, 0) -- (0, \h) node[anchor=south]{$i_1$};
	\draw(\sp + 3, \shiftdown) --(\sp +3, 0) -- (\a+2, \h) node[anchor=south]{$i_3$};
	\draw(\sp + 4, \shiftdown) --(\sp +4, 0) -- (1, \h) node[anchor=south]{$i_2$};
	\draw(\sp + 5, \shiftdown) --(\sp +5, 0) -- (\a+3, \h) node[anchor=south]{$i_4$};
	
	\draw (\sp + 5.5 + \sep/2, \shiftdown) node {$\cdots$};
	\draw (1.5 + \sep/2, \h) node[anchor=south]{$\cdots$};
	\draw (\a + 3.5 + \sep/2, \h) node[anchor=south]{$\cdots$};
	
	\draw (\sp + 6+\sep, \shiftdown) --(\sp + 6 + \sep, 0) -- (2  + \sep, \h) node[anchor=south]{$i_m^{-3}$};
	\draw (\sp + 7+\sep, \shiftdown) -- (\sp + 7 + \sep, 0) -- (\a + 4  + \sep, \h) node[anchor=south]{$i_m^{-1}$};
	\draw  (\sp + 8+\sep, \shiftdown) --(\sp +8 + \sep, 0) -- (3  + \sep, \h) node[anchor=south]{$i_m^{-2}$};
	\draw  (\sp + 9+\sep, \shiftdown) --(\sp + 9 + \sep, 0) -- (\a + 5  + \sep, \h) node[anchor=south]{$i_m$};
	\draw  (\sp + 10+\sep, \shiftdown) --(\sp + 10 + \sep, 0) -- (4  + \sep, \h) node[anchor=south]{$i_m^{-1}$};
	\draw  (\sp + 11+\sep, \shiftdown) --(\sp + 11 + \sep, 0) -- (5  + \sep, \h) node[anchor=south]{$i_m$};
	\draw[black, sharp corners, fill=white] (1.5 + \sp, \shiftdown + 3) rectangle (7.5 + \sp + \sep , \shiftdown + 3 + \hrect);
	\draw (\sp + 4.5 + \sep/2, \shiftdown +3 + 0.5*\hrect) node[black, scale=1.3]{$\psi_{\sigma'}$};
	
	\draw[black, sharp corners, fill=white] (1.5 + \sp, \shiftdown + 1) rectangle (9.5 + \sp + \sep , \shiftdown + 1 + \hrect);
	\draw (\sp + 5.5 + \sep/2, \shiftdown +1 + 0.5*\hrect) node[black, scale=1.3]{$\psi_{\tau}$};
	\draw[brown] (-0.5, \h) rectangle (\a - 1.5 , \h + \hrect);
	
	\draw[brown] (\a - 0.5, \h) rectangle (2*\a + 0.5 , \h + \hrect);

\end{braid},
     \end{equation}
     where the permutation $\tau$ exchanges only the string labeled by $\tabt(N_m')$ with other strings. Renaming $\psi_\sigma = \psi_{\sigma'} \psi_\tau = \psi_{\sigma'\tau}$, we easily see that $\sigma$ can be any permutation satisfying conditions \ref{C:perm1} and \ref{C:perm2}.
 \end{proof}
 
 Next we wish to compute $v_L y_k$, which will naturally be useful in the course of computing $v_\tabt y_k = v_L \psi_{\sigma} y_k$. In the diagram we will use an operator $\phi_{ab}$  which we define to be a crossing between two strings unless $i_a = i_b$, in which case $\phi_{ab} = 1$. Furthermore, we will use the compact notation $\phi_{-ab} \coloneq \phi_{m-a,m-b}$ to avoid visual clutter.
 
\begin{lem} \label{L:YActionYkVL}
    Let $m \geq 2$, $k \in \bm{s}^{(r)}$ with $\deg(y_k e(\resi(\tabt))) = 2$, and let $v_L \in S^\lambda$ be as in \cref{L:YActionVTDescription}. Then we have
    \begin{equation*}
        v_L y_k = \pm 
        \begin{braid}
	\def\h{5};
	\def\sep{2}
	\def\a{7+\sep};
	\def\dashpart{0.8}
	\def\sp{1.5}
	\def\hrect{1.15}
	\def\shiftdown{-3}
	
	\draw(\sp + 0, \shiftdown) -- (\sp + 0, 0) -- (0, \h) node[anchor=south]{$i_1$} ;
	\draw(\sp + 1, \shiftdown) --(\sp +1, 0) -- (1, \h) node[anchor=south]{$i_2$};
	\draw(\sp + 2, \shiftdown) --(\sp +2, 0) -- (2, \h) node[anchor=south]{$i_3$};
	\draw(\sp + 3, \shiftdown) --(\sp +3, 0) -- (\a, \h) node[anchor=south]{$i_1$};
	\draw(\sp + 4, \shiftdown) --(\sp +4, 0) -- (3, \h) node[anchor=south]{$i_4$};
	\draw(\sp + 5, \shiftdown) --(\sp +5, 0) -- (\a+1, \h) node[anchor=south]{$i_2$};
	
	\draw (\sp + 5.5 + \sep/2, -3) node {$\cdots$};
	\draw (3.5 + \sep/2, \h) node[anchor=south]{$\cdots$};
	\draw (\a + 1.5 + \sep/2, \h) node[anchor=south]{$\cdots$};
	
	\draw (\sp + 6+\sep, \shiftdown) --(\sp + 6 + \sep, 0) -- (4  + \sep, \h) node[anchor=south]{$i_m^{-1}$};
	\draw (\sp + 7+\sep, \shiftdown) -- (\sp + 7 + \sep, 0) -- (\a + 2  + \sep, \h) node[anchor=south]{$i_m^{-3}$};
	\draw  (\sp + 8+\sep, \shiftdown) --(\sp +8 + \sep, 0) -- (5  + \sep, \h) node[anchor=south]{$i_m$};
	\draw  (\sp + 9+\sep, \shiftdown) --(\sp + 9 + \sep, 0) -- (\a + 3  + \sep, \h) node[anchor=south]{$i_m^{-2}$};
	\draw  (\sp + 10+\sep, \shiftdown) --(\sp + 10 + \sep, 0) -- (\a + 4  + \sep, \h) node[anchor=south]{$i_m^{-1}$};
	\draw  (\sp + 11+\sep, \shiftdown) --(\sp + 11 + \sep, 0) -- (\a + 5  + \sep, \h) node[anchor=south]{$i_m$};
	
	\draw[black, sharp corners, fill=white] (1.8 + \sp, \shiftdown + 1) rectangle (3.2 + \sp , \shiftdown + 1 + \hrect);
	\draw[black, sharp corners, fill=white] (3.8 + \sp, \shiftdown + 1) rectangle (5.2 + \sp , \shiftdown + 1 + \hrect);
	\draw[black, sharp corners, fill=white] (5.7 + \sep + \sp, \shiftdown + 1) rectangle (7.3 + \sp + \sep , \shiftdown + 1 + \hrect);
	\draw[black, sharp corners, fill=white] (7.7 + \sep + \sp, \shiftdown + 1) rectangle (9.3 + \sp + \sep , \shiftdown + 1 + \hrect);
	
	\draw[brown] (-0.5, \h) rectangle (\a - 1.5 , \h + \hrect);
	\draw (\sp + 2.5, \shiftdown +1 + 0.5*\hrect) node[black, scale=1]{$\phi_{13}$};
	\draw (\sp + 4.5, \shiftdown +1 + 0.5*\hrect) node[black, scale=1]{$\phi_{24}$};
	\draw (\sp + 6.5+\sep, \shiftdown +1 + 0.5*\hrect) node[black, scale=1]{$\phi_{-13}$};
	\draw (\sp + 8.5+\sep, \shiftdown +1 + 0.5*\hrect) node[black, scale=1]{$\phi_{-02}$};
	\draw[brown] (\a - 0.5, \h) rectangle (2*\a + 0.5 , \h + \hrect);

\end{braid} \eqcolon \pm v_A.
    \end{equation*} 
\end{lem}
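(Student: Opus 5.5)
Our plan is to compute $v_L y_k$ by a direct diagrammatic computation, using induction on $m$ as in the proof of \cref{L:YActionVTDescription}. Since $\deg(y_k e(\resi(\tabt)))=2$, the repeated residues in $\bm{s}^{(r)}$ are not $0$ or $e$. By \cref{L:YActionVanishing}, only the strings of $\bm{s}^{(r)}$ matter: all other strings are immobile and can be erased. In $v_L$, the leg strip (drawn as a row-strip) is filled first in an interleaved fashion. The strings reaching $i_1, i_2, \dots$ in the arm cross over the strings reaching the leg, and every $(i_a,i_a)$ pair is crossed exactly once.

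\textbf{Base case $m=2$.} Here $v_L$ consists of four strings with residues $(i_1,i_2,i_1,i_2)$ reaching, in order, arm-$i_1$, arm-$i_2$, leg-$i_1$, leg-$i_2$. In that arrangement, the strings to leg-$i_1$ and leg-$i_2$ lie on the left. We place the dot on string $k$ and slide it to the top. Each time it passes an $(i_a,i_a)$ crossing, we pick up an error term in which that crossing is cut. The slide term ends with a dot at the top of a row-strip and vanishes by \cref{E:specht}, using the Garnir relation for the leg strip.

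Among the error terms, the one that cuts the crossing between the two $i_1$-strings reroutes them. Rerouting here means exchanging which string goes to the arm and which to the leg. After the cut, the remaining crossings of equal adjacent residues must be resolved with \cref{E:psisquaredC} and \cref{E:braidC}. The key observation is that the braid relation in the case $i=k\to j$ produces exactly one surviving error term. The other terms either contain a dot at the top of a strip or an adjacent transposition inside a strip, so they die by \cref{E:specht} or by the Garnir relation \cref{E:GarnirC}. What survives is the diagram $v_A$, with $\phi_{13}$ giving a crossing between the strings reaching $i_1$ and $i_3$ unless these residues coincide. The sign comes from the $-$ signs in \cref{E:braidC} and \cref{E:dotslideC}.

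\textbf{Inductive step.} We add the last pair of strings reaching $N_m$ and $N_m'$. By the induction hypothesis, $v_{L'}y_k=\pm v_{A'}$ on the first $2m-2$ strings. We then check how the two new strings, with residue $i_m$, pass through the new configuration. The string $s^{(r)}_{2m}$ goes to $N_m$, and the string to $N_m'$ crosses $s^{(r)}_{2m-1}$ and $s^{(r)}_{2m}$. Straightening these crossings produces the factors $\phi_{-13}$ and $\phi_{-02}$. Since $\resi^{(r)}$ is a short residue sequence, \cref{L:elementary_short_weights}(4) guarantees that equal residues occur only at the positions forced by the double short weight. Therefore every newly created $(i,i)$ crossing lies adjacent in a strip, and the corresponding terms vanish. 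Equivalently, each such string is immobile, so \cref{E:ImmobileRelations} applies.

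\textbf{Main obstacle.} We expect the hardest step to be controlling the error terms of the braid relations when two strings of equal residue meet a third of adjacent residue, in particular for the residues next to $0$ or $e$. There, \cref{E:braidC} produces dotted error terms involving $\calq_{0,1}$ or $\calq_{e-1,e}$. Our plan for these is to show that each dotted term has its dot on a string that is stubborn in the arm or co-stubborn in the leg. Such a term then either vanishes by \cref{E:specht}, or has strictly lower degree than $v_A$ and so is excluded by \cref{P:ResidueHookTableaux} together with \cref{L:YActionVanishing}. We would need to check this case by case.
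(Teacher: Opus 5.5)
There is a genuine gap, located at what you call the main obstacle, and your plan for it points the wrong way. First, $\deg(y_ke(\resi(\tabt)))=2$ only gives $i_k\notin\{0,e\}$. The short sequence $i_1,\dots,i_m$ can still pass through $0$ or $e$: in \cref{eg:binaryword}, $v_L$ for $\bm{s}^{(1)}$ is $v_{\phi^\lambda_{\resi}(L,A)}$, with $(i_1,i_2,i_3)=(1,2,1)$ and $e=2$. This is exactly what the boxes in $v_A$ record, since $i_a=i_{a+2}$ if and only if $i_{a+1}\in\{0,e\}$, and then $\phi_{a,a+2}=1$.

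The computation is a cascade. The dot cuts the lowest $(i_1,i_1)$ crossing. Then sliding $s^{(r)}_3$ through the crossings of $s^{(r)}_2$ leaves only the braid error term cutting the lowest $(i_2,i_2)$ crossing, and so on up the sequence. Consider a pair of residue $i_a$ with $i_{a-1}\in\{0,e\}$. Here the double-edge case of \cref{E:braidC} applies, and its error term consists \emph{only} of the two dotted diagrams. So discarding dotted terms stops the cascade and forces $v_Ly_k=0$, whereas $v_A\neq 0$.

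The paper keeps these terms. One obtains $\sum_Y\pm v_A'Y$. The summands with a dot on a leg string $s_a'$ vanish, because $s_a'$ meets no string of its residue in $v_A'$. The unique survivor carries a dot on each such arm string $s_a$. That dot does not vanish by \cref{E:specht}: in $v_A'$ the string $s_a$ still crosses the leg string of residue $i_{a-2}=i_a$. Sliding the dot up cuts exactly that crossing, and this is how $\phi_{a-2,a}$ becomes the identity.

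Your dichotomy (vanishes by a Specht relation, or is excluded by degree) misses this third outcome. The degree exclusion is not available in any case. Every term produced by the homogeneous relations has degree $\deg v_L+2$. Moreover, \cref{P:ResidueHookTableaux,L:YActionVanishing} constrain basis coefficients of the final result, not intermediate diagrams.

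The induction on $m$ also does not go through as described. In $v_L$ the new arm string of residue $i_m$ runs from the last bottom position to top position $m$, so it crosses every leg string of $v_{L'}$. Thus $v_{L'}$ does not sit directly below the Specht generator inside $v_L$, and the Specht and Garnir vanishing used to evaluate $v_{L'}y_k$ cannot simply be imported. Moreover, the new $(i_m,i_m)$ crossing is absent from $v_A$, so it must be cut by an error term. The new strings are therefore not immobile, and the cascade has to run through them as well, contrary to what your inductive step asserts. The paper avoids both problems by running the cascade once on the whole diagram instead of inducting on $m$.
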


\begin{proof}
    We show only a sketch of the computation for $k=1$. We wish to compute
    \begin{equation}
         \begin{braid}
	\def\h{5};
	\def\sep{2}
	\def\a{7+\sep};
	\def\dashpart{0.8}
	\def\sp{1.5}
	\def\hrect{1.15}
	\def\shiftdown{-5}
	
	\draw (\sp + 0, 0) -- (\a, \h) node[anchor=south]{$i_1$} ;
	\draw(\sp +1, 0) -- (\a+1, \h) node[anchor=south]{$i_2$};
	\draw(\sp +2, 0) -- (0, \h) node[anchor=south]{$i_1$};
	\draw(\sp+3,0) -- (\a+2, \h) node[anchor=south]{$i_3$};
	\draw(\sp +4, 0) -- (1, \h) node[anchor=south]{$i_2$};
	\draw(\sp +5, 0) -- (\a+3, \h) node[anchor=south]{$i_4$};
	
	\draw (\sp + 5.5 + \sep/2, 0) node {$\cdots$};
	\draw (1.5 + \sep/2, \h) node[anchor=south]{$\cdots$};
	\draw (\a + 3.5 + \sep/2, \h) node[anchor=south]{$\cdots$};
	
	\draw(\sp + 6 + \sep, 0) -- (2  + \sep, \h) node[anchor=south]{$i_m^{-3}$};
	\draw(\sp + 7 + \sep, 0) -- (\a + 4  + \sep, \h) node[anchor=south]{$i_m^{-1}$};
	\draw (\sp +8 + \sep, 0) -- (3  + \sep, \h) node[anchor=south]{$i_m^{-2}$};
	\draw (\sp + 9 + \sep, 0) -- (\a + 5  + \sep, \h) node[anchor=south]{$i_m$};
	\draw (\sp + 10 + \sep, 0) -- (4  + \sep, \h) node[anchor=south]{$i_m^{-1}$};
	\draw (\sp + 11 + \sep, 0) -- (5  + \sep, \h) node[anchor=south]{$i_m$};
	\draw[brown] (-0.5, \h) rectangle (\a - 1.5 , \h + \hrect);
	
	\draw[brown] (\a - 0.5, \h) rectangle (2*\a + 0.5 , \h + \hrect);
	\path (\sp + 0, 0) -- (\a, \h) coordinate[pos = 0.1] (A);
    	\greendot(A);

\end{braid}.
    \end{equation}
    We simply slide the dot past crossings until it reaches the top of the diagram and vanishes. We are left with some summands corresponding to the breaking of crossings for the string $s^{(r)}_1$. But breaking a crossing above the first crossing immediately gives zero due to a Specht relation, so that Relation \ref{E:dotslideC} gives simply
    \begin{equation}
         -  \begin{braid}
	\def\h{5};
	\def\sep{2}
	\def\a{7+\sep};
	\def\dashpart{0.8}
	\def\sp{1.5}
	\def\hrect{1.15}
	\def\shiftdown{0}
	
	\draw (\sp + 0, 0) -- (0, \h) node[anchor=south]{$i_1$};
	\draw(\sp +1, 0) -- (\a+1, \h) node[anchor=south]{$i_2$};
	\path(\sp +2, 0) -- (0, \h) coordinate[pos=0.2] (A);
	\draw(\sp+2,0) -- (A) -- (\a, \h) node[anchor=south]{$i_1$};
	\draw(\sp + 3, 0) -- (\a+2, \h) node[anchor=south]{$i_3$};
	\draw(\sp +4, 0) -- (1, \h) node[anchor=south]{$i_2$};
	\draw(\sp +5, 0) -- (\a+3, \h) node[anchor=south]{$i_4$};
	
	\draw (\sp + 5.5 + \sep/2, \shiftdown) node {$\cdots$};
	\draw (1.5 + \sep/2, \h) node[anchor=south]{$\cdots$};
	\draw (\a + 3.5 + \sep/2, \h) node[anchor=south]{$\cdots$};
	
	\draw(\sp + 6 + \sep, 0) -- (2  + \sep, \h) node[anchor=south]{$i_m^{-3}$};
	\draw(\sp + 7 + \sep, 0) -- (\a + 4  + \sep, \h) node[anchor=south]{$i_m^{-1}$};
	\draw (\sp +8 + \sep, 0) -- (3  + \sep, \h) node[anchor=south]{$i_m^{-2}$};
	\draw (\sp + 9 + \sep, 0) -- (\a + 5  + \sep, \h) node[anchor=south]{$i_m$};
	\draw (\sp + 10 + \sep, 0) -- (4  + \sep, \h) node[anchor=south]{$i_m^{-1}$};
	\draw (\sp + 11 + \sep, 0) -- (5  + \sep, \h) node[anchor=south]{$i_m$};
	\draw[brown] (-0.5, \h) rectangle (\a - 1.5 , \h + \hrect);
	
	\draw[brown] (\a - 0.5, \h) rectangle (2*\a + 0.5 , \h + \hrect);
	\path (\sp + 0, 0) -- (\a, \h) coordinate[pos = 0.1] (A);

\end{braid}.
    \end{equation}
    Next we slide the string $s^{(r)}_3$ past the crossings of $s^{(r)}_2$ until it reaches the top of the diagram, where the diagram evaluates to zero due to a Specht relation. Again, the only non-zero term corresponds to the summand where the lowest crossing of $s^{(r)}_2$ is cut, where Relation \ref{E:braidC} gives
    \begin{equation}
         \begin{braid}
	\def\h{5};
	\def\sep{2}
	\def\a{7+\sep};
	\def\dashpart{0.8}
	\def\sp{1.5}
	\def\hrect{1.15}
	\def\shiftdown{0}
	
	\draw (\sp + 0, 0) -- (0, \h) node[anchor=south]{$i_1$};
	\draw(\sp +1, 0) -- (1, \h) node[anchor=south]{$i_2$};
	\path(\sp +2, 0) -- (0, \h) coordinate[pos=0.2] (A);
	\draw(\sp+2,0) -- (A) -- (\a, \h) node[anchor=south]{$i_1$};
	\draw(\sp + 3, 0) -- (\a+2, \h) node[anchor=south]{$i_3$};
	\path(\sp +4, 0) -- (1, \h) coordinate[pos=0.23] (A);
	\draw(\sp +4, 0)  -- (A) -- (\a + 1, \h)node[anchor=south]{$i_2$};
	\draw(\sp +5, 0) -- (\a+3, \h) node[anchor=south]{$i_4$};
	
	\draw (\sp + 5.5 + \sep/2, \shiftdown) node {$\cdots$};
	\draw (1.5 + \sep/2, \h) node[anchor=south]{$\cdots$};
	\draw (\a + 3.5 + \sep/2, \h) node[anchor=south]{$\cdots$};
	
	\draw(\sp + 6 + \sep, 0) -- (2  + \sep, \h) node[anchor=south]{$i_m^{-3}$};
	\draw(\sp + 7 + \sep, 0) -- (\a + 4  + \sep, \h) node[anchor=south]{$i_m^{-1}$};
	\draw (\sp +8 + \sep, 0) -- (3  + \sep, \h) node[anchor=south]{$i_m^{-2}$};
	\draw (\sp + 9 + \sep, 0) -- (\a + 5  + \sep, \h) node[anchor=south]{$i_m$};
	\draw (\sp + 10 + \sep, 0) -- (4  + \sep, \h) node[anchor=south]{$i_m^{-1}$};
	\draw (\sp + 11 + \sep, 0) -- (5  + \sep, \h) node[anchor=south]{$i_m$};
	\draw[brown] (-0.5, \h) rectangle (\a - 1.5 , \h + \hrect);
	
	\draw[brown] (\a - 0.5, \h) rectangle (2*\a + 0.5 , \h + \hrect);
	\path (\sp + 0, 0) -- (\a, \h) coordinate[pos = 0.1] (A);

\end{braid}.
    \end{equation}
    The process can be continued in exactly the same way, with the only caveat that Relation \ref{E:braidC} adds some dots to our diagrams. That is, we reach a sum of diagrams as follows:
    
    \begin{equation} \label{E:YActionYSumDiagram}
        \sum_Y \pm \begin{braid}
	\def\h{5};
	\def\sep{2}
	\def\a{7+\sep};
	\def\dashpart{0.8}
	\def\sp{1.5}
	\def\hrect{1.15}
	\def\shiftdown{-3}
	
	\draw(\sp + 0, 0) -- (0, \h) node[anchor=south]{$i_1$} ;
	\draw(\sp +1, 0) -- (1, \h) node[anchor=south]{$i_2$};
	\draw(\sp +3, 0) -- (2, \h) node[anchor=south]{$i_3$};
	\draw(\sp +2, 0) -- (\a, \h) node[anchor=south]{$i_1$};
	\draw(\sp +5, 0) -- (3, \h) node[anchor=south]{$i_4$};
	\draw(\sp +4, 0) -- (\a+1, \h) node[anchor=south]{$i_2$};
	
	\draw (\sp + 5.5 + \sep/2, 0) node {$\cdots$};
	\draw (3.5 + \sep/2, \h) node[anchor=south]{$\cdots$};
	\draw (\a + 1.5 + \sep/2, \h) node[anchor=south]{$\cdots$};
	
	\draw (\sp + 7 + \sep, 0) -- (4  + \sep, \h) node[anchor=south]{$i_m^{-1}$};
	\draw (\sp + 6 + \sep, 0) -- (\a + 2  + \sep, \h) node[anchor=south]{$i_m^{-3}$};
	\draw  (\sp +9 + \sep, 0) -- (5  + \sep, \h) node[anchor=south]{$i_m$};
	\draw  (\sp + 8 + \sep, 0) -- (\a + 3  + \sep, \h) node[anchor=south]{$i_m^{-2}$};
	\draw  (\sp + 10 + \sep, 0) -- (\a + 4  + \sep, \h) node[anchor=south]{$i_m^{-1}$};
	\draw  (\sp + 11 + \sep, 0) -- (\a + 5  + \sep, \h) node[anchor=south]{$i_m$};
	
	\draw[brown] (-0.5, \h) rectangle (\a - 1.5 , \h + \hrect);
	\draw[brown] (\a - 0.5, \h) rectangle (2*\a + 0.5 , \h + \hrect);

\end{braid} Y \eqcolon \pm \sum_Y v_A' Y
    \end{equation} 
    where the sum is over all collections $Y$ of dots having one dot per pair of strings $s_a, s_a' \in \bm{s}^{(r)}$  which, in $v_L$, reach nodes $N_a, N_a'$ to the right of nodes $N_{a-1},N_{a-1}'$ of residue $0$ or $e$. That is, in $v_L$, string $s_a$ reaches $N_a \in \Arm(\lambda)$, while $s_a'$ reaches $N_a' \in \Leg(\lambda)$. However, the string $s_a'$ does not meet any other strings of its residue in $v_A'$, so that $v_A' y_{s_a'} = 0$. It follows that the only non-zero summand in Diagram \ref{E:YActionYSumDiagram} which does not vanish is given by the $Y$ where all the $s_a$ have a dot and none of the $s_a'$ have a dot. These dots break all the crossings between strings of the same residue, and we reach the diagram in the lemma statement.
\end{proof}

Next we wish to compute $v_\tabt y_k$ in more generality. Let $\sigma$ be an arbitrary permutation that satisfies conditions \ref{C:perm1} and \ref{C:perm2} above. Suppose first that $k \in \bm{s}^{(r+1)} \subset \bm{s}^{(r)}$ and $l_{r+1} = L$ in $\tabt$. According to \cref{L:YActionSubsetStructure}, $\resi^{(r+1)} = (i, i)$, so that string $k$ is adjacent to and crosses a string with the same residue in $v_\tabt$. Suppose for instance that $i_{k+1} = i_k$, so that $v_\tabt = v' \psi_k$. In that case,
\begin{equation}
    v_\tabt y_k = v' \psi_k y_k = (v' y_{k+1}) \psi_k - v',
\end{equation}
according to \cref{E:dotslideC}. It follows that to understand the $y$-action on $v_\tabt$, it is enough to understand the action on $v'$. In the sequel we make the further assumption that in $\psi_\sigma$, two adjacent strings of the same residue do not cross each other.

\begin{lem} \label{L:YActionPsiVa}
    Let $v_\tabt = v_L \psi_\sigma$, where $\sigma$ is a permutation satisfying conditions \ref{C:perm1}, \ref{C:perm2}, and suppose that adjacent strings of equal residue do not cross each other in $\psi_\sigma$. Suppose also that $\deg(y_k e(\resi(\tabt))) = 2$. Then
    \begin{equation*}
        v_\tabt y_k = \pm v_A \psi_\sigma.
    \end{equation*}
\end{lem}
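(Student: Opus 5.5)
We need $v_\tabt y_k = v_L \psi_\sigma y_k = \pm v_A\psi_\sigma$. The strategy is to move the dot $y_k$ upward through $\psi_\sigma$ until it sits on top of $v_L$, and then apply \cref{L:YActionYkVL}. Write $\psi_\sigma = \psi_{r_1}\cdots\psi_{r_p}$ using the reduced expression coming from \cref{L:YActionVTDescription}.

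We push $y_k$ up one crossing at a time using the dot slide relation \eqref{E:dotslideC}. At each crossing the dot changes strings, and this produces a Slide term plus, if the two crossing strings have equal residue, an Error term in which that crossing is removed. By hypothesis, adjacent strings of equal residue never cross each other in $\psi_\sigma$. We also need to rule out equal-residue crossings between non-adjacent strings. For this we use \cref{L:YActionSubsetStructure}: the residues inside $\bm{s}^{(r)}$ outside a nested pair form the short sequences $i_1,\dots,i_m$, and each residue occurs at most once per half. Consequently, two equal-residue strings in $\psi_\sigma$ are the two partners $s_a, s_a'$ of a pair. Condition (A) forbids $\sigma$ from uncrossing them, and standardness, condition (B), forbids crossing them a second time.

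This step is the main obstacle, and I expect it to need care. If an equal-residue crossing does occur in $\psi_\sigma$, I would instead argue that the Error term vanishes. Removing such a crossing gives a diagram $v_L\psi_{\sigma'}$ in which some string reaches a node not accessible to it in $v_\tabt$. Equivalently, it yields a diagram of strictly higher degree in the span of tableaux with the same residue sequence. Comparing with the degree count of \cref{P:ResidueHookTableaux} and with \cite[Lemma 3.6]{forsberg}, that diagram must be zero, since a stubborn arm string cannot reach the leg. Either way, every Error term vanishes.

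Hence $v_L\psi_\sigma y_k = v_L y_{k'}\psi_\sigma$, where $k' = \sigma(k)$ is the position at the top of $\psi_\sigma$ of the string that ends at position $k$. The string $k'$ lies in $\bm{s}^{(r)}$ and has the same residue, so $\deg(y_{k'} e(\resi^{L})) = 2$. By \cref{L:YActionYkVL}, which is proved for arbitrary $k'$ in $\bm{s}^{(r)}$ (not just $k=1$), we get $v_L y_{k'} = \pm v_A$, and therefore $v_\tabt y_k = \pm v_A\psi_\sigma$. Finally, the strings outside $\bm{s}^{(r)}$ are immobile, so by \eqref{E:ImmobileRelations} they do not affect any of these manipulations.
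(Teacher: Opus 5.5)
\textbf{There is a gap in the step where you rule out equal-residue crossings.} Your overall reduction matches the paper's: slide $y_k$ up through $\psi_\sigma$, show that every Error term vanishes, then apply \cref{L:YActionYkVL}. The problem is the claim that each residue occurs at most once in each half $i_1,\dots,i_m$. That claim is false. By \cref{L:elementary_short_weights}(1), a residue $j\notin\{0,e\}$ may occur twice; the halves in \cref{eg:binaryword} are $(1,2,1)$. Also, \cref{L:YActionSubsetStructure} is about nested designated sequences, not about multiplicities.

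Suppose $i_p=i_q$ with $q\ge p+2$. Then the arm string reaching the $p$-th node and the leg string reaching the $q$-th node do not cross in $v_L$. So neither (A) nor standardness prevents $\psi_\sigma$ from crossing them, and they need not be adjacent. Here is a concrete instance. Take $e=3$, $\kappa=0$, $\lambda=(6,1^5)$, and let $\tabt$ have first row $1,6,7,9,10,11$ and first column $1,2,3,4,5,8$. Then $v_\tabt=v_L\psi_\sigma$ satisfies all the hypotheses; in fact no two adjacent strings share a residue. Yet strings $5$ and $7$, both of residue $2$ and reaching $(5,1)$ and $(1,3)$, cross inside $\psi_\sigma$, with string $6$ between them at the bottom. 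So sliding $y_7$ up really does produce an Error term $v_1^*$, and everything rests on your fallback.

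\textbf{The fallback does not close the gap.} $v_1^*$ is a summand of the homogeneous element $v_\tabt y_k$, so it has degree $\deg\tabt+2$, the same as the Slide term. By \cref{P:ResidueHookTableaux}, that degree is attained by non-zero elements of $e(\resi)S^\lambda$, for example $v_{\tabt'}$ with $l_r=A$. So degree alone cannot force $v_1^*=0$. Accessibility does not help either. Cutting the crossing is exactly what sends the stubborn string to the leg node, so that node is accessible, and \cite[Lemma 3.6]{forsberg} does not say the resulting diagram vanishes.

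\textbf{What is missing is the paper's explicit chain of slides ending in a Specht relation.} Write $\tilde k$ for the co-stubborn partner of $k$.
\begin{enumerate}
\item Since $k$ and $\tilde k$ are non-adjacent, in $v_1^*$ the node just before $k$'s new node is filled by a string $s'<k$ of residue $i_{k-1}$, or the symmetric situation holds for $\tilde k$.
\item Slide $k$ through the crossings of $s'$. Every term dies by \eqref{E:specht} except the one that cuts the unique crossing of $s'$ with its co-stubborn partner $s$.
\item In that term, slide $s'$ past the stubborn $i_k$-string $t$. The only survivor is the term cutting the crossing of $t$ with its co-stubborn partner $t'$.
\item There, $t'$ slides past a string $u$ of residue $i_{k+1}$ without meeting any $(i_{k+1},i_{k+1})$-crossing, so the diagram is $0$ by \eqref{E:specht}.
\end{enumerate}
Only after an argument of this kind do you obtain $v_L\psi_\sigma y_k=v_L y_{\sigma^{-1}(k)}\psi_\sigma$.
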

\begin{proof}
    The following is a sketch of the strings involved in the argument as they appear in $v_\tabt$.
    \begin{equation} \label{E:YActionPsiVASketch}
        \begin{braid}
    	\def\h{5};
    	\def\sep{2}
    	\def\a{6+\sep};
    	\def\dashpart{0.8}
    	\def\sp{1.5}
    	\def\hrect{1.25}
    	\def\shiftdown{-3}
    	
    	\draw(0, 0) node[black, anchor=north]{$t'$}-- (\a, \h) node[anchor=south]{$i_k$} ;
        \draw(-1,0) node{$\cdots$};
        \draw(-1,\h + \hrect*0.5) node{$\cdots$};
        \draw(1,0) node{$\cdots$};
        \draw(1,\h + \hrect*0.5) node{$\cdots$};
        \draw(\a,0) node{$\cdots$};
        \draw(\a+1,\h + \hrect*0.5) node{$\cdots$};
        \draw(\a-2,0) node{$\cdots$};
        \draw(\a-1,\h + \hrect*0.5) node{$\cdots$};
        \draw(3,0) node{$\cdots$};
        \draw(5,\h + \hrect*0.5) node{$\cdots$};
        \draw(\a+4,0) node{$\cdots$};
        \draw(\a+4,\h + \hrect*0.5) node{$\cdots$};
    	\draw(2, 0) node[black, anchor=north]{$k'$} -- (\a + 2, \h) node[anchor=south]{$i_k$};
        \draw(\a-1, 0) node[black, anchor=north]{$k$} -- (0, \h) node[anchor=south]{$i_k$} ;
    	\draw(\a+2, 0) node[black, anchor=north]{$t$} -- (3, \h) node[anchor=south]{$i_k$};
        \draw(4, 0) node[black, anchor=north]{$s'$} -- (4, 0.5) -- (\a+3,\h) coordinate[pos=0.2] node[anchor=south]{$i_k^{-1}$};
        \draw(3+\a, 0) node[black, anchor=north]{$s$} -- (4,\h) node[anchor=south]{$i_k^{-1}$};
        \draw(1+\a, 0) node[black, anchor=north]{$u$}-- (2,\h) node[anchor=south]{$i_k^{+1}$};

    	\draw[brown] (-1.7, \h) rectangle (\a - 2.3 , \h + \hrect);
    	\draw[brown] (\a - 1.7, \h) rectangle (2*\a - 1.3 , \h + \hrect);
    
        \end{braid} 
    \end{equation}
    We are tasked with showing that $v_L \psi_\sigma y_k = v_L y_{\sigma^{-1}(k)} \psi_\sigma$. The lemma will easily follow, as
    \begin{equation}
        v_\tabt y_k = v_L \psi_\sigma y_k = v_L y_{\sigma^{-1}(k)} \psi_\sigma = \pm v_A \psi_\sigma.
    \end{equation}
    The reduced expression for $v_\tabt$ is fully commutative, so that the string labelled by $k$ is either stubborn or co-stubborn. We suppose without loss of generality that it is stubborn. As the dot $y_k$ slides past the crossings of string $k$, it may encounter a crossing with a co-stubborn string of the same residue before reaching $v_L$. We label this string $k'$. The dot slide leaves a summand $\pm v^*_1$ where the crossing between strings $k$ and $k'$ is cut, according to \cref{E:dotslideC}. To prove the lemma it suffices to show that $v^*_1 = 0$.

    First, notice that for strings $k$ and $k'$ to cross below $v_L$ in $v_\tabt$, residue $i_k$ must appear four times inside $\resi^{(r)}$, string $k$ must reach the smallest node of its residue and $k'$ must reach the largest node of its residue.  Moreover, strings $k, k'$ may not be adjacent, according to the condition imposed on $\psi_\sigma$ in the statement of the lemma. Consequently, there is at least one string between $k'$ and $k$, and at least one of the following holds for the tableau corresponding to $v_1^*$.
    \begin{enumerate}
        \item The number in the node immediately to the left of $\tabt_{{v^*_1}}^{-1}(k)$ is smaller than $k$, or
        \item The number in the node immediately to the right of $\tabt_{{v^*_1}}^{-1}(k')$ is larger than $k'$.
    \end{enumerate}
    We suppose that the former condition holds without loss of generality, and we let the number in question be $s'$. It has residue $i_{k-1}$. (Recall that we are using the notation $i_1, \dots, i_m$ for the short residue sequence in the arm and leg covered by $\bm{s}^{(r)}$.)

    String $k$ can slide through all the crossings of string $s'$, leaving summands where one of these crossings is cut. If string $k$ slides past all the crossings of string $s'$ without cutting any, the diagram vanishes due to \cref{E:specht}. In fact, exactly one of the crossings of $s'$ which string $k$ slides past features a co-stubborn string $s$ sharing the residue $i_{k-1} = \res(s')$. This is because, if $s'$ reaches $N \in \Arm(\lambda)$ in $v_\tabt$, then there is a corresponding string $s$ reaching the mirrored node $N' \in \Leg(\lambda)$ in $v_\tabt$, and standardness requires $s > k$. If there were another string $s''$ with residue $i_{k-1}$ reaching a node in $\Leg(\lambda)$, we would have $s'' < k$, as this string must reach a node lower than $\tabt^{-1}(k)$. 

    We call $v^{*}_2$ the diagram obtained from $v^*_1$ by cutting the crossing between the co-stubborn string $s$ and the stubborn string $s'$ via the slide of string $k$ past this crossing, according to \cref{E:braidC}. The $(s,s')$-crossing in $v^*_1$ is part of the middle horizontal of $(i,i)$-crossings in the upper part of the diagram $v^*_1$ described by $v_L$, so that, in diagram $v^{*}_2$, string $s'$ can slide past the crossings of a stubborn string $t$ of residue $i_k$, again vanishing at the top of the diagram due to \cref{E:specht} and leaving only a diagram $v^{*}_3$ where the crossing between string $t$ and a co-stubborn string $t'$ of residue $i_k$ is cut. 
    
    Finally, in diagram $v^{*}_3$, string $t'$ can slide past the crossings of a string $u$ of residue $i_{k+1}$ without meeting any $(i_{k+1},i_{k+1})$-crossings, so that $v^{*}_3 = 0$ due to \cref{E:specht}, and therefore $v^*_1 = 0$, which completes the proof of the lemma. 
\end{proof}

Now we have reached the final problem in this section: we need to compute $v_A \psi_\sigma$. Recall that we're currently making the assumption that adjacent strings of equal residue do not cross.

\begin{lem} \label{L:YActionPsiVaComp}
    Let $\tabt' = \phi_{\resi}^\lambda(l_1, \dots, l_{r-1}, A, l_{r+1}, \dots, l_d)$ and let $v_A$ be as in \cref{L:YActionYkVL}. Then
    \begin{equation*}
        v_A \psi_\sigma = \pm v_{\tabt'}.
    \end{equation*}
\end{lem}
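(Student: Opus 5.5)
We need to show that $v_A \psi_\sigma$ equals $\pm v_{\tabt'}$, where $v_A$ is the diagram from \cref{L:YActionYkVL} and $\sigma$ satisfies conditions \ref{C:perm1}, \ref{C:perm2}. The plan is to recognise $v_A\psi_\sigma$ as a single diagram whose underlying permutation is (up to removable same-residue crossings) the permutation $w^{\tabt'}$.

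\emph{Step 1: unfold $v_A$.} I would expand each operator $\phi_{ab}$ in $v_A$. When $i_a = i_b$ the operator is $1$; otherwise it is an honest crossing between strings of distinct residue. The underlying permutation of $v_A$ then sends the first $m$ strings of $\bm{s}^{(r)}$, in the order forced by the reduced double short sequence, to the arm nodes $N_1,\dots,N_m$. It sends the remaining strings to the leg nodes $N_1',\dots,N_m'$. This is exactly the configuration in which the letter $l_r$ is switched from $L$ to $A$ while all other strings (which we stopped drawing, being immobile) are unchanged. So the composite permutation $\pi\sigma$, with $\pi$ the permutation of $v_A$, satisfies $\tabt^\lambda\,\pi\sigma = \tabt'$ at the level of nodes.

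\emph{Step 2: reducedness and full commutativity.} I would check that the word for $\pi$ followed by the word for $\sigma$ is reduced, i.e.\ $\ell(\pi\sigma)=\ell(\pi)+\ell(\sigma)$. Condition \ref{C:perm1} says $\sigma$ does not undo crossings of $v_L$. The crossings of $v_A$ form a subset of the pairs crossed in $v_L$ (we only removed the $(i,i)$-crossings and cut crossings in the middle band). Hence $\sigma$ does not undo crossings of $v_A$ either, and no pair of strings crosses twice.

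Since $\tabt'$ is standard and hook-shaped, $w^{\tabt'}$ is fully commutative. Any reduced expression of a fully commutative element gives the same $\psi$, so $v_A\psi_\sigma = v_{\tabt'}$ up to the sign inherited from \cref{L:YActionYkVL}. The sign comes only from the error terms of \eqref{E:dotslideC} and \eqref{E:braidC} used there.

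\emph{Step 3, the main obstacle: showing no same-residue crossings survive in an essential way.} The assumption that adjacent strings of equal residue do not cross in $\psi_\sigma$ is needed here. The concern is that $\psi_\sigma$ could create an $(i,i)$-crossing between a string now ending in the arm and one ending in the leg. That crossing is not part of $w^{\tabt'}$, because in $\tabt'$ the arm string is smaller. To rule this out, I would use the standardness in \ref{C:perm2} together with the interleaving description of \cref{L:YActionVTDescription}. In $\tabt$, every leg string of residue $i_a$ lies to the right of its arm partner only by the crossings already recorded in $v_L$. Crossings of $\sigma$ involve only the string of $N'_m$ passing others, and inductively the strings of each $N'_a$.

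I would argue by induction on $m$, mirroring the proof of \cref{L:YActionVTDescription}. Write $\sigma=\sigma'\tau$ and check that $\tau$ moves the string reaching $N_m'$ only past strings of residue different from $i_m$, or past strings it genuinely must cross in $\tabt'$. Then
\[
v_A\psi_{\sigma'}\psi_\tau = (\pm v_{\tabt'_{m-1}})\psi_\tau = \pm v_{\tabt'}.
\]
If an equal-residue crossing did appear, I expect it to contradict the reducedness of $\bm{s}^{(r)}$, exactly as in the proof of \cref{L:YActionSubsetStructure}.
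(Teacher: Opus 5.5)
\textbf{There is a genuine gap in Step 2.} You claim the crossings of $v_A$ are a subset of those of $v_L$, so that $\psi_\sigma$ cannot undo any of them. This is false. The boxes $\phi_{ab}$ sit at the bottom of $v_A$ and swap positions that $v_L$ does not swap, and $\psi_\sigma$ typically undoes exactly these crossings.

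Here is a concrete instance. Take $e\ge 5$, $\kappa=0$, $\lambda=(5,1^4)$, and $\tabt$ with leg entries $2,3,4,5$. Then $\resi=(0,1,2,3,4,1,2,3,4)$, $\bm{s}^{(r)}=[2,9]$, $(i_1,\dots,i_4)=(1,2,3,4)$, and $\tabt'=\tabt^\lambda$. Number the eight strings of $\bm{s}^{(r)}$ by $1,\dots,8$.
\begin{itemize}
\item The diagram $v_A$ is the generator followed by $\psi_4\psi_3\psi_5$; both boxes $\phi_{13}$ and $\phi_{24}$ are genuine crossings.
\item The admissible permutation (it satisfies (A), (B) and the no-adjacent-equal-residue assumption) gives $\psi_\sigma=\psi_3\psi_5\psi_4$.
\item Both cross precisely the position pairs $\{3,4\}$, $\{3,6\}$, $\{5,6\}$.
\end{itemize}
So the concatenated word has length $6$ but represents the identity permutation. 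The identity $v_A\psi_\sigma=v_{\tabt'}$ holds here only because $\psi_3^2$, $\psi_5^2$, $\psi_4^2$ act on pairs of unrelated residues.

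Full commutativity of $w^{\tabt'}$ is therefore beside the point: you must resolve a genuinely non-reduced diagram with the KLR relations. Your Step 3 targets the wrong obstruction, namely $(i,i)$-crossings created by $\sigma$. Its inductive step $(\pm v_{\tabt'_{m-1}})\psi_\tau=\pm v_{\tabt'}$ also presupposes the same length additivity, so it fails for the same reason.

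\textbf{What is missing is the actual content of the paper's proof.}
\begin{enumerate}
\item The paper first uses \cref{L:YActionVanishing} and \cref{L:YActionPsiVa} to know a priori that $v_A\psi_\sigma=a_{\tabt'}v_{\tabt'}$.
\item It then passes to the longest admissible permutation $\sigma^*$, writing $\psi_{\sigma^*}=\psi_\sigma\psi_{\tau_\sigma}$. If $v_A\psi_{\sigma^*}=\pm v_{\tabt^*}$, then $a_{\tabt'}$ times an integral vector equals $\pm v_{\tabt^*}$, which forces $a_{\tabt'}=\pm 1$.
\item Finally it computes $v_A\psi_{\sigma^*}$ by induction on $m$, peeling strings off the ends.
\end{enumerate}
In that computation the conditions on short residue sequences guarantee two things. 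The double crossings that occur are between unrelated residues. When the sequence turns at $0$ or $e$ (the case $i_a=j_a$), the braid triples with adjacent residues leave exactly one surviving error term. That error term, not \cref{L:YActionYkVL}, is where the sign in this lemma comes from.
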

\begin{proof}
    According to \cref{L:YActionVanishing,L:YActionPsiVa}, $v_\tabt y_k = \pm v_A \psi_\sigma = a_{\tabt'} v_{\tabt'}$. We must show that $a_{\tabt'} = \pm 1$. Let $\sigma^*$ be a permutation so that $v_L \psi_{\sigma^*}$ exchanges all stubborn strings with all co-stubborn strings. It is clear that $\sigma^*$ is the longest (in Bruhat order) permutation satisfying conditions \ref{C:perm1} and \ref{C:perm2}, so that for any $\sigma'$ satisfying those two conditions, we have $\psi_{\sigma'} \psi_{\tau_{\sigma'}} = \psi_{\sigma^*}$ for some permutation $\tau_{\sigma'}$. It is enough to show that $v_A \psi_{\sigma^*}$ gives a diagram $v_{\tabt^*}$ in the Specht module basis with coefficient $\pm 1$. Indeed, in that case, $$v_A \psi_{\sigma^*} = v_A \psi_{\sigma} \psi_{\tau_\sigma} = a_{\tabt'} v_{\tabt'} \psi_{\tau_\sigma} = \pm v_{\tabt^*},$$
    so it follows that $a_{\tabt'} = \pm 1$.

    We proceed to the computation of $v_A \psi_{\sigma^*}$. For the purposes of induction, we will perform a slightly more general computation in resolving the computation in \cref{E:YActionV1V2} under the following conditions for $v$, which are easily seen to hold for the center strings of diagram $v_A$.
    \begin{enumerate}
        \item The residue sequences $\resi = (i_1, \dots, i_m)$ and $\resj= (j_1, \dots, j_m)$ are both short residue sequences.
        \item The parity of $i_1$ and $j_1$ coincides, so that: 
        \begin{enumerate}
            \item $i_a = j_b$ only if $a-b$ is even.
            \item $i_a$ is adjacent to $j_b$ only if $a-b$ is odd.
        \end{enumerate}
        \item The following configurations are \emph{not} allowed:
        \begin{enumerate}
            \item $(i_a,i_{a+1}) = (j_b, j_{b+1})$ for any $b \leq a$.
            \item $(i_a, i_{a+1}, j_b, j_{b+1})$ is a substring of $\mathcal{I}$, for any $b \leq a$.
        \end{enumerate}
        
        \item A string of residue $i = 0$ or $e$ only crosses strings of residues $j \unrelated i$ in $v$.
    \end{enumerate}
    
    \begin{equation} \label{E:YActionV1V2}
\begin{braid}
    \def\h{5};
    \def\sep{2}
    \def\a{7+\sep};
    \def\dashpart{0.8}
    \def\sp{1.5}
    \def\hrect{1.15}
    \def\shiftdown{-3}
    \def\sshiftdown{-7}

    \draw [black, sharp corners, decorate,decoration={brace,amplitude=5pt,raise=-1ex}]
  (0,\shiftdown + 0.5) -- (0,\h+\hrect) node[midway, anchor=east]{$v$};
  \draw [black, sharp corners, decorate,decoration={brace,amplitude=5pt,raise=-1ex}]
  (0,\sshiftdown + 0.5) -- (0,\shiftdown-0.5) node[midway, anchor=east]{$\psi_D$};
    
    % Left side
    \draw(\sp + 6 + \sep, \sshiftdown) -- (\sp + 2, \shiftdown) --(\sp +2, 0) -- (2, \h) node[anchor=south]{$i_1$};
    \draw(\sp+1, \sshiftdown) -- (\sp+1, \shiftdown - 1) --  (\sp + 3, \shiftdown) --(\sp +3, 0) -- (\a, \h) node[anchor=south]{$j_1$};
    \draw (\sp + 7+\sep, \sshiftdown) -- (\sp + 7 + \sep, \sshiftdown + 1) -- (\sp + 4, \shiftdown) --(\sp +4, 0) -- (3, \h) node[anchor=south]{$i_2$};
    \draw(\sp+2, \sshiftdown) -- (\sp+2, \shiftdown - 2) -- (\sp + 5, \shiftdown) --(\sp +5, 0) -- (\a+1, \h) node[anchor=south]{$j_2$};
    
    % Middle points
    \draw (\sp + 3, \sshiftdown) node {$\cdots$};
    \draw (\sp + 8 + \sep, \sshiftdown) node {$\cdots$};
    \draw (\sp + 5.5+\sep*0.5, \shiftdown + 1) node {$\cdots$};
    \draw (3.5 + \sep/2, \h) node[anchor=south]{$\cdots$};
    \draw (\a + 1.5 + \sep/2, \h) node[anchor=south]{$\cdots$};
    
    % Right side
    \draw  (\sp+9 + \sep, \sshiftdown) -- (\sp+9 + \sep, \shiftdown - 1) -- (\sp + 6+\sep, \shiftdown) --(\sp + 6 + \sep, 0) -- (4  + \sep, \h) node[anchor=south]{$i_m^{-1}$};
    \draw (\sp + 4, \sshiftdown) -- (\sp + 4, \sshiftdown + 1) --  (\sp + 7+\sep, \shiftdown) -- (\sp + 7+\sep, \shiftdown) -- (\sp + 7 + \sep, 0) -- (\a + 2  + \sep, \h) node[anchor=south]{$j_m^{-1}$};
    \draw  (\sp+10 + \sep, \sshiftdown) -- (\sp+10 + \sep, \shiftdown - 1) -- (\sp + 8+\sep, \shiftdown) --(\sp +8 + \sep, 0) -- (5  + \sep, \h) node[anchor=south]{$i_m$};
    \draw (\sp + 5, \sshiftdown) -- (\sp + 9+\sep, \shiftdown) --(\sp + 9 + \sep, 0) -- (\a + 3  + \sep, \h) node[anchor=south]{$j_m$};

    % Rectangles and labels
    \draw[black, sharp corners, fill=white] (1.8 + \sp, \shiftdown + 1) rectangle (3.2 + \sp , \shiftdown + 1 + \hrect);
    \draw[black, sharp corners, fill=white] (3.8 + \sp, \shiftdown + 1) rectangle (5.2 + \sp , \shiftdown + 1 + \hrect);
    \draw[black, sharp corners, fill=white] (5.7 + \sep + \sp, \shiftdown + 1) rectangle (7.3 + \sp + \sep , \shiftdown + 1 + \hrect);
    \draw[black, sharp corners, fill=white] (7.7 + \sep + \sp, \shiftdown + 1) rectangle (9.3 + \sp + \sep , \shiftdown + 1 + \hrect);
    \draw (\sp + 2.5, \shiftdown +1 + 0.5*\hrect) node[black, scale=1]{$\phi_{11}$};
    \draw (\sp + 4.5, \shiftdown +1 + 0.5*\hrect) node[black, scale=1]{$\phi_{22}$};
    \draw (\sp + 6.5+\sep, \shiftdown +1 + 0.5*\hrect) node[black, scale=1]{$\phi_{-11}$};
    \draw (\sp + 8.5+\sep, \shiftdown +1 + 0.5*\hrect) node[black, scale=1]{$\phi_{-00}$};

    % Top bounding rectangles
    \draw[brown] (1.5, \h) rectangle (\a - 1.5 , \h + \hrect);
    \draw[brown] (\a - 0.5, \h) rectangle (2*\a - 1.5 , \h + \hrect);
\end{braid}
= \pm
\begin{braid}
    \def\h{5};
    \def\sep{2}
    \def\a{7};
    \def\dashpart{0.8}
    \def\sp{1.5}
    \def\hrect{1.15}
    \def\shiftdown{-3}
    \def\sshiftdown{-7}

    \draw(0, 0) -- (0, \h) node[anchor=south]{$i_1$};
    \draw(1, 0) -- (1, \h) node[anchor=south]{$i_2$};
    \draw (1.5 + \sep*0.5, 0) node {$\cdots$};
    \draw (1.5 + \sep*0.5, \h + \hrect*0.5) node {$\cdots$};
    \draw(2 + \sep, 0) -- (2+\sep, \h) node[anchor=south]{$i_m^{-1}$};
    \draw(3 + \sep, 0) -- (3+\sep, \h) node[anchor=south]{$i_m$};

    \draw(\a, 0) -- (\a+0, \h) node[anchor=south]{$j_1$};
    \draw(\a+ 1, 0) -- (\a+1, \h) node[anchor=south]{$j_2$};
    \draw (\a+1.5 + \sep*0.5, 0) node {$\cdots$};
    \draw (\a+1.5 + \sep*0.5, \h + \hrect*0.5) node {$\cdots$};
    \draw(\a+2 + \sep, 0) -- (\a+2+\sep, \h) node[anchor=south]{$j_m^{-1}$};
    \draw(\a+3 + \sep, 0) -- (\a+3+\sep, \h) node[anchor=south]{$j_m$};

    \draw[black, sharp corners, fill=white] (3+\sep - 0.5, \h*0.5 - \hrect*0.5) rectangle (\a + 0.5,\h*0.5 + \hrect*0.5);
    \draw[black] (3*0.5 + \sep*0.5 + \a*0.5, \h*0.5) node{$\phi_{m1}$};
    
    % Top bounding rectangles
    \draw[brown] (-0.5, \h) rectangle (\a - 1.5 , \h + \hrect);
    \draw[brown] (\a - 0.5, \h) rectangle (2*\a - 1.5 , \h + \hrect);
\end{braid}
    \end{equation} 
    For the purposes of this diagram we define $\phi_{ab}$ in close analogy with \cref{L:YActionYkVL}, so that $\phi_{ab}$ represents a crossing if and only if $i_a \neq j_b$. We have used the notation $\phi_{-ab} \coloneq \phi_{m-a, m-b}$ for graphical considerations. Next we perform an induction on $m$ to show that \cref{E:YActionV1V2} holds. The statement can be easily checked for $m = 1, 2$.

    Next suppose that $i_1 = j_1$. Then condition (3.a) implies that $i_2 \unrelated j_2$, so that near the center-left of the diagram we have
\begin{equation}
\begin{braid}
    \def\h{5};
    \def\ha{3};
    \def\hb{1};
    \def\hc{-1};
    \def\sep{2}
    \def\a{7};
    \def\dashpart{0.8}
    \def\sp{1.5}
    \def\hrect{1.25}
    \def\shiftdown{-3}
    \def\sshiftdown{-7}

    \draw(2, \hc) -- (0, \hb) -- (0, \ha) -- (0, \h) node[anchor=south]{$i_1$};
    \draw(0, \hc) -- (0, \hc + 1) -- (1, \hb) --(1, \ha) -- (2, \h) node[anchor=south]{$j_1$};
    \draw(3, \hc) -- (3, \hc + 1) --(2, \hb) --(2,\ha) -- (1,\h) node[anchor=south]{$i_{2}$};
    \draw(1, \hc) -- (3, \hb) -- (3, \ha) -- (3, \h) node[anchor=south]{$j_{2}$};
    \draw[black, sharp corners, fill=white] (-0.3, \ha*0.5 + \hb*0.5 - 0.5*\hrect) rectangle (1.3,\ha*0.5 + \hb*0.5 + \hrect*0.5);
    \draw[black] (0.5, \ha*0.5 + \hb*0.5) node{$\phi_{11}$};
    \draw[black, sharp corners, fill=white] (1.7, \ha*0.5 + \hb*0.5 - 0.5*\hrect) rectangle (3.3,\ha*0.5 + \hb*0.5 + \hrect*0.5);
    \draw[black] (2.5, \ha*0.5 + \hb*0.5) node{$\phi_{22}$};

    \draw[brown] (-0.5, \h) rectangle (1.6 , \h + \hrect);
\end{braid}
=
\begin{braid}
    \def\h{5};
    \def\ha{3};
    \def\hb{1};
    \def\hc{-1};
    \def\sep{2}
    \def\a{7};
    \def\dashpart{0.8}
    \def\sp{1.5}
    \def\hrect{1.25}
    \def\shiftdown{-3}
    \def\sshiftdown{-7}

    \draw(2, \hc) -- (0, \hb) -- (0, \ha)  -- (0, \h) node[anchor=south]{$i_1$};
    \draw(0, \hc) -- (0, \hc + 1) -- (1, \hb) --(1, \ha) -- (2, \h) node[anchor=south]{$j_1$};
    \draw(3, \hc) -- (3, \hc + 1) --(2, \hb) --(3,\ha) -- (3,\h) node[anchor=south]{$j_2$};
    \draw(1, \hc) -- (3, \hb) -- (2, \ha) -- (1, \h) node[anchor=south]{$i_2$};

    \draw[brown] (-0.5, \h) rectangle (1.6 , \h + \hrect);
\end{braid}
=
\begin{braid}
    \def\h{5};
    \def\ha{3};
    \def\hb{1};
    \def\hc{-1};
    \def\sep{2}
    \def\a{7};
    \def\dashpart{0.8}
    \def\sp{1.5}
    \def\hrect{1.25}
    \def\shiftdown{-3}
    \def\sshiftdown{-7}

    \draw(2, \hc) -- (0, \ha)  -- (0, \h) node[anchor=south]{$i_1$};
    \draw(0, \hc) -- (0, \hb) -- (2, \h) node[anchor=south]{$j_1$};
    \draw(3, \hc) -- (3,\h) node[anchor=south]{$j_2$};
    \draw(1, \hc) -- (2, \hb) -- (2, \ha) -- (1, \h) node[anchor=south]{$i_2$};

    \draw[brown] (-0.5, \h) rectangle (1.6 , \h + \hrect);
\end{braid}
=
\begin{braid}
    \def\h{5};
    \def\ha{3};
    \def\hb{1};
    \def\hc{-1};
    \def\sep{2}
    \def\a{7};
    \def\dashpart{0.8}
    \def\sp{1.5}
    \def\hrect{1.25}
    \def\shiftdown{-3}
    \def\sshiftdown{-7}

    \draw(2, \hc)  -- (2, \h) node[anchor=south]{$j_1$};
    \draw(0, \hc) -- (0, \h) node[anchor=south]{$i_1$};
    \draw(3, \hc) -- (3,\h) node[anchor=south]{$j_2$};
    \draw(1, \hc) -- (1, \h) node[anchor=south]{$i_2$};

    \draw[brown] (-0.5, \h) rectangle (1.6 , \h + \hrect);
\end{braid}.
    \end{equation}
    Suppose to the contrary that $i_1 \neq j_1$. Then $i_1 \unrelated j_1$, and it follows that 
    \begin{equation} 
\begin{braid}
    \def\h{5};
    \def\ha{3};
    \def\hb{1};
    \def\hc{-2};
    \def\sep{2}
    \def\a{7};
    \def\dashpart{0.8}
    \def\sp{1.5}
    \def\hrect{1.25}
    \def\shiftdown{-3}
    \def\sshiftdown{-7}
    \draw(0, 0) --  (1, \h*0.5) -- (1, \h) node[anchor=south]{$j_1$};
    \draw(1, 0) -- (0, \h*0.5) -- (0, \h) node[anchor=south]{$i_1$};
    \draw[black, sharp corners, fill=white] (-0.3, \h*0.5 ) rectangle (1.3,\h*0.5 + \hrect);
    \draw[black] (0.5, \h*0.5 + \hrect*0.5) node{$\phi_{11}$};
\end{braid} =
\begin{braid}
    \def\h{5};
    \def\ha{3};
    \def\hb{1};
    \def\hc{-2};
    \def\sep{2}
    \def\a{7};
    \def\dashpart{0.8}
    \def\sp{1.5}
    \def\hrect{1.25}
    \def\shiftdown{-3}
    \def\sshiftdown{-7}
    \draw(0, 0) --  (0, \h) node[anchor=south]{$j_1$};
    \draw(1, 0) -- (1, \h) node[anchor=south]{$i_1$};
\end{braid}
.
    \end{equation}

    Next let $a \in \{1, \dots, m-1\}$, and suppose first that $i_a \neq j_a$ and $i_{a+1} \neq j_{a+1}$. Condition (3.b) implies that $i_{a+1} \unrelated j_a$. At the middle of the diagram where the corresponding strings meet, we have
    \begin{equation}
\begin{braid}
    \def\h{5};
    \def\ha{3};
    \def\hb{1};
    \def\hc{-1};
    \def\sep{2}
    \def\a{7};
    \def\dashpart{0.8}
    \def\sp{1.5}
    \def\hrect{1.15}
    \def\shiftdown{-3}
    \def\sshiftdown{-7}

    \draw(2, \hc) -- (0, \hb) -- (0, \ha) -- (0, \h) node[anchor=south]{$i_a$};
    \draw(0, \hc) -- (0, \hc + 1) -- (1, \hb) --(1, \ha) -- (2, \h) node[anchor=south]{$j_a$};
    \draw(3, \hc) -- (3, \hc + 1) --(2, \hb) --(2,\ha) -- (1,\h) node[anchor=south]{$i_{a}^{+1}$};
    \draw(1, \hc) -- (3, \hb) -- (3, \ha) -- (3, \h) node[anchor=south]{$j_{a}^{+1}$};
    \draw[black, sharp corners, fill=white] (-0.3, \ha*0.5 + \hb*0.5 - 0.5*\hrect) rectangle (1.3,\ha*0.5 + \hb*0.5 + \hrect*0.5);
    \draw[black] (0.5, \ha*0.5 + \hb*0.5) node{$\phi_{aa}$};
    \draw[black, sharp corners, fill=white] (1.7, \ha*0.5 + \hb*0.5 - 0.5*\hrect) rectangle (3.3,\ha*0.5 + \hb*0.5 + \hrect*0.5);
    \draw[black] (2.5, \ha*0.5 + \hb*0.5) node{$\phi_{aa}^{+1}$};
\end{braid}
=
\begin{braid}
    \def\h{5};
    \def\ha{3};
    \def\hb{1};
    \def\hc{-1};
    \def\sep{2}
    \def\a{7};
    \def\dashpart{0.8}
    \def\sp{1.5}
    \def\hrect{1.15}
    \def\shiftdown{-3}
    \def\sshiftdown{-7}

    \draw(2, \hc) -- (0, \hb) -- (1, \ha)  -- (2, \h) node[anchor=south]{$j_a$};
    \draw(0, \hc) -- (0, \hc + 1) -- (1, \hb) --(0, \ha) -- (0, \h) node[anchor=south]{$i_a$};
    \draw(3, \hc) -- (3, \hc + 1) --(2, \hb) --(3,\ha) -- (3,\h) node[anchor=south]{$j_{a}^{+1}$};
    \draw(1, \hc) -- (3, \hb) -- (2, \ha) -- (1, \h) node[anchor=south]{$i_{a}^{+1}$};
\end{braid}
=
\begin{braid}
    \def\h{5};
    \def\ha{3};
    \def\hb{1};
    \def\hc{-1};
    \def\sep{2}
    \def\a{7};
    \def\dashpart{0.8}
    \def\sp{1.5}
    \def\hrect{1.15}
    \def\shiftdown{-3}
    \def\sshiftdown{-7}

    \draw(2, \hc) -- (1, \hb) -- (1, \ha)  -- (2, \h) node[anchor=south]{$j_a$};
    \draw(0, \hc) -- (0, \h) node[anchor=south]{$i_a$};
    \draw(3, \hc) -- (3,\h) node[anchor=south]{$j_{a}^{+1}$};
    \draw(1, \hc) -- (2, \hb) -- (2, \ha) -- (1, \h) node[anchor=south]{$i_{a}^{+1}$};
\end{braid}
=
\begin{braid}
    \def\h{5};
    \def\ha{3};
    \def\hb{1};
    \def\hc{-1};
    \def\sep{2}
    \def\a{7};
    \def\dashpart{0.8}
    \def\sp{1.5}
    \def\hrect{1.15}
    \def\shiftdown{-3}
    \def\sshiftdown{-7}

    \draw(2, \hc)  -- (2, \h) node[anchor=south]{$j_a$};
    \draw(0, \hc) -- (0, \h) node[anchor=south]{$i_a$};
    \draw(3, \hc) -- (3,\h) node[anchor=south]{$j_{a}^{+1}$};
    \draw(1, \hc) -- (1, \h) node[anchor=south]{$i_{a}^{+1}$};
\end{braid}.
    \end{equation}
    Here we have written $\phi_{aa}^{+1} \coloneq \phi_{a+1,a+1}$. 
    Next suppose that $i_a = j_a$, with $a \notin \{1, m\}$. Then condition (3.a) implies that $i_{a+1} \unrelated j_{a+1}$ and $i_{a-1} \unrelated j_{a-1}$, which implies that $i_{a+1} = j_{a-1}$. In this case, near the center of the diagram we find
    \begin{equation}
\begin{braid}
    \def\h{5};
    \def\ha{3};
    \def\hb{1};
    \def\hc{-2};
    \def\sep{2}
    \def\a{7};
    \def\dashpart{0.8}
    \def\sp{1.5}
    \def\hrect{1.25}
    \def\shiftdown{-3}
    \def\sshiftdown{-7}

    \draw(0, \hc) -- (0, \hb - 1.5) -- (1, \hb) -- (0, \ha) -- (0, \h) node[anchor=south]{$i_{a}^{-1}$};
    \draw(1, \hc) -- (1, \hb - 2) -- (3, \hb) -- (3, \ha)  -- (4, \h) node[anchor=south]{$j_{a}$};
    \draw(2, \hc) -- (5, \hb) -- (2, \h) node[anchor=south]{$i_{a}^{+1}$};
    \draw(3, \hc) -- (0, \hb) -- (3, \h) node[anchor=south]{$j_{a}^{-1}$};
    \draw(4,\hc) -- (4, \hb - 2) -- (2, \hb) -- (2, \ha)  -- (1, \h) node[anchor=south]{$i_{a}$};
    \draw(5,\hc) -- (5, \hb - 1.5) -- (4, \hb) -- (5, \ha) -- (5, \h) node[anchor=south]{$j_{a}^{+1}$};
\end{braid}
=
\begin{braid}
    \def\h{5};
    \def\ha{3};
    \def\hb{1};
    \def\hc{-2};
    \def\sep{2}
    \def\a{7};
    \def\dashpart{0.8}
    \def\sp{1.5}
    \def\hrect{1.25}
    \def\shiftdown{-3}
    \def\sshiftdown{-7}

    \draw(0, \hc) --  (0, \h) node[anchor=south]{$i_{a}^{-1}$};
    \draw(1, \hc) -- (4, \h) node[anchor=south]{$j_{a}$};
    \draw(2, \hc) -- (4, \h*0.5 + \hc*0.5) -- (2, \h) node[anchor=south]{$i_{a}^{+1}$};
    \draw(3, \hc) -- (1, \h*0.5 + \hc*0.5) -- (3, \h) node[anchor=south]{$j_{a}^{-1}$};
    \draw(4,\hc) -- (1, \h) node[anchor=south]{$i_{a}$};
    \draw(5,\hc) -- (5, \h) node[anchor=south]{$j_{a}^{+1}$};
\end{braid}
= \pm
\begin{braid}
    \def\h{5};
    \def\ha{3};
    \def\hb{1};
    \def\hc{-2};
    \def\sep{2}
    \def\a{7};
    \def\dashpart{0.8}
    \def\sp{1.5}
    \def\hrect{1.25}
    \def\shiftdown{-3}
    \def\sshiftdown{-7}

    \draw(0, \hc) --  (0, \h) node[anchor=south]{$i_{a}^{-1}$};
    \draw(1, \hc) -- (1, \h) node[anchor=south]{$i_{a}$};
    \draw(2, \hc) -- (3, \h) node[anchor=south]{$j_{a}^{-1}$};
    \draw(3, \hc) -- (2, \h) node[anchor=south]{$i_{a}^{+1}$};
    \draw(4,\hc) -- (4, \h) node[anchor=south]{$j_{a}$};
    \draw(5,\hc) -- (5, \h) node[anchor=south]{$j_{a}^{+1}$};
\end{braid}.
    \end{equation}
    Finally, the case where we set $a = m$ is very similar to the case where $a = 1$, so we will skip the details.
    Applying the last few computations successively to our diagram yields
    \begin{equation}
\begin{braid}
    \def\h{5};
    \def\sep{2}
    \def\a{7+\sep};
    \def\dashpart{0.8}
    \def\sp{1.5}
    \def\hrect{1.15}
    \def\shiftdown{-3}
    \def\sshiftdown{-7}
    
    % Left side
    \draw(\sp + 6 + \sep, \sshiftdown) -- (\sp + 2, \shiftdown) --(\sp +2, 0) -- (2, \h) node[anchor=south]{$i_1$};
    \draw(\sp+1, \sshiftdown) -- (\sp+1, \shiftdown - 1) --  (\sp + 3, \shiftdown) --(\sp +3, 0) -- (\a, \h) node[anchor=south]{$j_1$};
    \draw (\sp + 7+\sep, \sshiftdown) -- (\sp + 7 + \sep, \sshiftdown + 1) -- (\sp + 4, \shiftdown) --(\sp +4, 0) -- (3, \h) node[anchor=south]{$i_2$};
    \draw(\sp+2, \sshiftdown) -- (\sp+2, \shiftdown - 2) -- (\sp + 5, \shiftdown) --(\sp +5, 0) -- (\a+1, \h) node[anchor=south]{$j_2$};
    
    % Middle points
    \draw (\sp + 3, \sshiftdown) node {$\cdots$};
    \draw (\sp + 8 + \sep, \sshiftdown) node {$\cdots$};
    \draw (\sp + 5.5+\sep*0.5, \shiftdown + 1) node {$\cdots$};
    \draw (3.5 + \sep/2, \h) node[anchor=south]{$\cdots$};
    \draw (\a + 1.5 + \sep/2, \h) node[anchor=south]{$\cdots$};
    
    % Right side
    \draw  (\sp+9 + \sep, \sshiftdown) -- (\sp+9 + \sep, \shiftdown - 1) -- (\sp + 6+\sep, \shiftdown) --(\sp + 6 + \sep, 0) -- (4  + \sep, \h) node[anchor=south]{$i_m^{-1}$};
    \draw (\sp + 4, \sshiftdown) -- (\sp + 4, \sshiftdown + 1) --  (\sp + 7+\sep, \shiftdown) -- (\sp + 7+\sep, \shiftdown) -- (\sp + 7 + \sep, 0) -- (\a + 2  + \sep, \h) node[anchor=south]{$j_m^{-1}$};
    \draw  (\sp+10 + \sep, \sshiftdown) -- (\sp+10 + \sep, \shiftdown - 1) -- (\sp + 8+\sep, \shiftdown) --(\sp +8 + \sep, 0) -- (5  + \sep, \h) node[anchor=south]{$i_m$};
    \draw (\sp + 5, \sshiftdown) -- (\sp + 9+\sep, \shiftdown) --(\sp + 9 + \sep, 0) -- (\a + 3  + \sep, \h) node[anchor=south]{$j_m$};

    % Rectangles and labels
    \draw[black, sharp corners, fill=white] (1.8 + \sp, \shiftdown + 1) rectangle (3.2 + \sp , \shiftdown + 1 + \hrect);
    \draw[black, sharp corners, fill=white] (3.8 + \sp, \shiftdown + 1) rectangle (5.2 + \sp , \shiftdown + 1 + \hrect);
    \draw[black, sharp corners, fill=white] (5.7 + \sep + \sp, \shiftdown + 1) rectangle (7.3 + \sp + \sep , \shiftdown + 1 + \hrect);
    \draw[black, sharp corners, fill=white] (7.7 + \sep + \sp, \shiftdown + 1) rectangle (9.3 + \sp + \sep , \shiftdown + 1 + \hrect);
    \draw (\sp + 2.5, \shiftdown +1 + 0.5*\hrect) node[black, scale=1]{$\phi_{11}$};
    \draw (\sp + 4.5, \shiftdown +1 + 0.5*\hrect) node[black, scale=1]{$\phi_{22}$};
    \draw (\sp + 6.5+\sep, \shiftdown +1 + 0.5*\hrect) node[black, scale=1]{$\phi_{-11}$};
    \draw (\sp + 8.5+\sep, \shiftdown +1 + 0.5*\hrect) node[black, scale=1]{$\phi_{-00}$};

    % Top bounding rectangles
    \draw[brown] (1.5, \h) rectangle (\a - 1.5 , \h + \hrect);
    \draw[brown] (\a - 0.5, \h) rectangle (2*\a - 1.5 , \h + \hrect);
\end{braid}
= \pm
\begin{braid}
    \def\h{5};
    \def\sep{2}
    \def\a{7+\sep};
    \def\dashpart{0.8}
    \def\sp{1.5}
    \def\hrect{1.15}
    \def\shiftdown{-3}
    \def\sshiftdown{-7}
    
    % Left side
    
    \draw (0, \sshiftdown) -- (0, \h) node[anchor=south]{$i_1$};
    \draw (1, \sshiftdown) -- (1, \h) node[anchor=south]{$i_2$};
    \draw(\sp + 6 + \sep, \sshiftdown) -- (\sp + 2, \shiftdown) --(\sp +2, 0) -- (2, \h) node[anchor=south]{$i_3$};
    \draw(\sp+1, \sshiftdown) -- (\sp+1, \shiftdown - 1) --  (\sp + 3, \shiftdown) --(\sp +3, 0) -- (\a, \h) node[anchor=south]{$j_1$};
    \draw (\sp + 7+\sep, \sshiftdown) -- (\sp + 7 + \sep, \sshiftdown + 1) -- (\sp + 4, \shiftdown) --(\sp +4, 0) -- (3, \h) node[anchor=south]{$i_4$};
    \draw(\sp+2, \sshiftdown) -- (\sp+2, \shiftdown - 2) -- (\sp + 5, \shiftdown) --(\sp +5, 0) -- (\a+1, \h) node[anchor=south]{$j_2$};
    
    % Middle points
    \draw (\sp + 3, \sshiftdown) node {$\cdots$};
    \draw (\sp + 8 + \sep, \sshiftdown) node {$\cdots$};
    \draw (\sp + 5.5+\sep*0.5, \shiftdown + 1) node {$\cdots$};
    \draw (3.5 + \sep/2, \h) node[anchor=south]{$\cdots$};
    \draw (\a + 1.5 + \sep/2, \h) node[anchor=south]{$\cdots$};
    
    % Right side
    \draw  (\sp+9 + \sep, \sshiftdown) -- (\sp+9 + \sep, \shiftdown - 1) -- (\sp + 6+\sep, \shiftdown) --(\sp + 6 + \sep, 0) -- (4  + \sep, \h) node[anchor=south]{$i_m^{-1}$};
    \draw (\sp + 4, \sshiftdown) -- (\sp + 4, \sshiftdown + 1) --  (\sp + 7+\sep, \shiftdown) -- (\sp + 7+\sep, \shiftdown) -- (\sp + 7 + \sep, 0) -- (\a + 2  + \sep, \h) node[anchor=south]{$j_m^{-3}$};
    \draw  (\sp+10 + \sep, \sshiftdown) -- (\sp+10 + \sep, \shiftdown - 1) -- (\sp + 8+\sep, \shiftdown) --(\sp +8 + \sep, 0) -- (5  + \sep, \h) node[anchor=south]{$i_m$};
    \draw (\sp + 5, \sshiftdown) -- (\sp + 9+\sep, \shiftdown) --(\sp + 9 + \sep, 0) -- (\a + 3  + \sep, \h) node[anchor=south]{$j_m^{-2}$};
    \draw (\a + 4 + \sep, \sshiftdown) -- (\a + 4 + \sep, \h) node[anchor=south]{$j_m^{-1}$};;
    \draw (\a + 5 + \sep, \sshiftdown) -- (\a + 5 + \sep, \h) node[anchor=south]{$j_m$};;

    % Rectangles and labels
    \draw[black, sharp corners, fill=white] (1.8 + \sp, \shiftdown + 1) rectangle (3.2 + \sp , \shiftdown + 1 + \hrect);
    \draw[black, sharp corners, fill=white] (3.8 + \sp, \shiftdown + 1) rectangle (5.2 + \sp , \shiftdown + 1 + \hrect);
    \draw[black, sharp corners, fill=white] (5.7 + \sep + \sp, \shiftdown + 1) rectangle (7.3 + \sp + \sep , \shiftdown + 1 + \hrect);
    \draw[black, sharp corners, fill=white] (7.7 + \sep + \sp, \shiftdown + 1) rectangle (9.3 + \sp + \sep , \shiftdown + 1 + \hrect);
    \draw (\sp + 2.5, \shiftdown +1 + 0.5*\hrect) node[black, scale=1]{$\phi_{31}$};
    \draw (\sp + 4.5, \shiftdown +1 + 0.5*\hrect) node[black, scale=1]{$\phi_{42}$};
    \draw (\sp + 6.5+\sep, \shiftdown +1 + 0.5*\hrect) node[black, scale=1]{$\phi_{-13}$};
    \draw (\sp + 8.5+\sep, \shiftdown +1 + 0.5*\hrect) node[black, scale=1]{$\phi_{-02}$};

    % Top bounding rectangles
    \draw[brown] (-0.5, \h) rectangle (\a - 1.5 , \h + \hrect);
    \draw[brown] (\a - 0.5, \h) rectangle (2*\a + 0.5 , \h + \hrect);
\end{braid}.
    \end{equation}
    Resolving this diagram corresponds exactly to solving a version of \cref{E:YActionV1V2} for $m' = m - 2$, and it is easy to see that the conditions above that equation also hold for the new diagram, so that, by induction, \cref{E:YActionV1V2} holds. This completes the proof of the lemma.
\end{proof}

The results of  this section come together nicely to give a complete description (up to sign) of the type C $y$-action on a Specht module with a short leg. This is the main theorem of this section.

\begin{thm} \label{T:YAction}
    Let $k \in \{1, \dots, n\}$, $\tabt = \phi_{\resi}^\lambda(l_1, \dots, l_d) \in \Std(\lambda,\resi)$. 
    \begin{enumerate}
        \item If $\deg(y_k e(\resi(\tabt))) = 2$, then $v_\tabt y_k = \sum_{\tabt' \in \Std(\lambda,\resi)} \pm v_{\tabt'}$, where the sum is taken over $\tabt'$ such that $\tabt' = \phi_{\resi}^\lambda(l_1', \dots, l_d')$, with $l_s = l_s'$ for $s \in \{1, \dots, r-1, r+1, \dots, d\}$, while $l_r = L$, $l_r' = A$, $k \in \bm{s}^{(r)}$ and $r \in W_2$.
        \item If $\deg(y_k e(\resi(\tabt))) = 4$, then $v_\tabt y_k = \pm v_{\tabt'}$, with $\tabt' = \phi_{\resi}^\lambda(l_1', \dots, l_d')$, with $l_s = l_s'$ for all $s \in \{1, \dots, r-1, r+1, \dots, d\}$, while $l_r = L$, $l_r' = A$, $k \in \bm{s}^{(r)}$ and $r \in W_4$.
    \end{enumerate}
\end{thm}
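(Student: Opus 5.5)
The theorem assembles the preceding lemmas, so the plan is to reduce to the situation they cover and then chain them. \cref{L:YActionVanishing} already pins down the support: in part (1), only tableaux $\tabt'$ obtained from $\tabt$ by flipping a single letter $l_r=L$ to $l_r'=A$, with $k\in\bm{s}^{(r)}$ and $r\in W_2$, can occur. In part (2), at most one such $\tabt'$ occurs. It therefore remains to show that every nonzero coefficient $a_{\tabt'}$ equals $\pm1$.

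First I reduce to the normal form used in \cref{L:YActionPsiVa}. Fix $r$ with $k\in\bm{s}^{(r)}$ and $l_r=L$. Using \cref{L:YActionVTDescription}, I write $v_\tabt=v_L\psi_\sigma$, keeping only the strings in $\bm{s}^{(r)}$; the other strings are immobile and can be ignored by \cref{E:ImmobileRelations}. If $\psi_\sigma$ contains a crossing of two adjacent strings of equal residue, \cref{L:YActionSubsetStructure} says this comes from a nested designated sequence $\bm{s}^{(r+1)}$ with $\resi^{(r+1)}=(i,i)$. In that case I write $v_\tabt=v'\psi_k$ and use $v'\psi_ky_k=(v'y_{k+1})\psi_k-v'$ from \cref{E:dotslideC}. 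The term $v'$ is the basis vector in which the letter of that inner sequence is flipped to $A$, and it carries coefficient $\pm1$. The term $(v'y_{k+1})\psi_k$ is handled by induction on the number of such adjacent equal-residue crossings. Each such crossing accounts for exactly one of the summands listed in (1).

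In the reduced case, \cref{L:YActionPsiVa} gives $v_\tabt y_k=\pm v_A\psi_\sigma$ when $\deg(y_ke(\resi))=2$, and \cref{L:YActionPsiVaComp} identifies $v_A\psi_\sigma=\pm v_{\tabt'}$. Here $\tabt'$ is the tableau with the $r$-th letter changed to $A$, so this gives the summand of (1) with coefficient $\pm1$.

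For part (2), $k$ has residue $0$ or $e$, and $r\in W_4$ with $\resi^{(r)}\in\{(0,0),(e,e)\}$. The two relevant strings are adjacent and cross; I write $v_\tabt=v'\psi_{k}$ (or $\psi_{k-1}$). The dot slide then gives $\pm v'$, the flipped tableau, plus $(v'y_{k\pm1})\psi$. This last term vanishes because in $v'$ the string $k\pm1$ meets no other string of its residue, so the dot reaches the Specht relation \cref{E:specht}.

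The main obstacle is the bookkeeping in the inductive reduction of (1): showing that after the $y$-dot is pushed past adjacent equal-residue crossings, the resulting terms are distinct basis vectors $v_{\tabt'}$ with coefficients $\pm1$ and no cancellation. I would control this with the degree formula in \cref{P:ResidueHookTableaux}, which forces each term to differ from $\tabt$ in exactly one $L\to A$ letter. I would then argue that distinct flipped letters give distinct tableaux, so no cancellation can occur.
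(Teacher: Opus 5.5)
Your route is the paper's: the support comes from \cref{L:YActionVanishing}, crossings of adjacent equal-residue strings are peeled off with the dot slide $v'\psi_k y_k=(v'y_{k+1})\psi_k-v'$ (as in the paragraph before \cref{L:YActionPsiVa}), and the reduced case is \cref{L:YActionPsiVa} followed by \cref{L:YActionPsiVaComp}. Your explicit dot-slide treatment of part (2) usefully spells out what the paper leaves implicit. However, the reason you give there for the vanishing term is false.

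Write $\bm{s}^{(r)}=[u,u+1]$ with $l_r=L$. Then in $v'=v_{\tabt s_u}$ the string $u$ ends in $\Arm(\lambda)$ and $u+1$ ends in $\Leg(\lambda)$. For $k=u$ you need $v'y_{u+1}=0$, and you assert that string $u+1$ meets no other string of its residue. But a leg string crosses every arm string with a larger label, and only the leg is short. The arm can therefore contain another node of residue $0$ (resp.\ $e$) $2e$ positions further along, and its entry exceeds $u+1$.

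For example, take $e=2$, $\kappa=0$, $\lambda=(7,1^3)$, and let $\tabt$ have $1,2,5,7,8,9,10$ along the arm and $3,4,6$ down the leg. Then $[4,5]\in W_4$ carries the letter $L$. In $v_{\tabt s_4}$, string $5$ (node $(3,1)$, residue $2$) crosses string $10$ (node $(1,7)$, residue $2$), so sliding the dot does produce error terms.

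The conclusion $v'y_{u+1}=0$ is still true, but the reason is \cref{L:YActionVanishing} applied to $v'$ (equivalently, immobility of every string outside $[u,u+1]$ with respect to $y_{u+1}$). By \cref{L:YActionSubsetStructure} a $W_4$ sequence meets no other designated sequence, and its letter is $A$ in $v'$, so no term can survive. For $k=u+1$ your argument is fine as stated: the short leg contains only one node of that residue, and the arm string $u$ does not cross it.

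A smaller point concerns your last paragraph. The degree formula of \cref{P:ResidueHookTableaux} does not by itself force a single $L\to A$ flip. Flipping a $W_4$ letter from $L$ to $A$ together with a $W_2$ letter from $A$ to $L$ also raises the degree by $2$. What excludes this is the stubbornness and accessibility argument inside \cref{L:YActionVanishing}: an $A$ never becomes an $L$, and the flipped sequence must contain $k$. You already invoke that lemma, so the bookkeeping should rest on it. Distinctness of the resulting terms then follows because $\phi_{\resi}^\lambda$ is a bijection.

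Note also that the theorem asserts every listed $\tabt'$ actually occurs, not merely that nonzero coefficients are $\pm1$. Your explicit computation of each term as $\pm v_{\tabt'}$ does give this, but your opening reduction should be phrased accordingly.
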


\begin{eg} \label{eg:YActionExample}
    We continue \cref{eg:binaryword}. Let us evaluate the $y$-action on the basis vector $v_{\phi(L,L)}$ according to \cref{T:YAction}:
    \begin{itemize}
        \item For $k \in \{2, 7\}$, we have $k \in \bm{s}^{(1)}$ but $k \notin \bm{s}^{(2)}$. The action flips $l_1$ from $L$ to $A$, yielding exactly one term: 
        \[ v_{\phi(L,L)} y_2 = v_{\phi(L,L)} y_7 = \pm v_{\phi(A,L)}. \]
        \item For $k \in \{4, 5\}$, we have $k \in \bm{s}^{(1)} \cap \bm{s}^{(2)}$. Summing over $r \in \{1, 2\}$ flips $l_1 \to A$ and $l_2 \to A$, yielding a linear combination of two terms:
        \[ v_{\phi(L,L)} y_4 = \pm v_{\phi(A,L)} \pm v_{\phi(L,A)}, \]
        \[ v_{\phi(L,L)} y_5 = \pm v_{\phi(A,L)} \pm v_{\phi(L,A)}. \]
        We make no claims regarding the signs.
        \item For $k \in \{3, 6\}$, we have $\deg(y_k e(\resi(\tabt))) = 4$ and no subsequences in $W_4$, so that $v_{\phi(L,L)} y_3 = v_{\phi(L,L)} y_6 = 0$.
    \end{itemize}
\end{eg}

The following corollary is the type C analogue of \cite[Corollary 4.4]{loub17}. 
\begin{cor} \label{Cor:YAction}
    Suppose that $e(\resi) S^\lambda \neq 0$. Then 
    $$
    \{v \in e(\resi)S^\lambda \mid v y_k = 0 \text{ for all } k \in \{1, \dots, n\}\} = \mathcal{O}v_{\tabt_{\mathrm{max}}(\resi)}.
    $$
\end{cor}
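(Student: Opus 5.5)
We prove both inclusions, using the basis $\{v_\tabt \mid \tabt\in\Std(\lambda,\resi)\}$ of $e(\resi)S^\lambda$ together with \cref{P:ResidueHookTableaux} and \cref{T:YAction}.

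\emph{The maximal tableau is annihilated.} Put $\tabt_{\mathrm{max}}=\tabt_{\mathrm{max}}(\resi)=\phi^\lambda_\resi(A,\dots,A)$. By \cref{T:YAction}, a basis vector $v_{\phi^\lambda_\resi(l_1,\dots,l_d)}$ can have nonzero image under $y_k$ only if some letter $l_r$ equals $L$. For $\tabt_{\mathrm{max}}$ no letter is $L$, so the sums in both parts of \cref{T:YAction} are empty and $v_{\tabt_{\mathrm{max}}}y_k=0$ for every $k$. Independently, \cref{P:ResidueHookTableaux} shows $\tabt_{\mathrm{max}}$ has strictly the largest degree in $\Std(\lambda,\resi)$, and $y_ke(\resi)$ has positive degree, so $v_{\tabt_{\mathrm{max}}}y_k$ is homogeneous of a degree with no basis vectors in $e(\resi)S^\lambda$. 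Hence $\mathcal O v_{\tabt_{\mathrm{max}}}$ lies in the annihilator.

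\emph{The annihilator is no larger.} Let $v=\sum_\tabt c_\tabt v_\tabt$ satisfy $vy_k=0$ for all $k$, and suppose some $c_\tabt\neq 0$ with $\tabt\neq\tabt_{\mathrm{max}}$. Among such $\tabt$, choose one whose binary word has the fewest letters $L$; fix an index $r$ with $l_r=L$ and some $k\in\bm{s}^{(r)}$ with $\deg(y_ke(\resi))=2$ if $r\in W_2$, or $=4$ if $r\in W_4$. Such $k$ exists: take $k=s^{(r)}_1$, whose residue is $0$ or $e$ exactly when $r\in W_4$. Let $\tabt^\flat$ be the tableau obtained from $\tabt$ by changing $l_r$ to $A$. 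By \cref{T:YAction}, $v_\tabt y_k$ contains $v_{\tabt^\flat}$ with coefficient $\pm1$.

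We claim that no other $v_{\tabt'}$ with $c_{\tabt'}\ne0$ contributes $v_{\tabt^\flat}$ to $vy_k$. Any contribution to $v_{\tabt^\flat}$ from $v_{\tabt'}y_k$ requires $\tabt'$ to agree with $\tabt^\flat$ except at one letter $l'_{r'}=L$ with $k\in\bm{s}^{(r')}$ and $r'$ in the class matching $\deg y_k$. If $r'=r$ then $\tabt'=\tabt$. Otherwise $\tabt'$ has $l'_r=A$, so it has exactly as many $L$'s as $\tabt$. To rule out this case, refine the choice: choose $\tabt$ with $c_\tabt\neq0$ minimal in the dominance order described after \cref{L:YActionVanishing}, and choose $r$ to be the \emph{last} index with $l_r=L$. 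By \cref{L:YActionSubsetStructure}, the designated sequences containing $k$ form a chain in which every later one has length two. Using this, pick $k$ in $\bm{s}^{(r)}$ avoiding the nested length-two sequences. This is possible since $|\bm{s}^{(r)}|\ge 2$ with equality only if nothing is nested in it, and the first string $s^{(r)}_1$ lies in no inner sequence. Then any $r'\neq r$ with $k\in\bm{s}^{(r')}$ satisfies $r'<r$ with $\bm{s}^{(r)}\subset\bm{s}^{(r')}$ of length two, forcing $k$ into an inner sequence. That is a contradiction unless $r$ itself is the inner one, in which case we instead take $k$ to be the \emph{other} string of $\bm{s}^{(r')}$ not in $\bm{s}^{(r)}$, and argue symmetrically.

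With the claim established, the coefficient of $v_{\tabt^\flat}$ in $vy_k$ is $\pm c_\tabt\neq0$, contradicting $vy_k=0$. So $v=c\,v_{\tabt_{\mathrm{max}}}$.

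The main obstacle is this uniqueness-of-contribution step. Nested length-two designated sequences mean one $y_k$ can flip several letters, as in \cref{eg:YActionExample}, so the choice of $k$ must be made carefully. If the combinatorial choice fails, the fallback is a triangularity argument: order the tableaux by the dominance order and show the matrix of $\bigoplus_k y_k$ restricted to the span of the tableaux other than $\tabt_{\mathrm{max}}$ is injective, using that each $v_\tabt$ maps, under a suitable $y_k$, to $\pm$ a vector whose dominance-leading term is unique.
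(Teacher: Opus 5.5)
\textbf{There is a genuine gap in the reverse inclusion.} Your first inclusion is fine. The problem is the uniqueness-of-contribution step, in exactly the case you try to dispatch with ``argue symmetrically''.

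Suppose the chosen $r$ is a nested designated sequence: $\bm{s}^{(r)}\subset\bm{s}^{(r')}$, $|\bm{s}^{(r)}|=2$, and $l_{r'}=A$ in $\tabt$. Every $k\in\bm{s}^{(r)}$ then also lies in $\bm{s}^{(r')}$. So by \cref{T:YAction}, the tableau $\tabt'$ obtained from $\tabt$ by exchanging the letters at $r$ and $r'$ also contributes $\pm v_{\tabt^\flat}$ to $vy_k$.
\begin{itemize}
\item Under the $L$-set description of dominance that you invoke, $\tabt$ and $\tabt'$ are incomparable. Minimality therefore does not force $c_{\tabt'}=0$.
\item Switching to $k\in\bm{s}^{(r')}\setminus\bm{s}^{(r)}$ does not help. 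Such a $y_k$ cannot flip $l_r$ (since $k\notin\bm{s}^{(r)}$) or $l_{r'}$ (already $A$), so $v_{\tabt^\flat}$ does not occur in $v_\tabt y_k$ at all.
\item Taking $r$ to be the \emph{last} $L$ makes this case typical, because nested sequences start after their containers.
\end{itemize}
Concretely, in \cref{eg:binaryword} take $v=c_1v_{\phi(A,L)}+c_2v_{\phi(L,A)}$. With $\tabt=\phi(A,L)$ you get $r=2$ and $k\in\{4,5\}$, and $vy_k=(\pm c_1\pm c_2)v_{\phi(A,A)}$, which may vanish.

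A smaller slip: $s^{(r)}_1$ can have residue $0$ or $e$ even when $r\in W_2$. For $e=2$, $\kappa=1$, $\lambda=(6,1^3)$ and $\resi=(1,2,1,0,1,0,1,2,2)$, the designated sequence $[4,7]$ has residues $(0,1,0,1)$. So $y_4$ has degree $4$ and does not detect it.

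\textbf{A repair that needs no minimality: eliminate outer sequences first.} By \cref{L:YActionSubsetStructure}:
\begin{itemize}
\item designated sequences not contained in any other (call them outer) are pairwise disjoint;
\item every other one is a pair $(i,i)$ with $i\notin\{0,e\}$, lies in exactly one outer sequence, and is disjoint from the other such pairs;
\item its residue $i$ occurs twice in the short sequence of its container.
\end{itemize}

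For an outer $r$, pick $k\in\bm{s}^{(r)}$ of the right degree lying in no nested pair. Take $k=s^{(r)}_1$ when $r\in W_4$ or its residue is not $0,e$. Otherwise take a string of residue $1$ (resp.\ $e-1$); this residue occurs only once in a short sequence starting at $0$ (resp.\ $e$). Then $\bm{s}^{(r)}$ is the only designated sequence containing $k$. So the coefficient of $v_{\tabt^\flat}$ in $vy_k$ is exactly $\pm c_\tabt$, and $c_\tabt=0$ whenever $\tabt$ has an $L$ at an outer position.

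The surviving tableaux have $A$ at every outer position. For them, a $k$ in a nested $\bm{s}^{(r)}$ can additionally flip only the container, whose letter is $A$ throughout. So again the coefficient is $\pm c_\tabt$.

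Alternatively, one can check that under genuine tableau dominance the swapped $\tabt'$ is strictly dominated by $\tabt$; for example, $\phi(A,L)$ strictly dominates $\phi(L,A)$. A truly minimal choice would then exclude $\tabt'$, but that is not the order you cite.

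For comparison, the paper applies the product $y_{k_1}\cdots y_{k_A}$ over all $L$-positions of a dominance-minimal $\tabt^*$, landing directly on $v_{\tabt_{\mathrm{max}}(\resi)}$. That route must exclude the same swapped cross-terms, and choosing $k$ outside nested pairs is what does it.
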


\begin{proof}
    Let $v = \sum_\tabt a_\tabt v_\tabt \in e(\resi) S^\lambda$ be such that $v y_k = 0$ for all $k \in \{1, \dots, n\}$. Let $\tabt^* = \phi_{\resi}^\lambda(l_1, \dots, l_d) \in \Std(\lambda,\resi)$ be minimal in dominance order among those tableaux $\tabt$ with $a_\tabt \neq 0$. Let $r_1, r_2, \dots, r_A \in \{1, \dots, d\}$ be those integers such that $l_{r_a} = L$. For each $a \in \{1, \dots, A\}$, let $k_a \in \bm{s}^{(r_a)}$ be such that $\deg(y_{k_a} e(\resi(\tabt))) = 2$ whenever $r_a \in W_2$ and $\deg(y_{k_a} e(\resi(\tabt))) = 4$ if $r_a \in W_4$. Moreover, if $\bm{s}^{(r_{a})} \subset \bm{s}^{(r_b)}$ for some $a, b \in \{1, \dots, A\}$, then we further require that $k_b \notin \bm{s}^{(r_a)}$. According to \cref{T:YAction}, we have
    \begin{equation}
        v y_{k_1} y_{k_2} \dots y_{k_A} = \pm v_{\tabt_{\mathrm{max}}(\resi)} \neq 0. 
    \end{equation}
    It follows that the product of $y$'s must be empty and we have $v = a_{\tabt_{\mathrm{max}}(\resi)} v_{\tabt_{\mathrm{max}}(\resi)}$, as we claimed.
\end{proof}
It is particularly striking that the above result holds regardless of the characteristic of the ring $\mathcal{O}$.

\subsection{The \texorpdfstring{$\psi$}{psi}-action} \label{SS:PsiAction}

In this section, we determine the right action of the quiver Hecke algebra generators $\psi_r$ ($r \in \{1, \dots, n-1\}$) on the Specht module basis given by $\{ v_\tabt \mid \tabt \in \Std(\lambda) \}$. The computation of $v_\tabt \psi_r$ relies heavily on the relative positions of $r$ and $r+1$ in $\tabt$, the standardness of the swapped tableau $\tabt s_r$, and the diagrammatic relations of the quiver Hecke algebra.

Our analysis naturally splits into distinct cases.

\begin{thm} \label{T:PsiAction}
    Let $\tabt \in \Std(\lambda)$ and $r \in \{1, \dots, n-1\}$. Let $i_r = \res(\tabt^{-1}(r))$ and $i_{r+1} = \res(\tabt^{-1}(r+1))$.
    \begin{enumerate}
        \item If $\tabt^{-1}(r) \in \Arm(\lambda)$ and $\tabt^{-1}(r+1) \in \Leg(\lambda)$, then
        \begin{equation}
            v_\tabt \psi_r = v_{\tabt s_r}.
        \end{equation}
        \item If $\tabt^{-1}(r) \in \Leg(\lambda)$, $\tabt^{-1}(r+1) \in \Arm(\lambda)$, then
        \begin{equation}
            v_\tabt \psi_r = v_{\tabt s_r} \mathcal{Q}_{i_r, i_{r+1}}(y_r, y_{r+1}).
        \end{equation}
        
        \item Suppose $\tabt^{-1}(r)$ and $\tabt^{-1}(r+1)$ lie in the same component of $\lambda$ (i.e., both in $\Leg(\lambda)$ or both in $\Arm(\lambda)$), so that $\tabt s_r \notin \Std(\lambda)$. Let $k = r+1, k' = r$ if the nodes are in $\Leg(\lambda)$, and $k = r, k' = r+1$ if they are in $\Arm(\lambda)$. 
        \begin{enumerate}
            \item Suppose that there exists some $m \in \{1,\dots,d\}$ such that $k \in \bm{s}^{(m)}$ and $k \notin \bm{s}^{(m)}$, and furthermore $l_m = L$. Let $\tabt'$ be the tableau having the same residue sequence as $\tabt$ and the same binary word, with the exception that $l_m = A$. The tableau $\tabt' s_r$ is standard.
            \begin{enumerate}
                \item  If $i_{k'} \notin \{0, e\}$, then
                    \begin{equation}
                        v_\tabt \psi_r = \pm v_{\tabt' s_r}.
                    \end{equation}
                \item Suppose instead that $i_{k'} \in \{0, e\}$. Let $s$ be the string of residue $i_k$ crossing string $k+1$ lowest down in the string diagram of $v_\tabt$. Then
                \begin{equation}
                    v_\tabt \psi_r = \pm v_{\tabt' s_r} (y_{k'} + y_s).
                \end{equation}
            \end{enumerate}
            \item Otherwise,
            \begin{equation}
                v_\tabt \psi_r = 0.
            \end{equation}
        \end{enumerate}
    \end{enumerate}
\end{thm}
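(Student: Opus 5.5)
We prove the three cases separately, writing $v_\tabt = v_{\tabt^\lambda}\psi_{w^\tabt}$ and using that every $w^\tabt$ is fully commutative for a hook, so $\psi_{w^\tabt}$ does not depend on the reduced expression and every string is stubborn (if it reaches the arm) or co-stubborn (if it reaches the leg).

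\textbf{Cases (1) and (2).} If $\tabt^{-1}(r)\in\Arm(\lambda)$ and $\tabt^{-1}(r+1)\in\Leg(\lambda)$, then $\tabt s_r$ is standard and $\ell(w^\tabt s_r)=\ell(w^\tabt)+1$. Hence $\psi_{w^\tabt}\psi_r$ is a reduced expression for $w^{\tabt s_r}$, and full commutativity gives $v_\tabt\psi_r=v_{\tabt s_r}$. In case (2), $\tabt s_r$ is again standard, but now $w^\tabt=w^{\tabt s_r}s_r$ is a length-decreasing factorisation. So $v_\tabt\psi_r=v_{\tabt s_r}\psi_r^2$, and we apply \cref{E:psisquaredC}.

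The residues $i_r,i_{r+1}$ sit in the arm and leg respectively. They cannot be equal: otherwise $r,r+1$ would form a designated $(i,i)$-sequence and the string would be forced the other way. So $\psi_r^2 e(\resi)=\mathcal Q_{i_r,i_{r+1}}(y_r,y_{r+1})$, using the conventions of \cref{E:psisquaredC}. We keep the polynomial unevaluated; \cref{T:YAction} then expands it if desired.

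\textbf{Case (3).} Here $r,r+1$ lie in the same strip, so $v_\tabt\psi_r$ places an $(i_r,i_{r+1})$-crossing directly under a Specht or Garnir box. We take the arm case, so that $k=r$ is a stubborn string; the leg case is symmetric, with Garnir relations replacing \cref{E:specht}, as in \cref{L:YActionVTDescription}. The strategy is the one used in \cref{L:YActionPsiVa}: slide string $k'$ upward past the crossings of string $k$ using \cref{E:braidC}. The term in which the crossing survives to the top vanishes by \cref{E:specht}. Only error terms survive, and each requires a crossing of $k$ with a co-stubborn string $s$ of residue $i_k$ (the mirrored node in the leg), together with $k'$ of adjacent residue.

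Such an $s$ exists exactly when $k$ belongs to a designated sequence $\bm{s}^{(m)}$ with $l_m=L$ and $k'$ does not. This is how we read the mis-typed hypothesis in (3a), justified by the structure in \cref{L:YActionSubsetStructure}. If no such $m$ exists, every string $k$ crosses is immobile in the sense of \cref{E:ImmobileRelations}, and we get $v_\tabt\psi_r=0$, which is (3b). When $m$ exists, cutting the $(k,s)$ crossing reroutes $k$ into the leg position and $s$ into the arm, which gives the diagram of $\tabt's_r$.

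The error term from \cref{E:braidC} is $-1$ when $i_{k'}\notin\{0,e\}$, since then $i_k$ and $i_{k'}$ are joined by a single edge. This gives (3a.i), up to sign. When $i_{k'}\in\{0,e\}$ the edge is doubled and the error term is $-(y_{k'}+y_s)$ placed on the relevant strings. Pushing these dots to the bottom via \cref{E:dotslideC}, using immobility of the remaining strings, yields (3a.ii).

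\textbf{Main obstacle.} The hard part is the claim in case (3a) that exactly one error term survives and that it straightens to $\pm v_{\tabt's_r}$ with coefficient $\pm1$. My first attempt would be the inductive untangling of \cref{L:YActionPsiVaComp}, showing that further cuts in the column strip vanish by the same cascading Specht argument as in \cref{L:YActionPsiVa}.
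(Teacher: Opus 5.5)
Your cases (1) and (2) are fine and more explicit than the paper's: $w^\tabt$ is Grassmannian, hence fully commutative, and the length count does the rest. In case (3) your strategy is the paper's: slide the new crossing upward, kill the slide term with a Specht (or one-$\psi$ column Garnir) relation, and analyse the error terms of \cref{E:braidC}.

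There is, however, a genuine gap in how you classify those error terms. You claim that a co-stubborn string $s$ of residue $i_k$ crossing $k$ exists exactly when $k\in\bm s^{(m)}$, $k'\notin\bm s^{(m)}$ and $l_m=L$. You also claim that otherwise every string crossing $k$ is immobile. Both fail whenever $k$ and $k'$ lie in a \emph{common} designated sequence $\bm s^{(m)}$ with $l_m=L$. In the arm case you treat, reducedness keeps the leg half of $\bm s^{(m)}$ strictly ahead of the arm half. So the leg string at the node mirroring $\tabt^{-1}(k)$ has smaller label and residue $i_k$. It therefore crosses $k$, and sliding past it produces an error term (outer residues $i_k$, middle residue $i_{k'}$ adjacent).

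Concretely, in \cref{eg:binaryword} take $\tabt=\phi(L,L)$ and $r=5$. String $5$ crosses the leg strings $2$ and $4$, both of residue $1$. String $4$ comes from $\bm s^{(2)}=[4,5]$ and gives the surviving term. String $2$ comes from $\bm s^{(1)}=[2,7]$, which also contains $k'=6$, and it is not immobile relative to $5$: by \cref{eg:YActionExample}, $y_5$ flips $l_1$. Its error term must be shown to vanish, and your argument never sees it.

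Without a separating subsequence like $\bm s^{(2)}$, the same configuration lands in (3b), where your immobility claim is the entire argument. An example is $e=3$, $\kappa=3$, $\lambda=(5,1^4)$, $\tabt$ with first row $1,5,6,8,9$ and $r=5$; there the target weight space is non-zero, so the vanishing is not automatic. Thus both the vanishing in (3b) and the uniqueness of the surviving term in (3a) rest on a false step. Your `main obstacle' paragraph names the difficulty but does not supply a mechanism.

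The missing ingredient is the paper's local argument for exactly this configuration. As the $\psi_r$-crossing moves up, it must also pass the partner pair of strings of $\bm s^{(m)}$ in the other strip, which carry residues $(i_r,i_{r+1})$. Avoiding the Specht relation then forces the resolution of an $(i_{r+1},i_{r+1})$-crossing. That resolution runs through an immobile string $u$, so it gives zero.

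For the remaining terms the paper does not redo a cascade as you propose. It puts the error terms of the $\psi_r$-slide in bijection with those of the dot slide of $y_k$. The vanishing for $l_m=A$ then follows from \cref{L:YActionVanishing}, and the coefficient $\pm1$ is inherited from \cref{T:YAction}.

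A smaller point in case (2): $i_r=i_{r+1}$ can occur, because the letter $L$ of an $(i,i)$ designated sequence is allowed, not ``forced the other way''. For example, take $\lambda=(2,1)$, $\kappa=0$, with $2$ in the leg, $3$ in the arm and $r=2$. Then $v_\tabt\psi_r=v_{\tabt s_r}\psi_r^2e(\cdot)=0$. The formula therefore holds only under the convention $\mathcal Q_{i,i}=0$, and your stated reason for excluding this case is wrong.
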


\begin{proof}
    Case 1 is immediate. Since $r$ and $r+1$ lie in different components of the hook $\lambda$, the tableau $\tabt s_r$ where the two numbers are swapped remains standard.
    
    Case 2 is also clear. It can be evaluated further by using the description of the $y$-action in the previous section.

    In Case 3, $\tabt s_r \notin \Std(\lambda)$ because $r$ and $r+1$ are adjacent in the same row or column. We will only cover the case where both nodes are in $\Leg(\lambda)$, as the other case is similar. When we multiply $v_\tabt$ by $\psi_r$, we add a crossing between strings $r$ and $r+1$ at the bottom of the diagram. The $\psi_r$ crossing can slide upwards past the crossings of string $r$. Note that string $r$ has residue $i_{r+1}$, since $\psi_r$ exchanges the residues of strings $r$ and $r+1$. If $\psi_r$ reaches the top of the diagram without breaking any $(i_{r+1},i_{r+1})$-crossings, it evaluates to zero by the Specht relations, since $\tabt s_r \notin \Std(\lambda)$. 

    In order for the slide to yield a non-zero result, it must trigger the error term of the braid relations \eqref{E:braidC}. This occurs exactly when the $\psi_r$-crossing slides past a string $s$ such that the residues form the sequence $(i_{r+1}, i_r, i_{r+1})$. Diagrammatically, this condition matches exactly the condition for a dot $y_{r+1}$ to break a crossing during the $y$-action on $v_\tabt$. Consequently, there is a natural bijection between the broken diagrams produced by the $\psi_r$-slide and those produced by the $y_{r+1}$-slide. For this reason, we obtain $0$ if $l_m = A$.

    The key difference is that $\psi_r$ exchanges strings $r$ and $r+1$. In the $y$-action evaluation $v_\tabt y_{r+1} = \sum c_{\tabt'} v_{\tabt'}$, the string $r+1$ is moved from the leg to the arm in $\tabt'$. The $\psi_r$-action applies the same break with the difference that strings $r$ and $r+1$ now reach the arm and leg respectively, and that any term where both $r$ and $r+1$ are in the same double short residue sequence $\bm{s}^{(m)}$ gives zero, as we see next.
    
    As the $\psi_r$ crossing between strings $r$ and $r+1$ (with residues $i_r$ and $i_{r+1}$) slides upwards, it must pass through the other pair of strings in the double short residue sequence $\bm{s}^{(m)}$ with residues $(i_r,i_{r+1})$.

\begin{equation*}
    \begin{braid}
        \def\h{3}
        % Left strings (going right)
        \draw (0,\h) node[anchor=south]{$i_r$} -- (3,0);
        \draw (1,\h) node[anchor=south]{$i_{r+1}$} -- (4,0);
        
        % Right strings (going left, crossing at the bottom)
        \draw (3,\h) node[anchor=south]{$i_r$} -- (0,0.75) -- (1,0);
        \draw (4,\h) node[anchor=south]{$i_{r+1}$} -- (1,0.75) -- (0,0);
    \end{braid}
    \quad = \quad
    \begin{braid}
        \def\h{3}
        % Left strings (going right)
        \draw (0,\h) node[anchor=south]{$i_r$} -- (3,0);
        \draw (1,\h) node[anchor=south]{$i_{r+1}$} -- (4,0);
        
        % Right strings (crossing slid up)
        \draw (3,\h) node[anchor=south]{$i_r$} -- (1.6,1.8) -- (2.4,1.2) -- (1,0);
        \draw (4,\h) node[anchor=south]{$i_{r+1}$} -- (0,0);

        \draw[red,sharp corners] (-1,\h) -- (1.4,0.7) -- (4.6,\h);
    \end{braid}
\end{equation*}
If we look only at the part of the diagram above the red line, we obtain
\begin{equation*}
    \begin{braid}
        \def\h{3}
        % Left strings (going right)
        \draw (0,\h) node[anchor=south]{$i_r$} -- (1,0) node[anchor=north, black]{$u$};
        \draw (1,\h) node[anchor=south]{$i_{r+1}$} -- (4,0);
        
        % Right string i_{r+1} (straight, isolating the i_{r+1}, i_{r+1} crossing)
        \draw (4,\h) node[anchor=south]{$i_{r+1}$} -- (0,0);
        
        % Right string i_r curved out of the way to clarify the diagram
        \draw (3,\h) node[anchor=south]{$i_r$} -- (1.3,1.7) -- (3,0);
    \end{braid}
\end{equation*}

Since $u$ is immobile, the error term that resolves the $(i_{r+1}, i_{r+1})$-crossing evaluates to zero. Therefore, we consider only the case where $r+1 \in \bm{s}^{(m)}, r \notin \bm{s}^{(m)}$ for some $m$, as in the statement. Note that in this case $r$ is at the start of such a sequence, and that furthermore there can only be one such sequence. In conclusion, if $i_r \notin \{0,e\}$, then
\[
v_\tabt \psi_r = \pm v_{\tabt' s_r}.
\]
Otherwise, if $i_r \in \{0,e\}$, then the braid relation produces extra dots.
\[
v_\tabt \psi_r = \pm v_{\tabt' s_r} (y_r + y_s).
\]
The action of the dots can be evaluated using the results in the previous section.
\end{proof}

\begin{eg} \label{eg:PsiActionExample}
    We continue \cref{eg:YActionExample}. Consider $\psi_3$ acting on $\phi(L,L)$. The integers $3$ and $4$ are adjacent in the leg. Here $k=4$ and $k'=3$. Notice that $4 \in \bm{s}^{(2)}$ but $3 \notin \bm{s}^{(2)}$, so $m=2$ and $l_2 = L$. Because $i_3 = 2 = e$, this triggers the $(y_{k'} + y_s) = (y_3 + y_5)$ relation. Flipping $l_2 \to A$ gives $\tabt' = \phi(L,A)$. The action results in $v_{\phi(L,L)} \psi_3 = \pm v_{\tabt' s_3}(y_3 + y_5)$, producing the sum of two standard tableaux:

    \[
    v_{\phi(L,L)} \psi_3 = \pm \;
    \ColorTableau[]{{1,2,6,7},{3},{4},{5}}
    \pm 
    \ColorTableau[]{{1,3,4,5},{2},{6},{7}}
    \]
    The effect of applying $\psi_5$ is similar:
    \[
    v_{\phi(L,L)} \psi_5 = \pm \;
    \ColorTableau[]{{1,2,3,7},{4},{5},{6}}
    \pm 
    \ColorTableau[]{{1,4,5,6},{2},{3},{7}}
    \]
\end{eg}

Our results on the action of the quiver Hecke algebra generating elements on Specht modules labelled by a partition having a short arm or a short leg are somewhat less wieldy than the corresponding lemmas in \cite{loub17}, as is to be expected from type C computations. They can nevertheless be used to calculate the action (up to sign) on a basis element, and they will be useful as we begin our description of the homomorphisms into the Specht modules labelled by partitions of this kind.

\section{Homomorphisms when \texorpdfstring{$\lambda$}{lambda} has a Short Leg}  \label{S:ShortLeg} \noindent Let $\la=(a+1,1^b)$ with $b < 2e$ (short leg), and fix $\mu=(\mu_1,\dots,\mu_m)$ a partition of $n$. We say that a node $N \in [\mu]$ is an {\em arm node} (resp.\ {\em leg node}) with respect to $\tabt \in \Std(\lambda)$ if $\tabt^{-1}(\tabt^\mu(N)) \in \Arm(\lambda)$ (resp.\ in $\Leg(\lambda)$). If we refer to arm or leg nodes of $\mu$ without designating a specific tableau $\tabt \in \Std(\lambda)$, then $\tabt = \tabt_{\mathrm{max}}(\resi^\mu)$ is to be assumed.

\begin{lem}\label{L:firstcol}
    If there exists a non-zero $\varphi \in \Hom(S^\mu, S^\la)$, then $v \coloneq \varphi(v_{\tabt^\mu})$ satisfies $v\in \mathcal{O}v_{\tabt_{\mathrm{max}}(\resi^\mu)}$, and moreover, $\mu$ can be split into a partition $[\mu_L] \subset [\mu]$ and a skew partition $[\mu_A] = [\mu] \setminus [\mu_L]$ so that $[\mu_L]$ contains all leg nodes of $[\mu]$ (and also the node $(1,1)$), and $[\mu_A]$ contains all arm nodes of $[\mu]$.
\end{lem}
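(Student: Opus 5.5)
The plan has two parts. First we pin down $v$ using the $y$-action, and then we read off the combinatorial splitting of $[\mu]$ from the Specht relations for $S^\mu$.

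For the first part, $\varphi$ is an $R^\Lambda_n$-homomorphism, so $v=\varphi(v_{\tabt^\mu})$ inherits every relation satisfied by $v_{\tabt^\mu}$. By \cref{E:residuerelation} we have $v=v\,e(\resi^\mu)\in e(\resi^\mu)S^\la$. By the second relation in \cref{E:specht}, every dot kills $v_{\tabt^\mu}$, because each string of the initial tableau reaches its node with no crossings. Hence $v y_k=0$ for all $k$. Since $\varphi\neq 0$ and $v_{\tabt^\mu}$ generates $S^\mu$, we get $v\neq 0$, so $e(\resi^\mu)S^\la\neq 0$. \cref{Cor:YAction} then gives $v\in\mathcal O v_{\tabt_{\max}(\resi^\mu)}$, with a non-zero coefficient. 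Write $\tabt=\tabt_{\max}(\resi^\mu)$.

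For the second part, we need to show that the set of leg nodes (with respect to $\tabt$) together with $(1,1)$ forms the Young diagram of a partition $\mu_L$. The complement $\mu_A$ is then automatically a skew shape consisting of the arm nodes. Since $(1,1)$ holds the entry $1$ of $\tabt$, it is neither arm nor leg in a strict sense, and we adjoin it to $\mu_L$. It suffices to prove the following two claims.
\begin{enumerate}
\item If $N=(r,c)$ with $c\ge 2$ is a leg node, then $(r,c-1)$ is a leg node or equals $(1,1)$.
\item If $N=(r,c)$ with $r\ge 2$ is a leg node, then $(r-1,c)$ is a leg node or equals $(1,1)$.
\end{enumerate}

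For the row condition, suppose $N'=(r,c-1)$ is an arm node and $N=(r,c)$ is a leg node. Then $j=\tabt^\mu(N')$ and $j+1=\tabt^\mu(N)$ are adjacent in a row of $\tabt^\mu$, and $v_{\tabt^\mu}\psi_j=0$ by \cref{E:specht}. So $0=v\psi_j=c\,v_\tabt\psi_j$. But $\tabt^{-1}(j)\in\Arm(\la)$ and $\tabt^{-1}(j+1)\in\Leg(\la)$, so case (1) of \cref{T:PsiAction} gives $v_\tabt\psi_j=v_{\tabt s_j}\neq 0$. This is a contradiction, provided $\mathcal O$ is a domain or $c$ is a unit; we normalise $c=1$ in the field case.

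For the column condition, the natural tool is the Garnir relation \cref{E:GarnirC} at the node $(r-1,c)$, with $N=(r,c)$. We expect this step to be the main obstacle, since Garnir elements are sums of products of $\psi$'s and are harder to control. Our first attempt is to reduce to a single-$\psi$ argument using the row condition already proved. If $(r,c)$ is a leg node, then so is every $(r,c')$ with $c'\le c$. We would like to show that if $(r-1,c)$ is an arm node, then the Garnir belt between rows $r-1$ and $r$ can be evaluated on $v_\tabt$.

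We plan to evaluate $v_\tabt\psi_{\tabt^N}$ using the fully commutative structure: case (1) of \cref{T:PsiAction} moves arm strings past leg strings freely. This should give a tableau in which the arm entries of row $r-1$ are followed by the leg entries of row $r$. If only leg-to-arm swaps are involved, then $\psi_{\sigma^N}$ acts through case (1) alone, and each term yields a distinct standard basis vector. The Garnir sum would then be non-zero, which is the desired contradiction.

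A simpler fallback argument is also available. Leg nodes must receive strictly increasing leg positions in $\tabt$, and in a hook the leg is a single column. If both $(r,1)$ and $(r-1,c)$ for $c>1$ were handled this way, standardness of $\tabt$ combined with the residue bookkeeping from \cref{P:ResidueHookTableaux} may already force the configuration. We would try the Garnir computation first, restricting to the two relevant rows and treating all other strings as immobile, as in \S\ref{SS:YAction}.
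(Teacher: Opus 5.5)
Your first part and your row step are exactly the paper's argument: \cref{Cor:YAction}, then case (1) of \cref{T:PsiAction}. No hypothesis on $\mathcal O$ is needed in the row step, since $a\,v_{\tabt s_j}=0$ with $v_{\tabt s_j}$ a basis vector already forces $a=0$.

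Your intended column argument is also the paper's: evaluate the Garnir relation at $(r-1,c)$ through case (1). As written, though, it is only conditional (``if only leg-to-arm swaps are involved''), and you record only half of what the row step gives. The missing line is to apply claim (1) in row $r-1$ as well. If some $(r-1,c')$ with $c'>c$ were a leg node, iterating (1) would make $(r-1,c)$ a leg node. Hence:
\begin{itemize}
\item the belt nodes in row $r-1$ are all arm nodes, and those in row $r$ are all leg nodes;
\item the belt labels form an interval with the row-$(r-1)$ labels first, so these strings do not cross in $v_\tabt$;
\item the Garnir permutation only moves leg labels in front of arm labels, keeping the order within each row, so case (1) applies at every step and produces a standard basis vector.
\end{itemize}

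In fact the obstacle you anticipate is absent. The row-$r$ part of the belt has $c\le b<2e$ nodes, so it contains no brick and the brick-coset sum is trivial. (In the short-arm setting the row-$(r-1)$ part has at most $a<2e$ nodes, with the same effect.) The relation is then simply $v\,\psi^{\tabt^A}=0$ with $A=(r-1,c)$, where $\tabt^A$ moves the leg labels of the belt in front of its arm labels. Thus $v_\tabt\psi^{\tabt^A}=v_{\tabt w}\neq 0$ for a standard tableau $\tabt w$, which is the contradiction.

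Your fallback should be dropped. Residues, standardness and maximality cannot exclude an arm node sitting directly above a leg node; only the Garnir relation can. Take $e=2$, $\kappa=1$, $\mu=(2,2)$ and $\lambda=(2,1,1)$. Then $\resi^\mu=\resi^\lambda=(1,2,0,1)$ and $\Std(\lambda,\resi^\mu)=\{\tabt^\lambda\}$, so $\tabt_{\mathrm{max}}(\resi^\mu)=\tabt^\lambda$. The node $(1,2)$ of $[\mu]$ is an arm node directly above the leg node $(2,2)$. The vector $v_{\tabt^\lambda}$ satisfies the residue relation, all dot relations, and both row relations $v_{\tabt^\lambda}\psi_1=v_{\tabt^\lambda}\psi_3=0$. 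What fails is the Garnir relation at $(1,2)$: $v_{\tabt^\lambda}\psi_2\psi_3=v_{\tabt'}\neq 0$, where $\tabt'$ has first row $1,4$ and first column $1,2,3$.
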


\begin{proof}
    By \cref{Cor:YAction}, we see that $v$ satisfies \cref{E:residuerelation} and the $y$-relation in \cref{E:specht} if and only if $v=v_{\tabt_{\mathrm{max}}(\resi^\mu)}$. 
    
    Next, let $N = (c,d)$ be an arm node, and suppose that $N' = (c, d+1)$ is a leg node, with $r \coloneq (\tabt^{\mu})^{-1}(N)$ and $r+1 =(\tabt^{\mu})^{-1}(N')$. Then $v_{\tabt_{\mathrm{max}}(\resi^\mu)} \psi_r = v_{\tabt_{\mathrm{max}}(\resi^\mu) \sigma_r} \neq 0$. It follows that each row $[\mu_j]$ can be split into a set $[\mu_j]_L$ composed of leg nodes, followed by a set $[\mu_j]_A$ composed of arm nodes.

    Next, let $N=(a,b)$ be an arm node, and suppose that $N' = (a+1,b)$ is a leg node. Due to the previous paragraph, each node in row $a$ to the east of $N$ is an arm node, and each node in row $b$ to the west of $N'$ is a leg node. Therefore, the strings reaching the Garnir belt of $N$ do not cross each other in $v_{\tabt_{\mathrm{max}}(\resi^\mu)}$. The Garnir relation in $S^\mu$ involves a linear combination of the terms in $\Gar^N$, which is taken to be zero. However, since the strings in the Garnir belt of $N$ do not cross each other in $v_{\tabt_{\mathrm{max}}(\resi^\mu)}$, each of the tableaux $\tabt \in \Gar^N$ is mapped to a basis element $v_{\tabt_{\mathrm{max}}(\resi^\mu)} \psi_\tabt = v_{\tabt_{\mathrm{max}}(\resi^\mu) \sigma_\tabt}$, since $\tabt_{\mathrm{max}}(\resi^\mu) \sigma_\tabt$ is standard. A linear combination of basis elements cannot yield zero, so that there cannot be a homomorphism sending $v_{\tabt^\mu}$ to $v_{\tabt_{\mathrm{max}}(\resi^\mu)}$, which contradicts our assumptions. 
    
    It follows that each column $[\mu_j']$ can be split into a set $[\mu_j']_L$ composed of leg nodes, followed by a set $[\mu_j']_A$ composed of arm nodes.

    The lemma statement follows from the conclusions of the last two paragraphs.
\end{proof}

Without loss of generality, we may assume that $v \coloneq \varphi(v_{\tabt^\mu}) = v_{\tabt_{\mathrm{max}}(\resi^\mu)}$. (That is, the constant $a$ equals $1$ in $\varphi(v_{\tabt^\mu}) = v_{\tabt_{\mathrm{max}}(\resi^\mu)}$.) We consider two separate cases, according to whether the node $(1,2)$ of $[\mu]$ is an arm node or a leg node. The case where $(1,2) \notin [\mu]$ fits without issue inside the case where $(1,2)$ is an arm node of $[\mu]$, so we blend the two together into the case where $(1,2)$ is \emph{not a leg node} of $[\mu]$.

\subsection{The node \texorpdfstring{$(1,2)$}{(1,2)} is not a leg node of \texorpdfstring{$[\mu]$}{[mu]}}

\begin{lem}\label{L:legnodes}
    Suppose that $(1,2)$ is not a leg node of $[\mu]$. Then the leg nodes of $\mu$ are precisely $(2,1)$, $(3,1)$, $\dots$, $(b+1,1)$.
\end{lem}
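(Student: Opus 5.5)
The plan is to combine the shape constraint of \cref{L:firstcol} with the hypothesis on $(1,2)$ and with the structure of $\tabt_{\mathrm{max}}(\resi^\mu)$. We may assume $\varphi$ is non-zero, as the standing hypothesis of this section requires. By \cref{L:firstcol}, the leg nodes of $[\mu]$, together with $(1,1)$, form a partition $[\mu_L]$, and the arm nodes form the skew shape $[\mu_A]=[\mu]\setminus[\mu_L]$. The leg of $\lambda$ has exactly $b$ nodes, so $[\mu]$ has exactly $b$ leg nodes and $|\mu_L| = b+1$.

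First we determine the first row of $\mu_L$. The node $(1,2)$ is not a leg node, and it is not $(1,1)$. If $(1,2)\in[\mu]$ it is therefore an arm node, and the row-splitting part of \cref{L:firstcol} (leg nodes followed by arm nodes in each row) shows that every node $(1,c)$ with $c\ge 2$ is an arm node. Hence the first row of $\mu_L$ is $\{(1,1)\}$. Because $[\mu_L]$ is a partition, it must then be a single column of length $b+1$. This gives exactly $\{(1,1),(2,1),\dots,(b+1,1)\}$, and we need $\mu$ to have at least $b+1$ rows.

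The step I expect to need care is making sure that ``partition'' in \cref{L:firstcol} really means a Young diagram containing $(1,1)$, so that its row lengths are weakly decreasing. In particular, the row and column closure arguments in that proof must be combined correctly. A leg node $(r,c)$ with $c\ge 2$ would, by the row-splitting property, force $(r,c-1),\dots,(r,1)$ to be leg nodes. The column-splitting property would in turn force $(r-1,c),\dots,(1,c)$ to be leg nodes, and taking $c=2$ this contradicts the hypothesis that $(1,2)$ is not a leg node.

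As a sanity check, I would verify the count via residues. In $\tabt_{\mathrm{max}}(\resi^\mu)$ the first-column nodes of $\mu$ carry residues $\overline{\kappa-1},\overline{\kappa-2},\dots$, which match the leg residues of $\lambda$. This confirms that the leg indeed receives nodes from column one, consistent with $\mu$ having at least $b+1$ rows.
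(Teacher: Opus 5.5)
Your argument is correct and matches the paper's approach: the paper simply states that the lemma is a trivial consequence of \cref{L:firstcol}, and you have written that consequence out. Namely, $[\mu_L]$ is a partition with $b+1$ nodes whose first row is $\{(1,1)\}$, so it is the first column $\{(1,1),\dots,(b+1,1)\}$. The residue sanity check in your final paragraph is not needed.
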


\begin{proof}
    This is a trivial consequence of \cref{L:firstcol}.
\end{proof}

For a partition $\nu \vdash b + 1$ such that $[\nu] \subseteq [\mu]$, we define $\tabt_\mu^\lambda(\nu) \in \Std(\lambda)$ to be the unique standard tableau obtained as follows:
\begin{enumerate}
    \item For each node $N \in [\mu] \setminus [\nu]$, place the number $\tabt^{\mu}(N)$ into $\Arm(\lambda)$.
    \item For each node $N \in [\nu]$, place the number $\tabt^{\mu}(N)$ into $(1,1) \cup \Leg(\lambda)$.
\end{enumerate} 
According to \cref{L:legnodes}, if there is a homomorphism $\varphi$ from $S^\mu$ to $S^\lambda$ where $(1,2)$ is not a leg node of $[\mu]$ and $\lambda$ has a short leg, then (up to a constant) $\varphi(v_{\tabt^\mu}) = v_{\tabt_\mu^\lambda(\nu)}$, where $\nu =(1^{b+1})$. Let us write simply $\tabt_\mu^\lambda$ if $\nu = (1^{b+1})$. On the other hand, we also have (up to a constant) $\varphi(v_{\tabt^\mu}) = v_{\tabt_{\mathrm{max}}(\resi^\mu)}$ due to \cref{L:firstcol}. Next we give a condition under which ${\tabt_{\mathrm{max}}(\resi^\mu)}$ and $\tabt_\mu^\lambda$ match. 

\begin{lem} \label{L:y_rels_hom}
    Let $\mu$ be a partition such that $\resi^{\tabt^\lambda_\mu} = \resi^\mu$. Then $\tabt_{\mathrm{max}}(\resi^\mu)$ exists and 
    \[\tabt_\mu^\lambda = \tabt_{\mathrm{max}}(\resi^\mu).\]
\end{lem}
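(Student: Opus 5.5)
The hypothesis $\resi^{\tabt^\lambda_\mu} = \resi^\mu$ says that $\tabt^\lambda_\mu$ is a standard $\lambda$-tableau with residue sequence $\resi^\mu$. Hence $\Std(\lambda,\resi^\mu) \neq \varnothing$, and $\phi^\lambda_{\resi^\mu}$ and $\tabt_{\mathrm{max}}(\resi^\mu) = \phi^\lambda_{\resi^\mu}(A,\dots,A)$ are defined by \cref{P:ResidueHookTableaux} and \cref{C:minimal_tableau}. This gives existence. It then suffices to show that the binary word of $\tabt^\lambda_\mu$ is $(A,\dots,A)$. By the defining property of $\phi^\lambda_{\resi^\mu}$, this means showing that for every designated sequence $\bm{s}^{(r)} = [u,v] \in DS(\resi^\mu)$, the number $u$ lies in $\Arm(\lambda)$ in $\tabt^\lambda_\mu$.

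I would argue by contradiction. Suppose $u = s^{(r)}_1$ lies in $\Leg(\lambda)$ in $\tabt^\lambda_\mu$. By construction, the leg of $\tabt^\lambda_\mu$ is filled, top to bottom, by the entries $\tabt^\mu(2,1) < \tabt^\mu(3,1) < \dots < \tabt^\mu(b+1,1)$, i.e.\ the first entries of rows $2,\dots,b+1$ of $\tabt^\mu$. Every other number, apart from $1$, lies in the arm. So $u = \tabt^\mu(j,1)$ for some $2 \le j \le b+1$.

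Consider the number $u+1 \in \bm{s}^{(r)}$, which exists since $|\bm{s}^{(r)}| \ge 2$. Since $\bm{s}^{(r)}$ is reduced, the discussion before \cref{L:YActionVTDescription} shows that when $l_r = L$ we must also have $u+1$ in the leg. In $\tabt^\lambda_\mu$, however, $u+1$ is either $\tabt^\mu(j,2)$, which lies in the arm, or, if row $j$ of $\mu$ has length one, $\tabt^\mu(j+1,1)$. In the latter case, rows $j, j+1, \dots$ of $\mu$ all have length one, so every subsequent leg entry is consecutive.

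The first case contradicts reducedness directly. In the second case I would argue with residues. Both the arm and leg addable residues at step $u$ equal $i_u$, and the double short sequence $\bm{s}^{(r)}$ must place half of its weight in the arm. But after $u$, the following numbers $u+1, u+2, \dots$ all go into the leg until the leg is exhausted. So the arm only receives entries after the leg is full, and the arm segment begins exactly where the leg ended. The weight equality $\wt(\resi_{\text{arm}}) = \wt(\resi_{\text{leg}})$ then forces a strict initial segment to be double short, or contradicts \cref{L:doubleshortweight} because the two strands start at the same residue. Either way this contradicts reducedness.

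The main obstacle is this second case, the trailing column $(1^{k})$ of $\mu$. There one has to check carefully, using condition (2) of \cref{D:DS} and \cref{L:doubleshortweight}, that no designated sequence can start at a leg entry. I would first try to show directly that once all remaining leg positions are filled consecutively, the arm and leg residue strips at step $u$ coincide and the sequence cannot be reduced, appealing to \cref{L:YActionSubsetStructure} for nested sequences.
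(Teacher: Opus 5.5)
Your setup matches the paper's: you argue by contradiction with $u=\tabt^\mu(j,1)$ and split on whether $\mu_j\ge 2$. However, the second case is not actually proved, and the tools you propose for it (reducedness, \cref{L:doubleshortweight}, \cref{L:YActionSubsetStructure}) do not close it. The sentence ``forces a strict initial segment to be double short, or contradicts \cref{L:doubleshortweight}'' does not follow. \cref{L:doubleshortweight} would only tell you that both strands equal $\resi_{i_u}(w)$, and that is not a contradiction by itself.

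The missing observation is simple. If $\mu_j=1$, then rows $j,j+1,\dots$ of $\mu$ all have length one, so every number of $\bm{s}^{(r)}$, leg and arm entries alike, occupies consecutive nodes of the first column of $[\mu]$. Contents change by one at each step down a column, so $\resi^\mu_{\bm{s}^{(r)}}$ is a substring of $\mathcal{I}$. By part (3) of \cref{L:elementary_short_weights}, its weight cannot be double short, which is the contradiction. This is exactly how the paper disposes of this case, and no appeal to reducedness or to nested designated sequences is needed.

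Your first case also has a hole. Reducedness forces $u+1$ into the leg only when $|\bm{s}^{(r)}|>2$. For a designated sequence of length two, $u$ in the leg and $u+1$ in the arm is perfectly compatible with reducedness, so ``contradicts reducedness directly'' does not cover it.

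The paper's argument works for all lengths without reducedness. By condition (2) of \cref{D:DS}, the next addable arm node at step $u$ has residue $i_u$. The first arm entry of $\bm{s}^{(r)}$ fills exactly that node. If this entry is $\tabt^\mu(j,2)$, then $\res(j,2)=\res(j,1)$, which is impossible because horizontally adjacent nodes have adjacent, hence distinct, residues.
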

\begin{proof} 
    We show that $\tabt_\mu^\lambda = \tabt_{\mathrm{max}}(\resi^\mu) = \phi_{\resi^\mu}^\lambda(A, \dots, A)$, which by \cref{C:minimal_tableau} immediately implies $v_{\tabt_\mu^\lambda} y_k = 0$ for all $k$.
    Suppose, towards a contradiction, that $\tabt_\mu^\lambda = \phi_{\resi^\mu}^\lambda(l_1, \dots, l_d)$ corresponds to a word with $l_m = L$ for some $m \in \{1, \dots, d\}$. Let $\bm{s}^{(m)} \in DS(\resi^\mu)$ be the associated interval of indices.
    
    The choice $L$ means that the integer $u = s^{(m)}_1$ is placed in $\Leg(\lambda)$. By \cref{L:legnodes}, the node $N \coloneq (\tabt^{\mu})^{-1}(u)$ lies in the first column of $[\mu]$, so $N = (r,1)$ for some $r \ge 2$.
    Because $\bm{s}^{(m)}$ defines a double short residue sequence, the contiguous subset of integers $\bm{s}^{(m)}$ must be partitioned equally into an Arm strip $S_A$ and a Leg strip $S_L$ within $\lambda$ that generate identical residue sequences.
    The nodes of $[\mu]$ filled by the integers in $\bm{s}^{(m)}$ consist of consecutive nodes starting in row $r$.
    
    The leg nodes $S_L$ are located in the first column of $\mu$, while the arm nodes $S_A$ are located in arbitrary columns. The first arm node in $(\tabt^{\mu})^{-1}(\bm{s}^{(m)})$ is either in column $1$ or $2$.
    
    If it is in the second column, then it must be $N' = (r, 2)$, which is horizontally adjacent to $N = (r, 1)$. For $S_A$ and $S_L$ to generate identical residue sequences, their starting nodes must have the same residue, but horizontally adjacent nodes in $\mu$ have contents that differ by exactly 1, so their residues are adjacent in the type C quiver and can never be equal. 

    Assume instead that $N'$ is in the first column, so that $N' = (r', 1)$. In that case all of $(\tabt^{\mu})^{-1}(\bm{s}^{(m)})$ is in the first column of $[\mu]$, and we have a double short residue sequence which is a subsequence of $\mathcal{I}$. This is also impossible.
    
    This contradiction establishes that $\tabt_\mu^\lambda$ is constructed using only $A$ choices, and therefore $\tabt_\mu^\lambda = \tabt_{\mathrm{max}}(\resi^\mu)$.
\end{proof}

We set up some additional simple notation. Given $c_1 \in \mathbb{Z} \setminus \{ce \mid c \in \mathbb{Z}\}$, we may write $\overline{c_1}^+$ for $\overline{c_1}$ if $\overline{c_1 + 1} = \overline{c_1} + 1$
Similarly, we may write $\overline{c_1}^-$ for $\overline{c_1}$ if $\overline{c_1+1}=\overline{c_1}-1$. The following is easy to check.

\begin{lem}\label{lem:residues}
    Let $c_1, c_2 \in \mathbb{Z} \setminus \{ce \mid c \in \mathbb{Z}\}$. Then
    \begin{itemize}
        \item[(1)] $\overline{c_1}^\pm=\overline{c_2}^\pm$ if and only if $c_2\equiv +c_1 \mod 2e$, and
        \item[(2)] $\overline{c_1}^\pm=\overline{c_2}^\mp$ if and only if $c_2\equiv -c_1 \mod 2e$.
    \end{itemize}
\end{lem}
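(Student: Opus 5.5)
We will argue directly from the definition \eqref{res} of the residue map, working entirely inside $\mathbb{Z}/2e\mathbb{Z}$. First we make the superscripts explicit. For $c \not\equiv 0, e \pmod{2e}$, write $c \equiv k \pmod{2e}$ with $1 \le k \le 2e-1$ and $k \neq e$.

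If $1 \le k \le e-1$, then $\overline{c} = k$ and $\overline{c+1} = k+1$. This holds even when $k = e-1$, since then $\overline{c+1} = e = k+1$. So $\overline{c}$ carries the sign $+$.

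If $e+1 \le k \le 2e-1$, then $\overline{c} = 2e-k$ and $\overline{c+1} = 2e-k-1$. This also covers $k = 2e-1$, where $\overline{c+1} = 0 = 2e-k-1$. So $\overline{c}$ carries the sign $-$.

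Hence the sign attached to $c$ records whether $c \bmod 2e$ lies in $(0,e)$ or in $(e,2e)$. The value $\overline{c}$ is the representative of $\pm c$ lying in $\{1,\dots,e-1\}$.

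For (1), suppose $\overline{c_1} = \overline{c_2}$ with the same sign. If both signs are $+$, then $c_1 \bmod 2e$ and $c_2 \bmod 2e$ both lie in $\{1,\dots,e-1\}$ and are equal, so $c_2 \equiv c_1$. If both signs are $-$, then $2e - (c_1 \bmod 2e) = 2e - (c_2 \bmod 2e)$, which again gives $c_2 \equiv c_1$. Conversely, if $c_2 \equiv c_1$, then both the residues and the signs agree, because both depend only on the class modulo $2e$.

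For (2), use that $c \mapsto -c$ exchanges the intervals $(0,e)$ and $(e,2e)$ modulo $2e$. Since $\overline{-c} = \overline{c}$ by \eqref{res}, negation keeps the residue and flips the sign. Now $\overline{c_1}$ and $\overline{c_2}$ are equal with opposite signs exactly when $\overline{-c_1}$ and $\overline{c_2}$ are equal with the same sign. By (1) this happens iff $c_2 \equiv -c_1 \pmod{2e}$.

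The only point needing care is the boundary cases $k = e-1$ and $k = 2e-1$, where $c+1$ lands on a residue $e$ or $0$. The computation above shows that the sign convention still holds there, so the case analysis is complete.
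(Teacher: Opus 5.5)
Your proof is correct and takes the route the paper intends; the paper gives no argument beyond stating that the lemma is easy to check. Your direct verification supplies exactly that check: you identify the sign with whether $c \bmod 2e$ lies in $(0,e)$ or $(e,2e)$ (including the boundary cases $k=e-1$ and $k=2e-1$), and you reduce (2) to (1) using $\overline{-c}=\overline{c}$ together with the sign flip under negation.
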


The next lemma determines the shapes $\mu$ for which $\resi^{\tabt_\mu^\lambda} = \resi^\mu$, which is a necessary condition for the existence of a homomorphism $\varphi \in \Hom_{R_n}(S^\mu,S^\lambda)$, as we have seen. The lemma is separated into a recursive and a non-recursive part.

\begin{lem}\label{lem:shapes}
    Let $\mu=(\mu_1,\dots,\mu_m)$ be a partition of $n$ such that $m\ge b+1$ with charge $\kappa$ and residue sequence $(i_1,i_2,\dots,i_n)=\resi^\mu$. Recall that $\lambda = (a+1, 1^b)$.
    \begin{enumerate}
        \item \begin{itemize} 
            \item[(a)] Suppose that $b > 0$ and $\mu_1 = c(2e)$ for some $c \in \mathbb{Z}_{>0}$. Then $\mu = (\mu_1, \overline{\mu})$ satisfies $\resi^{\tabt_\mu^\lambda} =\resi^\mu$ if and only if $\overline{\mu}$ satisfies $\resi^{\tabt_{\overline{\mu}}^{\overline{\lambda}}} =\resi^{\overline{\mu}}$, where $\overline{\lambda} = (a + 1 - (\mu_1 - 1), 1^{b-1})$ with charge $\kappa - 1$. 
            \item[(b)] Suppose that $b = 0$ and $\mu_1 = c(2e) - 1$ for some $c \in \mathbb{Z}_{>0}$. Then $\mu = (\mu_1, \overline{\mu})$ satisfies $\resi^{\tabt_\mu^\lambda} =\resi^\mu$ if and only if $\overline{\mu}$ satisfies $\resi^{\tabt_{\overline{\mu}}^{\overline{\lambda}}} =\resi^{\overline{\mu}}$, where $\overline{\lambda} = (a + 1 - \mu_1)$ with charge $\kappa - 1$. 
            \end{itemize} \label{itemlist:1}
        \item Suppose that $\mu_1$ is not as in the previous cases.
        Then $\resi^{\tabt_\mu^\lambda} =\resi^\mu$ if and only if $\mu$ has one of the following shapes.
        \begin{enumerate}
            \item $\mu= \lambda = (a+1, 1^b)$
            \item $\mu=(a,1^{b+1})$ and $a \equiv -b-1 \mod 2e$
            \item $\mu=(\mu_1,1^{n-\mu_1})$ and $b - \mu_1 \equiv 2\kappa - 1 \mod 2e$.
            \item $\mu = (c(2e) - 2\kappa, 2^y, 1^{b-y})$ for some $c \in \mathbb{Z}_{\ge 0}$ and $y \in \{1, 2, \dots, b\}$.
            \item $\mu = (c(2e) - 2\kappa, 2^y, 1^{b-y + 1})$ for some $c \in \mathbb{Z}_{\ge 0}$, $b - y \equiv 2\kappa - 1 \mod 2e$ and $y \in \{1, 2,\dots,b\}.$
        \end{enumerate} \label{itemlist:2}
    \end{enumerate}
\end{lem}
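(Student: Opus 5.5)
This is a purely combinatorial statement about residues, so I will compare the two residue sequences position by position. Write $\tabt = \tabt_\mu^\lambda$. The entries $1,\dots,b+1$ of the first column of $\tabt^\mu$ are the numbers $\tabt^\mu(r,1)$ for $r \le b+1$, and these go to $(1,1)\cup\Leg(\lambda)$. All other numbers go to $\Arm(\lambda)$, in increasing order. Hence in $\tabt$:
\begin{enumerate}
\item the node $(r,1)$ of $[\mu]$ with $r\le b+1$ receives content $\kappa-(r-1)$ in $\lambda$, exactly as in $\mu$, so these positions never obstruct equality;
\item the $j$-th arm number (in the row reading order of $\tabt^\mu$, excluding the first $b+1$ nodes of column $1$) receives content $\kappa+j$.
\end{enumerate}
The condition $\resi^{\tabt}=\resi^\mu$ therefore reduces to the following: listing the nodes of $[\mu]\setminus\{(1,1),\dots,(b+1,1)\}$ in reading order as $N_1,N_2,\dots$, we need $\overline{\cont N_j}=\overline{\kappa+j}$ for all $j$. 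By \cref{lem:residues} and the definition of the bar map, $\overline{x}=\overline{y}$ if and only if $x\equiv \pm y \pmod{2e}$.

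The first step is the reduction in part (1). The first row contributes $N_1,\dots,N_{\mu_1-1}$ with contents $\kappa+1,\dots,\kappa+\mu_1-1$, which always match. The issue is what happens next.
\begin{itemize}
\item In case (a), with $b>0$, the next arm number is $(2,2)$, with content $\kappa$, which must match $\overline{\kappa+\mu_1}$. This holds when $\mu_1\equiv 0 \pmod{2e}$.
\item In case (b), with $b=0$, the node $(2,1)$ is also an arm node, with content $\kappa-1$, against $\overline{\kappa+\mu_1}$. This holds when $\mu_1\equiv -1\pmod{2e}$.
\end{itemize}
In either case the remaining condition is the same condition for $\overline\mu=(\mu_2,\dots)$ with charge $\kappa-1$, for the hook $\overline\lambda$ obtained by deleting $\mu_1-1$ (respectively $\mu_1$) arm cells. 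This works because $\overline{\kappa+\mu_1+t}=\overline{\kappa+t}$ by $2e$-periodicity, and shifting the charge by $-1$ accounts for the row shift. I must check that the leg of $\overline\lambda$ has length $b-1$ in case (a), since $(2,1)$ is consumed as its corner, and that the arm lengths agree. This is bookkeeping.

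The second step is to treat $\mu_1$ not of these forms. I will match the first mismatch position. Consider the first arm node after row $1$. Its residue must be $\overline{\kappa+\mu_1}$, while its content is either $\kappa$ (the node $(2,2)$) or $\kappa-b-1$ (the node $(b+2,1)$ when $\mu_2=1$). This forces one of the congruences $\kappa+\mu_1\equiv -\kappa \pmod{2e}$, or $\kappa-b-1\equiv \pm(\kappa+\mu_1)\pmod{2e}$, since the $+$ versions give exactly part (1) or the excluded cases. I then split by $\mu_2\ge 2$ versus $\mu_2\le1$.
\begin{itemize}
\item If $\mu_2\le1$, $\mu$ is a hook $(\mu_1,1^{n-\mu_1})$. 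Its extra leg cells have contents $\kappa-b-1,\kappa-b-2,\dots$, decreasing, while the required contents $\kappa+\mu_1,\kappa+\mu_1+1,\dots$ increase. So either there are no extra leg cells, giving (a), or we need the reflection congruence $\kappa-b-1\equiv-(\kappa+\mu_1)$, which propagates along the whole strip and gives (c). Case (b) is the sub-case with exactly one extra cell in the leg and the arm shortened by one, which I will check directly.
\item If $\mu_2\ge 2$, the node $(2,2)$ needs $2\kappa+\mu_1\equiv 0$, so $\mu_1=c(2e)-2\kappa$. Row $2$ continues as $(2,3)$ with content $\kappa+1$ against required content $\kappa+\mu_1+1\equiv-\kappa+1$. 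The reflection fails here unless the entry is absent. Hence $\mu_2=2$ unless $e$ is small; I will verify that the degenerate cases are excluded. Similarly each row $r\le b+1$ with $\mu_r=2$ contributes a node $(r,2)$ of content $\kappa+2-r$, required to be $\equiv-(\kappa+r-1)$, which holds consistently. So rows $2,\dots,y+1$ have length $2$. After them, any further leg cells in column $1$ below row $b+1$ must satisfy the same reflection condition as in the hook case, which gives $b-y\equiv 2\kappa-1$ and shape (e), or else there are none, which gives shape (d). The converse direction of each item is a direct content check.
\end{itemize}

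The main obstacle is the exhaustiveness in the second step. I must show that no row below the first can have length $\ge3$, and that once the column-$2$ block ends, nothing but a single further column-$1$ strip can follow. Both require careful use of \cref{lem:residues}, in particular that the $\pm$ sign cannot switch mid-row, since consecutive residues within a row must follow the same direction along $\mathcal I$. I also need to take care at the points where contents hit multiples of $e$, where the sign convention degenerates and \cref{lem:residues} does not directly apply.
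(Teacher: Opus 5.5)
Your plan follows the paper's route. You reduce the statement to the condition that the $j$-th arm node of $[\mu]$ has content $\equiv\pm(\kappa+j)\pmod{2e}$, obtain part (1) from $2e$-periodicity, and in part (2) play the translation congruence ($+$) against the mirror congruence ($-$).

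\textbf{The step that fails is your derivation of (e).} After rows $2,\dots,y+1$ of length $2$ (which sit in mirror position because $\mu_1\equiv-2\kappa$), the cell $(b+2,1)$ is the $(\mu_1+y)$-th arm node. Its content is $\kappa-b-1$ and its required content is $\kappa+\mu_1+y\equiv y-\kappa$.
\begin{itemize}
\item The mirror congruence, which is what propagated in the hook case, now reads $\kappa-b-1\equiv\kappa-y$, i.e.\ $y\equiv b+1\pmod{2e}$. This is impossible since $1\le y\le b<2e$.
\item What survives is the translation congruence $\kappa-b-1\equiv y-\kappa$, that is
\[
b+y\equiv 2\kappa-1\pmod{2e}.
\]
\item At $(b+3,1)$ this would become $b+y+2\equiv 2\kappa-1$, so it holds at one cell only. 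That is exactly why (e) has a single cell below row $b+1$.
\end{itemize}
In general, along a strip of consecutive arm nodes one congruence is position-independent and the other shifts by $2$ at each step. That is the principle to use, rather than ``the sign cannot switch''. So ``the same reflection condition as in the hook case'' does not yield $b-y\equiv 2\kappa-1$; it yields nothing.

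\textbf{The printed congruence in (e) is itself wrong.} The final sentence of the paper's proof makes the same slip.
\begin{itemize}
\item For $e=2$, $\kappa=1$, $\lambda=(4,1,1)$, the shape $\mu=(2,2,1,1)$ satisfies the printed condition ($c=1$, $y=1$, $b-y=1\equiv 2\kappa-1$). But $\resi^\mu=(1,2,0,1,1,2)$ while $\resi^{\tabt_\mu^\lambda}=(1,2,0,1,1,0)$, since $6$ goes to $(1,4)$, of content $4$.
\item Conversely, for $e=3$, $\kappa=1$, $\lambda=(7,1^5)$, the shape $(4,2,2,1^4)$ has both sequences equal to $(1,2,3,2,0,1,1,0,2,3,2,1)$, although $b-y=3\not\equiv 1$.
\end{itemize}
So prove (e) with $b+y$ in place of $b-y$.

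\textbf{Two further holes in part (2).}
\begin{itemize}
\item When $b=0$, the first arm node after row $1$ is $(2,1)$, not $(2,2)$, so your dichotomy misses this case. There you must show directly that $\mu$ is a hook: $(2,1)$ and $(2,2)$ would force $2\kappa+\mu_1\equiv 1$ and $2\kappa+\mu_1\equiv -1$, which is impossible.
\item You must show $y\le b$. If $\mu_{b+2}=2$, the cell $(b+2,1)$ forces $2\kappa\equiv 2b+1$ or $-1\equiv 0$, both impossible.
\end{itemize}
Finally, there are no small-$e$ exceptions to $\mu_2\le 2$: the two congruences at $(2,2)$ and $(2,3)$ give $2\equiv 0\pmod{2e}$. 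Also, the required content at $(r,2)$ is $\equiv-(\kappa+2-r)$, not $-(\kappa+r-1)$.
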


\begin{proof}
    Part \ref{itemlist:1} is a straightforward consequence of the fact that the set of contents $\{\kappa, \kappa+1, \dots, \kappa+c(2e)-1\}$ maps to exactly $c$ full cycles of the residues $\{0, 1, \dots, e, e-1, \dots, 1\}$. For instance, in (a), the residues of the nodes in the second row starting at node $(2,2)$ fit also in the first row starting at $(1, c(2e) + 1)$, and it follows that, if $\resi^{\tabt_{\overline{\mu}}^{\overline{\lambda}}} =\resi^{\overline{\mu}}$ (the definitions of $\overline{\lambda}$ and $\overline{\mu}$ are given in the statement), then a row with $c(2e)$ nodes can be added above both $\overline{\lambda}$ and $\overline{\mu}$, and further the nodes $(1,2), (1,3), \dots, (1,\overline{\lambda}_1)$ can be translated to the first row that was just added, to obtain $\lambda$ and $\mu$, while preserving the correspondence of nodes $N \leftrightarrow (\tabt^\mu)^{-1}(\tabt_{\overline{\mu}}^{\overline{\lambda}}(N))$ between $\overline{\lambda}$ and $\overline{\mu}$ in the new shapes $\lambda$ and $\mu$, so that $\resi^{\tabt_\mu^\lambda} =\resi^\mu$ also. The converse and the argument for (b) are both similar, and we skip the details.

    For Part \ref{itemlist:2}, let the ordered nodes of $\mu \setminus \lambda$ be $(M_1, \dots, M_p)$ and those of $\lambda \setminus \mu$ be $(L_1, \dots, L_p)$. Suppose first that $\mu$ is a hook partition. If $\mu = \lambda$, the result holds trivially. Otherwise, it is necessary that $\overline{\cont{M_1}} = \overline{\cont{L_1}}$. In the case where $\overline{\cont{M_1}}^\pm = \overline{\cont{L_1}}^\pm$, it is easy to see that there cannot exist nodes $M_2, L_2$, as otherwise 
    \[\overline{\cont{M_2}} = \overline{\cont{M_1} - 1} \neq \overline{\cont{L_1} + 1} = \overline{\cont{L_2}}.\] The condition on the residues of $M_1$ and $L_1$ reduces to $a \equiv -b-1 \mod 2e$. On the other hand, if $\overline{\cont{M_1}}^\pm = \overline{\cont{L_1}}^\mp$, then there may exist several nodes $(M_1, \dots, M_p)$ and $(L_1, \dots, L_p)$, and the condition on the residues reduces to $b-\mu_1 \equiv 2\kappa - 1 \mod 2e$.

    Next, suppose instead that $\mu$ is not a hook partition, so that $(2,2) = M_1 \in [\mu]$. Since $\mu_1$ is not a multiple of $2e$, we cannot have $\overline{\cont(M_1)}^{\pm} = \overline{\cont(L_1)}^{\pm}$, so it follows that $\overline{\cont(M_1)}^{\pm} = \overline{\cont(L_1)}^{\mp}$, and therefore, the residue of $(2, 3)$ is different from $\res(L_2)$, if such a node exists. It follows that $(2,3) \notin [\mu]$, which shows that $[\mu] \setminus [\mu_1]$ has exactly two columns. Moreover, the condition $\overline{\cont(M_1)}^{\pm} = \overline{\cont(L_1)}^{\mp}$ reduces to $\mu_1 \equiv -2\kappa \mod 2e$. The case (2.d) is now covered.
    
    Under the conditions in (2.d), let $M_y$ be the last node in the second column of $\mu$. If there is a node $M_{y+1}$ below $(b+1,1)$ in $[\mu]$, then we must have $\overline{\cont{M_{y+1}}} = \overline{\cont{M_y} - 1}$. The condition that the leg of $\lambda$ is short negates the possibility that $\overline{\cont{M_{y+1}}}^{\pm} = \overline{\cont{M_y} - 1}^\pm$, so it follows that $\overline{\cont{M_{y+1}}}^\pm = \overline{\cont{M_y} - 1}^\mp$, and this condition reduces to $b - y = 2\kappa - 1 \mod 2e$. We have exhausted all possibilities for matching residues.
\end{proof}

\begin{eg} \label{eg:leg_residues}
    Let $\lambda = (13, 1^3), \kappa = 0, e = 2$. We give a list of all partitions $\mu$ satisfying $\resi^{\tabt_\mu^\lambda} =\resi^\mu$.

    \begin{itemize}
        \item $\mu = (13, 1^3)$, corresponding to \emph{(2.a)}.
        \item Set $\mu_1 = 12$, according to \emph{(1.a)}. Then we have $\lambda' = (2,1,1), \kappa' = -1$.
            \begin{itemize}
                \item $\mu' = \lambda' = (2,1,1)$, corresponding to \emph{(2.a)}, gives $\mu = (12,2,1,1)$.
                \item $\mu' = (1^4)$, corresponding to \emph{(2.b)}, gives $\mu = (12,1^4)$.
            \end{itemize}
        \item Set $\mu_1 = 8$, according to \emph{(1.a)}. Then we have $\lambda' = (6,1,1), \kappa' = -1$.
            \begin{itemize}
                \item $\mu' = \lambda' = (6,1,1)$, corresponding to \emph{(2.a)}, gives $\mu = (8,6,1,1)$.
                \item $\mu' = (5,1,1,1)$, corresponding to \emph{(2.b)}, gives $\mu = (8,5,1,1,1)$.
                \item Set $\mu_2 = 4$, according to \emph{(1.a)}. Then we have $\lambda'' = (3,1), \kappa = -2$.
                    \begin{itemize}
                        \item $\mu'' = \lambda'' = (3,1)$, according to \emph{(2.a)}, gives $\mu = (8,4,3,1)$.
                        \item $\mu'' = (2,1,1)$, according to \emph{(2.b)}, gives $\mu = (8,4,2,1,1)$.
                    \end{itemize}
                \item $\mu' = (1^8)$, corresponding to \emph{(2.c)}, gives $\mu = (8,1^8)$.
            \end{itemize}
        \item Set $\mu_1 = 4$, according to \emph{(1.a)}. Then we have $\lambda' = (10,1,1), \kappa' = -1$. The restriction that $\mu_2 \leq \mu_1 = 4$ is now relevant.
            \begin{itemize}
                \item Set $\mu_2 = 4$, according to \emph{(1.a)}. Then we have $\lambda'' = (7,1), \kappa'' = -2$. Further iterations lead to the partitions:
                    \begin{itemize}
                        \item  $\mu = (4,4,4,4)$
                        \item  $\mu = (4,4,4,3,1)$
                        \item  $\mu = (4,4,2,1^6)$
                    \end{itemize}
                \item $\mu' = (1^{12})$, corresponding to \emph{(2.c)}, gives $\mu = (4,1^{12})$.
            \end{itemize}
        \end{itemize}
\end{eg}

\cref{lem:shapes} gives a collection of partitions $\mu$ for which the residue relation in \cref{E:residuerelation} and the $y$-relation in \cref{E:specht} allow for the existence of a homomorphism from $S^\mu$ to $S^\lambda$. It is left to check the $\psi$- and Garnir relations.

\begin{lem} \label{L:simple_spechts_hom}
    Let $\mu$ be a partition such that $\resi^{\tabt^\lambda_\mu} = \resi^\mu$. Then $v_{\tabt_\mu^\lambda}$ satisfies the $\psi$-relations of the Specht generator $v_{\tabt^\mu}$ given in \cref{E:specht}.
\end{lem}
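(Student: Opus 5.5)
We need to show $v_{\tabt_\mu^\lambda}\psi_r = 0$ whenever $r$ and $r+1$ are horizontally adjacent in $\tabt^\mu$, i.e.\ $N=(\tabt^\mu)^{-1}(r)$ and $N'=(\tabt^\mu)^{-1}(r+1)$ lie in the same row of $[\mu]$. We invoke \cref{T:PsiAction} with $\tabt=\tabt_\mu^\lambda$, which equals $\tabt_{\mathrm{max}}(\resi^\mu)=\phi^\lambda_{\resi^\mu}(A,\dots,A)$ by \cref{L:y_rels_hom}. The plan is to locate $r$ and $r+1$ in $\tabt$ and check which case of \cref{T:PsiAction} applies.

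By \cref{L:legnodes}, the only nodes of $[\mu]$ whose entries are sent to $(1,1)\cup\Leg(\lambda)$ are those in the first column, namely $(1,1),\dots,(b+1,1)$. First suppose $N$ is not in column $1$. Then both $N$ and $N'$ are arm nodes, so $r$ and $r+1$ both lie in $\Arm(\lambda)$ in $\tabt$. Since $\tabt$ is built by placing the entries of non-first-column nodes into the arm in increasing order, $r$ and $r+1$ are adjacent in the arm row. We are therefore in Case 3 of \cref{T:PsiAction}. Case 3(a) requires the relevant designated sequence $\bm{s}^{(m)}$ to have $l_m=L$, but every letter of the binary word of $\tabt$ is $A$. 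Hence Case 3(b) applies and $v_\tabt\psi_r=0$.

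Next suppose $N=(c,1)$ and $N'=(c,2)$. If $c=1$, then $r=1$ sits at $(1,1)$ and $r+1=2$ goes into the arm. The nodes $(1,1)$ and $(1,2)$ of $\lambda$ are in the same row and are adjacent, so $\tabt s_1$ is not standard. As before, Case 3 with the all-$A$ word gives zero; alternatively, one can argue directly that the crossing reaches the top row and vanishes by \cref{E:specht}. If $c\ge 2$, then $r$ lies in $\Leg(\lambda)$ and $r+1$ lies in $\Arm(\lambda)$, so Case 2 of \cref{T:PsiAction} gives
\[
v_\tabt\psi_r = v_{\tabt s_r}\,\mathcal{Q}_{i_r,i_{r+1}}(y_r,y_{r+1}).
\]
Since $N$ and $N'$ are horizontally adjacent in $\mu$, the residues $i_r$ and $i_{r+1}$ are adjacent in the quiver. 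We must show that this expression vanishes. The plan is to use \cref{T:YAction}: identify $\tabt s_r$ within $\Std(\lambda,\resi)$ for the swapped residue sequence, and show it is again maximal, i.e.\ its binary word is all $A$. The argument is the same as in \cref{L:y_rels_hom}: the leg entries still occupy the first column of a shifted filling, and an $L$-choice would force equal residues on horizontally adjacent nodes, or a double short sequence inside $\mathcal{I}$.

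Once $\tabt s_r$ is maximal, \cref{T:YAction} shows that every $y_k$ kills $v_{\tabt s_r}$. Because $\mathcal{Q}_{i_r,i_{r+1}}$ has no constant term when $i_r$ and $i_{r+1}$ are adjacent, the product vanishes. The main obstacle is exactly this step: verifying that $\tabt s_r$ has an all-$A$ binary word for the new residue sequence. There, the designated sequences may change because $r$ and $r+1$ now exchange residues. To handle this, we will check directly that no designated sequence can start at $r$ or $r+1$ with the leg choice forced, using the shapes listed in \cref{lem:shapes}.
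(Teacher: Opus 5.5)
Your case split matches the paper's, and your treatment of the arm--arm case and of $c=1$ is fine. The proof breaks at the step you flag as the main obstacle. In the leg--arm case, $\tabt s_r$ need not be maximal, so showing that its binary word is all $A$ cannot succeed.

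\textbf{A counterexample to the maximality step.} Take $e=2$, $\kappa=0$, $\lambda=(6,1^2)$ and $\mu=(4,3,1)$. Then $\resi^{\tabt_\mu^\lambda}=\resi^\mu=(0,1,2,1,1,0,1,2)$, with $5$ and $8$ placed in the leg.
\begin{itemize}
\item For $r=5$ (nodes $N=(2,1)$, $N'=(2,2)$), the tableau $\tabt s_5$ has $5,6,7$ at $(1,5),(2,1),(1,6)$, and $\resi s_5=(0,1,2,1,0,1,1,2)$.
\item Every tableau in $\Std(\lambda,\resi s_5)$ puts $1,\dots,5$ along the first row. After that, the next arm node and the next leg node both have residue $1=i'_6=i'_7$.
\item Hence $[6,7]\in DS(\resi s_5)$, of type $W_2$. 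Since $6$ sits in the leg, $\tabt s_5=\phi^\lambda_{\resi s_5}(L)$.
\item By \cref{T:YAction}, $v_{\tabt s_5}y_6=\pm v_{\phi^\lambda_{\resi s_5}(A)}\neq 0$.
\end{itemize}
So the claim that every $y_k$ kills $v_{\tabt s_r}$ is false, and the observation that $\mathcal{Q}$ has no constant term does not finish the argument.

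The same thing happens whenever row $c$ of $\mu$ contains a third node $N''=(c,3)$ and $\res N'\in\{0,e\}$, so that $\res N=\res N''$. Such rows arise from the recursive shapes in \cref{lem:shapes}(1); for instance $\mu=(8,6,1,1)$, $\lambda=(13,1^3)$, $r=9$. Your analogy with \cref{L:y_rels_hom} fails exactly here. The new designated sequence $[r+1,r+2]$ has residues $(\res N,\res N'')$, coming from nodes that are not horizontally adjacent.

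\textbf{What is missing} is an argument that tolerates this $L$ and uses the exact shape of $\mathcal{Q}$ rather than just the absence of a constant term. This is what the paper does.
\begin{itemize}
\item Since $\res N'\in\{0,e\}$, homogeneity forces $\mathcal{Q}$ acting on $e(\resi s_r)$ to be a combination of two terms. One is $y_r$: string $r$ now carries residue $0$ or $e$, so $y_r$ has degree $4$. The other is $y_{r+1}^2$: string $r+1$ carries residue $\res N$, so $y_{r+1}$ has degree $2$.
\item $v_{\tabt s_r}y_r=0$, because $r$ lies in no $W_4$ designated sequence carrying an $L$. The paper excludes designated sequences ending at $r$ by an induction stepping northwest through $[\mu]$, using that all leg nodes lie in the first column.
\item $v_{\tabt s_r}y_{r+1}^2=0$. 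The first $y_{r+1}$ flips the $L$ on $[r+1,r+2]$ to $A$, and the second then acts on a tableau with no $L$ involving $r+1$. For this you must also check that $r+1$ lies in only one $L$-designated sequence.
\end{itemize}
Finally, when row $c$ stops at $N'$, a designated sequence starting at $r+1$ would force its first arm node to lie southeast of $N$. The paper rules this out by the same kind of staircase induction.
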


\begin{proof}
    Recall from \cref{L:y_rels_hom} that $\tabt_\mu^\lambda = \tabt_{\mathrm{max}}(\resi^\mu)$. For any two horizontally adjacent nodes $N, N' \in [\mu]$ filled with consecutive integers $k$ and $k+1$ in $\tabt^\mu$, we must show that $v_{\tabt_\mu^\lambda} \psi_k = 0$.
    We split into two cases based on the placement of $N$ and $N'$ in $\mu$. 

    \begin{enumerate}
        \item \textbf{Both $N$ and $N'$ are arm nodes of $[\mu]$:} In this case, $\tabt_\mu^\lambda$ places both $k$ and $k+1$ in $\Arm(\lambda)$.
        We apply \cref{T:PsiAction} (Case 3). Since we have established that $\tabt_\mu^\lambda = \phi_{\resi^\mu}^\lambda(A, \dots, A)$, we simply have $v_{\tabt_\mu^\lambda} \psi_k = 0$.
        \item \textbf{$N$ is a Leg node and $N'$ is an arm node of $[\mu]$:} In this case, $(\tabt_\mu^\lambda)^{-1}(k+1) < (\tabt_\mu^\lambda)^{-1}(k)$, so that $v_{\tabt_\mu^\lambda} \psi_k = v_{\tabt_\mu^\lambda s_k} (y_k^{x_1} + y_{k+1}^{x_2})$, where $x_1, x_2$ are $1$ or $2$ depending on the residues of $N$ and $N'$, according to \cref{T:PsiAction}. We have to check that the expression $v_{\tabt_\mu^\lambda s_k} (y_k^{x_1} + y_{k+1}^{x_2})$ evaluates to zero. 

    Let $\resi$ be the residue sequence of $\tabt^\mu$, let $\resi' = \resi s_k$, and let $\tabt' = \tabt_\mu^\lambda s_k$. Through the rest of the proof, we refer to arm and leg nodes of $[\mu]$ with respect to their positions in $\tabt'$ instead of $\tabt_\mu^\lambda$, which goes against our usual convention. Suppose, towards a contradiction, that $k$ is at the end of a double short residue subsequence $\bm{s}' \in DS(\resi')$. Let $(\tabt')^{-1}(\bm{s}')$ be the subset of nodes inside $[\la]$ filled with the numbers in $\bm{s}'$ in tableau $\tabt'$, and let $(\tabt^\mu)^{-1}(\bm{s}')$ be the set of corresponding nodes in tableau $\tabt^\mu$. Since $\resi'_{\bm{s}'}$ is a double short sequence of residues, it follows that for every arm node there must be a corresponding leg node with the same residue. Therefore, a leg node with the same residue as $N' \in (\tabt^\mu)^{-1}(\bm{s}')$ must also be in $(\tabt^\mu)^{-1}(\bm{s}')$. Since leg nodes are limited to the first column of $[\mu]$, this requires that $N^1 \in (\tabt^\mu)^{-1}(\bm{s}')$, where $N^1$ is the node immediately northwest of $N'$. Next, the node $(N^1)'$ immediately to the right of $N^1$ and immediately above $N'$ satisfies $\tabt^\mu(N^1) < \tabt^\mu((N^1)') < \tabt^\mu(N')$, so it follows that $(N^1)' \in (\tabt^\mu)^{-1}(\bm{s}')$. This in turn requires that $N^2 \in (\tabt^\mu)^{-1}(\bm{s}')$, where $N^2$ is the node immediately to the northwest of $(N^1)'$, and this reasoning can be continued by induction. Since there are only finitely many rows in $[\mu]$, it follows that $k$ cannot in fact be at the end of a double short residue subsequence $\bm{s}'\in DS(\resi')$. 
    
    Next, suppose that $k+1$ is at the start of a double short residue subsequence $\bm{s}' \in DS(\resi')$. Let $(\tabt')^{-1}(\bm{s}') \subset [\la]$ be as in the previous paragraph. Since $i'_{k+1} = \res(N)$, the first arm node in $(\tabt^\mu)^{-1}(\bm{s})$, which we call $N''$, has the same residue in $(\tabt^\mu)^{-1}(\bm{s}')$. We reason differently according to where $N''$ is located.
    \begin{enumerate}
        \item If the row starting with $N,N'$ has more nodes in it, then the node $N''$ is immediately to the right of $N'$. Since $\res(N) = \res(N'')$, it follows that the node $N'$ has residue $0$ or $e$, and therefore $x_2 = 2$, so the double crossing after $\psi_k$ leads to a factor of $y_{k+1}^2$, and we have $v_{\tabt'} y_{k+1}^2 = 0$, since $k+1$ can be inside, at most, a single double short sequence $\bm{s}'$ with corresponding choice $L$.
        \item Otherwise, $N''$ is to the southeast of $N$, and the node $N_1$ immediately to the south of $N$ satisfies $\tabt^\mu(N) < \tabt^\mu(N_1) < \tabt^\mu(N'')$, so that $N_1$ must also be in $(\tabt^\mu)^{-1}(\bm{s}')$, and then an induction similar to that in the previous paragraph may be pursued.
    \end{enumerate}
    \end{enumerate}
    
    It follows from the previous two paragraphs that $v_{\tabt_\mu^\lambda} \psi_k = v_{\tabt_\mu^\lambda s_k} (y_k^{x_1} + y_{k+1}^{x_2}) = 0$, finishing the proof.
\end{proof}

We have proved that $v_{\tabt_\mu^\lambda}$ satisfies the simple Specht relations given in \cref{E:specht} for all the shapes in \cref{lem:shapes}. Next we will find which of these shapes satisfy the Garnir relations given in \cref{E:GarnirC}. The following computation will be useful to us.
\begin{lem} \label{lem:garnir-computation}
    \[
    \begin{braid}
	\def\h{5};
	\def\sep{2}
	\def\a{7+\sep};
	\def\dashpart{0.9}
	\def\hrect{1.15}
	
	\path (0, 0) -- (0, \h) coordinate[pos=0.5] (A) ;
	\draw (0, 0) -- (A) -- (3 + 2*\sep, \h) node[anchor=south]{$i_1$};
	\draw (1, 0) -- (1, \h) node[anchor=south]{$i_2$};
	\draw(\sep/2 + 1.5, 0) node{$\cdots$};
	\draw(\sep/2 + 1.5, \h+\hrect/2) node{$\cdots$};
	\draw (2 + \sep, 0) -- (2 + \sep, \h) node[anchor=south]{$i_{2e}$};
	\draw (3 + 2*\sep, 0) -- (A) -- (0, \h) node[anchor=south]{$i_1$};
	\draw[brown] (-0.5, \h) rectangle (2.7 + \sep, \h + \hrect);
\end{braid}
    = - 
    \begin{braid}
	\def\h{5};
	\def\sep{2}
	\def\a{7+\sep};
	\def\dashpart{0.9}
	\def\hrect{1.15}
	
	\path (0, 0) -- (0, \h) coordinate[pos=0.5] (A) ;
	\draw (0, 0) -- (A) -- (0, \h) node[anchor=south]{$i_1$};
	\draw (1, 0) -- (1, \h) node[anchor=south]{$i_2$};
	\draw(\sep/2 + 1.5, 0) node{$\cdots$};
	\draw(\sep/2 + 1.5, \h+\hrect/2) node{$\cdots$};
	\draw (2 + \sep, 0) -- (2 + \sep, \h) node[anchor=south]{$i_{2e}$};
	\draw (3 + 2*\sep, 0) -- (3+2*\sep, \h) node[anchor=south]{$i_1$} coordinate[pos = 0.5] (B);
	\draw[brown] (-0.5, \h) rectangle (2.7 + \sep, \h + \hrect);
	\greendot(B);
\end{braid}
    \]
\end{lem}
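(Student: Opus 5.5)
The diagram is a single row-strip of length $2e$ (the brown box is one Specht row), with residues $i_1, i_2, \dots, i_{2e}$ read off $2e$ consecutive nodes, followed by an extra string of residue $i_1$ on the far right. Since the contents of the row run through a full period of length $2e$, the next residue after $i_{2e}$ is again $i_1$; this is the point of the statement. The left-hand diagram has the first string $s$ (bottom position $1$) going to the far right, and the last string $t$ (bottom position $2e+1$) coming to the top-left position. Both strings have residue $i_1$, and they cross once at height $\tfrac12$ in the first column, in a ``zig-zag'' that crosses the middle strings $i_2,\dots,i_{2e}$.

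I would proceed by repeatedly moving the $i_1$-string $t$ leftwards through the middle strings using the braid relation \cref{E:braidC}. Equivalently, one can isolate the middle strings one at a time: slide the crossing of $s$ and $t$ (a pair of residue $i_1$) past each middle string $i_2, \dots, i_{2e}$ in turn, starting with $i_2$.

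For a middle string of residue $j$, the braid move \cref{E:braidC} on the triple $(i_1, j, i_1)$ produces an error term only when $j$ is adjacent to $i_1$ in the quiver. Within a single period of $\mathcal{I}$ starting at $i_1$, the residues adjacent to $i_1$ occur exactly twice: at $i_2$ and at $i_{2e}$. If $i_1 \in \{0,e\}$, the same residue occurs twice, namely $i_2 = i_{2e}$. Every other middle string commutes with the pair of $i_1$-strings freely.

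The error term at $i_2$, where the triangle around $i_2$ is cut, leaves a diagram in which the string $s$ goes directly to the top position of $i_2$'s original slot area. That diagram then has two strings crossing inside the Specht row, or a string entering the row out of order. I expect it to vanish by the row Specht relation \cref{E:specht}: after cutting, one can slide a crossing of adjacent top strings into the brown box. This vanishing is the step I expect to be the delicate one. I would check it concretely by noting that, after the cut, string $t$ ends at the top node of residue $i_1$ while the $i_2$ string is now crossed by $s$ near the top, giving an unstandard row. The same argument kills everything except the term coming from the final braid move past $i_{2e}$.

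Once past $i_{2e}$, the slide term yields the $(i_1,i_1)$ crossing with $t$ going straight. The two $i_1$-strings are then adjacent and crossing twice, so the diagram is zero by \cref{E:psisquaredC}, or it becomes a crossing touching the box and vanishes by \cref{E:specht}. The surviving error term is the straightened diagram of the right-hand side. It carries a dot on the last $i_1$-string, from the ``$i = k \rightarrow j$'' case, or from the double-edge case when $i_1 \in \{0,e\}$, where two error summands appear. In that double-edge case one summand has the dot on a string inside the row; I would show it vanishes by the $y$-Specht relation after sliding the dot to the top, since no $(i_1,i_1)$ crossing remains to produce further error terms. The remaining summand, with the dot on the external $i_1$-string, has coefficient $-1$ from \cref{E:braidC}, which gives the stated sign. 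The main obstacle is confirming that all the intermediate error terms at $i_2$ vanish, and handling uniformly the cases $i_1 \in \{0,e\}$, where $i_2 = i_{2e}$, versus $i_1 \notin \{0,e\}$.
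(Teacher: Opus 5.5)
\textbf{There is a genuine gap: your bookkeeping at the first braid move is backwards, and the rest of the plan inherits this.}

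Write the left-hand side as $v\,\psi_{2e}\cdots\psi_3(\psi_2\psi_1\psi_2)\psi_3\cdots\psi_{2e}$, where $v$ denotes the boxed top.

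\emph{The slide term dies.} The slide term of \cref{E:braidC} at $i_2$ is $v\,\psi_{2e}\cdots\psi_3\,\psi_1\psi_2\psi_1\,\psi_3\cdots\psi_{2e}=v\psi_1\,\psi_{2e}\cdots\psi_3\psi_2\psi_3\cdots\psi_{2e}\,\psi_1$. It vanishes by \cref{E:specht}, because top positions $1,2$ lie in the row.

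\emph{The error term survives.} In the error term, $s$ runs straight up to the top-left. Meanwhile $t$ forms a loop from the bottom-right to just right of $i_2$ and back to the top-right, double-crossing $i_3,\dots,i_{2e}$. Its topmost crossing involves the external position $2e+1$, so no Specht relation kills it. Since the slide term is zero, this loop carries the entire answer.

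\emph{Your kept terms are zero.} Every term you intend to keep downstream is obtained by rewriting the zero slide term. This includes the later slide terms and the error term of the move past $i_{2e}$. Each still has $v\psi_1$ on top, so each vanishes. As written, your argument discards the answer and keeps zero.

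\textbf{Your count of neighbours of $i_1$ is also wrong unless $i_1\in\{0,e\}$.} A period starting at $i_1$ contains a second string $i_k$ of residue $i_1$, flanked by $i_{k-1}$ and $i_{k+1}$, both adjacent to $i_1$. For example, $e=4$, $i_1=2$ gives $(2,3,4,3,2,1,0,1)$. The string $i_k$ does not commute freely with the loop: an undotted double crossing with it is zero by \cref{E:psisquaredC}.

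\textbf{The missing idea is the evaluation of the surviving loop.} In the paper's generic case $i_1\notin\{0,1,e-1,e\}$ it goes as follows.
\begin{enumerate}
\item Pull the loop back freely past $i_3,\dots,i_{k-2}$.
\item Resolve the double crossing with $i_{k-1}$ by \cref{E:psisquaredC}. The term with the dot on $i_{k-1}$ slides to the top and dies, leaving a dot on the loop.
\item The dotted double crossing with $i_k$ becomes $-1$ times a single crossing, by \cref{E:dotslideC} and \cref{E:psisquaredC}. This is the left-hand configuration again, now starting at position $k$.
\item Apply a braid move at $i_{k+1}$. Its slide term again dies by \cref{E:specht}, and its error term gives a new loop.
\item Pull this loop back to $i_{2e}$ and resolve that double crossing by \cref{E:psisquaredC}. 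The dot on $i_{2e}$ dies at the top, and the dot on the last string gives $y_{2e+1}$.
\end{enumerate}
So in this case the final dot comes from resolving a double crossing, not from an error term of \cref{E:braidC}.

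Even when $i_1\in\{0,e\}$, where your count is right, the correct route is the same: take the error term at $i_2$, then pull the loop back and resolve it against $i_{2e}$. Keeping slide terms does not work in that case either.
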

\begin{proof}
    Suppose first that $i_1 \neq 0,1,e-1$ or $e$ and let $k \in \{2, \dots, 2e\}$ be characterised by the condition that $i_1 = i_k$. Then we see that
    \[
    \begin{braid}
	\def\h{5};
	\def\sep{2}
	\def\a{6+\sep};
	\def\dashpart{0.9}
	\def\hrect{1.15}
	
	\path (0, 0) -- (0, \h) coordinate[pos=0.5] (A) ;
	\draw (0, 0) -- (A) -- (3 + 2*\sep, \h) node[anchor=south]{$i_1$};
	\draw (1, 0) -- (1, \h) node[anchor=south]{$i_2$};
	\draw(\sep/2 + 1.5, 0) node{$\cdots$};
	\draw(\sep/2 + 1.5, \h+\hrect/2) node{$\cdots$};
	\draw (2 + \sep, 0) -- (2 + \sep, \h) node[anchor=south]{$i_{2e}$};
	\draw (3 + 2*\sep, 0) -- (A) -- (0, \h) node[anchor=south]{$i_1$};
	\draw[brown] (-0.5, \h) rectangle (2.7 + \sep, \h + \hrect);
    \end{braid}
    =
    \begin{braid}
	\def\h{5};
	\def\sep{2}
	\def\a{7+\sep};
	\def\dashpart{0.9}
	\def\hrect{1.15}
	
	\path (1, 0) -- (1, \h) coordinate[pos=0.5] (A) ;
	\draw (0, 0) -- (0, \h) node[anchor=south]{$i_1$};
	\draw (1, 0) -- (1, \h) node[anchor=south]{$i_2$};
	\draw (2, 0) -- (2, \h) node[anchor=south]{$i_3$};
	\draw(\sep/2 + 2.5, 0) node{$\cdots$};
	\draw(\sep/2 + 2.5, \h+\hrect/2) node{$\cdots$};
	\draw (3 + \sep, 0) -- (3 + \sep, \h) node[anchor=south]{$i_{2e}$};
	\draw (4 + 2*\sep, 0) -- (A) -- (4 + 2*\sep, \h) node[anchor=south]{$i_1$};
	\draw[brown] (-0.5, \h) rectangle (3.7 + \sep, \h + \hrect);
\end{braid}
    = 
    \begin{braid}
	\def\h{5};
	\def\sep{2}
	\def\a{6+\sep};
	\def\dashpart{0.9}
	\def\hrect{1.15}

	\draw (0, 0) -- (0, \h) node[anchor=south]{$i_1$};
	\draw(\sep/2 + 0.5, 0) node{$\cdots$};
	\draw(\sep/2 + 0.5, \h+\hrect/2) node{$\cdots$};
	\path (\sep, 0) -- (\sep, \h) coordinate[pos=0.5] (A) ;
	\draw(1 + \sep, 0) -- (1+\sep, \h) node[anchor=south]{$i_{k-1}$};
	\draw(2 + \sep, 0) -- (2+\sep, \h) node[anchor=south]{$i_{k}$};
	\draw (3 + \sep, 0) -- (3 + \sep, \h) node[anchor=south]{$i_{k+1}$};
	\draw (3.5 + 3*\sep/2, 0) node{$\cdots$};
	\draw (3.5 + 3*\sep/2, \h + \hrect/2) node{$\cdots$};
	\draw (4 + 2*\sep, 0) -- (4 + 2*\sep, \h) node[anchor=south]{$i_{2e}$};
	\draw (5 + 3*\sep, 0) -- (A) -- (5+3*\sep, \h) node[anchor=south]{$i_1$};
	\draw[brown] (-0.5, \h) rectangle (4.7 + 2*\sep, \h + \hrect);
\end{braid}
    \]
    \[
    =
    \begin{braid}
	\def\h{5};
	\def\sep{2}
	\def\a{6+\sep};
	\def\dashpart{0.9}
	\def\hrect{1.15}

	\draw (0, 0) -- (0, \h) node[anchor=south]{$i_1$};
	\draw(\sep/2 + 0.5, 0) node{$\cdots$};
	\draw(\sep/2 + 0.5, \h+\hrect/2) node{$\cdots$};
	\path (1+\sep, 0) -- (1+\sep, \h) coordinate[pos=0.5] (A) ;
	\draw(1 + \sep, 0) -- (1+\sep, \h) node[anchor=south]{$i_{k-1}$};
	\draw(2 + \sep, 0) -- (2+\sep, \h) node[anchor=south]{$i_{k}$};
	\draw (3 + \sep, 0) -- (3 + \sep, \h) node[anchor=south]{$i_{k+1}$};
	\draw (3.5 + 3*\sep/2, 0) node{$\cdots$};
	\draw (3.5 + 3*\sep/2, \h + \hrect/2) node{$\cdots$};
	\draw (4 + 2*\sep, 0) -- (4 + 2*\sep, \h) node[anchor=south]{$i_{2e}$};
	\draw (5 + 3*\sep, 0) -- (A) -- (5+3*\sep, \h) node[anchor=south]{$i_1$};
	\draw[brown] (-0.5, \h) rectangle (4.7 + 2*\sep, \h + \hrect);
	\greendot($(A) + (0.3,0)$);
\end{braid}
    =
    -
    \begin{braid}
	\def\h{5};
	\def\sep{2}
	\def\a{6+\sep};
	\def\dashpart{0.9}
	\def\hrect{1.15}
	
	\draw (0, 0) -- (0, \h) node[anchor=south]{$i_1$};
	\draw(\sep/2 + 0.5, 0) node{$\cdots$};
	\draw(\sep/2 + 0.5, \h+\hrect/2) node{$\cdots$};
	\path (2+\sep, 0) -- (2+\sep, \h) coordinate[pos=0.5] (A) ;
	\draw(1 + \sep, 0) -- (1+\sep, \h) node[anchor=south]{$i_{k-1}$};
	\draw(2 + \sep, 0) -- (A) -- (5+3*\sep, \h) node[anchor=south]{$i_{k}$};
	\draw (3 + \sep, 0) -- (3 + \sep, \h) node[anchor=south]{$i_{k+1}$};
	\draw (3.5 + 3*\sep/2, 0) node{$\cdots$};
	\draw (3.5 + 3*\sep/2, \h + \hrect/2) node{$\cdots$};
	\draw (4 + 2*\sep, 0) -- (4 + 2*\sep, \h) node[anchor=south]{$i_{2e}$};
	\draw (5 + 3*\sep, 0) -- (A) -- (2+\sep, \h) node[anchor=south]{$i_1$};
	\draw[brown] (-0.5, \h) rectangle (4.7 + 2*\sep, \h + \hrect);
\end{braid}
    \]
    \[= -
    \begin{braid}
	\def\h{5};
	\def\sep{1.7}
	\def\a{5+\sep};
	\def\dashpart{0.9}
	\def\hrect{1.15}

	\draw (0, 0) -- (0, \h) node[anchor=south]{$i_1$};
	\draw(\sep/2 + 0.5, 0) node{$\cdots$};
	\draw(\sep/2 + 0.5, \h+\hrect/2) node{$\cdots$};
	\path (3+\sep, 0) -- (3+\sep, \h) coordinate[pos=0.5] (A) ;
	\draw(1 + \sep, 0) -- (1+\sep, \h) node[anchor=south]{$i_{k-1}$};
	\draw(2 + \sep, 0) -- (2+\sep, \h) node[anchor=south]{$i_{k}$};
	\draw (3 + \sep, 0) -- (3 + \sep, \h) node[anchor=south]{$i_{k+1}$};
	\draw (3.5 + 3*\sep/2, 0) node{$\cdots$};
	\draw (3.5 + 3*\sep/2, \h + \hrect/2) node{$\cdots$};
	\draw (4 + 2*\sep, 0) -- (4 + 2*\sep, \h) node[anchor=south]{$i_{2e}$};
	\draw (5 + 3*\sep, 0) -- (A) -- (5+3*\sep, \h) node[anchor=south]{$i_1$};
	\draw[brown] (-0.5, \h) rectangle (4.7 + 2*\sep, \h + \hrect);
\end{braid}
= -
    \begin{braid}
	\def\h{5};
	\def\sep{1.7}
	\def\a{5+\sep};
	\def\dashpart{0.9}
	\def\hrect{1.15}

	\draw (0, 0) -- (0, \h) node[anchor=south]{$i_1$};
	\draw(\sep/2 + 0.5, 0) node{$\cdots$};
	\draw(\sep/2 + 0.5, \h+\hrect/2) node{$\cdots$};
	\path (3+2*\sep, 0) -- (3+2*\sep, \h) coordinate[pos=0.5] (A) ;
	\draw(1 + \sep, 0) -- (1+\sep, \h) node[anchor=south]{$i_{k-1}$};
	\draw(2 + \sep, 0) -- (2+\sep, \h) node[anchor=south]{$i_{k}$};
	\draw (3 + \sep, 0) -- (3 + \sep, \h) node[anchor=south]{$i_{k+1}$};
	\draw (3.5 + 3*\sep/2, 0) node{$\cdots$};
	\draw (3.5 + 3*\sep/2, \h + \hrect/2) node{$\cdots$};
	\draw (4 + 2*\sep, 0) -- (4 + 2*\sep, \h) node[anchor=south]{$i_{2e}$};
	\draw (5 + 3*\sep, 0) -- (A) -- (5+3*\sep, \h) node[anchor=south]{$i_1$};
	\draw[brown] (-0.5, \h) rectangle (4.7 + 2*\sep, \h + \hrect);
\end{braid}
    =   - \begin{braid}
	\def\h{5};
	\def\sep{1.7}
	\def\a{5+\sep};
	\def\dashpart{0.9}
	\def\hrect{1.15}
	
	\path (0, 0) -- (0, \h) coordinate[pos=0.5] (A) ;
	\draw (0, 0) -- (A) -- (0, \h) node[anchor=south]{$i_1$};
	\draw (1, 0) -- (1, \h) node[anchor=south]{$i_2$};
	\draw(\sep/2 + 1.5, 0) node{$\cdots$};
	\draw(\sep/2 + 1.5, \h+\hrect/2) node{$\cdots$};
	\draw (2 + \sep, 0) -- (2 + \sep, \h) node[anchor=south]{$i_{2e}$};
	\draw (3 + 2*\sep, 0) -- (3+2*\sep, \h) node[anchor=south]{$i_1$} coordinate[pos = 0.5] (B);
	\draw[brown] (-0.5, \h) rectangle (2.7 + \sep, \h + \hrect);
	\greendot(B);
\end{braid}
    \]
    Similar straightforward computations give the result in the case where $i_1 = \text{$0$ or $e$}$ and the case where $i_1 = \text{$1$ or $e-1$}$.
\end{proof}

The next lemma in relation to Garnir relations is analogous to \cite[Lemma 5.7]{loub17}. The similarity of the statement is striking, though the computations are more involved in our case. The notation $(a_1, \dots, a_k) \in \fkS_n$ simply denotes a cycle of the symmetric group.
\begin{lem} \label{lem:garnir}
    Suppose that $\mu$ has one of the shapes in \cref{lem:shapes} and let $A = (x, y) \in \mu$ be a Garnir node. Define $r, s, t$ to be the values $\tabt^\mu(x, y), \tabt^\mu(x+1, 1)$ and $\tabt^\mu(x + 1, y)$, respectively. Then
    \[
    v_{\tabt_\mu^\lambda} \psi^{\tabt^A} = \begin{cases}
        v_{\tabt_\mu^\lambda (t+1,t, \dots, s)} &\text{if $x \le b$, $y \equiv 0 \mod  2e$ and $y < \mu_{x+1}$},\\
        v_{\tabt_\mu^\lambda (r, r+1, \dots, s)} &\text{if $x \le b$, $y \equiv 1 \mod 2e$ and $y > 1$},\\
        v_{\tabt_\mu^\lambda} &\text{if $x > b$ and $y \equiv 0 \mod 2e$},\\
        0 &\text{otherwise.}
    \end{cases}
    \]
\end{lem}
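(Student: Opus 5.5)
We will compute $v_{\tabt_\mu^\lambda}\psi^{\tabt^A}$ directly in the string calculus, following the structure of \cite[Lemma 5.7]{loub17}. The element $\psi^{\tabt^A}$ is the fully commutative element attached to the Garnir tableau $\tabt^A$. It moves the strings filling the Garnir belt: nodes $(x,y),\dots,(x,\mu_x)$ in row $x$, and nodes $(x+1,1),\dots,(x+1,y)$ in row $x+1$. By \cref{L:legnodes}, in $\tabt_\mu^\lambda=\tabt_{\mathrm{max}}(\resi^\mu)$ (\cref{L:y_rels_hom}) every node of $\mu$ is an arm node except $(2,1),\dots,(b+1,1)$, which are leg nodes. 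So in the belt only the string $s=\tabt^\mu(x+1,1)$ can be a leg string, and this happens exactly when $x\le b$. All other belt strings go to $\Arm(\lambda)$. Crossings between arm strings behave like Specht-relation crossings in a single row. Accordingly, we split the proof into the case $x>b$, where every belt string lands in the arm, and the case $x\le b$, where $s$ lands in the leg.

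For $x>b$, the belt strings are consecutive arm strings. Applying $\psi^{\tabt^A}$ moves the block of row-$(x+1)$ strings $(x+1,1),\dots,(x+1,y)$ to the left through the row-$x$ strings, and we resolve these crossings with \cref{E:psisquaredC}, \cref{E:braidC} and \cref{E:dotslideC}. Any crossing that survives to the top is killed by \cref{E:specht}. So a nonzero term must come from error terms of \cref{E:braidC}, and these need the residue pattern of the row-$(x+1)$ block to coincide with that of the row-$x$ strings it passes. Since contents in rows $x$ and $x+1$ differ by exactly one step, a residue match through a full block requires the offset to be a whole period, that is $y\equiv 0\bmod 2e$. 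In that case we apply \cref{lem:garnir-computation} repeatedly, once per period of $2e$ strings. Each application straightens a full-period crossing into a dotted string with a sign $-1$. We then need to show the dots vanish, either because they slide to the top or by \cref{T:YAction} and \cref{Cor:YAction}, so that exactly $v_{\tabt_\mu^\lambda}$ survives. We also need the sign to come out $+1$, or to be absorbed by the choice of sign in $\psi^{\tabt^A}$. Without this residue match every term dies, giving $0$.

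For $x\le b$, the string $s$ goes to the leg and is immobile with respect to the arm strings, since it only meets them through arm/leg crossings. We treat it by Case 1 and Case 2 of \cref{T:PsiAction}. Pulling $s$ out of the belt turns the remaining computation into the arm computation above with one string removed. This shifts the residue-matching condition and gives the two cases.
\begin{itemize}
\item[(i)] $y\equiv 0 \bmod 2e$ with $y<\mu_{x+1}$. The block $(x+1,2),\dots,(x+1,y)$ passes row $x$ cleanly, and the surviving diagram is $v_{\tabt_\mu^\lambda}$ multiplied by the permutation that cycles $s$ past $s+1,\dots,t$. This equals $v_{\tabt_\mu^\lambda(t+1,t,\dots,s)}$ by \cref{T:PsiAction}(1), and we must check that this tableau is standard.
\item[(ii)] $y\equiv 1\bmod 2e$ with $y>1$. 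The match is against row $x$ beginning at $r$, and we get the cycle $(r,r+1,\dots,s)$.
\end{itemize}
Every other configuration produces, after the slides, either a surviving same-row crossing or a dot on a stubborn string with no available flip. Both vanish by \cref{E:specht} and \cref{L:YActionVanishing}.

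The main obstacle will be the case analysis for the error terms when $0$ or $e$ residues are present. The braid relation of type $C$ then produces extra dot terms, as in \cref{T:PsiAction}(3)(b), and we must show these all cancel or vanish. We also have to verify that the restricted shapes of \cref{lem:shapes} are exactly what prevents spurious matches. I would first handle the generic residue case using \cref{lem:garnir-computation} as a black box. Then I would treat the special residues using immobility (\cref{E:ImmobileRelations}) of the strings not in the belt, and the recursive structure of \cref{lem:shapes}(1), which lets us reduce to $\mu_1$ not a multiple of $2e$.
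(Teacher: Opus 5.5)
\textbf{There is a genuine gap, and it starts with what $\psi^{\tabt^A}$ is.}

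By definition $\tabt^A=\tabt^\mu w^{\tabt^A}$. The tableau $\tabt^A$ differs from $\tabt^\mu$ only by refilling the Garnir belt so that the entries of $D^A$ come first and those of $C^A$ come last. It does not drag the row-$(x+1)$ block through row $x$; that is done by the brick permutations in the full Garnir element, which produce the binomial coefficients in the next lemma. The shapes in \cref{lem:shapes} pin down $C^A$ and $D^A$, and this drives every case.

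\begin{enumerate}
\item \emph{Case $x>b$, $y\equiv 0\bmod 2e$.} Row $x$ has $c(2e)-1$ nodes, so $C^A=D^A=\varnothing$, $\tabt^A=\tabt^\mu$ and $\psi^{\tabt^A}=1$. There is nothing to compute. Your proposed computation here is not just unnecessary but cannot work. \cref{lem:garnir-computation} outputs only a dotted diagram, so if those dots act as zero the whole term is zero. Your scheme could therefore never return $v_{\tabt_\mu^\lambda}$.
\item \emph{Case $x>b$, $y\not\equiv 0$.} Here $\psi^{\tabt^A}$ is nontrivial. Its first factor $\psi_m$, with $m\ge r$, already kills $v_{\tabt_\mu^\lambda}$, since every string from $r$ on is an arm string.
\item \emph{Case $x\le b$, $y\equiv 1\bmod 2e$, $y>1$.} Here $C^A=\varnothing$ and $D^A=\{(x+1,1)\}$. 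So $\psi^{\tabt^A}$ moves the leg string $s$ leftwards past the arm strings $r,\dots,s-1$. This is where Case 1 of \cref{T:PsiAction} applies, and it yields the cycle $(r,r+1,\dots,s)$.
\end{enumerate}

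\textbf{The main missing idea is in your case (i).} The answer there involves $t+1$, which lies outside the Garnir belt. So no rearrangement of belt strings can produce it. In particular, cycling $s$ past $s+1,\dots,t$ cannot; moreover that move is Case 2 of \cref{T:PsiAction} and introduces $\mathcal{Q}$-factors.

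What actually happens is as follows.
\begin{itemize}
\item Here $\mu_x=c(2e)$, so $C^A=\{(x,\mu_x)\}$ and $D^A=\varnothing$.
\item $\psi^{\tabt^A}$ sends string $t$ to the arm node that $\tabt_\mu^\lambda$ assigns to $(x,\mu_x)$, which has residue $\bar j$.
\item The leg node coming from $(x+1,1)$ also has residue $\bar j$, because $y\equiv 0\bmod 2e$.
\item So the leg-bound string and $t$ form an equal-residue crossing wrapped around full periods of arm strings. Repeated use of \cref{lem:garnir-computation} turns the diagram into $\pm v_{\tabt_\mu^\lambda(t,t-1,\dots,s)}\,y_t$.
\item The decisive step is then the $y$-action. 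The element $y_t$ acts non-trivially exactly when $t$ starts a designated sequence $(\bar j,\bar j)$ with $t+1$ in the arm. That happens iff $(x+1,y+1)\in[\mu]$, i.e.\ iff $y<\mu_{x+1}$. In that case \cref{T:YAction} gives $v_{\tabt_\mu^\lambda(t+1,t,\dots,s)}$; otherwise the result is $0$.
\end{itemize}
Your plan discards exactly this term by treating $s$ as immobile with respect to the arm strings. It also never explains the role of the condition $y<\mu_{x+1}$.

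\textbf{The zero cases are also under-argued.} Your blanket treatment misses configurations that need their own arguments:
\begin{itemize}
\item For $y=1$ the belt contains two leg strings, $r$ and $s$. When $\mu_x>1$ and $\bar j\in\{0,e\}$, the error term of \cref{E:braidC} leaves a dot $y_{r+2}$, and one must check that it acts as zero.
\item The node $A=(b+1,1)$ has $x>b$ but a leg string in its belt. This contradicts your claim that all belt strings land in the arm when $x>b$.
\item The cases $y=2=\mu_x$, and $|C^A|,|D^A|\ge 2$, each require a separate local computation.
\end{itemize}
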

\begin{proof}
    We take $j$ to be the content of node $A = (x,y)$. 
    
    First we assume that $x \le b$ and $y \equiv 0 \mod 2e$. Since $A$ is a Garnir node it follows that $\mu_x \ge \mu_{x+1} \ge 2e$, and according to \cref{lem:shapes} we must have $\mu_x = c(2e)$ for some $c \in \mathbb{Z}_{\ge 1}$. Consequently, $C^A$ consists of a single node while $D^A$ is empty. From this we may infer the structure of $\psi^{\tabt^A}$. As in \cite{loub17}, we obtain
    \begin{equation}
        v_{\tabt_\mu^\lambda} \psi^{\tabt^A} =  \begin{braid}
	\def\h{5};
	\def\sep{2}
	\def\a{7+\sep};
	\def\dashpart{0.9}
	\def\hrect{1.15}
	
	\draw (0, 0) node{$\cdots$};
	\draw (0, \h + \hrect/2) node{$\cdots$};
	\draw(1, 0) -- (1, \h);
	\path(2, 0) -- (2, \h) coordinate[pos=0.5] (A);
	\draw (2,0) -- (A) -- (12,\h) node[anchor=south]{$\overline{j}$};;
	\draw (3 , 0) -- (3, \h);
	\draw (4+\sep , 0) -- (4+\sep, \h);
	\draw (3.5 + \sep/2, 0) node{$\cdots$};
	\draw (3.5 + \sep/2, \h + \hrect/2) node{$\cdots$};
	\draw (5 + \sep, 0) node[anchor=north]{$t$} -- (A) -- (2, \h) node[anchor=south]{$\overline{j}$};
	\draw (5.5+3*\sep/2, 0) node{$\cdots$};
	\draw (5.5+3*\sep/2, \h + \hrect/2) node{$\cdots$};
	\draw[brown] (-0.7, \h) rectangle (5.2 + 2*\sep, \h + \hrect);
\end{braid}.
    \end{equation}
    Note that by construction, any co-stubborn string before $t$ is followed by several stubborn strings. It easily follows that any co-stubborn string before $t$ is immobile, and with this in mind we ignore them in the computations.
    After applying \cref{lem:garnir-computation} several times, we obtain
    \begin{equation}
        v_{\tabt_\mu^\lambda} \psi^{\tabt^A} = 
        \pm
    \begin{braid}
	\def\h{5};
	\def\sep{2}
	\def\a{7+\sep};
	\def\dashpart{0.9}
	\def\hrect{1.15}
	
	\draw (0, 0) node{$\cdots$};
	\draw (0, \h + \hrect/2) node{$\cdots$};
	\draw(1, 0) -- (1, \h);
	\draw(2, 0) -- (2, \h) node[anchor=south]{$\overline{j}$};
	\draw (3 , 0) -- (3, \h);
	\draw (4+\sep , 0) -- (4+\sep, \h);
	\draw (3.5 + \sep/2, 0) node{$\cdots$};
	\draw (3.5 + \sep/2, \h + \hrect/2) node{$\cdots$};
	\draw (5 + \sep, 0) node[anchor=north]{$t$}  -- (12, \h) coordinate[pos=0.2] (A) node[anchor=south]{$\overline{j}$};
	\greendot(A);
	\draw (5.5+3*\sep/2, 0) node{$\cdots$};
	\draw (5.5+3*\sep/2, \h + \hrect/2) node{$\cdots$};
	\draw[brown] (-0.7, \h) rectangle (5.2 + 2*\sep, \h + \hrect);
\end{braid}
    \coloneq \pm v_\tabs y_t.
    \end{equation}
    Here $\tabs = \tabt_\mu^\lambda (t, t-1, \dots, s)$ is a standard tableau. Next, notice that the residue sequence of unfilled nodes of $\Arm(\lambda)$ in $\tabs_{\downarrow (r-1)}$ continues $(\overline{j}, \overline{j+1}, \overline{j+2}, \dots)$, whereas the residue sequence of unfilled nodes of $\Leg(\lambda)$ continues $(\overline{j}, \overline{j-1}, \overline{j-2}, \dots)$. This is a consequence of the assumption that $y \equiv 0 \mod 2e$.
    After we have placed the $c(2e)$ numbers $r,r+1, \dots, t-1$ into $\Arm(\lambda)$ in $\tabs$, we again have that the residue sequence of unfilled nodes of $\Arm(\lambda)$ in $\tabs_{\downarrow (t-1)}$ continues $(\overline{j}, \overline{j+1}, \overline{j+2}, \dots)$, whereas the residue sequence of unfilled nodes of $\Leg(\lambda)$ continues $(\overline{j}, \overline{j-1}, \overline{j-2}, \dots)$. 
    
    It follows that, if string $t$ starts a sequence $\bm{s}^{(r)} \in DS(\resi(\tabs))$, then $\resi^{(r)} = (\overline{j}, \overline{j})$. But $i_{t+1}(\bm{s}) = \overline{j}$ and $\tabs^{-1}(t+1) \in \Arm(\lambda)$ happen simultaneously if and only if $y < \mu_{x+1}$. In that case, $y_t$ simply breaks the crossing between $t$ and $t+1$ as described in \cref{T:YAction} and we obtain $v_\tabs y_t = v_{\tabt_{\mu}^\lambda (t+1, t, \dots, s)}$. Otherwise, if $y = \mu_{x+1}$, then $v_\tabs y_t = 0$.
    
    Next we let $x \le b$ and $y \equiv 1 \mod 2e$. We first assume that $x \le b$ and $y = 1$. If $x=1$, then $v_{\tabt_\mu^\lambda} \psi^{\tabt^A} = 0$ due to the Garnir relation for the node $(1,1)$. Otherwise both $A = (x, 1)$ and $A' \coloneq (x+1, 1)$ are leg nodes, since $x \le b$. 
    
    If $\mu_x = 1$, then $\mu_{x+1} = 1$ also. The element $\psi^{\tabt^A}$ in this case is simply $\psi_r$, and we have $v_{\tabt_\mu^\lambda} \psi_r = 0$ according to case 3 of \cref{T:PsiAction}, since $\tabt_\mu^\lambda = \tabt_{\mathrm{max}}(\resi^\mu)$, that is, the binary word of $\tabt_\mu^\lambda$ is composed only of $A$'s.
    
    Suppose instead that $\mu_x > 1$. In that case, the node $A'' = (x,2)$ with residue $\overline{j+1}$ is an arm node. Following Loubert, we have
    \begin{equation}
         v_{\tabt_\mu^\lambda} \psi^{\tabt^A} = 
        \begin{braid}
    	\def\h{5};
    	\def\sep{2}
    	\def\a{8};
    	\def\dashpart{0.9}
    	\def\hrect{1.15}
    	
    	\draw (1, 0) node[anchor=north]{$r$}-- (\a+1, \h) node[anchor=south]{$\overline{j_{-1}}$};
    	\draw (2, 0)  -- (1, \h/8) -- (\a, \h) node[anchor=south]{$\overline{j}$};
    	\draw (3, 0) -- (1, \h) node[anchor=south]{$\overline{j_1}$};
    	\draw (3.5 + \sep/2, 0) node{$\cdots$};
    	\draw (4 + \sep, 0) node[anchor=north]{$s$} -- (2 + \sep, \h);
    	\draw (4.5 + 1.5*\sep, 0) node{$\cdots$};
    	\draw (2.5 + 1.5*\sep, \h+\hrect/2) node{$\cdots$};
    	\draw (1.5 + 0.5*\sep, \h+\hrect/2) node{$\cdots$};

        \draw[brown] (0.3, \h) rectangle (3.3+1.5*\sep, \h + \hrect);
        \end{braid}.
    \end{equation}
    
    We have used the notation $\overline{j_k} = \overline{j + k}$ for graphical considerations. 
    Let $\tabt'$ be the (standard) tableau corresponding to the part of this diagram above the $\psi_r$ crossing at the very bottom. The diagram above may only be non-zero if $r+1$ starts a designated sequence, which may only happen if $\overline{j-1} = \overline{j+1}$, so that $\overline{j} = 0$ or $e$, and in that case, the Slide term of string $r+1$ past the crossing $\chi_{r,r+2}$ yields zero, according to \cref{T:PsiAction}. Let us take $j = 0$ without loss of generality. Focusing on a fragment on the left side of the diagram, we find
    \[
        v_{\tabt_\mu^\lambda} \psi^{\tabt^A} = 
        \begin{braid}
        \def\h{5};
        \def\sep{2}
        \def\a{6};
        \def\dashpart{0.9}
        \def\hrect{1.15}
        
        \draw (0, 0) node[anchor=north]{$r$} -- (2 + \sep, \h) node[anchor=south]{$1$};
        \draw (2+\sep, 0) node[anchor=north]{$r+2$}-- (0, \h) node[anchor=south]{$1$};
        \draw (1, 0) -- (0,\h/4) -- (1 + \sep, \h) node[anchor=south]{$0$};
        \draw[brown] (-0.5, \h) rectangle (0.5, \h + \hrect);
        
        \end{braid}
        = -
        \begin{braid}
        \def\h{5};
        \def\sep{2}
        \def\a{6};
        \def\dashpart{0.9}
        \def\hrect{1.15}
        
        \draw (0, 0) node[anchor=north]{$r$} -- (0, \h) node[anchor=south]{$1$};
        \draw (1+\sep, 0) -- (1+\sep, \h) node[anchor=south]{$0$};
        \draw (2+\sep, 0) node[anchor=north]{$r+2$}-- (2 + \sep, \h) coordinate[pos=0.5] (A) node[anchor=south]{$1$};
        \greendot(A);
        \draw[brown] (-0.5, \h) rectangle (0.5, \h + \hrect);
        \end{braid}.
    \]
    
    Let $\tabt''$ be the (standard) tableau obtained from the diagram above by substituting the bottom braid relation by the Error term (and ignoring the dots). Then the number $r+2$ cannot start a designated sequence in $\tabt''$, because when $r+2$ is placed, the addable residue in the arm is either $0$ or $2$, while the residue of $r+2$ is $1$. It follows that $v_{\tabt''} y_{r+2} = 0$, so that $v_{\tabt_\mu^\lambda} \psi^{\tabt^A} = 0$.

        Next suppose that $y > 1$ such that $y \equiv 1 \mod 2e$. Following Loubert, we reach
        \begin{equation}
        v_{\tabt_\mu^\lambda} \psi^{\tabt^A}
        =
        \begin{braid}
    	\def\h{5};
    	\def\sep{2}
    	\def\a{8};
    	\def\dashpart{0.9}
    	\def\hrect{1.15}
    	
    	\draw (0, 0) -- (0, \h) node[anchor=south]{$\overline{j}$};
    	\draw (1, 0) node[anchor=north]{$r$}-- (\a+1, \h) node[anchor=south]{$\overline{j}$};
    	\draw (2, 0)  -- (1, \h) ;
    	\draw (2.5 + \sep/2, 0) node{$\cdots$};
    	\draw (3 + \sep, 0) node[anchor=north]{$s$} -- (2 + \sep, \h);
    	\draw (3.5 + 1.5*\sep, 0) node{$\cdots$};
    	\draw (\a + 1.5 + \sep/2, \h) node{$\cdots$};
    	\draw (2.5 + 1.5*\sep, \h+\hrect/2) node{$\cdots$};
    	\draw (1.5 + 0.5*\sep, \h+\hrect/2) node{$\cdots$};
    	\draw[brown] (-0.5, \h) rectangle (3.3+1.5*\sep, \h + \hrect);
        \end{braid},
        \end{equation}
        which is $v_{\tabt_\mu^\lambda (r, r+1, \dots, s)}$.
        
        Next we assume that $x \le b$ and $y = 2$. First, assume that $\mu_x = 2$. In that case, we have
        \[
        v_{\tabt_\mu^\lambda} \psi^{\tabt^A}
        =
        \begin{braid}
        	\def\h{5};
        	\def\sep{2}
        	\def\a{6};
        	\def\dashpart{0.9}
        	\def\hrect{1.15}
        	
        	\draw (0, 0) node[anchor=north]{$r$} -- (2, \h/4) -- (1, \h) node[anchor=south]{$\overline{j_{-1}}$};
        	\draw (1, 0) node[anchor=north]{$s$} -- (0, \h) node[anchor=south]{$\overline{j}$};
        	\draw (2, 0) node[anchor=north]{$t$} -- (1, \h/4) -- (\a, \h) node[anchor=south]{$\overline{j_{-2}}$};
        	\draw (1.5 + \sep/2, \h + \hrect/2) node{$\cdots$};
        	\draw (2 + \sep/2, 0) node{$\cdots$};
        	\draw (\a + 0.5 + \sep/2, \h + \hrect/2) node{$\cdots$};
        	\draw[brown] (-0.5, \h) rectangle (.3+1.5*\sep, \h + \hrect);
        \end{braid}
        = 0.
        \]
        The diagram evaluates to zero after the resolution of the double crossing between strings $r$ and $t$ due to the crossing between strings $r$ and $s$, according to \cref{E:specht}.
        
        Let us cover the last cases such that $x \le b$. We may assume $y \not\equiv 0, 1 \mod 2e$, and furthermore, if $y = 2$, then $\mu_x > 2$. Then, according to \cref{lem:shapes}, we have $\mu_x = c(2e)$ for some $c \in \mathbb{Z}_{>0}$. It follows that both $C^A$ and $D^A$ have at least two nodes and, as in \cite{loub17}, we reach a diagram which has a subdiagram at the top as follows.
        \[
        v_{\tabt_\mu^\lambda} \psi^{\tabt^A}
        =
        \begin{braid}
        	\def\h{6.5} 
        	\def\sep{2.6} 
        	\def\a{7.8}
        	\def\dashpart{0.9} 
        	\def\hrect{1.3} 
        	
        	\draw (0, 0) -- (2.6, \h/2) -- (5.2 + \sep, \h) node[anchor=south]{$\overline{j_{-1}}$};
        	\draw (1.3, 0) -- (3.9, \h/2) -- (2.6, \h) node[anchor=south]{$\overline{j}$};
        	\draw (3.9, 0) -- (0, \h/2) -- (0, \h) node[anchor=south]{$\overline{j_{-2}}$};
            \draw (5.2, 0) -- (1.3, \h/2) -- (1.3, \h) node[anchor=south]{$\overline{j_{-1}}$};
        	\draw (3.9, \h + \hrect/2) node{$\cdots$};
        	\draw (6.5, 0) node{$\cdots$};
            \draw (2.6, 0) node{$\cdots$};
        	\draw (\a + 0.65 + \sep/2, \h + \hrect/2) node{$\cdots$};
        	\draw[brown] (-0.9, \h) rectangle (4.6, \h + \hrect);
        \end{braid}
        \]
        This diagram evaluates to zero after computing the braid relation in the center due to a $\psi$- or $y$-relation, according to \cref{E:specht}.
        
        Next we check the special case $A = (b+1, 1)$. If $\mu_{b+1} = 1$, then the Garnir relation involves a single $\psi$, which gives a double crossing when applied to $v_{\tabt_\mu^\lambda}$. This gives zero due to arguments like those in the proof of \cref{L:simple_spechts_hom}. Next, note that we cannot have $\mu_{b+1} = 2$, as nodes $N',N'' \in [\mu]$ respectively to the immediate east and south of node $N$ do not have adjacent residues, and so they cannot both be consecutive arm nodes. Suppose instead that $\mu_{b+1} \geq 3$. Following Loubert, we obtain
        \[
        v_{\tabt_\mu^\lambda} \psi^{\tabt^A}
        =
        \begin{braid}
        	\def\h{5} 
        	\def\sep{2} 
        	\def\a{6}
        	\def\dashpart{0.9} 
        	\def\hrect{1} 
        	
        	\draw (1,0) node[anchor=north]{$r+1$}-- (0, \h/2) -- (\a, \h);
            \draw (2,0) -- (1, \h/2) -- (0, \h);
            \draw (2 + \sep,0) node[anchor=north]{$s-1$} -- (1 + \sep, \h/2) -- (0 + \sep, \h);
            \draw (3 + \sep,0) node[anchor=north]{$s$} -- (2 + \sep, \h/2) -- (1 + \sep, \h);
            \draw (0,0)  node[anchor=north]{$r$} -- (3 + \sep, \h/2) -- (2 + \sep, \h);
            
        	\draw[brown] (-0.5, \h) rectangle (4.6, \h + \hrect);
        \end{braid}.
        \]
        This diagram gives zero: the crossing $\chi_{r,s}$ between strings $r$ and $s$ can slide up past the crossing $\chi_{r+1, s}$. The Slide term (according to \cref{E:braidC}) gives zero, and the Error term (if it exists) gives zero due to the leftover crossing $\chi_{r+1,s-1}$, which triggers a $\psi$-relation.

        Next we check the case where $x > b$ and $y \not\equiv 0 \mod 2e$, while also $y \neq 1$. In this case, every node $A'$ such that $A' > A$ is an arm node, and $C^A \neq \varnothing$, so that $\psi^{\tabt^A}$ is a non-trivial element. Recall that all leg nodes before $r$ are immobile, as they are followed by an arm node which immobilizes them. Therefore, any $\psi_m$ for $m \geq r$ annihilates $v_{\tabt_{\mu}^\lambda}$, so that $v_{\tabt_\mu^\lambda} \psi^{\tabt^A} = 0$.
        
        Finally, if $x > b$ and $y \equiv 0 \mod 2e$, then according to \cref{lem:shapes}, row $x$ has $c(2e) - 1$ nodes, so that $C^A = D^A = \varnothing$. It follows that $\tabt^A = \tabt^\mu$, so that $v_{\tabt_\mu^\lambda} \psi^{\tabt^A} = v_{\tabt_\mu^\lambda}$.
\end{proof}

If $\mu_k$ is a row of $\mu$, we write $\mu_k^A$ for the number of Arm nodes in $\mu_k$, which is either $\mu_k$ or $\mu_k - 1$. The \emph{ceiling $2e$-quotient} ${\bar{\mu}}$ of a partition $\mu$ is given by
\[{\bar{\mu}}_k = \bigceil*{\frac{\mu_k^A+1}{2e}}\]
The next lemma follows in the same way as \cite[Lemma 5.9]{loub17}. 

\begin{lem} Under the same conditions as the previous lemma, and for any $1 \leq m \leq \bar{\mu}_x$, the following holds.
\[
v_{\tabt_\mu^\lambda} g^A = \begin{cases}
    \binom{\bar{\mu}_x}{m} v_{\tabt_\mu^\lambda (t+1,t, \dots, s)} &\text{if $x \le b$, $A = (x,m(2e))$ and $\mu_{x+1} > m(2e),$}\\
    \binom{\bar{\mu}_x}{m} v_{\tabt_\mu^\lambda (r, r+1, \dots, s)} &\text{if $x \le b$, $A = (x,m(2e)+1)$ and $m > 0,$}\\
    \binom{\bar{\mu}_x}{m} v_{\tabt_\mu^\lambda} &\text{if $x > b$ and $A = (x,m(2e))$, and}\\
    0 &\text{otherwise.}
\end{cases}
\]
\end{lem}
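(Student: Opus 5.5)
The plan is to follow the structure of \cite[Lemma 5.9]{loub17}: expand the homogeneous Garnir element $g^A$ as a sum over the Garnir tableaux, and reduce each summand to the computation of \cref{lem:garnir}. Recall that $g^A = \sum_{\tabt \in \Gar^A} \psi^{\tabt}$, up to the prescribed coefficients. Here the Garnir belt is cut into bricks of length $2e$, since the homogeneous Garnir relations in type $C_e^{(1)}$ agree with those of type $A^{(1)}_{2e-1}$. The relevant tableaux are indexed by ways of interchanging bricks between row $x$ and row $x+1$. Each $\psi^{\tabt}$ factors as $\psi^{\tabt^A}$ followed by a brick permutation $\psi_{w}$, where $w$ is a fully commutative element permuting the bricks, and the coefficients come from the graded cellular setup.

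\textbf{Step 1.} Compute $v_{\tabt_\mu^\lambda}\psi^{\tabt^A}$ using \cref{lem:garnir}. In the "otherwise" case everything vanishes. The exception is when the brick permutations could move some strings back to a nonzero configuration; I would rule this out by the same immobility argument used in \cref{lem:garnir}. The point is that all leg nodes before $r$ are immobile, and any $(i,i)$-crossing resolution among arm strings hits a Specht relation.

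\textbf{Step 2.} In the three nonvanishing cases, determine how each brick permutation $\psi_{w}$ acts on the result.
\begin{enumerate}
\item The key observation is that a brick consists of $2e$ consecutive residues forming a full period of $\mathcal{I}$.
\item All bricks lying in $\Arm(\lambda)$ lie in a single row of $[\lambda]$. So swapping two adjacent bricks exchanges two blocks of strings whose residue sequences coincide.
\item After applying \cref{lem:garnir-computation} brick by brick, such a swap acts on $v_{\tabt_\mu^\lambda (\cdots)}$ as the identity, up to a sign.
\end{enumerate}
Each admissible brick arrangement therefore contributes the same basis vector $v_{\tabt_\mu^\lambda (t+1,t,\dots,s)}$, $v_{\tabt_\mu^\lambda (r,\dots,s)}$ or $v_{\tabt_\mu^\lambda}$ respectively.

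\textbf{Step 3.} Count the arrangements. The number of brick arrangements is the number of ways to choose which $m$ of the $\bar{\mu}_x$ bricks are moved. Here I would verify that the ceiling $2e$-quotient $\bar\mu_x=\lceil(\mu^A_x+1)/2e\rceil$ is exactly the number of bricks available in row $x$. The count of arm nodes, together with the extra brick completed by the leg node or the Garnir node itself, gives this number. The count produces $\binom{\bar\mu_x}{m}$, exactly as in Loubert's argument.

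The main obstacle is Step 2. Specifically, I must show that the signs from the repeated use of \cref{lem:garnir-computation} and from the error terms of the type $C$ braid relation (the terms with dots at residues $0,e$) all agree. Only then do the contributions add up to a binomial coefficient rather than cancel. I would first try to show that every brick swap is realised by a fully commutative diagram in which the moved strings are immobile relative to the fixed ones. This would make the dots produced at residues $0$ and $e$ vanish by \cref{T:YAction}, since $\tabt_\mu^\lambda=\tabt_{\max}(\resi^\mu)$ by \cref{L:y_rels_hom}. I would then normalise signs by comparing with the case $m=1$ treated in \cref{lem:garnir}.
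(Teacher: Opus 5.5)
\textbf{There is a genuine gap, in Step 2: brick transpositions act as zero, not as $\pm$ the identity.} The error comes from a misdescription of $g^A$ in Step 1.

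The homogeneous Garnir element the paper imports from type $A^{(1)}_{2e-1}$ is not a weighted sum of $\psi^{\tabt}$ over Garnir tableaux with ``cellular'' coefficients. It is the Kleshchev--Mathas--Ram element $g^A=\psi^{\tabt^A}\sum_{w\in\mathcal{D}^A}\tau^A_w$. Here:
\begin{enumerate}
\item $k$ is the number of bricks in the belt, and $f$ is the number of them in one of the two rows;
\item $\mathcal{D}^A$ is the set of minimal length coset representatives of $\fkS_f\times\fkS_{k-f}$ in $\fkS_k$;
\item $\tau^A_w=\tau^A_{r_1}\cdots\tau^A_{r_l}$ for a reduced expression of $w$;
\item $\tau^A_r=\sigma^A_r+1$, where $\sigma^A_r$ is the brick transposition.
\end{enumerate}
Compare the paper's own example, $g^N=\psi_{\tabt^N}(2+\psi_{\sigma^N_1})$. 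That example already rules out your mechanism. With two bricks the lemma predicts the coefficient $\binom{2}{1}=2$, but if $\psi_{\sigma^N_1}$ acted as $\pm$ the identity you would get $3$ or $1$. In general, if all $\sigma^A_r$ acted as $+1$, then $\sum_w\tau^A_w$ would act as $\sum_w 2^{\ell(w)}$; if all acted as $-1$, it would act as $1$. Neither equals $\binom{k}{f}$ when $0<f<k$. So the sign agreement you single out as the main obstacle is not the real issue. Nor will repeated use of \cref{lem:garnir-computation} (which moves one string across a block and produces a dot) show that a brick swap is $\pm$ the identity.

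\textbf{What is needed is the opposite: each brick transposition annihilates $u:=v_{\tabt_\mu^\lambda}\psi^{\tabt^A}$.} This is the shape of the argument in \cite[Lemma 5.9]{loub17}, which the paper invokes.
\begin{enumerate}
\item The arm strings of two adjacent bricks fill adjacent segments of the single arm row of $[\lambda]$. So the swap must at some point cross two strings in adjacent nodes of that row, which vanishes by \cref{E:specht} (equivalently case (3b) of \cref{T:PsiAction}).
\item When a brick contains the leg string coming from $(x+1,1)$, the crossings with that string are of type (1) in \cref{T:PsiAction}. They only relabel the tableau before such an arm--arm crossing occurs.
\item Hence $u\tau^A_r=u$ for every $r$, so $u\tau^A_w=u$ for every $w$, and $v_{\tabt_\mu^\lambda}g^A=|\mathcal{D}^A|\,u=\binom{k}{f}u$.
\item It remains to check that $k=\bar\mu_x$ and $f=m$, then substitute \cref{lem:garnir}. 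For instance, if $x\le b$, $A=(x,m(2e))$ and $\mu_x=c(2e)$, the belt has $c-m$ bricks in row $x$ and $m$ in row $x+1$, and $\bar\mu_x=c$. Note that $\bar\mu_x$ counts all bricks of the belt, not just those in row $x$.
\end{enumerate}
With this structure Step 1 is automatic, since $\psi^{\tabt^A}$ is a left factor of $g^A$. No signs ever need comparing, because every non-identity term vanishes.
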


Now the main result of the section follows just as it does in \cite{loub17}. 
We borrow the following piece of notation. Given a partition $\nu$, we define the \emph{Garnir content} of $\nu$ as follows.
\[
\Gc(\nu) = \Gc(\nu_1, \dots, \nu_m)=\gcd\left\{\binom{\nu_i}{k} \mid 1 \leq k \leq \nu_{i+1}-1, \ 1 \leq i \leq m-1\right\}.
\]
We use the convention that $\gcd(\varnothing) = 0$.

\begin{thm} \label{T:leg}
    Let $\la, \mu$ be partitions of $n$ such that $\la$ is a hook with a short leg, and consider only homomorphisms where $(1,2)$ is not a leg node of $[\mu]$. Then there exists a non-zero $\varphi \in \Hom_{R_n^\Lambda}(S^\mu, S^\la)$ if and only if $\mu$ is one of the shapes in \cref{lem:shapes} with ceiling $2e$-quotient $\bar{\mu}$ satisfying $\Gc(\bar{\mu}) = 0$.
\end{thm}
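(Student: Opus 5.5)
Since $S^\mu$ is cyclic, generated by $v_{\tabt^\mu}$ and presented by the residue relation \cref{E:residuerelation}, the simple Specht relations \cref{E:specht} and the Garnir relations \cref{E:GarnirC}, I would determine which images $v\in S^\la$ of $v_{\tabt^\mu}$ satisfy all of these relations.

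\emph{Necessity of the shape condition.} Suppose $\varphi\neq0$. By \cref{L:firstcol}, up to a scalar $\varphi(v_{\tabt^\mu})=v_{\tabt_{\mathrm{max}}(\resi^\mu)}$. Since $(1,2)$ is not a leg node, \cref{L:legnodes} shows that $\tabt_{\mathrm{max}}(\resi^\mu)=\tabt_\mu^\la$. In particular $\resi^{\tabt^\la_\mu}=\resi^\mu$, and \cref{lem:shapes} then forces $\mu$ to be one of the listed shapes.

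\emph{Reduction to the Garnir relations.} Conversely, fix a shape from \cref{lem:shapes}. By \cref{L:y_rels_hom} and \cref{Cor:YAction}, $v_{\tabt_\mu^\la}$ satisfies the residue relation and the $y$-relations. By \cref{L:simple_spechts_hom} it also satisfies the $\psi$-relations of \cref{E:specht}. Hence $v_{\tabt^\mu}\mapsto v_{\tabt^\la_\mu}$ extends to a homomorphism if and only if $v_{\tabt_\mu^\la}g^A=0$ for every Garnir node $A$. If this fails, no homomorphism exists at all, because the image of $v_{\tabt^\mu}$ is forced up to a scalar.

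\emph{Evaluating the Garnir relations.} The previous lemma (the analogue of \cite[Lemma 5.9]{loub17}) computes each $v_{\tabt_\mu^\la}g^A$. It is either zero or equal to $\binom{\bar\mu_x}{m}$ times a single basis vector, and that basis vector is nonzero because the permuted tableau is standard. So the relation for $A$ holds if and only if the relevant binomial coefficient vanishes in $k$.

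\emph{Matching the surviving nodes with $\Gc(\bar\mu)$.} I need to show that the $(x,m)$ for which a nonzero vector occurs are exactly the pairs $(i,k)$ with $1\le i\le m'-1$ and $1\le k\le\bar\mu_{i+1}-1$ in the definition of $\Gc$. Here I would check the three cases separately, using the ceiling quotient $\bar\mu_k=\lceil(\mu^A_k+1)/2e\rceil$ together with the shape descriptions of \cref{lem:shapes}.
\begin{enumerate}
\item For $x\le b$ with $A=(x,m(2e))$ and $\mu_{x+1}>m(2e)$: the range of admissible $m$ should be exactly $1\le m\le\bar\mu_{x+1}-1$.
\item For $x\le b$ with $A=(x,m(2e)+1)$: the node $(x+1,1)$ is a leg node, so $\mu^A_{x+1}=\mu_{x+1}-1$, and the admissible $m$ should again run over $1\le m\le\bar\mu_{x+1}-1$.
\item For $x>b$ with $A=(x,m(2e))$: the rows there have length $c(2e)-1$, and the $+1$ in the ceiling is what makes the range come out correctly.
\end{enumerate}
Granting this bookkeeping, all Garnir relations hold if and only if every such binomial coefficient vanishes. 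In characteristic $p$ this is the statement that the $\gcd$ is $0$ as an element of $k$, which the paper writes as $\Gc(\bar\mu)=0$.

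The main obstacle is this last index matching, especially the off-by-one issues between cases (1) and (2). When $x\le b$ the nodes $(x,m(2e))$ and $(x,m(2e)+1)$ both contribute, and I must verify that together they give the same $\binom{\bar\mu_x}{m}$ condition without overcounting. I would handle this by the recursive structure of \cref{lem:shapes}(1), peeling off rows of length $c(2e)$ and reducing to the base shapes of part (2), where the row lengths can be checked directly.
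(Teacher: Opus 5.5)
Your proposal is correct and follows the paper's route. The paper only says the theorem "follows just as in \cite{loub17}" from the chain \cref{L:firstcol}, \cref{L:legnodes}, \cref{L:y_rels_hom}, \cref{L:simple_spechts_hom} and the Loubert-type Garnir lemma with coefficients $\binom{\bar\mu_x}{m}$, which is exactly the sequence of reductions you give. Your index check is the bookkeeping the paper leaves implicit, and it goes through: in all three cases the surviving $m$ are exactly $1\le m\le\bar\mu_{x+1}-1$. For $x\le b$ the two nodes $(x,m(2e))$ and $(x,m(2e)+1)$ give the same coefficient $\binom{\bar\mu_x}{m}$, so the overcounting you worry about does not arise.
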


\begin{eg}
   Most partitions in \cref{eg:leg_residues} lead to homomorphisms. The exceptions are $\mu^1 = (8,6,1,1)$ and $\mu^2 = (8,5,1,1,1)$. We have $\bar{\mu}^1 = (2,2,1,1)$ and $\bar{\mu}^2 = (2,2,1,1,1)$. Both cases lead to $\Gc(\bar{\mu}) = 2$, so that the corresponding homomorphisms only exist in characteristic $2$.
\end{eg}

\subsection{The node \texorpdfstring{$(1,2)$}{(1,2)} is a leg node of \texorpdfstring{$[\mu]$}{[mu]}} \label{SS:12legnode}

Suppose that there is a non-zero homomorphism $\varphi \in \Hom(S^\mu, S^\lambda)$, and let $\tabt \coloneq \tabt_\mathrm{max}(\resi^\mu)$ so that $\varphi(v_{\tabt^\mu}) = v_{\tabt}$. In this subsection we assume that $(1,2)$ is a leg node of $[\mu]$. In particular, we must have $\overline{\kappa-1} = \overline{\kappa+1}$, so that the charge $\kappa$ must be $0$ or $e$, and without loss of generality we work in $    \Lambda = \Lambda_0$. 

\begin{lem}
    The partition $[\mu]$ must be a hook.
\end{lem}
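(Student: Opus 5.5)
We must show that if $(1,2)$ is a leg node of $[\mu]$ (with $\kappa=0$ after the reduction above), then $(2,2)\notin[\mu]$. The plan is to combine the arm/leg splitting of \cref{L:firstcol} with the residue constraints coming from the short leg.

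First we pin down the leg nodes. By \cref{L:firstcol}, $[\mu_L]$ is a partition containing $(1,1)$ and $(1,2)$, and it contains all leg nodes. There are exactly $b$ leg nodes besides $(1,1)$. Their residues, read in the order given by $\tabt^\mu$, must be the residue sequence of the leg of $\lambda$, namely $\overline{-1},\overline{-2},\dots,\overline{-b}$. Since $b<2e$, these are the first $b$ terms of $(1,2,\dots,e,\dots)$ read backwards from $0$. In particular the leg residues form a substring of $\mathcal I$ in which each residue occurs at most twice, and $0$ occurs at most once (only if $b=2e-1$, at its last entry).

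Now suppose, towards a contradiction, that $(2,2)\in[\mu]$. Since $(1,2)$ is a leg node and $(1,1)$ is in $[\mu_L]$, we consider the node $(2,1)$, which has content $-1$ and residue $1$, the same as $(1,2)$.
\begin{itemize}
\item If $(2,1)$ is a leg node, then the two nodes $(1,2),(2,1)$ of residue $1$ are consecutive leg nodes in the $\tabt^\mu$-order, up to row-one nodes in between. Their residues must match consecutive entries $\overline{-1},\overline{-2},\dots$ of the leg sequence, and $\overline{-2}=2\neq 1$ (for $e\ge2$). We would then argue that the row-one nodes between them are forced to be leg nodes as well, by using the row-splitting part of \cref{L:firstcol} together with the fact that the leg sequence has no repeated $1$ within distance less than $2e$. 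This would force a first row of length at least $2e$ entirely in the leg, contradicting $b<2e$.
\item If $(2,1)$ is an arm node, then by the column-splitting in \cref{L:firstcol} it cannot lie above a leg node, and $[\mu_L]\supseteq\{(1,1),(1,2)\}$ is a partition. Hence $(2,2)$ must also be an arm node, or else $[\mu_L]$ would fail to be a partition. Then $(2,1),(2,2)$ are consecutive arm nodes of residues $1,0$. But the arm sequence of $\lambda$ starts at $(1,2)$ and reads $1,2,\dots$ with residue $1$ first. The node $(1,2)$ of $\mu$ is a leg node, so the arm entry of residue $1$ must be supplied by $(2,1)$ or by a later row-one node; either way, the next arm residue after a $1$ is $2$, not $0$. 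This gives the contradiction unless the ordering forces a designated-sequence swap. That is exactly where $\tabt=\tabt_{\max}$, the all-$A$ word from \cref{C:minimal_tableau}, is used: an all-$A$ choice puts the earlier number in the arm, and we check that this contradicts $(1,2)$ being a leg node.
\end{itemize}

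The main obstacle is the bookkeeping of how the first row of $\mu$ interleaves with rows $2,3,\dots$ in $\tabt^\mu$. We plan to handle it by induction on the row index in the manner of the proof of \cref{L:simple_spechts_hom}: assume $(x,2)\in[\mu]$ for some $x\ge2$, track residues of the staircase of nodes northwest of $(x,2)$, and derive that the leg would need a repeated residue at distance less than $2e$, which is impossible by \cref{L:elementary_short_weights}(4).
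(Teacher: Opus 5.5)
Your Case 1 does not go through, and the failure traces back to a residue miscount in your first paragraph. With $\kappa=0$ the $k$-th leg node of $\lambda$ has residue $\overline{-k}=\overline{k}$. Since $1\le k\le b\le 2e-1$, this residue is never $0$; note that $\overline{-(2e-1)}=1$, not $0$ as you claim.

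On the other hand, residue $1$ \emph{does} repeat in the leg, at positions $1$ and $2e-1$. So your principle that the leg has ``no repeated $1$ within distance less than $2e$'' is false; \cref{L:elementary_short_weights}(4) only concerns substrings of length at least two. Consequently, if $(2,1)$ is a leg node, the leg residues only force the leg nodes to be $(1,2),\dots,(1,2e-1),(2,1)$ with $b=2e-1$. This is perfectly consistent with $b<2e$: it is exactly the configuration $\mu_L=(2e-1,1)$ that genuinely occurs in the proof of \cref{T:short-arm-leg}. So no contradiction can come from the leg alone. Your closing induction on rows rests on the same false principle, so it cannot close the gap either.

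The missing idea, which is where the paper's proof starts, is to use the residue of $(2,2)$ itself. Its content is $0$, so its residue is $0$. By the above it cannot be a leg node, so it is an arm node sitting at an arm position $k$ with $2e\mid k$. Hence $\Arm(\lambda)$ has at least $2e$ nodes, and the first two arm nodes of $\mu$ in the $\tabt^\mu$-order must carry residues $1,2$. A short check now gives the contradiction:
\begin{itemize}
\item If row one contains arm nodes, the first one is $(1,j+1)$ with $\overline{j}=1$ and $2\le j\le b+1\le 2e$. This forces $j=2e-1$. The next arm node is then one of $(1,2e+1)$, $(2,1)$, $(2,2)$, with residues $0,1,0$ respectively, never $2$.
\item If row one consists entirely of leg nodes, the first arm node is either $(2,1)$, followed by $(2,2)$ of residue $0\neq 2$, or it is $(2,2)$ itself, of residue $0\neq 1$.
\end{itemize}

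The same check repairs your Case 2. There, ``the next arm residue after a $1$ is $2$'' holds only for the first $1$ of the arm sequence: arm positions $2e-1,2e$ carry $1,0$. So the sub-case with arm nodes in row one must be treated separately, as above.

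Finally, the appeal to $\tabt_{\mathrm{max}}$ and designated sequences is unnecessary here. The residue matching holds for any standard $\lambda$-tableau with residue sequence $\resi^\mu$.
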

\begin{proof}
    Suppose that $[\mu]$ is not a hook. Then $[\mu]$ contains the node $N = (2,2)$.
    This must be an arm node of $[\mu]$, because $N$ has residue zero and $[\lambda]$ has a short leg and charge zero.
    
    Now, let $r \coloneq \tabt^\mu (N)$ and $N' \coloneq \tabt^{-1}(r) \in [\lambda]$.
    Then $N'$ is separated from the corner node $(1,1)$ by at least $2e-1$ nodes, since they both have residue zero, and in particular the two nodes $A,A' \in [\lambda]$ immediately to the right of $(1,1)$ in $[\lambda]$ have residues $1$ and $2$. Let $\bar{A} = (\tabt^\mu)^{-1}(\tabt(A))$ and define $\bar{A}'$ similarly. Then $\bar{A}$ and $\bar{A}'$ are the first two arm nodes in $[\mu]$.
    
    It follows that the node $B \in [\mu]$ immediately to the left of $\bar{A}$, with residue $0$, is a leg node. But $(1,2)$ is also a leg node, so $A \neq (1,2)$, and therefore $B \neq (1,1)$. We have reached a contradiction, as both $(1,1)$ and $B$ are leg nodes of $[\mu]$ with residue $0$, despite the fact that $[\lambda]$ has a short leg.
\end{proof}

\begin{lem}\label{L:leg-length}
    The length of $\Leg(\la)$ must be strictly larger than the length of $\Arm(\la)$.
\end{lem}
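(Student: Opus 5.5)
Throughout, I take the setting of \cref{SS:12legnode}: $\kappa=0$, $(1,2)$ is a leg node of $[\mu]$, $\varphi(v_{\tabt^\mu})=v_\tabt$ with $\tabt=\tabt_{\mathrm{max}}(\resi^\mu)$, and $[\mu]=(\mu_1,1^{n-\mu_1})$ is a hook by the previous lemma. I argue by contradiction: assume $a\ge b$, i.e.\ $|\Arm(\la)|\ge|\Leg(\la)|$, and show that the leg/arm labelling of $[\mu]$ forced by \cref{L:firstcol} cannot be realised by $\tabt_{\mathrm{max}}(\resi^\mu)$.

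First I pin down the shape of the labelling. By \cref{L:firstcol}, $[\mu_L]$ is a partition containing $(1,1)$ and $(1,2)$, and $[\mu_A]$ is its complement. Since $[\mu]$ is a hook, $[\mu_L]=(p,1^q)$ with $p\ge 2$. The nodes $(1,1),(1,2),\dots,(1,p)$ therefore have contents $0,1,\dots,p-1$, and their residues must fill $(1,1)$ followed by the first $p-1$ leg nodes of $\la$, whose residues are $\overline{-1},\overline{-2},\dots$. The residue sequence $0,1,2,\dots$ read along row $1$ of $\mu$ agrees with $0,\overline{-1},\overline{-2},\dots$ by the symmetry $\overline{k}=\overline{-k}$. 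So this part of the labelling is consistent, and the constraint must come from the degree maximality of $\tabt$.

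The key step is to use \cref{P:ResidueHookTableaux} and \cref{C:minimal_tableau}. When the number $2=\tabt^\mu(1,2)$ is placed, both $i^{\text{arm}}_1$ and $i^{\text{leg}}_1$ equal $1$. I will show that the string $\tabt^\mu(1,1)+1,\dots$ starts a designated sequence $\bm{s}\in DS(\resi^\mu)$ whenever $a\ge b$. Because $\tabt=\phi(A,\dots,A)$ places the first element of every designated sequence in the arm, this contradicts the fact that $(1,2)$ is a leg node.

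To produce $\bm{s}$, I look for the smallest $m'>1$ such that the numbers $2,\dots,m'$ fill equally many arm and leg nodes in some $\tabt'\in\Std(\la,\resi^\mu)$. Their residue sequence is then double short, because the leg is short ($b<2e$) and so each half has length less than $2e$; reducedness follows from minimality. If instead $a<b$, the arm can run out before any balancing point is reached. This is why the inequality appears in the statement.

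The main obstacle is guaranteeing that such a balancing point exists when $a\ge b$. The arm values are supplied by the remaining nodes of row $1$ and by the first column of $\mu$, and they must be interleaved to match residues. I would try to exhibit $\tabt'$ explicitly: send the leg-labelled row segment of $\mu$ into $\Leg(\la)$, and the first-column nodes $(2,1),(3,1),\dots$, whose contents are $-1,-2,\dots$ and hence residues $1,2,\dots$, into $\Arm(\la)$, whose residues are also $1,2,\dots$. The balance condition then reduces to a comparison of $b$ with the number of arm nodes available, which is $\ge b$ exactly when $a\ge b$. Any remaining cases where residues $0$ or $e$ coincide would be handled using \cref{L:doubleshortweight}.
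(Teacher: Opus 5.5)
Your strategy is the paper's. You produce a designated sequence in $DS(\resi^\mu)$ starting at the number $2$, and note that $\tabt$ places $2$ in $\Leg(\la)$. The binary word of $\tabt$ then contains an $L$, so $\tabt\neq\tabt_{\mathrm{max}}(\resi^\mu)$, contradicting \cref{L:firstcol} (equivalently \cref{Cor:YAction}).

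The gap is exactly the step you flag as the main obstacle: you never show that a balancing point exists, and your sketch of $\tabt'$ is not an argument.
\begin{enumerate}
\item You cannot freely route nodes of $[\mu]$ into the arm or leg. Apart from the binary choices at designated sequences, the position of every number is forced by $\resi^\mu$. You would first have to show your recipe yields an element of $\Std(\la,\resi^\mu)$ at all.
\item Routing the first-column nodes $(2,1),(3,1),\dots$ of $[\mu]$, which carry numbers larger than $\mu_1$, to the arm does not by itself give equal arm and leg counts for an initial run $2,\dots,m'$.
\item The remark that the available arm nodes number at least $b$ ``exactly when $a\ge b$'' only restates the hypothesis.
\item The deferred ``remaining cases'' via \cref{L:doubleshortweight} are left unspecified.
\end{enumerate}

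No auxiliary tableau is needed; count in $\tabt$ itself. For $2\le k\le n$, let $d(k)$ be the number of $j\in\{2,\dots,k\}$ with $\tabt^{-1}(j)\in\Leg(\la)$ minus the number with $\tabt^{-1}(j)\in\Arm(\la)$.
\begin{enumerate}
\item Since $(1,2)$ is a leg node, $d(2)=1$. Since $a\ge b$, $d(n)=b-a\le 0$. Since $d$ changes by exactly $\pm1$ at each step, there is a least $k>2$ with $d(k)=0$.
\item At that point $2,\dots,k$ fill the first $j$ nodes of both the arm and the leg, with $j\le b<2e$. Because $\kappa=0$, these have the same residues $\overline{1},\dots,\overline{j}$, so $(i_2,\dots,i_k)$ is double short.
\item It is reduced: any shorter double-short initial segment would, by \cref{L:doubleshortweight}, force equal arm and leg strips, i.e.\ an earlier zero of $d$.
\item Condition (2) of \cref{D:DS} holds at $u=2$, because the arm and leg are both non-empty and $i^{\text{arm}}_1=i^{\text{leg}}_1=1=i_2$.
\end{enumerate}
This discrete intermediate-value step is exactly the paper's proof. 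Substituting it for your last paragraph completes your argument.
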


\begin{proof}
    Suppose that the length of $\Arm(\la)$ is at least the same as the length of $\Leg(\lambda)$. In that case, there must exist $k\in \mathbb{Z}_{\ge 2}$ such that $\tabt_{\downarrow k}$ has the same amount of nodes in the arm as in the leg, as $(1,2)$ is a leg node. This implies that the sequence $(i_2, \dots, i_k)$ is a double short residue sequence. Since the number $2$ goes to the leg, it follows from \cref{Cor:YAction} that there must be a $k'$ such that $v y_{k'} \neq 0$, which contradicts our assumption that $v_\tabt = \varphi(v_{\tabt^\mu})$.
\end{proof}

\begin{thm} \label{T:short-arm-leg}
    Let $\la, \mu$ be partitions of $n$ such that $\la = (a+1, 1^b)$ is a hook with a short leg, and consider only homomorphisms where $(1,2)$ is a leg node of $[\mu]$, so that $\Lambda = \Lambda_0$ or $\Lambda_e$. Then there exists a non-zero $\varphi \in \Hom_{R_n^\Lambda}(S^\mu, S^\la)$ if and only if one of the following holds.
    \begin{enumerate}
        \item $\mu = (b+1,1^a)$, where $b>a$.
        \item $\mu = (2e)$ and $\lambda = (2,1^{2e-2})$.
        \item $\mu = (2e-1, 1)$ and $\lambda = (1^{2e})$.
    \end{enumerate}
\end{thm}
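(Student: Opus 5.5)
The plan is to use the two preceding lemmas to pin down the shapes, and then to check the Specht relations directly for each candidate. Since $\mu$ must be a hook and $(1,2)$ is a leg node, write $\mu=(p+1,1^q)$. By \cref{L:firstcol}, $[\mu_L]$ is a partition containing $(1,1)$ and $(1,2)$, and by \cref{L:leg-length} we have $b>a$.

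I would first determine the arm/leg split of $[\mu]$. Residues along the first row of $\mu$ read $\overline{1},\overline{2},\dots$ from $(1,2)$, and residues down the first column read $\overline{-1},\overline{-2},\dots$ from $(2,1)$. These are the same two residue strings as the leg and arm of $\lambda$, read in that order. So the natural guess is that the row of $\mu$ feeds $\Leg(\la)$ and the column feeds $\Arm(\la)$, and the tableau $\tabt_{\mathrm{max}}(\resi^\mu)$ should be the transpose-like filling.

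Next I would split into cases according to whether the first row of $\mu$ is entirely leg nodes.

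\begin{itemize}
\item If every node of row $1$ is a leg node, then $p=b$ and $q=a$. This gives $\mu=(b+1,1^a)$, which is case (1).
\item Otherwise the row contains both leg and arm nodes, with the leg nodes first by \cref{L:firstcol}. The first arm node of row $1$ must then match the residue at the start of the arm of $\lambda$. Since $b<2e$, comparing residues of consecutive nodes (as in the proof of \cref{lem:shapes}, using \cref{lem:residues}) should force the wraparound through $0$ or $e$ to happen exactly once. This should leave only the degenerate small cases $\mu=(2e)$, $\lambda=(2,1^{2e-2})$ and $\mu=(2e-1,1)$, $\lambda=(1^{2e})$.
\end{itemize}

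I would also use the fact that $\tabt_{\downarrow k}$ can never balance arm and leg counts. This is the argument of \cref{L:leg-length}, and it excludes the remaining column/row mixtures.

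For sufficiency I would verify, for each of the three families, that $v_\tabt$ satisfies all defining relations of $v_{\tabt^\mu}$.

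\begin{itemize}
\item The residue relation holds by construction.
\item The $y$-relations hold by \cref{Cor:YAction}, once I check that $\tabt$ has binary word $(A,\dots,A)$. I expect this to hold because, when $b>a$, no designated sequence can start with an $L$ choice without creating an earlier balanced prefix.
\item The row $\psi$-relations hold by \cref{T:PsiAction} case 3: consecutive row entries of $\mu$ go to consecutive leg nodes, and the binary word is all $A$.
\item The Garnir relations of a hook $\mu$ involve only the node $(1,1)$, i.e.\ a single $\psi$ between $\tabt^\mu(1,1)$ and $\tabt^\mu(2,1)$ together with the row. I would compute these by the same slide/error analysis as in \cref{lem:garnir}, case $y=1$.
\end{itemize}

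For the exceptional cases (2) and (3), I would use \cref{lem:garnir-computation}, where a full $2e$-cycle of residues returns to the same residue.

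The main obstacle is necessity in the mixed case, namely ruling out every other distribution of arm nodes in row $1$ and column $1$. I would attack it by tracking residues at the first arm node and at the first wrap through $0$ or $e$. The two $C$-type coincidences $\overline{c}=\overline{-c}$ must then be shown to allow exactly the two listed small configurations, with degree considerations excluding anything longer.
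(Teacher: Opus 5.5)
Your overall plan matches the paper's. You use the hook shape, get $b>a$ from \cref{L:leg-length}, match residues for necessity, and check relations with \cref{Cor:YAction} and \cref{T:PsiAction}. However, your case split is wrong, and the error carries into the sufficiency check.

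\textbf{The case split.} The claim that if every node of row $1$ is a leg node then $p=b$ and $q=a$ overlooks that $[\mu_L]$ may also contain $(2,1)$. Case (3) is exactly this situation. For $\mu=(2e-1,1)$ and $\lambda=(1^{2e})$ every node is a leg node, since $\lambda$ has no arm, yet $p=2e-2\neq b$. Your scheme instead assigns it to the branch where row $1$ contains arm nodes, which is impossible here. The correct split, as in the paper, is on whether $(2,1)\in[\mu_L]$.
\begin{itemize}
\item If $(2,1)\in[\mu_L]$: the leg nodes $(1,2),\dots,(1,k)$ of $\mu$ fill $(2,1),\dots,(k,1)$ of $\lambda$. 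Then $(2,1)$ of $\mu$, of residue $\overline{1}$, must land on $(k+1,1)$, of residue $\overline{k}$. Since $2\le k\le b<2e$, this forces $k=2e-1$, so $b=2e-1$ and $\mu_L=(2e-1,1)$. Hence $\mu$ is either $(2e-1,1)$ (when $a=0$) or $(2e,1^a)$ with $(1,2e)$ an arm node.
\item If $(2,1)\notin[\mu_L]$: then $\mu_L=(b+1)$. An arm node $(1,b+2)$ would need $\overline{b+1}=\overline{1}$, i.e.\ $b=2e-2$, giving $\mu=(2e)$. Otherwise $\mu=(b+1,1^a)$.
\end{itemize}

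\textbf{The sufficiency check.} The more serious problem is your expectation that the transpose-like filling (row of $\mu$ into $\Leg(\lambda)$, column into $\Arm(\lambda)$) has binary word $(A,\dots,A)$ whenever $b>a$. This fails for $b=2e-1$ and $a\ge 1$.
\begin{itemize}
\item Once $2,\dots,2e-1$ fill the first $2e-2$ leg nodes, the next addable nodes of $\Arm(\lambda)$ and $\Leg(\lambda)$ both have residue $\overline{1}$.
\item Also $i_{2e}=i_{2e+1}=\overline{1}$, coming from the nodes $(1,2e)$ and $(2,1)$ of $\mu$.
\item So $[2e,2e+1]$ is a designated sequence, your filling makes the choice $L$ there, and $v\,y_{2e}\neq 0$ by \cref{T:YAction}.
\end{itemize}
The balanced-prefix argument of \cref{L:leg-length} only controls designated sequences starting at position $2$, not ones starting in the middle. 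The correct image is $v_{\tabt}$ with $\mu_L=(2e-1,1)$.

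Consequently, both for this family and for case (2), row $1$ has a leg node followed by an arm node at $(1,2e-1),(1,2e)$. For $(2e,1^a)$ with $a\ge 2$, column $1$ has the same transition at $(2,1),(3,1)$. So your $\psi$-check, which assumes consecutive row entries go to consecutive leg nodes, does not apply. These relations need case (2) of \cref{T:PsiAction}: you must show $v_{\tabt s_r}\mathcal{Q}_{i_r,i_{r+1}}(y_r,y_{r+1})=0$ using \cref{T:YAction}.

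By contrast, \cref{lem:garnir-computation} is not needed. The Garnir relation at $(1,1)$ holds for residue reasons: $\resi^{\tabt^{(1,1)}}$ does not begin with $\overline{\kappa}$, while every standard $\lambda$-tableau's residue sequence does. The column Garnir nodes $(r,1)$ with $r\ge 2$, which you omitted, each give a single-$\psi$ relation.
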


\begin{proof}
    First we establish the shape of $\mu$ according to the possible residue sequences. Recall that we have the condition $(1,2) \in [\mu_L]$. Let us split our investigation according to whether $(2,1) \in [\mu_L]$ or $(2,1) \notin [\mu_L]$.
    \begin{enumerate}
        \item Suppose first that $(2,1) \in [\mu_L]$. Since we have charge $0$, it follows that $\Leg(\lambda)$ must have the residue sequence $(1,2,\dots,1)$ with length $2e-1$, so that $b = 2e-1$. In this case, the partition $\mu_L$ must have $\resi^{\mu_L} = (0,1,2,\dots,1)$, so that $\mu_L = (2e-1, 1)$. In this case, the shape $[\mu_A]$ (if it is non-empty) must have a single node in $\Arm(\mu)$ and can continue in $\Leg(\mu)$. In this case we have $\mu = (b+1, 1^a)$.
        \item Suppose that $(2,1) \notin [\mu_L]$. In this case we simply have $\mu_L = (b+1)$. 
        \begin{enumerate}
            \item If $b \neq 2e-2$, then the first residue $1$ in the arm of $\lambda$ differs from the next addable node in the arm of $\mu_L$, so that $[\mu_A]$ must be confined to the leg of $\mu$. In this case we have $\mu = (b+1, 1^a)$.
            \item If $b = 2e-2$, then $[\mu_A]$ may confined to the leg as before, but it is also possible that $[\mu_A]$ is a single node in the arm of $[\mu]$. In this case, $\mu$ is the trivial partition $(2e)$.
        \end{enumerate}
    \end{enumerate}
    In the case $\mu = (b+1,1^a)$ with $b = 2e-1$ and $a \geq 1$, there are two candidates for $\mu_L$, that is, we may have $\mu_L = (2e-1,1)$ or $\mu_L = (2e)$. In fact, we must have $\mu_L = (2e-1,1)$. Indeed, if we had $\mu_L = 2e$ and $a \neq 0$, then the strings $2e$ and $2e+1$ cross in $v_{\tabt_{\mu}^\la}$ and they share the same residue, so that $v_{\tabt_{\mu}^\la} y_{2e} \neq 0$.
    
    In each case, the Garnir relation for the node $(1,1)$ is easily seen to hold, as all tableaux in $\Std(\lambda)$ have residue sequence starting with 0. The $y$-relations also hold, since the tableaux $\tabt_\mu^\la$ obtained from the above $[\mu_L]$ and $[\mu_A]$ can be checked to be maximal for their respective residue sequences. The $\psi$-relations (and the Garnir relations for nodes in $\Leg(\mu)$, which also involve a single $\psi$-element) hold in each case due to cases (2) and (3) in \cref{T:PsiAction}, since the $y$-action is zero. 
\end{proof}

\subsection{The trivial module} \label{SS:Trivial}

For reference, we give a version of \cref{T:leg} in the special case of the trivial module $S^\lambda = S^{(n)}$.

\begin{lem}\label{lem:shapes-t-module}
    Let $\mu=(\mu_1,\dots,\mu_m)$ be a partition of $n$ such that $m\ge 1$ with charge $\kappa$ and residue sequence $(i_1,i_2,\dots,i_n)=\resi^\mu$. Let $\lambda=(n)$.
    \begin{enumerate}
        \item Suppose that $\mu_1 = c(2e) - 1$ for some $c \in \mathbb{Z}_{>0}$. Then $\mu = (\mu_1, \overline{\mu})$ satisfies $\resi^{\tabt_\mu^\lambda} =\resi^\mu$ if and only if $\overline{\mu}$ satisfies $\resi^{\tabt_{\overline{\mu}}^{\overline{\lambda}}} =\resi^{\overline{\mu}}$, where $\overline{\lambda} = (n - \mu_1)$ with charge $\kappa - 1$.
        \item Suppose that $\mu_1$ is not as in the previous case.
        Then $\resi^{\tabt_\mu^\lambda} =\resi^\mu$ if and only if $\mu$ has one of the following shapes.
        \begin{enumerate}
            \item $\mu = \lambda = (n)$
            \item $\mu=(\mu_1,1^{n-\mu_1})$ and $-\mu_1 \equiv 2\kappa - 1 \mod 2e$. 
        \end{enumerate}
    \end{enumerate}
\end{lem}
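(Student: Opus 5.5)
This is the $b=0$ specialisation of \cref{lem:shapes}, and I will prove it by running that argument again with the leg removed. When $\lambda=(n)$, the tableau $\tabt_\mu^\lambda$ puts $1,\dots,n$ in order along the single row. So $\resi^{\tabt_\mu^\lambda}=(\overline{\kappa},\overline{\kappa+1},\dots,\overline{\kappa+n-1})$, and the condition $\resi^{\tabt_\mu^\lambda}=\resi^\mu$ just says that the residues of $\mu$, read row by row, form an initial segment of the infinite sequence $\mathcal{I}$ starting at $\overline{\kappa}$.

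\textbf{Part (1).} Suppose $\mu_1=c(2e)-1$. The first row uses the contents $\kappa,\dots,\kappa+c(2e)-2$, so the next residue needed in $\lambda$ is $\overline{\kappa+c(2e)-1}=\overline{\kappa-1}$. This is exactly the residue of $(2,1)$, the first node of $\overline\mu$ with charge $\kappa-1$. Since contents shift by the period $2e$, the residues of $\overline\mu$ read row by row match the continuation $\overline{\kappa-1},\overline{\kappa},\dots$ if and only if $\overline\mu$ satisfies the analogous condition for $\overline\lambda=(n-\mu_1)$ with charge $\kappa-1$. I will state this translation explicitly, as in \cref{lem:shapes}(1b), and it holds in both directions.

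\textbf{Part (2).} Now assume $\mu_1\not\equiv -1 \pmod{2e}$. If $\mu=(n)$ we are in case (2a). Otherwise $m\ge 2$, and the node $(2,1)$, of content $\kappa-1$, must have the same residue as the node $(1,\mu_1+1)$ of $\lambda$, whose content is $\kappa+\mu_1$. By \cref{lem:residues} (with the $0$ and $e$ boundary cases checked directly), this means either $\kappa+\mu_1\equiv\kappa-1$ or $\kappa+\mu_1\equiv-(\kappa-1)\pmod{2e}$. The first option is excluded by hypothesis, so $-\mu_1\equiv 2\kappa-1\pmod{2e}$.

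In this reflected case the next residue along the row of $\lambda$ is $\overline{\kappa+\mu_1+1}=\overline{-(\kappa-2)}=\overline{\kappa-2}$. Going down the first column of $\mu$ gives residue $\overline{\kappa-2}$, which matches. Moving to the node $(2,2)$ instead gives residue $\overline{\kappa}$, which matches only if $\overline{\kappa}=\overline{\kappa-2}$. That happens only when $\overline{\kappa-1}\in\{0,e\}$; but then the two congruences above coincide, contradicting our hypothesis on $\mu_1$. So $(2,2)\notin[\mu]$.

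Continuing down the first column, the node $(r,1)$ has content $\kappa-r+1$, which is the reflection of $\kappa+\mu_1+r-1$. Every further node in column $1$ therefore matches, and $\mu=(\mu_1,1^{n-\mu_1})$, which is case (2b). Conversely, shapes (2a) and (2b) visibly satisfy the condition.

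The only delicate step is the degenerate residue case in Part (2), where $\overline{\kappa-1}\in\{0,e\}$. There $\overline{\cdot}^{\pm}$ is undefined and \cref{lem:residues} does not apply, so the residue equalities have to be checked by hand, as in the proof of \cref{lem:shapes}.
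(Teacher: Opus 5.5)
Your proof is correct and follows the route the paper intends: the lemma is the $b=0$ case of \cref{lem:shapes}, with the $2e$-periodicity shift giving (1), and for (2) the comparison of $(2,1)$ with $(1,\mu_1+1)$, which forces the reflected congruence, followed by ruling out $(2,2)$. (Incidentally, the $0,e$ cases need no separate check, since $\overline{k}=\overline{k'}$ if and only if $k'\equiv \pm k \pmod{2e}$ holds in all cases.) There is one index slip: the node $(r,1)$ of $\mu$ carries the entry $\mu_1+r-1$, which lies at $(1,\mu_1+r-1)$ in $\lambda$ with content $\kappa+\mu_1+r-2$, not $\kappa+\mu_1+r-1$; it is this content that is congruent to $-(\kappa-r+1)$ modulo $2e$.
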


\begin{thm}
    Let $\la, \mu$ be partitions of $n$ such that $\la=(n)$. Then there exists a non-zero $\varphi \in \Hom_{R_n^\Lambda}(S^\mu, S^{(n)})$ if and only if $\mu$ is one of the shapes in \cref{lem:shapes-t-module} with ceiling $2e$-quotient $\bar{\mu}$ satisfying $\Gc(\bar{\mu}) = 0$.
\end{thm}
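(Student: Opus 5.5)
The statement is the specialisation of \cref{T:leg} to $b=0$. I plan to run the same chain of lemmas with $\lambda=(n)$, so that $\Leg(\lambda)=\varnothing$ and every node other than $(1,1)$ is an arm node.

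\textbf{Reduction to a single candidate image.} For a non-zero $\varphi\in\Hom_{R_n^\Lambda}(S^\mu,S^{(n)})$, the module $S^{(n)}$ is one-dimensional, with basis $v_{\tabt^{(n)}}$. The residue relation \cref{E:residuerelation} therefore forces $\resi^\mu=\resi^{\tabt^{(n)}}$. This is exactly the condition $\resi^{\tabt_\mu^\lambda}=\resi^\mu$, because with $b=0$ the tableau $\tabt_\mu^\lambda$ just places every entry in the single row. The $y$-relations hold automatically: $DS(\resi)$ is empty, so \cref{T:YAction} gives $v y_k=0$, in line with \cref{Cor:YAction}.

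\textbf{Shape classification (\cref{lem:shapes-t-module}).} I would rerun the proof of \cref{lem:shapes} with $b=0$.
\begin{enumerate}
\item \emph{Recursive step.} If $\mu_1=c(2e)-1$, then the second row begins at content $\kappa-1$. Since $c(2e)$ contents exhaust $c$ full residue periods, the residues of rows $2,3,\dots$ coincide with the continuation of row one from column $c(2e)$ onward. This gives the recursion with charge $\kappa-1$, as in \cref{lem:shapes}(1b).
\item \emph{Non-recursive case.} Let $M_1=(2,1)$ and let $L_1=(1,\mu_1+1)$ be the first node of $\lambda\setminus\mu$. I need $\res M_1=\res L_1$, which means $\overline{\kappa-1}=\overline{\kappa+\mu_1}$.
\item \emph{Applying \cref{lem:residues}.} The option $\kappa-1\equiv\kappa+\mu_1$ is exactly the recursive case, so it is excluded here. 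That leaves $\kappa-1\equiv-(\kappa+\mu_1)$, i.e.\ $-\mu_1\equiv 2\kappa-1\pmod{2e}$.
\item \emph{Ruling out a second column.} The node $(2,2)$ would have content $\kappa$. Matching it against $\kappa+\mu_1+1$ forces a contradiction with the relation just derived. Hence $\mu$ is the hook $(\mu_1,1^{n-\mu_1})$.
\item \emph{The leg continues.} Successive leg nodes keep matching successive arm nodes, because the sign-reversed correspondence of \cref{lem:residues}(2) propagates.
\end{enumerate}

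\textbf{Specht and Garnir relations.} The $\psi$-relations follow from \cref{L:simple_spechts_hom}, or more simply from \cref{T:PsiAction}(3b), since all strings go to the arm. For the Garnir relations I use the lemma following \cref{lem:garnir} with $b=0$, so every $x$ satisfies $x>b$. Then $v g^A$ equals $\binom{\bar\mu_x}{m}v$ when $A=(x,m(2e))$, and equals $0$ otherwise. Here the Garnir nodes with $\mu_x=c(2e)-1$ give exactly the binomials $\binom{\bar\mu_x}{m}$ with $1\le m\le\bar\mu_{x+1}-1$, and $\bar\mu_x=\lceil \mu_x/2e\rceil$ since $\mu_x^A=\mu_x$ for $x\ge2$. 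The node $(1,1)$ contributes $\mu_1^A=\mu_1-1$. Therefore $v$ satisfies all Garnir relations exactly when every such binomial vanishes in $\mathcal O$, i.e.\ when $\Gc(\bar\mu)=0$ in the paper's convention. This yields both directions.

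\textbf{Main obstacle.} The hard part is the precise bookkeeping matching Garnir nodes to binomials, especially at the boundary row where $\mu_x\not\equiv -1\pmod{2e}$ (the hook tail in case (2b)). I would first check that such rows have $\bar\mu_{x+1}\le1$, so that they contribute no binomials.
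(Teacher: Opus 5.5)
Your proposal is correct and follows the paper's route: the paper gives no separate argument and simply specialises \cref{T:leg} to $\lambda=(n)$ (so $b=0$, with \cref{lem:shapes}, the $\psi$-action and the Garnir binomial lemma), which is exactly what you do. There is one slip in the bookkeeping you flagged as the main obstacle. For $x\ge 2$ you have $\mu_x^A=\mu_x$, so $\bar\mu_x=\lceil(\mu_x+1)/2e\rceil$, not $\lceil\mu_x/2e\rceil$. The two values agree on rows with $\mu_x\equiv-1\pmod{2e}$, but the $+1$ is precisely what makes the range $1\le m\le\lfloor\mu_{x+1}/2e\rfloor$ of Garnir nodes $(x,m\cdot 2e)$ coincide with $1\le m\le\bar\mu_{x+1}-1$ when $2e\mid\mu_{x+1}$. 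For example, with $e=2$ and $\mu=(7,4)$, the node $(1,4)$ contributes $\binom{2}{1}$, which your formula would miss.
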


\begin{eg}
    Let $\lambda = (12), \kappa = 0$.
    The list of partitions $\mu$ satisfying the condition in \cref{lem:shapes-t-module} can be computed as in \cref{eg:leg_residues}. Here we simply give the full list of partitions. These are $(12)$, $(11,1)$, $(9,1,1,1)$,
        $ (7,5)$,
        $(7,3,2)$,
        $(7,3,1,1)$,
         $(5,1^7)$,
         $(3,3,3,3)$,
        $ (3,3,1^6)$ and
        $(1^{12})$.
    Of these, all of them have $Gc(\bar{\mu}) = 0$ except for $\mu^* = (7, 5)$, for which $\Gc(\bar{\mu}^*) = \Gc(2,2)$ = 2. Therefore, all of the corresponding homomorphisms exist in any characteristic, except for the partition $\mu^*$. In that case $\Hom_{R_n^{\Lambda_0}}(S^{\mu^*}, S^{\la}) \neq 0$ only over a ring of characteristic $2$.
\end{eg}

\subsection{Connection between types A and C} \label{SS:AC-Connection}

In \cite[Section 4]{forsberg} we found that the homomorphism spaces between Specht modules  over different quiver Hecke algebras of type A (varying the quantum characteristic) are sometimes related.

Our results on homomorphisms between Specht modules of type $C_e^{(1)}$ suggest a further connection. Let us check our main result, \cref{T:leg}, against the results for homomorphisms to hooks in type $A_{2e-1}^{(1)}$. It is striking that the shapes given in \cite[Main Theorem]{loub17} are precisely the shapes that appear in \cref{T:leg} if we restrict our attention to the partitions that arise from (2.a) and (2.b) in \cref{lem:shapes}. Let us investigate an example.

\begin{eg}
Let $\lambda=(4,1^3)$, and let us look at the partitions $\mu$ such that the homomorphism space is non-zero under quiver Hecke algebras $R_n({\Gamma})$ for different quivers ${\Gamma}$.

\begin{enumerate}
    \item If $\Gamma = A_{3}^{(1)}$, then the only homomorphism is the identity, with $\mu = (4,1^3)$.
    \item If $\Gamma = C_2^{(1)}$, then we can split our search depending on the charge:
    \begin{enumerate}
        \item If the charge is $0$ or $2$, then again the non-zero homomorphism is the identity, and $\mu = (4,1^3)$.
        \item If the charge is $1$, then the following yield homomorphisms.
        \begin{itemize}
            \item $\mu = (4,1^3)$
            \item $\mu = (2^3,1)$
            \item $\mu = (2,1^6)$
        \end{itemize}
    \end{enumerate}
    \item If $\Gamma = A_{1}^{(1)}$, then several partitions yield homomorphisms, as follows.
    \begin{itemize}
        \item $\mu = (6,1)$
        \item $\mu = (4,3)$
        \item $\mu = (4,1^3)$
        \item $\mu = (2^3,1)$
        \item $\mu = (2,1^6)$
    \end{itemize}
    The partition in 1 appears in both (2.a) and (2.b), and each partition in (2.a) or (2.b) appears in 3. The above hold for an arbitrary field $k$, but there exist other examples where the characteristic of $k$ is relevant.
\end{enumerate}

We extrapolate our findings to all partitions $\mu, \lambda$.

\begin{conj} \label{Conj:Unfurling}
    Let $R_n(\Gamma,p)$ be the quiver Hecke algebra corresponding to quiver $\Gamma$ over a field of characteristic $p$. Let $\mu, \lambda \vdash n$ be arbitrary partitions.
    \begin{enumerate}
    \item Let $e \in \mathbb{Z}_{\geq 2}$. Whenever 
    \[\Hom_{R_n(A_{2e-1}^{(1)},p)}(S^\mu, S^\lambda) \neq \{0\},\] we also find that:
    \[ \Hom_{R_n(C_e^{(1)},p)}(S^\mu, S^\lambda) \neq \{0\} \quad \text{(for any charge $\kappa$).} \]
    \item Whenever \[ \Hom_{R_n(C_{2}^{(1)},2)}(S^\mu, S^\lambda) \neq \{0\} \quad \text{(for any charge $\kappa$)}, \] we also find that \[\Hom_{R_n(A_{1}^{(1)},2)}(S^\mu, S^\lambda) \neq \{0\}.\]
    \end{enumerate}
\end{conj}

This conjecture suggests a structure of quiver specialization, where a generic par\-a\-me\-trized quiver specializes to a quiver with fewer vertices under certain values of the parameter. This structure was suggested to the first author by Ben Webster. It is likely that the results in \cite[Section 4]{forsberg}, and perhaps the first part of \cref{Conj:Unfurling}, follow from \cite[Theorem 6.4]{Web16}, while the second part may require further investigation.
\end{eg}

\section{Homomorphisms when \texorpdfstring{$\lambda$}{lambda} has a Short Arm}  \label{S:ShortArm}

Let $\mu=(\mu_1,\dots,\mu_m)$ and $\la=(a+1,1^b)$ with $a< 2e$ be two partitions of $n$.
In this section, as in the previous one, we investigate homomorphisms from $S^\mu$ to $S^\la$, but now we restrict the amount of nodes in $\Arm(\lambda)$ instead of $\Leg(\lambda)$. The homomorphisms to $S^\la$ under the condition that $\Arm(\lambda)$ be short (that is, it has less than $2e$ nodes) also send the Specht module generator to a single basis element, as \cref{Cor:YAction} can also be applied in this case.

In the short arm case, the case where the node $(1,2) \in [\mu]$ is not a leg node becomes simpler, while the case where $(1,2)$ is a leg node becomes significantly harder. We quickly cover the simpler case, and we discuss the difficulties in the harder case by going over two examples.

\subsection{The node \texorpdfstring{$(1,2)$}{(1,2)} is not a leg node of \texorpdfstring{$[\mu]$}{[mu]}}

Recall from \cref{SS:12legnode} that this case covers every homomorphism whenever $\Lambda \neq \Lambda_0$ or $\Lambda_e$. Applying \cref{L:firstcol}, we see that all of $\mu$'s leg nodes appear in its first column. Define $\tabt_\mu^\lambda$ as in the previous section (before \cref{L:y_rels_hom}).

\begin{lem}\label{L:shortarm-shapes-other}
    We have that $\resi^\mu=\resi^{\tabt^\la_\mu}$ if and only if $\mu$ is one of the following partitions:
    \begin{enumerate}
            \item $\mu = \lambda = (a+1, 1^b)$
            \item $\mu = (a,1^{b+1})$ and $a \equiv -b-1 \mod 2e$
            \item $\mu=(\mu_1,1^{n-\mu_1})$ and $b - \mu_1 \equiv 2\kappa - 1 \mod 2e$.
            \item $\mu = (c(2e) - 2\kappa, 2^y, 1^{b-y})$ for some $c \in \mathbb{Z}_{\ge 0}$ and $y \in \{1, 2, \dots, b\}$.
            \item $\mu = (c(2e) - 2\kappa, 2^y, 1^{b-y + 1})$ for some $c \in \mathbb{Z}_{\ge 0}$, $b - y \equiv 2\kappa - 1 \mod 2e$ and $y \in \{1, 2, \dots, b\}$.
    \end{enumerate}
\end{lem}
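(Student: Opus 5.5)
The plan is to mirror the proof of \cref{lem:shapes}, now with a short arm, $a<2e$, in place of the short leg. In the present subsection $(1,2)$ is not a leg node, so by \cref{L:firstcol} every leg node of $[\mu]$ lies in the first column. Since $\tabt_\mu^\lambda$ puts the first $b+1$ first-column nodes into $(1,1)\cup\Leg(\lambda)$, the condition $\resi^\mu=\resi^{\tabt^\la_\mu}$ says the following. Let $(M_1,\dots,M_p)$ be the nodes of $[\mu]\setminus[\lambda]$, in the order given by $\tabt^\mu$, and let $(L_1,\dots,L_p)$ be the nodes of $[\lambda]\setminus[\mu]$ in $\Arm(\lambda)$, read left to right. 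The condition is exactly $\res M_q=\res L_q$ for all $q$, together with the requirement that the first-column nodes $(2,1),\dots,(b+1,1)$ lie in $[\mu]$. So I will compare the residues of these two node sequences using \cref{lem:residues}.

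The first case is $\mu$ a hook. If $\mu=\lambda$ there is nothing to prove. Otherwise the first mismatch sits either at the end of the arm or at the end of the leg.
\begin{enumerate}
\item If $\overline{\cont M_1}^\pm=\overline{\cont L_1}^\pm$, then a second pair $M_2,L_2$ is impossible, since the contents move in opposite directions. This forces $\mu=(a,1^{b+1})$ with $a\equiv-b-1 \pmod{2e}$.
\item If the signs are opposite, \cref{lem:residues}(2) lets the two sequences run in parallel for any length. This gives $\mu=(\mu_1,1^{n-\mu_1})$ with $b-\mu_1\equiv 2\kappa-1 \pmod{2e}$.
\end{enumerate}
This is verbatim the argument of \cref{lem:shapes}(2).

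The second case is $\mu$ not a hook, so $M_1=(2,2)$. Here I will exploit the short arm instead of the recursion in \cref{lem:shapes}(1). Since $|\Arm(\lambda)|=a<2e$, the sequence $(L_q)$ is a substring of $\mathcal I$ of length less than $2e$. The nodes $M_q$ fill $\mu$ row by row starting at $(2,2)$.
\begin{itemize}
\item Matching equal signs at $M_1$ would require $\mu_1\equiv 0 \pmod{2e}$. That is impossible with $\mu_1\le a+1\le 2e$ unless $\mu_1=2e$, which I handle by the same residue count, or which is excluded because $a+1\le 2e$. So the signs are opposite, which gives $\mu_1\equiv-2\kappa \pmod{2e}$.
\item The opposite sign forces $\res(2,3)\ne\res L_2$, so $[\mu]\setminus[\mu_1]$ has at most two columns.
\item The row-major order of $\tabt^\mu$ then alternates between the second column and the first column below row $b+1$. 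Continuing the residue matching down column two and, beyond $M_y$, down column one yields shapes (4) and (5). In (5) the congruence $b-y\equiv 2\kappa-1$ comes from the opposite-sign matching at $M_{y+1}$, exactly as at the end of the proof of \cref{lem:shapes}.
\end{itemize}
The converse direction is a direct residue check in each of the five cases.

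The main obstacle is the step in \cref{lem:shapes} that excluded a same-sign continuation at $M_{y+1}$. That step used the short leg, which is no longer available. Here I would instead use that the arm is short: a same-sign continuation would need $2e$ more matched arm residues after $L_y$, exceeding $a<2e$. I would also verify that the recursive rows of length $c(2e)$ in \cref{lem:shapes}(1) cannot occur, since $\mu_1\le a+1\le 2e$ when leg nodes are confined to column one. This is why no recursive part appears in the statement.
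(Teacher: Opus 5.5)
Your strategy is the paper's own. Its proof of this lemma only says to rerun \cref{lem:shapes}, with the short arm removing the recursive rows. The clean form of your remark about $\mu_1=2e$ is this: in the non-hook case $(2,2)$ is an arm node outside the first row, so $2\le\mu_1\le a\le 2e-1$. The real problem is the step you flag as the main obstacle, and your proposed fix does not work. The same-sign alternative at $M_{y+1}=(b+2,1)$ says that its content $\kappa-b-1$ is congruent to $(\text{content of }M_y)-1=\kappa-y$, that is, $y\equiv b+1 \pmod{2e}$. This jump is produced by the stretch $(y+2,1),\dots,(b+1,1)$ of leg nodes of $[\mu]$, and those are matched against $\Leg(\lambda)$, not the arm. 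On the arm side $L_{y+1}$ is simply the node after $L_y$, so no arm residues are used up and $a<2e$ gives no contradiction. The case is therefore not excluded once $b+1-y$ is a positive multiple of $2e$, which is possible as soon as $b\ge 2e$. After it, the content of $M_{y+k}$ is congruent to minus that of $L_{y+k}$ for every $k$, so the first column may even continue. Concretely, take $e=3$, $\kappa=2$, $\lambda=(4,1^6)$ and $\mu=(2,2,1^6)$. Then $\resi^\mu=\resi^{\tabt^\la_\mu}=(2,3,1,2,0,1,2,3,2,1)$, yet $\mu$ is not in the list: it has the shape of (5) with $y=1$, but $b\pm y\not\equiv 2\kappa-1 \pmod 6$. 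Likewise $\mu=(2,2,1^7)$ works for $\lambda=(5,1^6)$ and has none of the listed shapes. So this step cannot be closed. The list is missing the family $(c(2e)-2\kappa,2^y,1^{b-y+z})$ with $z\ge 1$ and $y\equiv b+1 \pmod{2e}$, and the paper's one-line proof overlooks exactly the same point.

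Second, your converse ``direct residue check'' fails for (5) as printed. Your claim that its congruence comes from the opposite-sign matching at $M_{y+1}$ is a miscomputation, inherited from the end of the proof of \cref{lem:shapes}. With $\mu_1\equiv-2\kappa$, the node $(b+2,1)$ has content $\kappa-b-1$ and is sent to the arm node of content $\kappa+\mu_1+y\equiv y-\kappa$. The opposite-sign condition is therefore $b+y\equiv 2\kappa-1 \pmod{2e}$, not $b-y\equiv 2\kappa-1$. For example, $e=3$, $\kappa=2$, $\lambda=(4,1^4)$, $\mu=(2,2,1^4)$ satisfies $b-y\equiv 2\kappa-1$. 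However, $(6,1)\in[\mu]$ has residue $3$, while the node $(1,4)$ of $\lambda$ that receives $\tabt^\mu(6,1)=8$ has residue $1$.
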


\begin{proof}
    Similar to the proof of \cref{lem:shapes}, with the single simplification that the arm of $\lambda$ is short, which eliminates the possibility of $\mu_1$ having a multiple of $2e$ nodes, and also the possibility of $\mu_1$ having $c(2e)-1$ nodes, except in the case $c = 1$ and $\mu = \lambda$, which we already cover.
\end{proof}

It is left to check which of the above $\tabt_\mu^\lambda$ satisfy the Specht module relations of $S^\mu$. In fact, they are satisfied \emph{for all} the shapes above, as one can see by retracing the proofs of \cref{L:simple_spechts_hom,lem:garnir}, which are slightly simpler in the current case, as the possible shapes of $\mu$ are comparatively restricted.

\begin{thm}
    Let $\lambda = (a+1,1^b)$ be a partition with a short arm. Then there is a homomorphism from $S^\mu$ to $S^\lambda$ such that $(1,2) \notin [\mu_L]$ if and only if the partition $\mu$ has one of the shapes appearing in \cref{L:shortarm-shapes-other}.
\end{thm}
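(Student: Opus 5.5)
The plan follows the proof of \cref{T:leg}, now for a short arm. Both directions rest on the fact that any non-zero homomorphism is pinned down by the image of the generator.

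For the forward direction, let $\varphi \in \Hom(S^\mu,S^\lambda)$ be non-zero with $(1,2)\notin[\mu_L]$. By \cref{L:firstcol}, which depends only on \cref{Cor:YAction} and so holds equally for short arms, we may take $\varphi(v_{\tabt^\mu}) = v_{\tabt_{\mathrm{max}}(\resi^\mu)}$. Since $(1,2)$ is not a leg node, the argument of \cref{L:legnodes} shows that the leg nodes of $\mu$ are exactly $(2,1),\dots,(b+1,1)$. Hence $\tabt_{\mathrm{max}}(\resi^\mu)=\tabt_\mu^\lambda$, and in particular $\resi^\mu=\resi^{\tabt_\mu^\lambda}$. \cref{L:shortarm-shapes-other} then puts $\mu$ in the stated list.

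For the converse, take $\mu$ from the list and check that $v_{\tabt_\mu^\lambda}$ satisfies every defining relation of $S^\mu$ at $v_{\tabt^\mu}$.
\begin{enumerate}
\item The residue relation \eqref{E:residuerelation} holds by construction.
\item For the $y$-relations, I would rerun the proof of \cref{L:y_rels_hom}: a designated sequence with choice $L$ would need arm and leg strips of $\mu$ with equal residue sequences, starting either at horizontally adjacent nodes or entirely within the first column, and both are impossible. This gives $\tabt_\mu^\lambda=\phi^\lambda_{\resi^\mu}(A,\dots,A)$, so $v_{\tabt_\mu^\lambda}y_k=0$ for all $k$ by \cref{T:YAction}.
\item The $\psi$-relations follow from the two cases of \cref{L:simple_spechts_hom}. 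The argument there uses only the first-column location of the leg nodes and \cref{T:PsiAction}, neither of which depends on which side is short.
\end{enumerate}

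The main obstacle is the Garnir relations \eqref{E:GarnirC}. Here I would rely on the restricted shapes. Because the arm has fewer than $2e$ nodes, no row of $\mu$ has $c(2e)$ or $c(2e)-1$ nodes except in the case $\mu=\lambda$. So the only Garnir nodes $A=(x,y)$ are the following:
\begin{enumerate}
\item nodes in the first column;
\item nodes with $y=2$ in shapes (4) and (5);
\item nodes in the first row at $y\not\equiv 0,1 \pmod{2e}$.
\end{enumerate}
Each of these lands in the ``otherwise'' branch of \cref{lem:garnir}. I would redo those computations in this setting:
\begin{enumerate}
\item In the first column, $\psi^{\tabt^A}$ is a single $\psi_r$ between two leg or two arm strings, which vanishes by Case~3 of \cref{T:PsiAction} because the binary word is all $A$. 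The corner node $(b+1,1)$ needs the separate treatment given at the end of \cref{lem:garnir}.
\item For $y=2$, the braid computation at the end of \cref{lem:garnir} yields zero through a $\psi$- or $y$-relation.
\item The remaining nodes are handled by the immobility argument, in which leg strings followed by arm strings are immobile.
\end{enumerate}
With every $v_{\tabt_\mu^\lambda}\psi^{\tabt^A}=0$, the $\Gc$ condition never arises, which explains why the statement carries no characteristic restriction. The point to verify carefully is that the short-arm hypothesis really excludes the nonzero cases of \cref{lem:garnir}, and in particular the case $y\equiv1 \pmod{2e}$ with $y>1$, which would require $\mu_1>2e$.
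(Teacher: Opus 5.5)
Your proposal follows the paper's own route: the forward direction goes through \cref{L:firstcol}, \cref{L:legnodes} and \cref{L:shortarm-shapes-other}, and the converse retraces \cref{L:y_rels_hom}, \cref{L:simple_spechts_hom} and \cref{lem:garnir}, with every Garnir node landing in the vanishing branch. Two details need fixing when you write it out, though neither changes the plan:
\begin{enumerate}
\item The restriction on Garnir nodes comes from every row below the first having at most two nodes. It does not come from row lengths avoiding $c(2e)$ and $c(2e)-1$, and that claim is false: $\mu=(2e-1,1)$ with $\lambda=(2e)$ is shape (2) with $a=2e-1$, $b=0$, and $\mu\neq\lambda$.
\item For $(1,1)$ with $\mu_1\ge 2$, and for the first-column Garnir nodes in the length-two rows of shapes (4) and (5), the element $\psi^{\tabt^A}$ is not a single $\psi_r$. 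There you need the residue argument used for $(1,1)$ and the $\mu_x>1$ computation in \cref{lem:garnir}, not Case~3 of \cref{T:PsiAction}.
\end{enumerate}
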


\subsection{The node \texorpdfstring{$(1,2)$}{(1,2)} is a leg node of \texorpdfstring{$[\mu]$}{[mu]}}

As in \cref{SS:12legnode}, this case can only present if $\Lambda = \Lambda_0$ or $\Lambda_e$. It is significantly more difficult to analyze, and we do not give a full classification. According to \cref{Cor:YAction}, the nodes in $[\mu]$ can be separated into $[\mu_L]$ and $[\mu_A]$, but in this case the subset $[\mu_L]$ is not restricted to the first column of $[\mu]$. This greatly increases the possibilities for partitions $\mu$ with potential Specht module homomorphisms to $S^\lambda$. 

\begin{eg}
Let $e = 3, \kappa = (0), \lambda = (4,1^5)$. Then the partitions $\mu$ such that there is a homomorphism from $S^\mu$ to $S^\lambda$ are as follows:
\begin{itemize}
    \item $\mu = (2,1^7)$ gives a homomorphism where $(1,2) \in [\mu]$ is an arm node.
    \item $\mu = (4,1^5)$ gives the identity homomorphism.
    \item $\mu = (4,3,1,1)$
    \item $\mu = (4,3,2)$
    \item $\mu = (6,1,1,1)$
    \item $\mu = (8,1)$ 
\end{itemize}
Let us focus on the case $\mu = (4,3,2)$. The homomorphism sends the initial tableau 
\[
\tabt^\mu = 
\ColorTableau[
  1/1/blue!30, 2/1/blue!30, 3/1/blue!30,
  1/2/blue!30, 2/2/blue!30, 3/2/blue!30
]{{1,2,3,4},{5,6,7},{8,9}}
\]
where $[\mu_L]$ has been shaded in blue, to the tableau
\[
\tabt^\lambda_\mu([\mu_L]) =
\ColorTableau[
  1/1/blue!30, 1/2/blue!30, 1/3/blue!30, 1/4/blue!30, 1/5/blue!30, 1/6/blue!30,
]{{1,4,8,9},{2},{3},{5},{6},{7}}.
\]
We see that, since we no longer have an analog of \cref{L:legnodes}, we lack control on $\mu_L$, and as a result, in general we obtain many more candidates for the partition $\mu$.
\end{eg}    

\begin{eg}
Let $e = 3, \kappa = (0), \lambda = (2,1^8)$. One candidate for a homomorphism is $\mu=(5,3,2)$, since the choice $\mu_L = (5,3,1)$ gives the correct residue sequence. We fill in the nodes with residues and shade the nodes in our candidate $[\mu_L]$ (which we call $\nu$) in blue.
\[
[\mu] = 
\ColorTableau[
  1/1/blue!30, 2/1/blue!30, 3/1/blue!30, 4/1/blue!30, 5/1/blue!30,
  1/2/blue!30, 2/2/blue!30, 3/2/blue!30,
  1/3/blue!30
]{{0,1,2,3,2},{1,0,1},{2,1}}
\]
This matches the residues of $[\lambda]$.
\[
[\lambda] = 
\ColorTableau[
  1/1/blue!30, 1/2/blue!30, 1/3/blue!30, 1/4/blue!30, 1/5/blue!30, 1/6/blue!30, 1/7/blue!30, 1/8/blue!30, 1/9/blue!30, 
]{{0,1},{1},{2},{3},{2},{1},{0},{1},{2}}
\]
However, the Specht relation $v_{\tabt^\mu} \psi_{10} = 0$ is not satisfied for $v_{\tabt_\mu^\lambda(\nu)}$, as we see next.
\[
 \begin{braid}
	\def\h{5};
	\def\sep{0.7};
	\def\a{15+\sep};
	\def\dashpart{0.9};
	
	% 1. Draw all standard blue lines
	\draw[blue] (0, 0) -- (0, \h) node[anchor = south]{$0$};
	\draw[blue] (1, 0) -- (2 + \sep, \h) node[anchor = south]{$1$};
	\draw[blue] (2, 0) -- (3 + \sep, \h) node[anchor = south]{$2$};
	\draw[blue] (3, 0) -- (4 + \sep, \h) node[anchor = south]{$3$};
	\draw[blue] (4, 0) -- (5 + \sep, \h) node[anchor = south]{$2$};
	\draw[blue] (5, 0) -- (6 + \sep, \h) node[anchor = south]{$1$};
	\draw[blue] (6, 0) -- (7 + \sep, \h) node[anchor = south]{$0$};
	\draw[blue] (7, 0) -- (8 + \sep, \h) node[anchor = south]{$1$};
    
	% 2. Draw the two special lines entirely in blue first
	\draw[blue] (8, 0) -- (9 + \sep, 1) -- (1, \h) node[anchor = south]{$2$};
	\draw[blue] (9+\sep, 0) -- (8, 1) -- (9 + \sep, \h) node[anchor = south]{$1$};

	% 3. Overdraw the bottom section (y=0 to y=1) in orange using a clip
	\begin{scope}
		\clip (-1, 0) rectangle (11, 1); 
		\draw[orange] (8, 0) -- (9 + \sep, 1) -- (1, \h);
		\draw[orange] (9+\sep, 0) -- (8, 1) -- (9 + \sep, \h);
	\end{scope}
    
	% Brown rectangles
	\draw[brown] (-0.5, \h) rectangle (1.5 , \h + 1);
	\draw[brown] (1.5 + \sep, \h) rectangle (9.5 + \sep, \h + 1);
\end{braid}
=
 \begin{braid}
	\def\h{5};
	\def\sep{0.7};
	\def\a{15+\sep};
	\def\dashpart{0.9};
	
	\draw (0, 0) -- (0, \h) node[anchor = south]{$0$};
	\draw (1, 0) -- (2 + \sep, \h) node[anchor = south]{$1$};
	\draw (2, 0) -- (3 + \sep, \h) node[anchor = south]{$2$};
    \draw (3, 0) -- (4 + \sep, \h) node[anchor = south]{$3$};
    \draw (4, 0) -- (5 + \sep, \h) node[anchor = south]{$2$};
    \draw (5, 0) -- (6 + \sep, \h) node[anchor = south]{$1$};
    \draw (6, 0) -- (7 + \sep, \h) node[anchor = south]{$0$};
    \draw (7, 0) -- (8 + \sep, \h) node[anchor = south]{$1$};
    \draw (8+\sep, 0)  -- (1, \h) coordinate[pos=0.1] (B) node[anchor = south]{$1$};
    \draw (9+\sep, 0) -- (9+\sep, \h) node[anchor = south]{$2$};
    
	\draw[brown] (-0.5, \h) rectangle (1.5 , \h + 1);
    \draw[brown] (1.5 + \sep, \h) rectangle (9.5 + \sep, \h + 1);
    \greendot(B);
\end{braid}
=
 \begin{braid}
	\def\h{5};
	\def\sep{0.7};
	\def\a{15+\sep};
	\def\dashpart{0.9};
	
	\draw (0, 0) -- (0, \h) node[anchor = south]{$0$};
	\draw (1, 0) -- (2 + \sep, \h) node[anchor = south]{$1$};
	\draw (2, 0) -- (3 + \sep, \h) node[anchor = south]{$2$};
    \draw (3, 0) -- (4 + \sep, \h) node[anchor = south]{$3$};
    \draw (4, 0) -- (5 + \sep, \h) node[anchor = south]{$2$};
    \draw (5, 0) -- (6 + \sep, \h) node[anchor = south]{$1$};
    \draw (6, 0) -- (7 + \sep, \h) node[anchor = south]{$0$};
    \draw (7, 0) -- (1, \h) node[anchor = south]{$1$};
    \draw (8+\sep, 0)  -- (8+\sep, \h) node[anchor = south]{$1$};
    \draw (9+\sep, 0) -- (9+\sep, \h) node[anchor = south]{$2$};
    
	\draw[brown] (-0.5, \h) rectangle (1.5 , \h + 1);
    \draw[brown] (1.5 + \sep, \h) rectangle (9.5 + \sep, \h + 1);
\end{braid}
\neq 0.
\]
It follows that there is no non-zero homomorphism from $S^\mu$ to $S^\lambda$. In \cref{S:ShortLeg}, we saw that whenever there is a candidate $\mu$ with the correct residue sequence, there is a homomorphism unless the Garnir relations prevent it. However, this no longer holds in the short arm case, as we have just seen.
\end{eg}
The last two examples show that care is needed when enumerating the homomorphisms to a hook $\lambda$ with a short arm. On the one hand, there are more candidate source partitions $\mu$. On the other hand, some plausible partitions do not produce a homomorphism. A full description of the partitions $\mu$ such that $\Hom_{R_n}(S^\mu,S^\lambda) \neq \{0\}$ in the spirit of \cref{T:leg} would therefore be significantly harder to achieve, but we do not expect it to be outside the scope of the methods used in this text.

\bibliographystyle{amsalpha}
\bibliography{master}

\end{document}